\documentclass{amsart}

\usepackage{amssymb}
\usepackage{hyperref}

\newtheorem{theorem}{Theorem}[section]
\newtheorem{lemma}[theorem]{Lemma}
\newtheorem{corollary}[theorem]{Corollary}
\newtheorem{proposition}[theorem]{Proposition}

\theoremstyle{definition}
\newtheorem{definition}[theorem]{Definition}

\theoremstyle{remark}

\newcommand\R{\mathbb{R}}
\newcommand\Z{\mathbb{Z}}
\newcommand\C{\mathbb{C}}
\newcommand\N{\mathbb{N}}
\newcommand\T{\mathbb{T}}
\newcommand{\HS}{{\mathfrak{I}_{2}}}

\newcommand{\ind}{\mathbf{1}}
\newcommand\eps{\varepsilon}
\newcommand\vk{\varkappa}

\newcommand{\qtq}[1]{\quad\text{#1}\quad}

\newcommand{\LL}{\mathcal L}
\newcommand{\Lax}{\mathbf{L}}
\newcommand{\LaxR}{\mathbf{R}}

\newcommand{\mKdV}{\text{\slshape mKdV}}

\newcommand{\Wspb}{W^{\smash[b]{s,p}}_b}

\newcommand{\loc}{\text{loc}}
\newcommand{\op}{\textup{op}}

\DeclareMathOperator{\tr}{tr}
\DeclareMathOperator{\dist}{dist}
\DeclareMathOperator{\supp}{supp}
\DeclareMathOperator{\sign}{sign}
\DeclareMathOperator{\E}{\mathbb{E}}
\DeclareMathOperator{\PP}{\mathbb{P}}

\newcommand{\slp}[1]{{\slshape(}#1{\slshape)}}  

\newcommand{\jb}[1]{\langle #1 \rangle}

\numberwithin{equation}{section}

\allowdisplaybreaks

\begin{document}

\title[Invariant measures for mKdV and KdV in infinite volume]{Invariant measures for mKdV and KdV\\in infinite volume}

\author[J.~Forlano]{Justin Forlano }
\address{
Justin Forlano, School of Mathematics\\ The University of Edinburgh\\ and The Maxwell Institute for the Mathematical Sciences\\ James Clerk Maxwell Building\\
The King's Buildings\\ Peter Guthrie Tait Road\\ Edinburgh\\ EH9 3FD\\ United Kingdom\\ and\\
Department of Mathematics\\ University of California\\ Los Angeles\\ CA 90095\\ USA}
\email{j.forlano@ed.ac.uk}

\author[R.~Killip]{Rowan Killip }
\address{
Rowan Killip, Department of Mathematics\\
University of California\\Los Angeles\\CA 90095\\USA}
\email{killip@math.ucla.edu}

\author[M.~Vi\c{s}an]
{Monica Vi\c{s}an }
\address{
Monica Vi\c{s}an, Department of Mathematics\\
University of California\\Los Angeles\\CA 90095\\USA}
\email{visan@math.ucla.edu}

\subjclass[2020]{35Q53} 

\keywords{modified Korteweg-de Vries equation, Gibbs measure}

\begin{abstract}
We construct dynamics for the defocusing real-valued (Miura) mKdV equation on the real line with initial data distributed according to Gibbs measure. We also prove that Gibbs measure is invariant under these dynamics.  On the way, we provide a new proof of the invariance of the Gibbs measure under mKdV on the torus.

Building on these results, we construct new measure-preserving dynamics for the KdV equation on the whole real line.  Samples from this family of measures exhibit the same local regularity as white-noise, but completely different statistics!
\end{abstract}

\maketitle
\tableofcontents

\section{Introduction}

The defocusing modified Korteweg--de Vries equation,
\begin{equation}\label{mkdv}\tag{mKdV}
\partial_t q=-\partial_x^{3}q+6 q^2 \partial_x q,
\end{equation}
describes the time evolution of a real-valued field $q(t,x)$ which may be defined either for $x\in\R$ or for $x\in\T_{L}:=\R/2L\Z$ with $L>0$.  The former will be referred to as the line or infinite-volume case, while the latter constitutes the finite-volume or periodic case. 

This equation was originally introduced by Miura \cite{MR0252825} through its connection to the Korteweg--de Vries equation.  It was subsequently realized that \eqref{mkdv} is an effective model for many physical scenarios.

We are interested in studying the \eqref{mkdv} dynamics at thermal equilibrium, particularly, in the infinite-volume setting.  This necessitates solving \eqref{mkdv} with initial data chosen at random from the Gibbs distribution (described below) and demonstrating that these solutions lead to a measure-preserving transformation of the Gibbs state.  This is inherently a low regularity problem, further compounded by the lack of spatial decay. Indeed, the Gibbs state is a stationary process.   The absence of spatial decay places this problem outside the scope of the much-studied well-posedness theory in $H^s$ spaces. 

The description of Gibbs states rests on the underlying Hamiltonian structure.  The \eqref{mkdv} dynamics is generated by the Hamiltonian
\begin{align}\label{mkdv hamil}
H_\mKdV(q)=\int \tfrac{1}{2}(\partial_x q)^2 + \tfrac{1}{2}q^4 \, dx
\end{align}
with respect to the Poisson structure 
\begin{align}\label{PB}
\{ F,G\}:=\int \tfrac{\delta F}{\delta q}(x)  \; \partial_x \big( \tfrac{\delta G}{\delta q}\big)(x) \,dx.
\end{align}
Here, functional derivatives are defined in the traditional fashion:
\begin{align}\label{functl deriv}
\frac{d}{ds}\bigg\vert_{s=0} F(q+sf) = dF\vert_{q}(f)=\int \frac{\delta F}{\delta q}(x) f(x)\,dx.
\end{align}

A second important conserved quantity associated with \eqref{mkdv} is 
\begin{align}\label{mkdv mass}
M(q)=\int \tfrac{1}{2} q^2 \, dx,
\end{align}
which may be regarded as either the mass or the momentum of the wave, depending on the physical context.  This functional generates translations under the Poisson structure \eqref{PB}.

The two conserved quantities combine naturally into a single free energy
\begin{align}\label{Free E}
H_\mKdV(q) - \mu M(q),
\end{align}
where $\mu\in\R$ plays the role of a chemical potential.  This in turn gives rise to the following two-parameter family of Gibbs states:
\begin{align}\label{formal Gibbs}
\tfrac{1}{Z}\exp\{ - \beta(H_\mKdV - \mu M) \} \, d(\text{phase volume}).
\end{align}
Here, $\beta>0$ denotes the inverse temperature and  $Z=Z(\beta,\mu)$ is the partition function, which is included in order to normalize this as a probability measure.

The natural notion of phase-space volume is induced by the underlying Poisson structure.  In view of \eqref{PB}, we ought to adopt product Lebesgue measure on the Fourier coefficients; however, such an infinite product measure does not exist.

We describe the rigorous construction of \eqref{formal Gibbs} in Section~\ref{S:Gibbs}.  For the moment however, we wish to give the reader an intuitive picture of samples from the Gibbs state \eqref{formal Gibbs}.  To this end, we rewrite the free energy density as
\begin{align}\label{Gibbs F E}
\tfrac12 (\partial_x q)^2 + U(q(x))  \qtq{with} U(y) = \tfrac12 y^4 - \tfrac\mu2 y^2  .
\end{align}
The first term penalizes rapid changes in $q(x)$, ensuring that it maintains the regularity of Brownian motion.  The second term endeavors to keep the process in regions where $U$ is small.   In Corollary~\ref{C:BM}, we give a precise meaning to this heuristic in a manner inspired by the It\^o SDE representation of the process:
\begin{align}\label{E:Ito}
dq = \tfrac{\psi_0'(q)}{\psi_0(q)} dt + dB
\end{align}
where $\psi_0$ denotes the ground-state of the Schr\"odinger operator \eqref{HGibbs}.


Building on this intuitive picture, we see that when $\mu$ is large and positive, the process $q(x)$ will remain near the two well-separated minima of $U(y)$ for the overwhelming fraction of $x$, with very occasional transits between them.   This leads us to regard the corresponding Gibbs state as representing a dilute soup of kinks evolving within a thermal background.  We remind the reader that kinks are the topological solitons indigenous to \eqref{mkdv}.  The desire to cover this physically important case was a major driver for including the chemical potential $\mu$ in our definition of the Gibbs state.

To truly fulfill its promise of describing thermal equilibrium,  a Gibbs state ought to be shown to be dynamically invariant.  For finite-dimensional systems, this is typically elementary.  However, it becomes a subtle matter even for the most basic infinite-dimensional systems, such as a gas in infinite volume or a classical field theory (even) in finite volume.  In both cases, the total energy is infinite.  The model treated here combines both infinities: the infinite energy density of a classical field (the famous Rayleigh--Jeans catastrophe) \emph{and} infinite volume.  Nevertheless, we are able to construct dynamics and prove that it conserves the Gibbs law:

\begin{theorem}\label{T:1.1}
For all values of temperature and chemical potential, solutions of \eqref{mkdv} define a measure-preserving group of transformations of the Gibbs state.
\end{theorem}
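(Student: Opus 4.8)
The plan is to construct the \eqref{mkdv} dynamics directly on the line, and then deduce invariance of the Gibbs state by running two approximations in tandem: a regularization of the \eqref{mkdv} flow itself coming from its integrable structure, and a finite-volume (periodic) approximation of the Gibbs state, under which the state becomes absolutely continuous with respect to a Gaussian measure so that classical tools apply.

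The first task is to make sense of \eqref{mkdv} for initial data drawn from \eqref{formal Gibbs}. Such samples carry only the local regularity of Brownian motion and, being a stationary process, do not decay at spatial infinity; they therefore lie outside the usual $H^s(\R)$ well-posedness theory. I would build global dynamics on this class by the method of commuting flows: introduce a family of regularized Hamiltonians $H_\kappa$ — built from the perturbation determinant, equivalently the scattering transmission coefficient, of the Lax operator associated with \eqref{mkdv} — whose flows Poisson-commute with \eqref{mkdv}, remain globally well-posed for rough and non-decaying data, and converge to the \eqref{mkdv} flow as $\kappa\to\infty$. The a priori control needed here is supplied by the low-regularity conservation laws generated by that same determinant. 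This yields the global flow $\Phi^t$, $t\in\R$, hence a group; the confining shape of $U$ in \eqref{Gibbs F E} for every chemical potential is what allows the construction to proceed for all the measures in \eqref{formal Gibbs}.

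Next I would prove invariance in finite volume. On $\T_L$ the Gibbs state $\mu_L$ is absolutely continuous with respect to a Gaussian (periodic Ornstein--Uhlenbeck--type) measure, with a bounded density — the exponential of $-\beta$ times the quartic-plus-quadratic part of the free energy \eqref{Free E} — so it is supported on $H^s(\T_L)$ for every $s<\tfrac12$, where \eqref{mkdv} is globally well-posed with flow $\Phi^t_L$. To obtain $(\Phi^t_L)_*\mu_L=\mu_L$ while avoiding the delicate Fourier-truncation scheme, I would exploit the $H_\kappa$-flows once more: since each is Hamiltonian and Poisson-commutes with $H_\mKdV$ and $M$, it preserves both the phase volume and the free energy \eqref{Free E}, hence preserves $\mu_L$ — the rigorous justification coming from Liouville's theorem on finite-dimensional Galerkin subspaces together with the conserved-quantity bounds — and letting $\kappa\to\infty$ transfers this to $\Phi^t_L$. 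Finally I would pass to the thermodynamic limit $L\to\infty$. Two convergences are required: first, $\mu_L\to\mu$ in the topology of local uniform convergence, a one-dimensional statistical-mechanics statement proven through the transfer-operator (Schr\"odinger-operator) description behind the It\^o representation \eqref{E:Ito}, with the ground state $\psi_0$ of \eqref{HGibbs} governing the limit; second, $\Phi^t_L q\to\Phi^t q$ locally in $x$, uniformly on compact time intervals, for $\mu$-a.e.\ $q$. Because \eqref{mkdv} has infinite speed of propagation, this second convergence is \emph{not} a finite-speed statement; it must be extracted from the quasi-locality of the integrable structure — the near-locality of the diagonal Green's function of the Lax operator and of the densities of the conservation laws — together with the $H_\kappa$-approximation, all uniform over $\supp\mu$. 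Granting these inputs, for any bounded continuous functional $F$ that depends on $q$ only through its restriction to a bounded interval one has
\[
\int F(\Phi^t q)\,d\mu(q)=\lim_{L\to\infty}\int F(\Phi^t_L q)\,d\mu_L(q)=\lim_{L\to\infty}\int F(q)\,d\mu_L(q)=\int F(q)\,d\mu(q),
\]
and such functionals determine $\mu$; combined with the group property this is Theorem~\ref{T:1.1}.

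I expect the main obstacle to be the second convergence above — commuting the infinite-volume limit with the genuinely nonlocal \eqref{mkdv} evolution, uniformly over a measure whose samples are merely Brownian in regularity and fail to decay. This rests in turn on the low-regularity, no-decay well-posedness of the first step, itself the principal departure from the standard $H^s(\R)$ theory; the integrable machinery (Lax pair, perturbation determinant, commuting flows) is what renders both feasible.
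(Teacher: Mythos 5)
Your overall architecture --- regularized commuting flows, invariance in finite volume, thermodynamic limit --- matches the paper's in outline, but two of your steps conceal genuine gaps, and both are resolved in the paper by reversing the order in which you propose to take the limits.

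First, your opening step (``build global dynamics on this class'' directly on the line, with a priori control ``supplied by the low-regularity conservation laws'') does not close as stated. For Gibbs samples, which are stationary and non-decaying, the conserved densities do not decay, so $M(q)$ and $A(\kappa;q)$ are almost surely infinite and give no deterministic control; likewise, the contraction-mapping well-posedness of the $H_\kappa$ flows lives in $L^2(\T_L)$, whose norm diverges on Gibbs samples as $L\to\infty$. The only usable a priori input in infinite volume is \emph{invariance of the measure under the approximating flows}: the uniform-in-$L$ and uniform-in-$\kappa$ moment bounds \eqref{bounds kL} and \eqref{1610} are proved by using that $q(t)$ is Gibbs distributed at every time. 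So the logical order must be inverted relative to your plan: one first proves invariance of the finite-volume Gibbs law under the $H_\kappa$ flow (Theorem~\ref{THM:HKinv}), uses it together with the exponential coupling of Proposition~\ref{P:coupled q_L} to pass the $H_\kappa$ flow \emph{and its invariance} to the line (Theorem~\ref{T:1601}), and only then removes the regularization. One cannot first construct $\Phi^t$ and afterwards verify that it preserves the measure. (Relatedly, your finite-volume invariance step via Galerkin truncation and Liouville's theorem is the classical route of \cite{MR1309539}; it is workable but the truncated $H_\kappa$ flow does not conserve the free energy exactly, and controlling that error is the delicate point. The paper instead derives invariance from the classical KMS condition, Theorem~\ref{T:KMS}, which converts Poisson commutativity with the free energy directly into invariance.)

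Second, your final step requires $\Phi^t_L q\to\Phi^t q$ for the \emph{full} \eqref{mkdv} flow, which you correctly identify as the main obstacle but for which you offer no mechanism --- and this is precisely the limit the paper is structured to avoid. The infinite-volume limit is taken only for the regularized flows: there the vector field is a speed-$4\kappa^2$ transport plus a term that is Lipschitz in exponentially weighted norms by the Combes--Thomas estimate \eqref{CT est}, so a Gronwall argument with the weight $e^{-\langle x\rangle^\alpha}$, combined with $\PP(q_L\not\equiv q_\infty \text{ on } [-L/2,L/2])\lesssim e^{-cL}$, closes (Lemma~\ref{L:1603}); no analogous argument is available for \eqref{mkdv} itself, whose transport is unbounded. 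The comparison you would need is instead performed entirely on the line and entirely between regularized flows: Poisson commutativity reduces $\Phi_{2\kappa}(t)-\Phi_{\kappa}(t)$ to the flow of the difference Hamiltonian $H_{2\kappa}-H_\kappa$ (Proposition~\ref{P:1951}), whose vector field is small in negative Sobolev norms on Gibbs samples. That substitution --- interchange the $L\to\infty$ and $\kappa\to\infty$ limits, and replace ``finite-volume mKdV converges to line mKdV'' by ``the difference of two regularized line flows is small'' --- is the missing idea in your proposal.
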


A more detailed formulation of this result can be found in Theorem~\ref{T:1936}.  In particular, we show that $q(t,x)$ is both a distributional and green solution of \eqref{mkdv} and that $\langle x\rangle^{-1}q(t,x)$ is an $L^2(\R)$-continuous function of time.  For the notion of green solution, see Definition~\ref{D:mkdv green}.  One of the virtues of this notion is that such solutions retain the conservation laws characteristic of \eqref{mkdv}.

As an offshoot of our analysis, we will also provide a new proof of invariance of the Gibbs state for \eqref{mkdv} in finite volume; see Theorem~\ref{T:4.3}.  This result was first demonstrated in \cite{MR1309539}.  The corresponding result for generalized KdV equations was subsequently proved in \cite{MR4504646,MR3489633}.  In each case, the approach was to develop a suitable deterministic well-posedness result.  This was then combined with finite-dimensional approximation to obtain preservation of measure, which was in turn used to construct global solutions (almost surely).  We note that the Fourier-based methods employed in all these works are ill-suited to the problem in infinite volume and so do not inform our analysis here.

There are two alternate approaches to the well-posedness of \eqref{mkdv} that have proven successful at the low regularity inherent to samples from the Gibbs state.  Using the construction of action/angle variables for the Korteweg--de Vries
equation
\begin{equation}\label{kdv}\tag{KdV}
\partial_t w=-\partial_x^{3}w + 6 w\partial_x w
\end{equation}
in \cite{MR2267286} and the link to \eqref{mkdv} provided by the Miura transformation
\begin{equation}\label{miuraI}
w=q'+q^2,
\end{equation}
the global well-posedness of \eqref{mkdv} in $L^2(\T)$ was proved in \cite{MR2131061}.  At this moment, there is no analogue of this approach demonstrating well-posedness in $L^2(\R)$.  This incompatibility with the infinite-volume regime leads us to regard this as a nonviable approach to proving Theorem~\ref{T:1.1}.

The second method is based on the commuting flows paradigm of \cite{KV}, which was applied to \eqref{mkdv} in \cite{Forlano,HGKV,2309.12773}.  This is the method that we will be applying.  One key part of this approach is the introduction of regularized flows that approximate \eqref{mkdv} but have much better high-frequency behavior.  This better behavior allows us to construct solutions to these flows by simple contraction mapping arguments.  Moreover, the weaker transport of these approximating flows aids in passing to the infinite-volume limit.  It is only in infinite volume that we will remove the regularization; this in turn is aided by the fact that the regularized flows commute with one another, as well as with \eqref{mkdv}.   

The foregoing discussion has focused on the well-posedness theory of \eqref{mkdv} because that theory both informs the methods we employ and embodies the strength of the results that we obtain, including, conservation laws and the group property.  By comparison, the existence of weak solutions (without uniqueness) to \eqref{mkdv} is much older; for example, this was shown for initial data in $L^2(\R)$ already in \cite{MR0759907}.  There is a corresponding theory of random weak solutions based on Prokhorov's compactness criterion for probability measures; this theory was developed in a manner applicable to \eqref{mkdv} in infinite-volume in the paper \cite{MR4077096}.

The final section of the paper is devoted to the construction of new invariant measures for \eqref{kdv} by applying the Miura map \eqref{miuraI} to the Gibbs-distributed solutions to \eqref{mkdv} described in Theorem~\ref{T:1.1}.  The culmination of these results may be summarized as follows:

\begin{theorem}\label{T:1.2}
The \eqref{kdv} evolution defines a measure-preserving group of transformations of the image of the Gibbs state under the Miura map.
\end{theorem}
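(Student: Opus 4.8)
The plan is to exploit the classical observation of Miura: if $q$ solves \eqref{mkdv} then $w=\mathcal M(q):=q'+q^2$ solves \eqref{kdv}. In fact more is true — the Miura map $\mathcal M$ intertwines the entire mKdV hierarchy with the KdV hierarchy, so that, in particular, the regularized flows underlying the commuting-flows construction behind Theorem~\ref{T:1.1} are carried by $\mathcal M$ onto the corresponding regularized KdV flows. Writing $\rho=\rho_{\beta,\mu}$ for the mKdV Gibbs state, $\Phi_t$ for the measure-preserving group of Theorem~\ref{T:1.1}, and $\nu:=\mathcal M_*\rho$ for the pushforward, we would define the \eqref{kdv} evolution on $\nu$-samples by $\Psi_t(w):=\mathcal M\big(\Phi_t(q)\big)$ for any $q$ with $\mathcal M(q)=w$, and then read off both invariance and the group property from the pushforward identity
\[
(\Psi_t)_*\nu = (\Psi_t\circ\mathcal M)_*\rho = (\mathcal M\circ\Phi_t)_*\rho = \mathcal M_*\rho = \nu ,
\]
where the last step uses the invariance of $\rho$ under $\Phi_t$.

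First I would set up the image measure: check that $\mathcal M$ is a Borel map on the polynomially bounded, Brownian-regularity path space carrying $\rho$ (the quadratic term is harmless, $q'$ being read distributionally), identify the local regularity of $\nu$ as that of white noise, and record the structural facts needed downstream. Foremost among these is the need to know that $\Psi_t(w)$ is independent of the chosen preimage $q$: this holds as soon as $\mathcal M$ is $\rho$-almost surely injective on the relevant class of non-decaying fields or, failing that, as soon as \eqref{kdv} enjoys a uniqueness property on the image class — either suffices, since if $\mathcal M(q_1)=\mathcal M(q_2)$ then $t\mapsto\mathcal M(\Phi_t(q_j))$ are two \eqref{kdv} solutions issuing from the same datum.

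The heart of the matter is to make the Miura intertwining rigorous at the regularity of $\rho$-samples. At the level of Schwartz data this is an algebraic identity; for the green and distributional solutions furnished by Theorem~\ref{T:1.1} one must instead argue by approximation — apply $\mathcal M$ to the smooth global solutions of the regularized mKdV flows, use the mutual commutation of these flows together with bounds uniform in the regularization parameter to pass to the limit, and verify that the limit is a green solution of \eqref{kdv}. This is where the genuine difficulty lies: at white-noise regularity the \eqref{kdv} nonlinearity $\partial_x(w^2)$ is far too singular to be interpreted termwise — $w^2$ carries a $(q')^2$ contribution — so there is no prospect of simply invoking a deterministic \eqref{kdv} well-posedness theory. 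The resolution must be \emph{structural}: one never treats $w$ as a generic rough field, only as a Miura transform, and the flow, along with its conservation laws, is controlled via the perturbation-determinant machinery of the commuting-flows approach; locally in space the $\nu$-samples do lie in $H^{-1}$, where the \eqref{kdv} dynamics is under good control, and the infinite-volume, no-decay aspect is handled exactly as on the mKdV side of the paper.

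Granting the intertwining, the group property of $\{\Psi_t\}$ on $\supp\nu$ descends from that of $\{\Phi_t\}$, invariance of $\nu$ is precisely the displayed computation, and the remaining assertions — that $w(t,x)$ is a distributional and green solution of \eqref{kdv} and that $\langle x\rangle^{-1}w(t,x)$ depends continuously on $t$ in a suitable topology — are inherited from the corresponding statements for $q$ in Theorem~\ref{T:1936} by the continuity of $\mathcal M$. I expect essentially all of the labor, and all of the novelty, to sit in the intertwining step: pinning down the precise low-regularity sense in which $w=q'+q^2$ solves \eqref{kdv}, and showing that the regularized mKdV hierarchy is transported onto a regularized KdV hierarchy with control uniform enough in the regularization to permit passage to the limit.
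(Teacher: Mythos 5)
Your skeleton matches the paper's: almost-sure injectivity of the Miura map to make the pushed-forward flow well defined, the pushforward computation for invariance and the group property, and a green-solution notion for \eqref{kdv}. But the step you yourself call ``the heart of the matter'' is where the gap sits. You propose to transport the regularized mKdV flows onto regularized KdV flows and pass to the limit with bounds uniform in the regularization; the paper never constructs any regularized KdV dynamics and needs no further limiting argument on the KdV side. The mechanism is instead an exact operator identity: conjugating $\Lax^2$ by the orthogonal matrix $\mathcal O$ of \eqref{mcO} diagonalizes it into the two Schr\"odinger operators $-\partial^2+q^2\pm q'$, whence $\mathsf{R}_\pm(\kappa)=\tfrac{1}{2\kappa}[\LaxR_{11}+\LaxR_{22}\pm\LaxR_{12}\pm\LaxR_{21}]$ and the diagonal Green's function of $-\partial^2+w$ satisfies $\mathsf g=\tfrac{1}{2\kappa}[1+\gamma+g_+]$; see \eqref{Reskdvtomkdv} and \eqref{gtogp0}. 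The KdV green-solution identity \eqref{GreensolnKdV} then follows \emph{deterministically} from the mKdV green-solution identities \eqref{Green-} and \eqref{Greeng} already established in Theorem~\ref{T:1936}: differentiate the former in $x$, use \eqref{Gderiv}--\eqref{rderiv}, and add. Without \eqref{gtogp0} (or an equivalent), your approximation scheme has no target: one must first say what it means for $w$ to solve \eqref{kdv} before verifying that a limit does so, and the definition is precisely in terms of $\mathsf g$.

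Two further defects. First, your closing assertion that $w(t,x)$ is a \emph{distributional} solution of \eqref{kdv} contradicts your own correct earlier observation: $q$ is almost surely not of bounded variation, so $q'$ is not a function, $w^2$ contains an undefined $(q')^2$, and the paper claims only the green-solution property for $w$ (Theorem~\ref{T:7.6}). Second, injectivity of the Miura map is not something that ``holds as soon as'' --- it is a theorem (Proposition~\ref{P:inject}), proved via the reduction-of-order formula of \cite{MR2189502} together with ergodicity of the Gibbs state and the symmetries $q\mapsto q(-\cdot)$ and $q\mapsto -q$; your fallback of uniqueness for \eqref{kdv} on the image class is not available, since no such uniqueness theory is developed at this regularity in infinite volume.
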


As we have noted earlier, samples $q$ from the \eqref{mkdv} Gibbs state have the regularity of Brownian motion.  Correspondingly, the random distributions $w$ defined through \eqref{miuraI} have the regularity of white noise.  This informal assertion is made precise in Proposition~\ref{P:7.3}. Moreover, we show there that this process is not white noise: it is not Gaussian and its law is mutually singular to that of white noise!

The question of whether \eqref{kdv} preserves white noise has attracted significant attention.  In finite volume, this was settled via a variety of methods in \cite{MR2540076,MR2762393,MR3034397,MR2899812,MR2365449}.  In the whole-line setting, this was proved in \cite{KMV}.

At first glance, it is natural to wonder what invariant measure for \eqref{mkdv} might correspond (under the Miura map) to white noise in infinite volume.  The answer is none!  A characteristic property of the image of the Miura map is that Schr\"odinger operators with such potentials are positive semi-definite.  By comparison, the white noise potential leads to a Schr\"odinger operator whose spectrum is the whole real line.   

While it is not atypical for completely integrable systems to admit multiple Gibbs measures through the incorporation of higher conserved quantities into the construction of free energies (cf. \cite{MR3518561,MR1811126,MR3112200}), the invariant measures described by Theorem~\ref{T:1.2} do not fit this mold at all.  Indeed, we contend that the soliton gases described by Theorem~\ref{T:1.2} lie outside the traditional Gibbsian framework, which makes them very interesting indeed.

As described in \cite{KMV}, white noise corresponds to the lowest-regularity coercive conservation law for \eqref{kdv}, namely, $M(w):=\int \frac12w^2\,dx$.  The introduction of higher conservation laws into the Gibbs weight would make the samples smoother in contradiction to the regularity of the samples just described.  There is a second issue: with the exception of $M(w)$, the conservation laws associated to \eqref{kdv} are not coercive.  In particular, there is no Gibbs state analogous to \eqref{formal Gibbs} because the potential energy is unbounded below.  

Failure of coercivity of the Hamiltonian is a typical feature for focusing equations, such as \eqref{kdv}.  Following the lead of \cite{MR0939505}, it has become standard practice to impose a mass cutoff to overcome this.  Unfortunately, this approach is only applicable in finite volume; see \cite{KMV} for references and further discussion.


\subsection{Overview of the paper}
While the overall strategy we employ in this paper owes much to \cite{KMV}, the implementation is very different.   Although white noise is very irregular, it is probabilistically very simple: it is Gaussian and independent on disjoint intervals.  The law of Gibbs samples is much less explicit and, of course, carries correlations. Correspondingly, the probabilistic analysis here is essentially disjoint from that of \cite{KMV}.

Perhaps the most conspicuous metamorphosis relative to \cite{KMV} is the absence of any multiscale analysis (the most technical part of that work).  
This transformation of the method relies on our identification of a subtle difference between the two models in the earliest phase of the argument, which then initiates a cascade of improvements.  This difference is the relativistic speed limit inherent to the Lax operator associated to \eqref{mkdv}, which has the structure of a Dirac operator.   This manifests already in Proposition~\ref{P:Lax Op} and in the universality of the Combes--Thomas estimate \eqref{CT est}.

Section~\ref{S:Gibbs} is devoted to the rigorous construction of the Gibbs measure \eqref{formal Gibbs} and the elaboration of its key properties.  Our approach is via finite marginals and the Feynman--Kac formula. Although this approach is not new (cf. \cite{MR1277197,SimonInt,SXu}), it is significantly less well-known than its principal competitor.  Nowadays, the preeminent approach is to begin with a Gaussian measure, which is then modified by an appropriate Radon--Nikodym derivative.  This latter method, popularized by \cite{MR0939505}, is indeed very effective in finite volume.  It is also well-matched to the prevailing approach to the construction of solutions, based on treating the nonlinearity perturbatively.  Nevertheless, this approach is ill-suited to our problem because in infinite volume, Gibbs measure is mutually singular to the corresponding Gaussian field!

One other approach to the thermodynamic limit is to first construct finite-volume Gibbs measures via their Radon--Nikodym derivative and then use correlation inequalities to prove tightness of the resulting sequence of laws.   This style of argument underpins the study the Gibbs state for NLS in  \cite{MR1777342}, but relies on the chemical potential being negative.

Given the breakdown of the naive notion \eqref{formal Gibbs} described above, it is natural to ask if there is an intrinsic way of identifying the proper Gibbs state.  It was for this purpose that the DLR condition was introduced in \cite{MR0231434,MR0256687}.  It says that an infinite-volume measure is a Gibbs state if the conditional law on finite windows matches the Gibbs law given the state exterior to the window.  For particle systems, this has become the settled definition; however, for fields, the rule \eqref{formal Gibbs} breaks down even in finite volume.

An alternate approach to defining classical Gibbs states originates from the KMS (Kubo--Martin--Schwinger) condition in quantum statistical physics (cf. \cite{MR0219283}).  The classical analogue of this is discussed, for example, in \cite{GallVerb,Katz,Mermin}.  For particle systems, it was shown to be equivalent to the DLR condition in \cite{MR0443784}.

We will prove that our Gibbs states satisfy not only the traditional classical KMS condition (Theorem~\ref{T:KMS}), but also the natural synthesis of the DLR and KMS conditions; see Proposition~\ref{PROP:DLRKMS}.   At the beginning of subsection~\ref{SS:KMS}, we illustrate how the classical KMS condition is closely tied to the question of invariance in the finite-dimensional setting.  Our inspiration for considering it here and for regarding it as way-point on the road to proving dynamical invariance was the recent paper \cite{AS}, which considers classical fields in finite volume.

Our construction of infinite-volume dynamics rests on a sequence of finite-volume approximations.  For this purpose, it is crucial to have a strong \emph{quantitative} coupling of the finite-volume Gibbs laws and that in infinite volume.  This is provided by subsection~\ref{ss2.2}.

Subsections~\ref{SS:mix} and~\ref{SS:diffusion} demonstrate mixing of the Gibbs process and its relation to diffusions, respectively, showing how directly such properties may be deduced from our chosen construction of the Gibbs state.

Section~\ref{S:3} is devoted to the deterministic theory of the \eqref{mkdv} Lax operator under the very mild condition \eqref{det L hyp}.  A particular focus of this section is to obtain bounds on the diagonal Green's functions and to control their dependence on $q$.  Due to the wide scope afforded by our hypothesis \eqref{det L hyp}, we believe many of the results of this section will be useful in the study of \eqref{mkdv} more broadly.  For example, these results cover almost periodic and step-like initial data. 

In the latter part of Section~\ref{S:3}, we review some parts of the method of commuting flows as implemented in \cite{Forlano}.  The definition of the regularized Hamiltonians is presented in \eqref{Hk defn} and the key properties of the induced flows are covered by Proposition~\ref{P:HK well}.

In Section~\ref{S:invariance} we demonstrate the invariance of the finite-volume Gibbs measure in two cases: In Theorem~\ref{THM:HKinv} we treat the flows associated to the regularized Hamiltonians $H_\kappa$.   Then, we prove invariance under \eqref{mkdv} by sending $\kappa\to\infty$; see  Theorem~\ref{T:4.3}.

The goal of Section~\ref{S:5} is to construct regularized flows in infinite volume and to prove that they leave the Gibbs measure invariant; see Theorem~\ref{T:1601}.  Note that it is much easier to take an infinite volume limit for the regularized flows, as opposed to \eqref{mkdv}, because they have much weaker transportation properties.

In Section~\ref{S:6}, we send $\kappa\to\infty$ in the $H_\kappa$ flows in order to construct solutions to \eqref{mkdv} with Gibbsian initial data in infinite volume.   Amongst other things, we prove that these solutions have the group property, preserve Gibbs measure, and satisfy microscopic conservation laws; see Theorem~\ref{T:1936}.  The Poisson commutativity of the regularized Hamiltonians (for differing values of $\kappa$) plays an essential role in being able to control this $\kappa\to\infty$ limit: It allows us to control the difference of two such flows by just analyzing the flow under the difference Hamiltonian; see Proposition~\ref{P:1951}.

Section~\ref{S:7} begins by discussing the connection between \eqref{kdv} and \eqref{mkdv} at the level of the Lax operators.  The latter part is divided into two subsections:  The first describes the image of the Gibbs state under the Miura mapping, while the second demonstrates the existence of \eqref{kdv} solutions with such initial data and the invariance of measure.

\subsection*{Notations}

We reserve the norm notation  $\|\cdot\|$ for infinite-dimensional Banach spaces and use simple absolute value signs to represent the norm of vectors in finite-dimensional spaces such as $\C$, $\C^2$, and the space $\C^{2\times 2}$ of $2\times2$ matrices.

In addition to the traditional $W^{s,p}(\R)$ and $H^s(\R)$ Sobolev spaces and norms, we will also employ $H^s_\kappa$, which is equal to $H^s(\R)$ as a set, but has norm
$$
\| f \|_{H^s_\kappa}^2 = \int |\hat f(\xi)|^2 (\kappa^2 + \xi^2)^s \,d\xi .
$$ 

\subsection*{Acknowledgements}
J.F. was supported by the European Research Council (grant no. 864138 ``SingStochDispDyn'').  R.K. was supported by NSF grant DMS-2154022.  M.V. was supported by NSF grant DMS-2054194.

\section{Gibbs measures}\label{S:Gibbs}

By virtue of the scaling symmetry
\begin{equation}\label{scaling}
q(t,x) \mapsto \lambda q(\lambda^3 t,\lambda x)
\end{equation}
enjoyed by \eqref{mkdv}, we may reduce our consideration of the Gibbs states to the case $\beta=1$.  This rescaling alters the chemical potential $\mu$; however, its precise value will not play a role in our analysis and henceforth, we will regard it as fixed.  The dependence of implicit and explicit constants on $\mu$ will not be tracked, nor typically even noted.

We will first introduce the Gibbs measure, both on the line and on the circle, by simply declaring its marginal distributions at finitely many sample points.  This definition and certain fundamental properties of the resulting process are presented in Proposition~\ref{PROP:Gibbs}.

Later, we will justify that this measure is indeed the Gibbs measure by showing that it satisfies the classical KMS condition, which we find to be just the right tool for proving dynamical invariance.   We will also show that the whole-line Gibbs measure is the limit of those on ever larger tori; indeed, our chosen construction is specifically geared toward our need to prove quantitative estimates on the proximity of the whole-line distributions to those on large tori, as well as our need to couple together the random initial data following these laws.

Consider the one-dimensional Schr\"odinger operator
\begin{align}
\LL := -\tfrac12\partial_y^2 + V \qtq{with} V(y) := \tfrac{1}{2}y^4 - \tfrac\mu2 y^2 + V(0), \label{HGibbs}
\end{align}
where $V(0)\in\R$ is chosen so that the lowest eigenvalue of $\LL$ is zero.  The essential self-adjointness of this anharmonic oscillator on $C^\infty_c(\R)$ is shown in \cite{RS2} (by five different methods!).  The Rellich criterion (cf. \cite{RS4}) guarantees that $\LL$ has compact resolvent.  All eigenvalues of $\LL$ are necessarily simple as can be inferred by noting that the Wronskian of two eigenvectors is both constant and integrable.  In view of these properties, the spectral theory of $\LL$ is completely described by its eigenvalues
$$
0=\lambda_0<\lambda_1<\lambda_2<\cdots \qtq{and $L^2$-normalized eigenvectors} \psi_0,\psi_1,\psi_2,\ldots
$$
The eigenvector $\psi_0$ is necessarily sign-definite; we choose it to be positive.

As we will soon see, the finite marginal distributions of the Gibbs state may be expressed in terms of the heat kernel $e^{-t\LL}(y,y')$ associated to this Schr\"odinger operator.  Let us review some of its basic properties:

\begin{proposition}\label{P:heat}
The heat kernel $e^{-t\LL}(y,y')$ is non-negative and continuous as $(t,y,y')$ ranges over $(0,\infty)\times\R\times\R$; moreover, there exists $\eps>0$ so that 
\begin{alignat}{2}
 e^{-t\LL}(y,y')  &\lesssim t^{-1/2}\exp\bigl( - \tfrac1{2t}|y-y'|^2 \bigr) &&\qtq{uniformly for} 0< t\leq 1, \label{short heat} \\
 e^{-t\LL}(y,y')  &\lesssim \exp( -\eps|y|^3 - \eps|y'|^3 ) &&\qtq{uniformly for} 1\leq t,  \label{long heat} 
\end{alignat}
and we have the Hilbert--Schmidt bound
\begin{align}
\bigl\| e^{-t\LL}(y,y') - \psi_0(y)\psi_0(y') \bigr\|_{L^2_{y,y'}(\R^2)}^2 \lesssim \bigl(1 + \tfrac1t\bigr) e^{-2\eps t} \quad\text{uniformly for $t >0 .$}\!\! \label{HS heat}
\end{align}
Here $\psi_0$ denotes the $L^2$-normalized ground state of $\LL$, which itself satisfies
\begin{align}\label{Carmona}
\bigl| \psi_0(y) \bigr| &\lesssim \exp( -\eps|y|^3 ).
\end{align}
\end{proposition}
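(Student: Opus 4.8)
The plan is to derive every assertion from the Feynman--Kac representation
\[
e^{-t\LL}(y,y') = p_t(y,y')\;\E_{y\to y'}^{\,t}\!\Bigl[\exp\Bigl(-\int_0^t V(B_s)\,ds\Bigr)\Bigr],
\qquad p_t(y,y'):=\tfrac{1}{\sqrt{2\pi t}}\exp\!\bigl(-\tfrac1{2t}|y-y'|^2\bigr),
\]
where $\E_{y\to y'}^{\,t}$ is expectation over the Brownian bridge from $y$ to $y'$ of duration $t$ (the $\tfrac12$ in $\LL$ being absorbed by working with standard Brownian motion); this is legitimate since $V$ is continuous and bounded below, say $V\ge -C_0$. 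Non-negativity is then immediate, and since the integrand is at most $e^{C_0 t}$ we get \eqref{short heat} at once. For continuity, reparametrise the bridge onto $[0,1]$ so that the exponent becomes $-t\int_0^1 V\bigl(y+u(y'-y)+\sqrt t\,\beta_u\bigr)\,du$ with $\beta$ a fixed standard bridge from $0$ to $0$; for $\PP$-a.e.\ $\beta$ this is a continuous function of $(t,y,y')$ which is dominated by $e^{C_0 t}$, so dominated convergence gives joint continuity of $e^{-t\LL}(y,y')$ on $(0,\infty)\times\R\times\R$. (Alternatively one may use uniform-on-compacts convergence of $\sum_n e^{-\lambda_n t}\psi_n(y)\psi_n(y')$, via the elliptic bound $\sup_K|\psi_n|\lesssim_K(1+\lambda_n)^{1/2}$ together with a Weyl-type lower bound $\lambda_n\gtrsim n^{4/3}$.)

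The crux is the on-diagonal bound $e^{-\LL}(y,y)\lesssim e^{-2\eps|y|^3}$, which I would establish by an occupation-time analysis of the bridge of duration $1$ from $y$ to $y$. Writing $V(z)\ge\tfrac14 z^4-C$, so that $V\gtrsim|y|^4$ on the ``far'' set $\{|z|\ge|y|/2\}$, one has $\int_0^1 V(B_s)\,ds\gtrsim|y|^4\,T-C'$, where $T$ is the occupation time of the far set. Since the path begins and ends at $y$, having $T<r$ forces it to reach $\{|z|<|y|/2\}$ by time $r$, hence to travel a distance $\ge|y|/2$ by time $r$; comparing the bridge with Brownian motion (the Radon--Nikodym derivative is bounded for $r\le\tfrac12$) this has probability $\lesssim e^{-|y|^2/(8r)}$. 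Inserting $\PP(T<r)\lesssim e^{-|y|^2/(8r)}$ into $\E\bigl[e^{-c|y|^4 T}\bigr]=e^{-c|y|^4}+c|y|^4\int_0^1 e^{-c|y|^4 r}\PP(T<r)\,dr$ and evaluating the resulting Laplace-type integral—whose exponent $-c|y|^4 r-|y|^2/(8r)$ is maximised at $r\asymp|y|^{-1}$ with value $\asymp-|y|^3$—gives $e^{-\LL}(y,y)\lesssim e^{-2\eps|y|^3}$ after multiplying by $p_1(y,y)=O(1)$. This trade-off between the quartic barrier and the quadratic cost of a Brownian excursion is precisely what upgrades the naive decay $e^{-c|y|^2}$ to the correct cubic rate, and I expect it to be the main obstacle; it is essentially the only quantitative probabilistic input required.

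The remaining bounds are then soft. From $e^{-\LL}(y,y)=\sum_n e^{-\lambda_n}\psi_n(y)^2\ge\psi_0(y)^2$ we immediately get \eqref{Carmona}; we also get $\|e^{-\LL/2}(y,\cdot)\|_{L^2}^2=e^{-\LL}(y,y)\lesssim e^{-2\eps|y|^3}$ and, integrating, that $e^{-\LL}$ is trace class, so $\sum_{n\ge1}e^{-2\lambda_n}<\infty$. For \eqref{long heat} with $t\ge1$ I would use the factorisation $e^{-t\LL}=e^{-\LL/2}\bigl(P_0+(e^{-(t-1)\LL}-P_0)\bigr)e^{-\LL/2}$ with $P_0:=\psi_0\otimes\psi_0$; since $\lambda_0=0$ we have $e^{-\LL/2}\psi_0=\psi_0$, so the $P_0$-term is exactly $\psi_0(y)\psi_0(y')$, controlled by \eqref{Carmona}, while the remaining term equals $\langle e^{-\LL/2}(y,\cdot),(e^{-(t-1)\LL}-P_0)e^{-\LL/2}(y',\cdot)\rangle$ and is bounded by Cauchy--Schwarz and the spectral gap by $e^{-\lambda_1(t-1)}\,e^{-\eps|y|^3}e^{-\eps|y'|^3}$, which is $\lesssim e^{-\eps|y|^3-\eps|y'|^3}$ uniformly for $t\ge1$.

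Finally, for \eqref{HS heat} note that the left side equals the squared Hilbert--Schmidt norm of $e^{-t\LL}-\psi_0\otimes\psi_0$, namely $\sum_{n\ge1}e^{-2\lambda_n t}=\tr\bigl(e^{-2t\LL}\bigr)-1$. For $t\ge1$ this is at most $e^{-2\lambda_1(t-1)}\sum_{n\ge1}e^{-2\lambda_n}\lesssim e^{-2\eps t}$; for $0<t<1$ one controls $\tr(e^{-2t\LL})$ again by Feynman--Kac, using $V\ge\tfrac14 z^4-C$: the bridge of duration $2t$ stays within $|y|/2$ of $y$ off an event of probability $\le e^{-c|y|^2/t}$, whence $e^{-2t\LL}(y,y)\lesssim t^{-1/2}\bigl(e^{-c t|y|^4}+e^{-c|y|^2/t}\bigr)$ and integrating in $y$ gives $\tr(e^{-2t\LL})\lesssim t^{-3/4}\lesssim t^{-1}$, which is dominated by $(1+\tfrac1t)e^{-2\eps t}$. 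Choosing $\eps>0$ small enough to reconcile all occurrences completes the proof.
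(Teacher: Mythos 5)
Your argument is correct, and the quantitative heart of it is the same as the paper's: both proofs exploit the trade-off between the quartic barrier ($V\gtrsim|y|^4$ outside $\{|z|<|y|/2\}$) and the Gaussian cost $e^{-|y|^2/(8r)}$ of escaping that region in time $r$, optimized at $r\asymp|y|^{-1}$ to yield the cubic rate. The surrounding architecture differs, though. The paper estimates $\|e^{-s\LL}\delta_y\|_{L^2}$ directly via Feynman--Kac, a reflection-principle split, and the explicit choice $s=\min\{\tfrac12,|y|^{-1}\}$, then gets \eqref{long heat} from $e^{-t\LL}(y,y')\le\|e^{-s\LL}\delta_y\|\,\|e^{-s'\LL}\delta_{y'}\|$; it obtains \eqref{HS heat} by the comparison $\LL\ge\LL_0-C$ with the harmonic oscillator (so the eigenvalues dominate an arithmetic progression); and it deduces \eqref{Carmona} last, by letting $t\to\infty$ in \eqref{long heat} and \eqref{HS heat}. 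You instead isolate the single on-diagonal bound $e^{-\LL}(y,y)\lesssim e^{-2\eps|y|^3}$ via occupation times of the bridge and a Laplace-type integral, after which everything else is soft: \eqref{Carmona} drops out immediately from $\psi_0(y)^2\le e^{-\LL}(y,y)$, trace-class follows by integrating the diagonal (replacing the harmonic-oscillator comparison), and \eqref{long heat} follows from the spectral-gap factorisation $e^{-\LL/2}(P_0+(e^{-(t-1)\LL}-P_0))e^{-\LL/2}$. Your organisation is arguably cleaner in that one hard estimate drives the whole proposition and \eqref{Carmona} comes for free; the paper's harmonic-oscillator comparison is slightly more economical for \eqref{HS heat} at small $t$, where you need the extra (but routine) trace computation $\tr(e^{-2t\LL})\lesssim t^{-3/4}$. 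Your continuity argument via the rescaled bridge and dominated convergence is also a self-contained substitute for the paper's citation of Simon. No gaps.
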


\begin{proof}
For a proof that the heat kernel is continuous, see \cite[Thm B.7.1]{MR0670130}.

The right-hand side of \eqref{short heat} is essentially the fundamental solution to the heat equation (a numerical prefactor has been omitted).  By the Feynman--Kac formula, this provides an upper bound for the heat kernel associated to any one-dimensional Schr\"odinger operator with positive potential.  Although the potential $V(y)$ is not positive, it is bounded from below. This energy shift produces an extra factor of $e^{Ct}$, which is clearly bounded for $t\in[0,1]$.

We may also compare the operator $\LL$ with the harmonic oscillator
\begin{align}\label{harmo}
\LL_0 = -\tfrac12\partial_y^2 + \tfrac{1}{2}y^2 - \tfrac12 .
\end{align}
Clearly, we may choose $C>0$ so that $\LL \geq \LL_0 - C$ as quadratic forms.   As the eigenvalues of $\LL_0$ are known to be precisely the non-negative integers, we infer that the eigenvalues of $\LL$ grow faster than some arithmetic progression.  Thus,
\begin{align}
\big\| e^{-t\LL}(y,y') - \psi_0(y)\psi_0(y') \bigr\|_{L^2_{y,y'}(\R^2)}^2
	= \sum_{n=1}^\infty e^{-2\lambda_n t} \lesssim (1 + t^{-1}) e^{-2\lambda_1 t} .
\end{align}
This proves \eqref{HS heat}.

We turn now to \eqref{long heat}. As $e^{-s''\LL}$ is an $L^2$-contraction for $s''\geq0$, so
\begin{align}\label{ss's''}
e^{-t\LL}(y,y') \leq \| e^{-s \LL} \delta_y \|_{L^2} \| e^{-s' \LL} \delta_{y'} \|_{L^2} 
	\qtq{provided} t\geq s+s'.
\end{align}
To estimate the terms on the right, we employ duality and the Feynman--Kac formula: Given $f\in L^2(\R)$,
\begin{align}\label{CFK}
\langle f, e^{-s \LL} \delta_y \rangle = \E\bigl\{ f\bigl(y+b(s)\bigr)\exp\bigl( - {\textstyle\int_0^s} V(y+b(\tau))\,d\tau \bigr)\bigr\}
\end{align}
where $s>0$ and $b(\tau)$ denotes a standard Brownian motion.  The next step is to apply Cauchy--Schwarz.  This produces two factors.  The first is easy to estimate:
\begin{align}\label{0821}
\E\Bigl\{ f\bigl(y+b(s)\bigr)^2 \Bigr\} &= \langle f^2, e^{s\Delta/2} \delta_y \rangle \lesssim s^{-1/2} \| f\|_{L^2}^2.
\end{align}

For the second factor, we partition the probability space on the basis of the event
$$
E = \bigl\{  |b(\tau)| < \tfrac12 |y|, \ \forall \tau\in[0,s]\bigr\}.
$$
By the reflection principle,
\begin{align}
\PP\bigl( E^c \bigr) \lesssim  \exp\bigl( - \tfrac{1}{4s} |y|^2 \bigr),
\end{align}
which guarantees that 
\begin{align}\label{0822}
\E\bigl\{  \exp\bigl( - 2 {\textstyle\int_0^s} V(y+b(\tau))\,d\tau \bigr) \cdot 1_{E^c} \bigr\}
		\lesssim \exp\bigl( - \tfrac{1}{4s} |y|^2 \bigr)
\end{align}
uniformly for $s\in(0,\frac12]$.  On the event $E$ we note that
\begin{align}\label{0823}
\E\bigl\{  \exp\bigl( - 2{\textstyle\int_0^s} V(y+b(\tau))\,d\tau \bigr) \cdot 1_{E} \bigr\}
		&\leq \exp\bigl( - 2 s \inf\{ V(y') : |y'| > \tfrac12 |y|\}\bigr) \\
&\lesssim  \exp\bigl( - \tfrac s{99} |y|^4 \bigr).
\end{align}
Again this estimate holds uniformly for $s\in(0,\frac12]$.

Combining all these ingredients in \eqref{CFK}, we deduce that for $s\in(0,\frac12]$,
\begin{align}\label{CFK'}
\| e^{-s \LL} \delta_y \|_{L^2} \lesssim s^{-1/2} \bigl[ \exp\bigl( - \tfrac{1}{4s} |y|^2 \bigr) + \exp\bigl( - \tfrac s{99} |y|^4  \bigr) \bigr].
\end{align}
Choosing $s=\min\{ \frac12, |y|^{-1}\}$, we may then deduce \eqref{long heat} from \eqref{ss's''}.

Lastly, \eqref{Carmona} follows from \eqref{long heat} and \eqref{HS heat} by sending $t\to\infty$.
\end{proof}

We are now ready to introduce the Gibbs distributions, both in finite and infinite volume.  These are described by measures on appropriate spaces of continuous functions.  In the finite-volume case, $(C(\T_L),\mathcal B)$ denotes the (separable) Banach space of bounded continuous real-valued functions endowed with the supremum norm and its associated Borel $\sigma$-algebra.   The case of infinite volume requires a small adjustment because Gibbs samples are almost surely unbounded functions.  We use $(C(\R),\mathcal B)$ to denote the Polish space of continuous real-valued functions endowed with the compact-open topology and its associated Borel $\sigma$-algebra.  The random process will be denoted $q(x)$; a subscript indicating the half-period will only be added when the distinction is needed.

\begin{proposition}[Gibbs measure] \label{PROP:Gibbs}
There is a unique probability measure defined on $(C(\R), \mathcal{B})$ satisfying
\begin{align}\label{FKmuL2}
\E\Biggl[ \, \prod_{j=1}^{n} f_{j}\bigl(q_{\infty} (x_j)\bigr) \Biggr]
	&= \bigl\langle \psi_0, f_1 e^{-(x_2-x_1) \LL}f_2 \cdots e^{-(x_n-x_{n-1}) \LL}f_n \psi_0 \bigr\rangle,
\end{align}
for every choice of $x_1<\cdots<x_n$ and bounded continuous functions $f_1,\ldots,f_n$.

Likewise, for each $L\in 2^\N$ there is a unique probability measure on $(C(\T_L), \mathcal{B})$ satisfying
\begin{align*}
\E\Biggl[  \,\prod_{j=1}^{n}f_{j}(q_{L}(x_j))  \Biggr]   
	& = \frac{
		\tr \Big\{  e^{-(x_1+L)\LL}f_1  e^{-(x_2-x_1)\LL}f_2 \cdots  e^{-(x_n-x_{n-1}) \LL}f_n e^{-(L-x_n) \LL}  \Big\} }
		{ \tr\{e^{-2L\LL} \} }, 
\end{align*}
for every choice of  $-L<x_1<\cdots<x_n\leq L$ and $f_1,\ldots,f_n$ as above.

These processes are stationary (i.e., the law is preserved by spatial translations) and admit the following bounds uniformly in the period $L\in 2^\N\cup\{\infty\}:$ \\
\textup{(i)}  For any $1\leq p <\infty$, $0\leq s<\frac12$, $b > \frac1p$, and $1\leq r<\infty$,
\begin{align}\label{Wsp moments}
\E \Bigl[ \bigl\| \langle x\rangle^{-b} q(x) \bigr\|_{W^{s,p}(\R)} ^r \Bigr] < \infty .
\end{align}
\textup{(ii)} For any finite interval $I\subset\R$ and any $\lambda >0$,
\begin{align}
\PP\bigg[ \sup_{x\in I} |q_{L}(x)| > \lambda \biggr] \lesssim \bigl( 1 + |I| \bigr) e^{-c \lambda ^{3}}  .\label{max on I}
\end{align}
In particular, for any $1\leq r<\infty$,
\begin{align}
\E \big[ \| q \|_{L^{\infty}(I)}^{r}\big] \lesssim_{r} \log\bigl(2+|I|\bigr)^{\frac{r}{3}}. \label{log13} 
\end{align}
\end{proposition}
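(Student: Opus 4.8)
The plan is to construct both measures from the prescribed finite‑dimensional distributions via the Kolmogorov extension theorem, upgrade to continuous paths with Kolmogorov's continuity criterion, and then read off the quantitative bounds (i)--(ii) from two elementary consequences of Proposition~\ref{P:heat}. For $L=\infty$, note that the right‑hand side of \eqref{FKmuL2} equals $\int_{\R^n}\bigl(\prod_j f_j(y_j)\bigr)\rho(y_1,\dots,y_n)\,dy$ with
\[
\rho(y_1,\dots,y_n)=\psi_0(y_1)\,e^{-(x_2-x_1)\LL}(y_1,y_2)\cdots e^{-(x_n-x_{n-1})\LL}(y_{n-1},y_n)\,\psi_0(y_n).
\]
Since the heat kernel is non‑negative and $\psi_0>0$, this density is non‑negative, and since $\LL\psi_0=0$ and $\|\psi_0\|_{L^2}=1$, the semigroup property gives $\int\rho\,dy=1$; the same semigroup property together with $e^{-t\LL}\psi_0=\psi_0$ makes the family consistent in Kolmogorov's sense. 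One obtains a process on $\R^{\R}$, and the increment bound recorded below combined with Kolmogorov's criterion furnishes an a.s.\ continuous (indeed locally H\"older-$\gamma$ for every $\gamma<\tfrac12$) modification, whose law is a Borel probability measure on $C(\R)$ because the Borel $\sigma$-algebra of the compact-open topology is generated by the point evaluations. Uniqueness holds because cylinder sets form a generating $\pi$-system. The finite-volume case is identical, with density $\tfrac1{Z_L}e^{-(x_2-x_1)\LL}(y_1,y_2)\cdots e^{-(x_n-x_{n-1})\LL}(y_{n-1},y_n)\,e^{-(2L-(x_n-x_1))\LL}(y_n,y_1)$ and $Z_L=\tr\{e^{-2L\LL}\}=\sum_n e^{-2L\lambda_n}\in(0,\infty)$, where cyclicity of the trace simultaneously yields $\int\rho_L\,dy=1$, consistency as a process indexed by $\T_L$, and translation invariance. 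Stationarity on $\R$ is clear because \eqref{FKmuL2} depends only on the gaps $x_{j+1}-x_j$.

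The two probabilistic inputs, both uniform over $L\in 2^{\N}\cup\{\infty\}$, are $\sup_x\E[|q_L(x)|^r]\lesssim_r 1$ and $\E[|q_L(x)-q_L(y)|^r]\lesssim_r|x-y|^{r/2}$ for $|x-y|\leq1$. Both follow from the densities above. The one‑point density is $\psi_0(y)^2$ (resp.\ $Z_L^{-1}e^{-2L\LL}(y,y)$), which is $\lesssim e^{-2\eps|y|^3}$ by \eqref{Carmona} (resp.\ by \eqref{long heat}, using $2L\geq1$ and $Z_L\geq1$); hence all moments are finite, uniformly in $L$. The two‑point density is, in both cases, $\lesssim e^{-t\LL}(y_1,y_2)\,e^{-\eps|y_1|^3-\eps|y_2|^3}$ with $t=|x-y|$ (for $L=\infty$ this is \eqref{Carmona}; for $L\geq2$ bound $Z_L^{-1}e^{-(2L-t)\LL}(y_2,y_1)$ by \eqref{long heat}, noting $2L-t\geq1$, and use $Z_L\geq1$). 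Taking $t\leq1$, inserting \eqref{short heat} for $e^{-t\LL}(y_1,y_2)$, substituting $z=y_1-y_2$, and integrating gives $\lesssim_r t^{r/2}\,\E|\mathcal N(0,1)|^r$; in particular the increments of $q_L$ are subgaussian at scale $|x-y|^{1/2}$.

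For part (i) with $0<s<\tfrac12$ we use the Gagliardo characterisation $\|f\|_{W^{s,p}(\R)}^p\simeq\|f\|_{L^p}^p+\iint_{\R^2}|f(x)-f(y)|^p|x-y|^{-1-sp}\,dx\,dy$ (the case $s=0$ being just $L^p$). The $L^p$ piece is controlled by Jensen against the finite measure $\langle x\rangle^{-bp}\,dx$ (finite since $bp>1$), giving $\|\langle x\rangle^{-b}q\|_{L^p}^r\lesssim\int\langle x\rangle^{-bp}|q(x)|^r\,dx$, whose expectation is finite by the moment bound. For the Gagliardo piece we split $\langle x\rangle^{-b}q(x)-\langle y\rangle^{-b}q(y)=\langle x\rangle^{-b}(q(x)-q(y))+(\langle x\rangle^{-b}-\langle y\rangle^{-b})q(y)$, bound $|\langle x\rangle^{-b}-\langle y\rangle^{-b}|\lesssim|x-y|\min(\langle x\rangle,\langle y\rangle)^{-b-1}$ on the second term, decompose the region $|x-y|\leq1$ into dyadic annuli, and on each annulus apply Jensen twice---once in $x$ against $\langle x\rangle^{-bp}\,dx$ and once in the increment variable---together with the increment moment bound; the region $|x-y|>1$ is handled crudely using the moment bound, the weight, and $\int_{|u|>1}|u|^{-1-sp}\,du<\infty$. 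The exponents in the dyadic index form a convergent geometric series precisely because $s<\tfrac12$ and $b>\tfrac1p$, giving \eqref{Wsp moments} for every $r<\infty$, uniformly in $L$.

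For part (ii) it suffices to treat $\lambda\geq1$ (for $\lambda\leq1$ the right side of \eqref{max on I} is $\gtrsim1$). Partition $I$ into $\lesssim1+|I|$ intervals $J$ of length $\leq1$; on each $J$ pick a $\delta$-net $\{x_j\}$ with $\delta=\lambda^{-1}$, so that $\sup_{J}|q_L|\leq\max_j|q_L(x_j)|+\max_j\sup_{|x-x_j|\leq\delta}|q_L(x)-q_L(x_j)|$. By a union bound and the one‑point tail $\PP[|q_L(x)|>\tfrac\lambda2]\lesssim e^{-c\lambda^3}$ (integrate the $e^{-2\eps|y|^3}$ density), the first term exceeds $\tfrac\lambda2$ with probability $\lesssim\delta^{-1}e^{-c\lambda^3}$; by a union bound, subgaussianity of the increments at scale $\delta^{1/2}$, and a standard chaining bound for the oscillation, the second term exceeds $\tfrac\lambda2$ with probability $\lesssim\delta^{-1}e^{-c\lambda^2/\delta}$. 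With $\delta=\lambda^{-1}$ both are $\lesssim\lambda\,e^{-c\lambda^3}$; summing over the $\lesssim1+|I|$ intervals and absorbing the polynomial factor $\lambda$ into the exponential (valid for $\lambda\geq1$) gives \eqref{max on I}. Then \eqref{log13} follows by writing $\E\|q\|_{L^\infty(I)}^r=\int_0^\infty r\lambda^{r-1}\PP[\|q\|_{L^\infty(I)}>\lambda]\,d\lambda$ and splitting the integral at $\lambda\simeq(\log(2+|I|))^{1/3}$. The main obstacle is obtaining the sharp exponent $\lambda^3$ rather than $\lambda^2$ in \eqref{max on I}: a net of fixed spacing would only detect the subgaussian oscillation and yield $e^{-c\lambda^2}$, so it is essential to shrink the net spacing to $\delta=\lambda^{-1}$, trading the union‑bound factor $\delta^{-1}=\lambda$ against the improved exponent $e^{-c\lambda^2/\delta}=e^{-c\lambda^3}$, with the quartic one‑point tail supplying the matching bound.
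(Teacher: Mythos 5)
Your proposal is correct and follows essentially the same route as the paper: Kolmogorov extension plus the continuity criterion from the cubic one-point tail and the subgaussian increment bound, the Gagliardo--Slobodeckij form of $W^{s,p}$ for part (i), and for part (ii) the reduction to scale $\lambda^{-1}$ so that the chained oscillation bound $e^{-c\lambda^2/\delta}$ matches the $e^{-c\lambda^3}$ one-point tail. The paper carries out the chaining explicitly on an interval of length $\lambda^{-1}$ with geometric weights $M^n$, $M=0.8$ (which is why it bounds $\sup|q|>\lambda/(1-M)$ rather than $\lambda/2$), but this is only a constant-chasing difference from your net-plus-chaining formulation.
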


\begin{proof}
Uniqueness boils down to this question: Do the measures of cylinder sets (which may be computed directly from the finite marginals) uniquely determine the measures of all Borel sets?  The answer is yes.  This is a standard application of the $\pi$-$\lambda$ Theorem.

As the finite marginal distributions are invariant under spatial translations, this uniqueness guarantees that any process with such marginals is automatically stationary.

Turning to the question of existence, let us first observe that the systems of finite marginals presented above are consistent and correspond to probability measures.  In the infinite-volume setting, this follows from the positivity and the semigroup property of the heat kernel together with the positivity of $\psi_0$, its $L^2$ normalization, and the fact that it is a zero-energy eigenfunction.   In the finite-volume case, one must also ensure that the operator traces are well defined; this follows from the Hilbert--Schmidt bound \eqref{HS heat}.

In order to prove the existence of a corresponding process, we wish to employ Kolmogorov's extension theorem in concert with his continuity criterion.  The latter requires adequate moment bounds; these are readily deduced from Proposition~\ref{P:heat}.
Indeed, this result guarantees that 
\begin{equation}\label{LD one}
\PP\{ |q(x)| \geq \lambda \}  \lesssim e^{-\eps \lambda^3}
\end{equation}
and similarly that 
\begin{equation}\label{LD two}
\PP\{ |q(x)-q(x')| \geq \lambda \} \lesssim \exp\bigl( - \tfrac{\lambda^2}{2|x-x'|} \bigr)
	\qtq{whenever} |x-x'|\leq 1. 
\end{equation}
In the finite-volume setting, $|x-x'|$ denotes the natural distance in $\R/2L\Z$, namely,
\begin{equation}\label{quotient metric}
|x-x'| = \min\{ |x - x'+2mL| : m\in\Z \} \qtq{for points} x,x' \in \R/2L\Z .
\end{equation}

Let us turn now to \eqref{Wsp moments}.  The driving idea is that for $0<s<1$, the $W^{s,p}(\R)$ norm takes the form
\begin{align*}
\| f \|_{{W}^{s,p}}^p   = \|f\|_{L^{p}(\R)}^p + \| f \|_{\dot{W}^{s,p}}^p
\qtq{with} \| f \|_{\dot{W}^{s,p}}^p = \iint_{\R^2} \frac{ |f(x)-f(x')|^p }{|x-x'|^{1+ps}} dx\,dx' ,
\end{align*}
which is amenable to analysis via \eqref{LD one} and \eqref{LD two}.  (This is also one approach to proving Kolmogorov's continuity criterion; see, for example, \cite{MR1267569}.) 

We begin with the $L^p$ norm.  From \eqref{LD one}, we see that
\begin{align*}
\bigl\| q(x) \bigr\|_{L^r(d\PP)} \lesssim r^{1/3} .
\end{align*}
Thus when $bp>1$ and $r\geq p$, the triangle inequality in $L^{r/p}(d\PP)$ shows
\begin{align}\label{Lp b}
\E\Bigl[ \bigl\| \langle x\rangle^{-b} q(x) \|_{L^p(\R)}^r\Bigr]
	\lesssim_r \Bigl[{\textstyle\int}\langle x\rangle^{-bp}\,dx \Bigr]^{r/p} < \infty.
\end{align}

Regarding differences, \eqref{LD one} and \eqref{LD two} guarantee that
\begin{align*}
\bigl\| \langle x\rangle^{-b} q(x)  - \langle x'\rangle^{-b} q(x') \bigr\|_{L^r(d\PP)}
	\lesssim_r  \bigl[ \langle x\rangle^{-b} + \langle x'\rangle^{-b} \bigr]  \biggl[{\frac{|x-x'|}{\langle x - x'\rangle}} \biggr]^{1/2}.
\end{align*}
Thus, when $r\geq p$, we have
\begin{align*}
\E\Bigl[ \bigl\| \langle x\rangle^{-b} q(x) \|_{\dot W^{s,p}(\R)}^r\Bigr]
	\lesssim_r \biggl[\iint_{\R^2} \frac{ \langle x\rangle^{-bp} + \langle x'\rangle^{-bp} }{|x-x'|^{1+ps}}
		\Bigl[{\tfrac{|x-x'|}{\langle x - x'\rangle}} \Bigr]^{\frac p2}\, dx\,dx' \biggr]^{r/p} 
\end{align*}
which is finite, so long as $bp>1$ and $0<s<\frac12$.  Together with \eqref{Lp b}, this proves \eqref{Wsp moments}.  The restriction to $r\geq p$ is inconsequential because higher moments automatically control lower moments via Jensen's inequality. 

We turn now to \eqref{max on I}, noting the following reduction:  It suffices to show that
\begin{align}
\PP\bigg[ \sup_{x\in J} |q(x)| > \lambda \biggr] \lesssim e^{-\tilde c \lambda ^{3}} \label{max on I'}
	\qtq{when} \lambda \geq 1 \qtq{and} |J| \leq \lambda^{-1}.
\end{align}
Indeed, if $\lambda \leq 1$ then \eqref{max on I} is inferior to the fact that all probabilities are bounded by one.  On the other hand, when $\lambda\geq 1$, \eqref{max on I'} and the union bound show
\begin{align*}
\PP\bigg[ \sup_{x\in I} |q(x)| > \lambda \biggr] \lesssim (1+|I|) \lambda e^{-\tilde c \lambda ^{3}}
	\lesssim (1+|I|) e^{- c \lambda ^{3}},
\end{align*}
for any $c<\tilde c$.

To prove \eqref{max on I'}, we may simply consider $J=[0,\lambda^{-1}]$.  We first note that
$$
\sup_{x\in J} |q(x)| \leq |q(0)| + \sum_{n=1}^\infty \ \sup_{1\leq k\leq 2^n} \ \relax
	\Bigl| q\bigl(\tfrac{k}{\lambda 2^{n}}\bigr) - q\bigl(\tfrac{k-1}{\lambda 2^{n}}\bigr)\Bigr| .
$$
Thus, choosing $M=0.8$, we may employ \eqref{LD one} and \eqref{LD two} to deduce that
\begin{align*}
\PP\Big[ \sup_{x\in J} |q(x)| > \tfrac{\lambda}{1-M} \Bigr]
	&\leq \PP\Big[ |q(0)| > \lambda \Bigr] + \sum_{n=1}^\infty \sum_{k=1}^{2^n}
		\PP \Bigl[ \Bigl| q\bigl(\tfrac{k}{\lambda 2^{n}}\bigr) - q\bigl(\tfrac{k-1}{\lambda 2^{n}}\bigr)\Bigr| > \lambda M^n \Bigr] \\
&\lesssim e^{-\eps \lambda^3} + \sum_{n=1}^\infty 2^n \exp\bigl\{ - \tfrac12 \lambda^3 M^{2n} 2^{n} \bigr\} \\
&\lesssim e^{-\eps \lambda^3} + e^{- \lambda^3/ 4}.
\end{align*}
This proves \eqref{max on I'} and so \eqref{max on I}.  The moment bound \eqref{log13} follows from \eqref{max on I}.
\end{proof}

The description of the Gibbs state in terms of finite marginals is very convenient for understanding certain conditional distributions.  We are most interested in the following situation: given the value of $q$ at the two ends of an arc on the circle, what is the conditional law of $q$ on this arc and the complementary arc.  (In infinite volume, our two observation points divide the line into three intervals.)  Let us first state the key observations; the physical significance will be addressed later.

Given points $x_0<x_1<\cdots < x_{n+1}$ and $y_0,\ldots,y_{n+1}\in\R$, we define
\begin{align}\label{CondFM}
P\Bigl(\,\begin{matrix} y_1, \ldots, y_n \\ x_1, \ldots, x_n \end{matrix}\Big| \begin{matrix} y_0,y_{n+1} \\ x_0,x_{n+1} \end{matrix} \Bigr)
&= \frac{ \prod_{j=0}^n e^{-(x_{j+1}-x_j)\LL}(y_j,y_{j+1}) }{ e^{-(x_{n+1}-x_{0})\LL}(y_0,y_{n+1}) } .
\end{align}
This expresses the sought after conditional laws in the following sense:

\begin{lemma}[Conditional laws]\label{L:condit}
Given $L\in 2^\N \cup\{\infty\}$,
\begin{align}\label{CL1}
\E\Biggl[  \,\prod_{j=1}^{n}f_{j}\bigl(q_L(x_j)\bigr)  \Bigg| q_L(a),q_L(b)\Biggr]
	& = \E\Biggl[  \,\prod_{j=1}^{n}f_{j}\bigl(q_L(x_j)\bigr)  \Bigg| q_L(a),q_L(b)\Biggr] \notag \\
	& \qquad\qquad  \times \E\Biggl[  \,\prod_{j=n+1}^{n+m}f_{j}\bigl(q_L(x_j)\bigr)  \Bigg| q_L(a),q_L(b) \Biggr]
\end{align}
for any $a< x_1<\cdots<x_n <  b < x_{n+1}<\cdots<x_{n+m} < a+2L$.  Moreover, 
\begin{align}\label{CL2}
\E\Biggl[  \,\prod_{j=1}^{n} & f_{j}\bigl(q_L(x_j)\bigr)  \Bigg| q_L(a),q_L(b)\Biggr] \\
	& = \int\!\cdots\!\int P\Bigl(\,\begin{matrix} y_1, \ldots, y_n \\ x_1, \ldots, x_n \end{matrix}\Big| \begin{matrix} q_L(a),&\!\!\!\!q_L(b) \\ a,&\!\!\!\!b \end{matrix} \Bigr) \prod_{j=1}^{n}f_{j}\bigl(y_j\bigr)  \,dy_1\cdots dy_n .\notag
\end{align}
\end{lemma}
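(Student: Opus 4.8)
The plan is to reduce \eqref{CL1} and \eqref{CL2} to algebraic identities among the finite-dimensional marginals furnished by Proposition~\ref{PROP:Gibbs}, using only the semigroup (Chapman--Kolmogorov) property of $e^{-t\LL}$ and one standard measure-theoretic fact: as $\bigl(q_L(a),q_L(b)\bigr)$ is an $\R^2$-valued random variable, a $\sigma\bigl(q_L(a),q_L(b)\bigr)$-measurable function $h\bigl(q_L(a),q_L(b)\bigr)$ coincides with $\E\bigl[X\mid q_L(a),q_L(b)\bigr]$ exactly when $\E\bigl[g\bigl(q_L(a),q_L(b)\bigr)\,X\bigr]=\E\bigl[g\bigl(q_L(a),q_L(b)\bigr)\,h\bigl(q_L(a),q_L(b)\bigr)\bigr]$ for every bounded continuous $g\colon\R^2\to\R$; that testing against continuous $g$ suffices is the functional monotone class theorem, the bounded continuous functions on $\R^2$ being a multiplicatively closed family generating the Borel $\sigma$-algebra. (For $L<\infty$ one first invokes stationarity to bring all the points into a single fundamental domain, where the marginal formula of Proposition~\ref{PROP:Gibbs} applies.) Throughout, the decay bounds \eqref{short heat}, \eqref{long heat}, \eqref{Carmona}, together with boundedness of the $f_j$ and of $g$, make every multiple integral below absolutely convergent, so that it may be freely reorganized and identified with the relevant inner product (resp.\ trace).

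Applying Proposition~\ref{PROP:Gibbs} to the two points $a<b$ alone shows that $\bigl(q_L(a),q_L(b)\bigr)$ has a strictly positive Lebesgue density on $\R^2$: it is $\psi_0(y_0)\,e^{-(b-a)\LL}(y_0,y_1)\,\psi_0(y_1)$ when $L=\infty$, and $\tr\{e^{-2L\LL}\}^{-1}\,e^{-(b-a)\LL}(y_0,y_1)\,e^{-(2L-(b-a))\LL}(y_1,y_0)$ when $L<\infty$, positivity being that of $\psi_0$ and of the heat kernel (Proposition~\ref{P:heat}); in particular the denominator $e^{-(x_{n+1}-x_0)\LL}(y_0,y_{n+1})$ in \eqref{CondFM} never vanishes on the support of this law. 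For \eqref{CL2} I would then feed $a<x_1<\cdots<x_n<b$ and the test functions $g_0,f_1,\dots,f_n,g_{n+1}$ into Proposition~\ref{PROP:Gibbs} and expand the right-hand side as the integral over $y_0,\dots,y_{n+1}$ of $g_0(y_0)g_{n+1}(y_{n+1})$ against the chain $\prod_{j=0}^{n}e^{-(x_{j+1}-x_j)\LL}(y_j,y_{j+1})$ weighted by $\prod_{j=1}^{n}f_j(y_j)$, with $x_0=a$, $x_{n+1}=b$ (and, for $L<\infty$, a further factor $e^{-(2L-(b-a))\LL}(y_{n+1},y_0)$ coming from the complementary arc). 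Multiplying and dividing by $e^{-(b-a)\LL}(y_0,y_{n+1})$ exhibits this quantity as the integral, against the endpoint density above, of $g_0(y_0)g_{n+1}(y_{n+1})$ times $\int\!\cdots\!\int P\bigl(\cdot\mid y_0,y_{n+1}\bigr)\prod_j f_j(y_j)\,dy_1\cdots dy_n$; by the characterization recalled above, this is precisely \eqref{CL2}. (Chapman--Kolmogorov additionally shows $P(\cdot\mid y_0,y_{n+1})$ is a probability density, but this is not logically needed.)

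For \eqref{CL1} --- the statement that, conditionally on $\bigl(q_L(a),q_L(b)\bigr)$, the variables $q_L(x_1),\dots,q_L(x_n)$ on the arc from $a$ to $b$ are independent of the variables $q_L(x_{n+1}),\dots,q_L(x_{n+m})$ on the complementary arc --- I would run the same expansion with the full point set $a<x_1<\cdots<x_n<b<x_{n+1}<\cdots<x_{n+m}<a+2L$ and test functions $g_0,f_1,\dots,f_{n+m},g_{n+1}$, with $g_{n+1}$ placed at the endpoint $b$. The heat-kernel chain factors at the integration node sitting over $q_L(b)$ into an ``inside'' kernel carrying $f_1,\dots,f_n$ and the arc from $a$ to $b$, and an ``outside'' kernel carrying $f_{n+1},\dots,f_{n+m}$ and the complementary arc (the half-line $(b,\infty)$ terminating in $\psi_0$ when $L=\infty$, the arc from $b$ back around to $a$ when $L<\infty$). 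Since the whole quantity reduces to $\E\bigl[g_0(q_L(a))\,g_{n+1}(q_L(b))\bigr]$ once every $f_j\equiv1$, dividing the inside kernel by $e^{-(b-a)\LL}(y_0,y_1)$ and the outside kernel by its own $f\equiv1$ specialization reorganizes $\E\bigl[g_0\,g_{n+1}\prod_{j=1}^{n+m}f_j(q_L(x_j))\bigr]$ into the integral, against the endpoint density, of $g_0(y_0)\,g_{n+1}(y_1)\,\Phi_{\mathrm{in}}(y_0,y_1)\,\Phi_{\mathrm{out}}(y_0,y_1)$; here $\Phi_{\mathrm{in}}$ is exactly the $P$-integral of \eqref{CL2} --- so it depends only on the inside data and equals $1$ when $f_1\equiv\cdots\equiv f_n\equiv1$ --- while $\Phi_{\mathrm{out}}$ depends only on the outside data and the endpoints and equals $1$ when $f_{n+1}\equiv\cdots\equiv f_{n+m}\equiv1$, and both are $\sigma\bigl(q_L(a),q_L(b)\bigr)$-measurable. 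Specializing to $m=0$ identifies $\Phi_{\mathrm{in}}\bigl(q_L(a),q_L(b)\bigr)$ with $\E\bigl[\prod_{j\le n}f_j(q_L(x_j))\mid q_L(a),q_L(b)\bigr]$ and specializing to $n=0$ identifies $\Phi_{\mathrm{out}}\bigl(q_L(a),q_L(b)\bigr)$ with $\E\bigl[\prod_{n<j\le n+m}f_j(q_L(x_j))\mid q_L(a),q_L(b)\bigr]$, so the product form gives \eqref{CL1}. I expect the only real work to be organizational --- keeping the heat-kernel bookkeeping straight across the two or three arcs and checking that the $f\equiv1$ normalizations cancel as claimed --- with no genuine analytic obstacle: the estimates of Proposition~\ref{P:heat} enter only to license the interchanges of integration.
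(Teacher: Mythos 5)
Your proof is correct and takes essentially the same route as the paper, whose entire argument is the remark that the displayed formulas satisfy the defining (iterated-expectation) property of conditional expectation, with the verification left as elementary. Your expansion via the finite marginals, the semigroup property, and the monotone-class reduction to product test functions is precisely that verification carried out in detail (and it correctly reads the left-hand side of \eqref{CL1} as the product over all $n+m$ factors, as the surrounding discussion in the paper intends).
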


\begin{proof}
The defining property of conditional expectation is the law of iterated expectation.  It is elementary to verify that the formulas presented satisfy this axiom.  
\end{proof}

The description of the conditional distribution by $P$ is not entirely unique --- the defining identity would still be obeyed if $P$ were modified on a Lebesgue null set of its arguments.  Nevertheless, the representative presented is the unique continuous one.  The identity \eqref{CL1} has a simple interpretation: the restrictions of $q_L$ to the two arcs joining $a$ and $b$ in $\R/2L\Z$ are conditionally independent given the values of $q_L$ at the two ends.

A more complete description of the infinite volume case is this: the restrictions of $q$ to the three intervals $(-\infty,a]$, $[a,b]$, and $[b,\infty)$ are conditionally independent given $q(a)$ and $q(b)$; moreover the law on the middle interval is described by \eqref{CL2}.

As a single point suffices to disconnect the line, one may consider a single observation point $b$ in the infinite-volume setting.  The analogue of \eqref{CL1} is simply the Markov property: the restrictions of $q$ to $(-\infty,b]$ and $[b,\infty)$ are conditionally independent given $q(b)$.

Nevertheless, there is a good reason to consider the process with two observation points, even in the line setting, namely, the DLR condition.  This condition was introduced to define the notion of Gibbs state in infinite-volume lattice models when the underlying Hamiltonian involves finite-range interactions.  For such models, finite volume Gibbs states can be constructed without difficulty; likewise, Gibbs states on a finite region with prescribed values on the boundary are easily defined.  The DLR condition is this: an infinite-volume measure is a Gibbs measure if and only if the finite-volume conditional laws conditioned on the boundary data are correct.   Lemma~\ref{L:condit} already verifies one facet of this by showing that the finite- and infinite-volume measures have identical conditional laws.  The question of the correctness of these conditional laws will be addressed by Proposition~\ref{PROP:DLRKMS}, where we demonstrate a version of the KMS condition.  For this purpose, we will need the following integration by parts identity:

\begin{lemma}\label{L:IBP}
Fix $L\in 2^\N \cup\{\infty\}$ and $a <  b < a+2L$.  Suppose $\phi,\psi \in C^\infty_c(\R)$ are both supported within $(a,b)$, then
\begin{align*}
\E\biggl[ &\biggl( i{\int} \phi(x) \psi(x) \,dx \biggr) \cdot \exp\biggl( i{\int} q(x) \psi(x) \,dx \biggr) \bigg| q(a),q(b)\biggr] \\
	&= \E\biggl[ \biggl( \int -\phi''(x) q(x) + \phi(x)V'(q(x)) \,dx \biggr)\cdot 
		\exp\biggl( i{\int} q(x) \psi(x) \,dx \biggr) \bigg| q(a),q(b)\biggr] .
\end{align*}
where $V$ is as in \eqref{HGibbs}.
\end{lemma}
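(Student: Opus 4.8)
The plan is to identify the relevant conditional law as the bridge of a diffusion and then run a Cameron--Martin-style integration by parts. Fix $L\in 2^\N\cup\{\infty\}$ and $a<b<a+2L$, and condition on the values $q(a)=y_0$ and $q(b)=y_{n+1}$; since the asserted equality is one of conditional expectations, it suffices to prove it for (Lebesgue-)almost every pair $(y_0,y_{n+1})$. By Lemma~\ref{L:condit} --- concretely, \eqref{CL2} together with \eqref{CondFM} --- the conditional law of $q$ on the interval $[a,b]$ (in finite volume this is the genuine arc from $a$ to $b$ that does not wrap around, and the same kernel $P$ governs it, so all cases are treated uniformly) is the Markov bridge whose finite-dimensional distributions are assembled from the heat kernel $e^{-t\LL}(y,y')$; positivity of this kernel (Proposition~\ref{P:heat}) makes $P$ well defined. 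By the Feynman--Kac formula this bridge coincides with the Brownian bridge $\mathbb{P}_{\mathrm{br}}$ from $y_0$ to $y_{n+1}$ over $[a,b]$ --- normalized so that $e^{t\Delta/2}$ is its transition semigroup, to match $\LL=-\tfrac12\partial_y^2+V$ --- reweighted by the Radon--Nikodym factor $\tfrac1Z\exp\bigl(-\int_a^b V(q(x))\,dx\bigr)$, where $Z=\E_{\mathbb{P}_{\mathrm{br}}}\bigl[\exp(-\int_a^b V)\bigr]$. Because $V$ from \eqref{HGibbs} is bounded below, this density is bounded above (hence in every $L^p(\mathbb{P}_{\mathrm{br}})$) and $Z\in(0,\infty)$; and since both measures have the finite marginals \eqref{CL2}, they agree by the same uniqueness argument used for Proposition~\ref{PROP:Gibbs}. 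Writing $\mathbb{Q}$ for this conditional law, the two sides of the Lemma become $\E_{\mathbb{Q}}\bigl[(i\int\phi\psi\,dx)\,e^{i\int q\psi\,dx}\bigr]$ and $\E_{\mathbb{Q}}\bigl[\bigl(\int(-\phi''q+\phi V'(q))\,dx\bigr)\,e^{i\int q\psi\,dx}\bigr]$.

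Next I would differentiate a single generating functional in two ways. Since $\phi\in C^\infty_c(\R)$ is supported in $(a,b)$ we have $\phi(a)=\phi(b)=\phi'(a)=\phi'(b)=0$, so $\phi$ lies in the Cameron--Martin space of $\mathbb{P}_{\mathrm{br}}$, and the pushforward of $\mathbb{P}_{\mathrm{br}}$ under $q\mapsto q+\epsilon\phi$ has density $\exp\bigl(-\epsilon\int_a^b\phi''(x)q(x)\,dx-\tfrac{\epsilon^2}{2}\int_a^b\phi'(x)^2\,dx\bigr)$ with respect to $\mathbb{P}_{\mathrm{br}}$ --- here the usual Cameron--Martin functional $\int\phi'\,dq$ has been rewritten as $-\int\phi''q\,dx$ using that $\phi'\in C^1$ with $\phi'(a)=\phi'(b)=0$. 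Pulling $\E_{\mathbb{Q}}$ back to $\mathbb{P}_{\mathrm{br}}$ and substituting $q\mapsto q-\epsilon\phi$ gives
\begin{align*}
\E_{\mathbb{Q}}\bigl[e^{i\int(q+\epsilon\phi)\psi\,dx}\bigr]
={}&\tfrac1Z\,\E_{\mathbb{P}_{\mathrm{br}}}\!\Bigl[e^{i\int q\psi\,dx}\exp\bigl(-{\textstyle\int_a^b}V(q-\epsilon\phi)\,dx\bigr)\\
&\qquad\times\exp\bigl(-\epsilon{\textstyle\int_a^b}\phi''q\,dx-\tfrac{\epsilon^2}{2}{\textstyle\int_a^b}(\phi')^2\,dx\bigr)\Bigr].
\end{align*}
Differentiating the left-hand side at $\epsilon=0$ yields $\E_{\mathbb{Q}}\bigl[(i\int\phi\psi\,dx)\,e^{i\int q\psi\,dx}\bigr]$; differentiating the right-hand side at $\epsilon=0$ and converting back to $\E_{\mathbb{Q}}$ yields $\E_{\mathbb{Q}}\bigl[e^{i\int q\psi\,dx}\bigl(\int\phi V'(q)\,dx-\int\phi''q\,dx\bigr)\bigr]$, using $\frac{d}{d\epsilon}\big|_{0}\bigl(-\int V(q-\epsilon\phi)\,dx\bigr)=\int\phi V'(q)\,dx$. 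Equating the two expressions is exactly the claimed identity.

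The one step that requires genuine care is justifying differentiation under the expectation on the $\mathbb{P}_{\mathrm{br}}$ side, and this is where pulling back to the Brownian bridge pays off: because $V$ is bounded below, the reweighting $\exp\bigl(-\int_a^bV(q-\epsilon\phi)\,dx\bigr)$ is bounded by $e^{-(b-a)\inf V}$ uniformly in $\epsilon$, so no large-field difficulty enters through it, while $\int\phi''q\,dx$, $\int\phi V'(q)\,dx$ and $\int\phi\psi\,dx$ are all controlled by polynomials in $\sup_{[a,b]}|q|$, whose exponential moments are finite (from the sub-Gaussian tail of the supremum of a Brownian bridge, or directly from \eqref{max on I}). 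Hence for $|\epsilon|\le 1$ the $\epsilon$-derivative of the integrand is dominated by an integrable quantity of the shape $C\,(1+\sup_{[a,b]}|q|)^{3}\exp\bigl(C'(1+\sup_{[a,b]}|q|)\bigr)$, so dominated convergence applies. A more elementary alternative avoids the infinite-dimensional Cameron--Martin theorem entirely: approximate $\int q\psi$, $\int\phi\psi$, $\int\phi''q$ and $\int\phi V'(q)$ by Riemann sums over a fine partition $a=x_0<\cdots<x_{n+1}=b$, reduce via \eqref{CL2}--\eqref{CondFM} to an ordinary integration by parts in the inner variables $y_1,\dots,y_n$ against the density $P$, and pass to the limit; that route instead calls for the short-time asymptotics of $e^{-t\LL}(y,y')$, whose control for large $y,y'$ makes it messier.
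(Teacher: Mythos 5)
Your argument is correct, but it takes a genuinely different route from the paper's. The paper works entirely at the level of the semigroup $e^{-t\LL}$: it uses the commutator identities $y\LL-\LL y=\partial_y$ and $y\LL^2+\LL^2y-2\LL y\LL=V'(y)$ together with an integration by parts in the spatial variable to produce the kernel identity \eqref{cdc?}, inserts this into the explicit conditional marginals \eqref{CL2}--\eqref{CondFM} to obtain the identity \eqref{CL3} for cylinder functions $\prod_j e^{i\xi_j q(x_j)}$, and then replaces the Riemann sums by the integrals against $\psi$ and $\phi$ using \eqref{Wsp moments} --- essentially the ``more elementary alternative'' you sketch at the end, except that the $y$-differentiations are handled by operator algebra rather than by integrating by parts against the density $P$. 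You instead identify the conditional law as the $\LL$-diffusion bridge, realize it via Feynman--Kac as a Brownian bridge reweighted by $Z^{-1}\exp(-\int_a^b V(q)\,dx)$, and differentiate a Cameron--Martin shift in the direction $\phi$ at $\epsilon=0$. Both routes are sound: I checked that your substitution identity is exact (with $H(q):=e^{i\int q\psi}e^{-\int V(q-\epsilon\phi)}$ one has $\E_{\mathbb{P}_{\mathrm{br}}}[H(q+\epsilon\phi)]=\E_{\mathbb{P}_{\mathrm{br}}}[H(q)\exp(-\epsilon\int\phi''q-\tfrac{\epsilon^2}{2}\int(\phi')^2)]$), and your domination argument for differentiating under the expectation is adequate because $V$ is bounded below while the bracket is polynomial in $\sup_{[a,b]}|q|$, which has Gaussian tails under the bridge law. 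The trade-off: the paper's version is self-contained given Proposition~\ref{P:heat} and Lemma~\ref{L:condit} and needs no stochastic calculus, at the cost of the discretization-and-limit step; yours imports two standard external inputs (the Feynman--Kac identification of the bridge law and quasi-invariance of the Brownian bridge under $H^1_0$ shifts) but delivers the identity for all test functions in one stroke and makes its meaning transparent as the integration-by-parts (Schwinger--Dyson) identity for the formal density $Z^{-1}\exp(-\int\tfrac12(q')^2+V(q)\,dx)$.
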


\begin{proof}
Exploiting the operator identities
\begin{align*}
y\LL - \LL y = \tfrac{\partial\ }{\partial y} \qtq{and} y \LL^2 + \LL^2 y - 2 \LL y \LL = V'(y),
\end{align*}
and integrating by parts, we obtain
\begin{align}\label{cdc?}
\int_s^t e^{-(x-s)\LL} & [-\phi''(x) y + \phi(x)V'(y)] e^{-(t-x)\LL} \,dx \\
&= e^{-(t-s)\LL} \bigl[\phi(t)\tfrac{\partial\ }{\partial y}  - \phi'(t) y \bigr]
	- \bigl[\phi(s)\tfrac{\partial\ }{\partial y} - \phi'(s) y \bigr] e^{-(t-s)\LL} \notag.
\end{align}
Note here $y$ and $V'(y)$ denote multiplication operators: $y:f(y)\mapsto yf(y)$.

Combining this with \eqref{CL2} and \eqref{CondFM}, we find
\begin{align}\label{CL3}
& \E\Biggl[  \biggl( \int -\phi''(x) q(x) + \phi(x)V'(q(x)) \,dx \biggr) \cdot \prod_{j=1}^{n} e^{i\xi_j q(x_j)}  \Bigg| q(a),q(b)\Biggr] \\
	{} = {}& \E\biggl[  \biggl( i \sum_{j}\xi_j \phi(x_j)\biggr) \cdot \prod_{j=1}^{n} e^{i\xi_j q(x_j)}  \Bigg| q(a),q(b)\Biggr] \notag
\end{align}
for any choices of $\xi_j\in\R$ and $a=x_0<x_1<\cdots<x_n<x_{n+1}=b$.  To see this, one first breaks the interval $[a,b]$ into pieces $[x_j,x_{j+1}]$ and applies \eqref{cdc?} on each such smaller interval.

Choosing $\xi_j$ wisely we obtain a close variant of the sought-after identity.  The only difference is that all integrals involving $\psi(x)$ are replaced by Riemann sums.  A simple approximation argument using \eqref{Wsp moments} completes the proof.
\end{proof}

\subsection{The KMS condition}\label{SS:KMS}
Let us quickly motivate the KMS condition by considering finite dimensional Hamiltonian mechanics with position and momentum coordinates $\vec  x,\vec p \in\R^n$ and the traditional Poisson structure
\begin{align*}
\{G,F\} = \sum \frac{\partial G}{\partial x_i}\frac{\partial F}{\partial p_i} - \frac{\partial G}{\partial p_i} \frac{\partial F}{\partial x_i} .\end{align*}

For any pair of smooth functions $F,G\in C^\infty(\R^n\times\R^n)$, with $G$ being of compact support, integration by parts shows
\begin{align*}
\int \{G,F\} \,dx\,dp = 0.
\end{align*}
This expresses the divergence-free nature of the Hamiltonian vector field generated by $F$.  Inserting the Gibbs weight $e^{-\beta E}$ into this calculation, we find
\begin{align*}
 0 = \int \{F,G e^{-\beta E}\}  \,dx\,dp = \int \{ F,G \} e^{-\beta E} - \beta \{ F, E\}  G e^{-\beta E} \,dx\,dp 
\end{align*}
This leads us to the \emph{classical KMS condition}  for a measure to be a Gibbs measure for the free energy $E$ at inverse temperature $\beta$:
\begin{align}\label{naive KMS}
\E\bigl[ \{F,G\} \bigr]  = \beta \,\E\bigl[ \{ F, E \} G \bigr] .
\end{align}

In general, the free energy $E$ is not simply the Hamiltonian $H$ but a linear combination of (typically conserved) macroscopic physical quantities.  If $H$ and $E$ do indeed Poisson commute, then the KMS condition guarantees that
\begin{align*}
 \E\bigl[ \{H,G\} \bigr]  = \beta \,\E\bigl[ \{ H, E \} G \bigr] = 0  \qtq{for all} G\in C^\infty_c(\R^n\times\R^n).
\end{align*}
Thus KMS states are dynamical invariant whenever $\{ H, E \}=0$.  

We caution the reader that the preceding calculations took place in a finite-dimensional paradise, devoid of analytic subtleties.  Serious obstacles must be overcome to obtain dynamical invariance for interesting infinite-dimensional systems.

Let us consider what the KMS condition ought to say for \eqref{formal Gibbs} with $\beta=1$.  As testing functions, we employ
\begin{align}\label{FG chars}
F(q) = \exp\biggl( i{\int} q(x) \phi(x) \,dx \biggr)\qtq{and} G(q)=\exp\biggl( i{\int} q(x) \psi(x) \,dx \biggr)
\end{align}
with $\phi$ and $\psi$ belonging to $C^\infty_c(\R)$.  With these choices, 
\begin{align}\label{FG char PBs}
\{ F, G\}(q) &= - F(q) G(q) \int \phi(x)\psi'(x) \,dx,\\
\label{FE char PBs}
\{ F, H_\mKdV - \mu M\}(q)  &=  i F(q) \int \phi(x) \partial_x [-q''(x) + 2q(x)^3 - \mu q(x)] \,dx \\
&=  - i F(q) \int - \phi'''(x) q(x) + \phi'(x) V'(q(x)) \,dx . \notag
\end{align}
Combining these Poisson brackets with \eqref{naive KMS}, we are lead to the following form of KMS condition for \eqref{mkdv}.  It holds with any prescribed boundary conditions and so is stronger condition than the traditional KMS condition.  It arises naturally as a synthesis of the KMS and DLR conditions.
 
\begin{proposition}[Joint KMS/DLR condition] \label{PROP:DLRKMS} 
Fix $L\in 2^\N \cup\{\infty\}$.  The Gibbs distribution defined in Proposition~\ref{PROP:Gibbs} satisfies the following: Given $a <  b < a+2L$ and $\phi,\psi\in C^\infty_c(\R)$ that are supported within $(a,b)$, the functions
$F$ and $G$ defined in \eqref{FG chars} satisfy
\begin{align}\label{E:DLRKMS}
\E\Bigl[ - \langle &\phi,\psi'\rangle F(q) G(q)  \Big| q(a),q(b)\Bigr] \\
	&= \E\biggl[ \biggl( -i \int -\phi'''(x) q(x) + \phi'(x) V'(q(x)) \,dx \biggr)\cdot F(q) G(q) \bigg| q(a),q(b)\biggr] \notag
\end{align}
\end{proposition}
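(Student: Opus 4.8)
The plan is to obtain \eqref{E:DLRKMS} as a direct corollary of the integration-by-parts identity already recorded in Lemma~\ref{L:IBP}, which encapsulates all the genuine analytic work: the operator commutation relations for $\LL$, the interval-by-interval spatial integration by parts \eqref{cdc?}, and the passage from Riemann sums to integrals justified by the moment bound \eqref{Wsp moments}. As a preliminary sanity check, I would note that \eqref{E:DLRKMS} is precisely the identity that the formal KMS computation predicts: inserting \eqref{FG char PBs} and \eqref{FE char PBs} into \eqref{naive KMS} with $\beta=1$ and conditioning on $q(a),q(b)$ formally yields \eqref{E:DLRKMS}. The point is thus to make this rigorous, and the observation that does it is that both the extra spatial derivative (so $\phi'''$ rather than the $\phi''$ of Lemma~\ref{L:IBP}) and the product $F(q)G(q)$ in place of a single character can be generated simply by feeding the right test functions into Lemma~\ref{L:IBP}.

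Concretely, I would apply Lemma~\ref{L:IBP} with its two test functions taken to be $\phi'$ and $\phi+\psi$; since $\phi,\psi\in C_c^\infty$ are supported within $(a,b)$, so are $\phi'$ and $\phi+\psi$, so the hypotheses are met. The exponential produced, $\exp\bigl(i\int q(x)(\phi+\psi)(x)\,dx\bigr)$, is exactly $F(q)G(q)$, and the right-hand side of Lemma~\ref{L:IBP} becomes $\E\bigl[\bigl(\int -\phi'''(x)q(x)+\phi'(x)V'(q(x))\,dx\bigr)F(q)G(q)\mid q(a),q(b)\bigr]$, matching the right side of \eqref{E:DLRKMS} up to a factor of $-i$. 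On the left, Lemma~\ref{L:IBP} outputs the prefactor $i\int\phi'(x)(\phi+\psi)(x)\,dx$; here I would use $\int\phi'\phi\,dx=\tfrac12\int(\phi^2)'\,dx=0$ together with one integration by parts, $\int\phi'\psi\,dx=-\int\phi\psi'\,dx=-\langle\phi,\psi'\rangle$, so this prefactor collapses to the deterministic constant $-i\langle\phi,\psi'\rangle$. Thus Lemma~\ref{L:IBP} reads $-i\langle\phi,\psi'\rangle\,\E[F(q)G(q)\mid q(a),q(b)]=\E\bigl[\bigl(\int -\phi'''q+\phi'V'(q)\,dx\bigr)F(q)G(q)\mid q(a),q(b)\bigr]$; multiplying through by $-i$ and pulling the deterministic constants through the conditional expectation produces exactly \eqref{E:DLRKMS}.

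Given that Lemma~\ref{L:IBP} is in hand, there is essentially no remaining obstacle: the only care needed is bookkeeping the powers of $i$ and the signs from the two integrations by parts, plus the trivial remark that the constants $\langle\phi,\psi'\rangle$ and $i$ commute with conditioning on $q(a),q(b)$. If one instead wanted a self-contained proof, the natural alternative would be to rerun the argument of Lemma~\ref{L:IBP} verbatim but starting from the operator identities adapted to the free-energy generator appearing in \eqref{FE char PBs} rather than to the mass; this introduces one additional spatial derivative and hence the term $\phi'''$, after which one concludes as before by the Riemann-sum approximation via \eqref{Wsp moments}. The substitution route above simply avoids repeating that computation.
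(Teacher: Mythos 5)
Your proposal is correct and is exactly the paper's proof: apply Lemma~\ref{L:IBP} with the test functions $\phi'$ and $\phi+\psi$, use $\int\phi'\phi\,dx=0$ and one integration by parts to reduce the prefactor to $-i\langle\phi,\psi'\rangle$, and multiply through by $-i$. The sign and factor-of-$i$ bookkeeping checks out.
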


\begin{proof}
One simply applies Lemma~\ref{L:IBP} using the functions $\phi'$ and $\phi+\psi$.  The resulting identity reduces to that written above because $\int \phi' \phi\,dx = 0$.
\end{proof}

Taking the (unconditional) expectation of both sides of \eqref{E:DLRKMS} shows that our Gibbs distributions satisfy the traditional KMS condition.  

The specific test functions $F(q)$ and $G(q)$ permitted  in Proposition~\ref{PROP:DLRKMS} are not well suited to proving dynamical invariance of these Gibbs measures.  Our next result, Theorem~\ref{T:KMS}, addresses this.  We will restrict attention to the finite volume setting because that is what we need.  

Given a function $F: L^2(\T_L)\to \R$ and a frequency cutoff $N\in 2^\N$, we define a corresponding frequency localized version
\begin{align}\label{F_N on L^2}
F_N(q) := F(q_{\leq N}), \qtq{for which} \tfrac{\delta F_N}{\delta q} = P_{\leq N} \tfrac{\delta F}{\delta q}(q_{\leq N}).
\end{align}
Notice that if $F$ is $C^1$ and satisfies 
\begin{align}\label{C^1 on L^2}
\sup_q \Bigl[ |F(q)| + \bigl\| \tfrac{\delta F}{\delta q} \bigr\|_{L^2} \Bigr] < \infty,
\end{align}
then $\tfrac{\delta F_N}{\delta q}$ is uniformly bounded in $H^{100}(\T_L)$.  In particular, for any two functions $F$ and $G$ satisfying \eqref{C^1 on L^2}, the Poisson bracket of $F_N$ and $G_N$ is well defined.

\begin{theorem}[KMS condition]\label{T:KMS} 
Fix $L\in 2^\N$ and $N\in 2^\N$.  For any pair of $C^1$ functions $F,G: L^2(\T_L)\to \R$ satisfying \eqref{C^1 on L^2},
\begin{align}\label{E:C^1 KMS}
\E\Bigl[ \{ F_N, G_N\}  \Bigr] = \E\Bigl[  - \Bigl\langle \partial_x \tfrac{\delta F_N}{\delta q},\; -q''+V'(q) \Bigr\rangle G_N  \Bigr]  .
\end{align}
\end{theorem}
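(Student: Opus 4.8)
The plan is to reduce the claimed identity \eqref{E:C^1 KMS} to the already-established joint KMS/DLR condition of Proposition~\ref{PROP:DLRKMS} by approximating general $C^1$ observables satisfying \eqref{C^1 on L^2} by linear combinations of the characters $F(q)=\exp(i\langle q,\phi\rangle)$, and by exhausting the frequency-localized setting through test functions of the form $\phi\in C^\infty_c$. Concretely, first I would note that because $F_N$ depends on $q$ only through $q_{\leq N}$, and $q_{\leq N}$ is a smooth function determined by finitely many Fourier modes, both sides of \eqref{E:C^1 KMS} are in fact expressible as integrals against the joint law of a finite vector of Fourier coefficients of $q_L$. The left side $\E[\{F_N,G_N\}]$ expands, using the Poisson structure \eqref{PB} and \eqref{F_N on L^2}, as $\E\big[\langle \tfrac{\delta F_N}{\delta q},\partial_x \tfrac{\delta G_N}{\delta q}\rangle\big]$, where the functional derivatives are smooth (indeed band-limited, in $H^{100}$) and uniformly bounded; the right side is $\E\big[-\langle\partial_x\tfrac{\delta F_N}{\delta q},\,-q''+V'(q)\rangle G_N\big]$, which must be interpreted distributionally in $q''$ but makes sense after integration by parts since $\tfrac{\delta F_N}{\delta q}$ is smooth.

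The core step is to establish \eqref{E:C^1 KMS} first for the characters \eqref{FG chars} with $\phi,\psi\in C^\infty_c(\T_L)$, where now the frequency cutoff $N$ is irrelevant provided $\phi,\psi$ are replaced by $P_{\leq N}\phi$, $P_{\leq N}\psi$ (note $F_N(q)=\exp(i\langle q_{\leq N},\phi\rangle)=\exp(i\langle q,P_{\leq N}\phi\rangle)=F(q)$ with $\phi$ replaced by its projection). For such test functions, $\tfrac{\delta F_N}{\delta q}=iF_N P_{\leq N}\phi$ and the two Poisson brackets \eqref{FG char PBs}, \eqref{FE char PBs} apply verbatim on the torus, so \eqref{E:C^1 KMS} becomes exactly the unconditional expectation of \eqref{E:DLRKMS} after replacing $(\phi,\psi)$ by $(P_{\leq N}\phi,P_{\leq N}\psi)$; this latter is legitimate because on $\T_L$ every band-limited function is a smooth periodic function, and Proposition~\ref{PROP:DLRKMS} in the case $L\in 2^\N$, $a<b<a+2L$ with $b-a$ close to $2L$ (so $(a,b)$ exhausts $\T_L$ up to a single point) yields the conditional identity, whose expectation is the traditional KMS identity. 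Taking real and imaginary parts and forming real linear combinations $\sum c_k \exp(i\langle q,\phi_k\rangle)$ then gives \eqref{E:C^1 KMS} for a dense class of observables.

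The final step is the approximation argument passing from characters to general $F,G$ satisfying \eqref{C^1 on L^2}. Since $F_N$ and $G_N$ factor through the finite-dimensional vector $Q_N$ of Fourier modes $|\xi|\le N$ of $q_L$, we are dealing with genuinely finite-dimensional integrals: $\E[\{F_N,G_N\}]$ and the right-hand side of \eqref{E:C^1 KMS} are continuous functionals of $(F\circ \iota_N, G\circ \iota_N)\in C^1_b$ on that finite-dimensional space, where $\iota_N$ is the map sending a vector of modes to the band-limited function. One approximates a general such $C^1_b$ function, together with its gradient, uniformly on compacta by trigonometric polynomials in $Q_N$—equivalently by finite linear combinations of characters $\exp(i\langle q,\phi_k\rangle)$ with band-limited $\phi_k$—and uses the Gaussian-type tail bounds from Proposition~\ref{PROP:Gibbs} (specifically \eqref{LD one} and the finite-dimensionality, which give rapid decay of the law of $Q_N$) to control the contribution from the region where $|Q_N|$ is large. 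The integrability of $-q''+V'(q)$ against $G_N$ (with $q''$ handled via the integration by parts that moves the derivative onto $\partial_x\tfrac{\delta F_N}{\delta q}$, leaving only $\langle \partial_x^2 \tfrac{\delta F_N}{\delta q}, q\rangle$ plus the $V'(q)$ term, both of which have finite moments by \eqref{Wsp moments} and \eqref{log13}) ensures the limit passes. I expect the main obstacle to be exactly this interchange of limit and expectation: one must verify uniform integrability of the integrands along the approximating sequence, which is where the explicit large-deviation bounds of Proposition~\ref{PROP:Gibbs}—including polynomial growth of moments of $\|q\|_{L^\infty(I)}$ and of $\langle x\rangle^{-b}q$ in Sobolev spaces—do the essential work, and where one must be careful that the quartic growth of $V'$ is matched by the $e^{-c\lambda^3}$ tails of $q$ so that $\E[V'(q)\,G_N]$ and its approximants are controlled uniformly.
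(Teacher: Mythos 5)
Your overall strategy matches the paper's: establish the identity for characters $e^{i\langle q,\phi\rangle}$, then pass to general $C^1$ observables of finitely many Fourier modes by trigonometric (Fej\'er-type) approximation with uniform bounds on the function and its gradient, using the tail estimates of Proposition~\ref{PROP:Gibbs} to justify the interchange of limits. The final approximation step you describe is essentially the paper's.

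However, there is a genuine gap at the crucial first step. Proposition~\ref{PROP:DLRKMS} applies only to $\phi,\psi$ \emph{supported within} $(a,b)$ with $a<b<a+2L$, i.e.\ vanishing on a nontrivial complementary arc of $\T_L$. You propose to take $b-a$ close to $2L$ and then apply the proposition ``verbatim'' to $P_{\leq N}\phi$, $P_{\leq N}\psi$. This does not work: a nonzero trigonometric polynomial is real-analytic and therefore cannot vanish on any arc of positive length, so it is never an admissible test function for Proposition~\ref{PROP:DLRKMS}, no matter how large the arc $(a,b)$. Nor can one simply approximate a general $\phi$ by functions supported in $(a,b)$: the identity contains the term $\int\phi''' q\,dx$, and the third derivatives of such approximants blow up near the endpoint, so the limit cannot be taken (the paper explicitly flags this obstruction; only the restriction on $\psi$ can be removed by approximation). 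The missing idea is the paper's decomposition trick: write $\phi=\phi_1+\phi_2$ with each summand vanishing on some arc, and apply the conditional identity not to $F$ and $G$ themselves but to each summand of
\begin{align*}
F(q)G(q) = e^{i\int\phi_1 q}\cdot e^{i\int(\psi+\phi_2)q} + e^{i\int\phi_2 q}\cdot e^{i\int(\psi+\phi_1)q},
\end{align*}
exploiting that only the first factor's exponent enters the $\phi'''$ and $\phi'V'(q)$ terms. Without this (or an equivalent device), the reduction of \eqref{E:C^1 KMS} to Proposition~\ref{PROP:DLRKMS} for band-limited test functions is not established.
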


\begin{proof}
At this moment, we only know that 
\begin{align}\label{E:DLRKMS'}
\E\Bigl[ \langle\phi,\psi'\rangle F(q) G(q)  \Bigr] 
	&= \E\biggl[ i F(q) G(q) \int -\phi'''(x) q(x) + \phi'(x) V'(q(x)) \,dx \biggr] 
\end{align}
when $F$ and $G$ take the form \eqref{FG chars} with $\phi,\psi\in C^\infty(\T_L)$ both vanishing in the same small arc.  The arc in question contains the conditioning points $a$ and $b$ employed in Proposition~\ref{PROP:DLRKMS}.

The restriction that $\psi$ vanish is easily removed by approximation employing \eqref{Wsp moments}.  Such an argument does not extend easily to $\phi$ due to the presence of terms such as $\int \phi''' q \,dx$.

Here is the simple remedy:  First decompose $\phi=\phi_1+\phi_2$ such that each summand vanishes on some arc.  Now use \eqref{E:DLRKMS'} not with the original $F$ and $G$ but rather on each summand in the representation 
$$
F(q) G(q) = e^{i\int\phi_1 q} \cdot e^{i\int(\psi+\phi_2) q} + e^{i\int\phi_2 q} \cdot e^{i\int(\psi+\phi_1) q} .
$$

In this way, we deduce that \eqref{E:DLRKMS'} and so also \eqref{E:C^1 KMS} hold when $F$ and $G$ take the form \eqref{FG chars} with general $\phi,\psi\in C^\infty(\T_L)$.  Specializing to $\phi$ and $\psi$ being trigonometric polynomials, it follows that \eqref{E:C^1 KMS} holds whenever $F$ and $G$ are complex exponentials built from the Fourier coefficients of $q_{\leq N}$.

On the other hand,  $F_N=F(q_{\leq N})$ is ultimately a $C^1$ function of (finitely many) Fourier coefficients of $q_N$.  Thus it may be approximated by finite sums of complex exponentials of the form just described.  In fact, given any $C^1$ function $f:\R^d\to\R$ for which $f$ and $\nabla f$ are uniformly bounded, there is a sequence of trigonometric polynomials $f_n$ with the properties that
$$
\sup_{n,x} |f_n(x)| + |\nabla f_n(x)| <\infty \qtq{and} \bigl|f(x)-f_n(x)\bigr| + \bigl|\nabla f(x)-\nabla f_n(x)\bigr| \longrightarrow 0 
$$
uniformly on compact sets.  This is readily shown, for example, by truncation, periodization, and Fej\'er summation.  This quality of approximation suffices to deduce \eqref{E:C^1 KMS} in full generality.
\end{proof}

\subsection{Coupling Gibbs measures}\label{ss2.2}

It is not difficult to verify that finite-volume Gibbs distributions converge to that in infinite volume as $L\to\infty$.  Once one has convergence in distribution, the general Skorohod coupling allows one to realize all these Gibbs processes on a single probability space in such a way that samples from the finite-volume Gibbs laws converge almost surely to those of the infinite volume Gibbs state.  (Here convergence means uniformly on compact sets.)   Such a coupling is evidently necessary if we wish to transfer dynamical statements from finite to infinite volume.

Unfortunately, such generalities do not suffice to overcome the strong transportation present in the flows we study.  We need the following very strong quantitative version:

\begin{proposition}\label{P:coupled q_L}
There is a single probability space supporting Gibbs distributed random processes $q_L$ for all $L\in 2^\N\cup\{\infty\}$ with the following additional property: 
\begin{align}\label{E:coupled q_L}
\PP\Bigl( q_L(x) \not\equiv q_\infty(x) \text{ on the interval } \bigl[-\tfrac L2,\tfrac L2 \bigr] \Bigr) \lesssim e^{-c L}
\end{align}
uniformly in $L$ for some fixed $c>0$.
\end{proposition}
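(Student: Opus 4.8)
The plan is to build the coupling by exploiting the Markov property of the Gibbs processes together with the conditional-independence structure recorded in Lemma~\ref{L:condit}, and then to quantify the decoupling using the exponential clustering of the transfer operator $e^{-t\LL}$ (its spectral gap $\lambda_1 > 0$), which is precisely what powers the Hilbert--Schmidt bound \eqref{HS heat}. The key observation is that, conditioned on the value of $q$ at a \emph{single} point $x_0$, the restriction of $q_\infty$ to $[x_0,\infty)$ is conditionally independent of its restriction to $(-\infty,x_0]$, and the conditional law of $q_\infty$ on $[x_0,\infty)$ is a time-inhomogeneous Markov process started at $q_\infty(x_0)$ with transition kernel $e^{-(x-x')\LL}(y,y')$ and a ground-state tilt only in the far limit. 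For the finite-volume process $q_L$, conditioning on $q_L(x_0)$ similarly makes the two arcs joining $x_0$ to itself conditionally independent. So if I condition \emph{both} $q_\infty$ and $q_L$ on having the \emph{same} value at the two points $x=\pm L/2$ (say realised by a common random variable), then what remains is to couple, on the arc $[-L/2,L/2]$, two Markov bridges: the $q_\infty$-bridge from $(−L/2,y_-)$ to $(L/2,y_+)$ with kernel built from $e^{-t\LL}(\cdot,\cdot)$, and the $q_L$-bridge between the same endpoints but using the periodic partition function. By \eqref{CondFM}–\eqref{CL2} these two conditional laws on $[-L/2,L/2]$ are given by the \emph{same} formula $P(\cdots)$ --- they are literally identical! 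Hence on the interval $[-L/2,L/2]$ we can take $q_L \equiv q_\infty$ \emph{exactly}, provided we succeed in coupling the boundary values $(q_\infty(\pm L/2))$ with $(q_L(\pm L/2))$.

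So the problem reduces to a two-dimensional coupling: realise the pair $\bigl(q_\infty(-L/2),q_\infty(L/2)\bigr)$ and the pair $\bigl(q_L(-L/2),q_L(L/2)\bigr)$ on a common space so that they agree with probability $\ge 1-Ce^{-cL}$; on the event of agreement we simply transport the (identical) conditional law on $[-L/2,L/2]$ and the (different, but irrelevant for \eqref{E:coupled q_L}) conditional laws on the exterior. Thus the second step is to estimate the total variation distance between the law of $\bigl(q_\infty(-L/2),q_\infty(L/2)\bigr)$ and that of $\bigl(q_L(-L/2),q_L(L/2)\bigr)$. From Proposition~\ref{PROP:Gibbs}, the joint density of $\bigl(q_\infty(-L/2),q_\infty(L/2)\bigr)$ is $\psi_0(y_-)\,e^{-L\LL}(y_-,y_+)\,\psi_0(y_+)$, while that of $\bigl(q_L(-L/2),q_L(L/2)\bigr)$ is $e^{-L\LL}(y_-,y_+)\,e^{-L\LL}(y_+,y_-) / \tr\{e^{-2L\LL}\}$ (using $|(-L/2)+L| = L/2$... more precisely, the $q_L$ two-point density at the antipodal points $\pm L/2$ is $e^{-L\LL}(y_-,y_+)e^{-L\LL}(y_+,y_-)/\tr\{e^{-2L\LL}\}$, since each of the two arcs joining them has length $L$). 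The spectral decomposition $e^{-L\LL}(y,y')=\psi_0(y)\psi_0(y')+\sum_{n\ge1}e^{-\lambda_n L}\psi_n(y)\psi_n(y')$ shows that each of these densities differs from $\psi_0(y_-)^2\psi_0(y_+)^2$... no: from $\psi_0(y_-)\psi_0(y_+)\cdot\psi_0(y_-)\psi_0(y_+)$, i.e. from the product density $\psi_0(y_-)^2\psi_0(y_+)^2$ of two \emph{independent} ground-state samples, by a remainder controlled in $L^1_{y_-,y_+}$ by $Ce^{-\lambda_1 L}$, using \eqref{HS heat}, \eqref{Carmona}, and Cauchy--Schwarz (the $\psi_0$ factors supply the needed $L^2$-integrability to turn the Hilbert--Schmidt bound into an $L^1$ bound; the $\tr\{e^{-2L\LL}\} = 1 + O(e^{-2\lambda_1 L})$ normalisation is harmless). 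By the triangle inequality the two two-point laws are therefore within $Ce^{-\lambda_1 L}$ in total variation, and the optimal (maximal) coupling of these two laws gives the desired bound with $c = \lambda_1$ (or any $c<\lambda_1$, matching the $\eps$ of the excerpt). To assemble the full statement for \emph{all} $L\in 2^\N$ simultaneously on one space, one couples the boundary pairs consistently --- e.g. realise $q_\infty$ first, then for each $L$ use the maximal coupling of the $q_L$-boundary-pair law to the $q_\infty$-boundary-pair law conditionally, and on the (large-probability) agreement event glue in the shared interior bridge and an independent exterior; a Borel--Cantelli / summability remark over $L\in 2^\N$ handles any joint statement one wants, though \eqref{E:coupled q_L} as stated is just the per-$L$ bound.

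I expect the main obstacle to be the bookkeeping in step two: showing cleanly that both the finite- and infinite-volume two-point densities at $\pm L/2$ are $e^{-\lambda_1 L}$-close \emph{in $L^1$} (not merely $L^2$ or pointwise) to the \emph{same} reference density $\psi_0^2\otimes\psi_0^2$. The Hilbert--Schmidt bound \eqref{HS heat} is an $L^2_{y,y'}$ statement, so converting it to an $L^1$ statement requires multiplying by the integrable weight $\psi_0(y)\psi_0(y')$ and invoking \eqref{Carmona} to see $\psi_0\in L^2$; for the periodic density one must also handle the product $e^{-L\LL}(y_-,y_+)e^{-L\LL}(y_+,y_-)$, expanding both factors spectrally and checking that every cross term except $\psi_0(y_-)\psi_0(y_+)\cdot\psi_0(y_+)\psi_0(y_-)$ carries at least one factor $e^{-\lambda_1 L}$ and is absolutely summable in $n$ after integration (again via \eqref{HS heat}-type Schur/Hilbert--Schmidt control and the uniform decay from comparison with the harmonic oscillator). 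None of this is deep, but it is the only place where genuine estimation, as opposed to structural manipulation, occurs; everything else is the conditional-independence algebra of Lemma~\ref{L:condit} plus the elementary fact that a maximal coupling of two laws at TV-distance $\delta$ agrees with probability $1-\delta$.
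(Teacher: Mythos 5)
Your proposal is correct and follows essentially the same route as the paper: reduce to coupling the two boundary pairs $\bigl(q_\infty(\pm L/2)\bigr)$ and $\bigl(q_L(\pm L/2)\bigr)$, exploit the fact (Lemma~\ref{L:condit}) that the conditional bridge laws on $[-L/2,L/2]$ given the endpoints are identical so that the interior can be transported exactly on the agreement event, bound the total-variation distance of the two-point marginals by $e^{-\lambda_1 L}$ via the Hilbert--Schmidt spectral-gap bound \eqref{HS heat} together with Cauchy--Schwarz and the $\psi_0$ weights, and assemble the realizations for all $L$ by conditional extension (the paper invokes Disintegration plus Ionescu--Tulcea for this last step). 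The only cosmetic difference is that you compare both two-point densities to the reference $\psi_0^2\otimes\psi_0^2$ and use the triangle inequality, whereas the paper's Lemma~\ref{L:XY coup} factors the difference directly; the estimates are the same.
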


While we do not truly need an exponential rate of convergence, no polynomial rate would be fast enough; see, for example, the proof of Lemma~\ref{L:1603}.  We begin with a well-known coupling technique:

\begin{lemma}\label{R2 couple}
Let $f_X,f_Y \in L^1(\R^2)$ be probability density functions and suppose that $X:\Omega\to\R^2$ is a random variable with law $f_X$.  Then there is a product probability space $\Omega\times\Omega'$ endowed with product measure and a random variable $Y:\Omega\times\Omega'\to \R^2$ with law $f_Y$ so that
\begin{align}\label{almost same}
\PP ( X\neq Y) =  \tfrac12 \| f_X -f_Y \|_{L^1}  
\end{align}
and so that $Y$ is conditionally independent of every $Z:\Omega\to\R$ given $X$.
\end{lemma}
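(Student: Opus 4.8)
The plan is to realize the classical \emph{maximal coupling} (the ``$\gamma$-coupling'') of $f_X$ and $f_Y$, and then to observe that the conditional independence is automatic, because $Y$ is manufactured as a deterministic Borel function of $X$ together with fresh randomness drawn from a factor independent of $\Omega$.

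First I would set $m := \min(f_X, f_Y)$ (pointwise) and $p := \int_{\R^2} m\,dx$. Since $|f_X - f_Y| = f_X + f_Y - 2m$, we have $p = 1 - \tfrac12\|f_X - f_Y\|_{L^1} \in [0,1]$, with $p = 1$ precisely when $f_X = f_Y$ a.e.; in that degenerate case take $\Omega'$ trivial and $Y := X$, so assume $p < 1$ henceforth. Write $f_X = m + (f_X - m)$, $f_Y = m + (f_Y - m)$, and note $g_Y := (f_Y - m)/(1-p)$ is a probability density on $\R^2$. For the extra randomness, let $\Omega' := [0,1]^2$ with Lebesgue measure and coordinates $(U, V)$, and put $W := T(V)$, where $T\colon[0,1]\to\R^2$ is any Borel map pushing Lebesgue measure forward to $g_Y\,dx$ (such a map exists since every Borel probability on $\R^2$ is the law of a function of a uniform variable). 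On the product space $\Omega\times\Omega'$ with product measure, $U$ is uniform on $[0,1]$, $W$ has law $g_Y\,dx$, and the pair $(U,W)$ is independent of the entire factor $\Omega$, in particular of $X$ and of any $Z\colon\Omega\to\R$. Now define
\[
Y(\omega,\omega') := \begin{cases} X(\omega), & U(\omega') \le m\bigl(X(\omega)\bigr)\big/ f_X\bigl(X(\omega)\bigr),\\ W(\omega'), & \text{otherwise},\end{cases}
\]
which is meaningful because $f_X(X) > 0$ almost surely.

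Then I would check the three assertions by direct computation. For the law of $Y$: for bounded Borel $h$, conditioning on the value of $X$ gives $\E[h(Y)] = \int m(x)h(x)\,dx + (1-p)\,\E[h(W)] = \int m h + \int h\,(f_Y - m) = \int h\, f_Y$, so $Y$ has density $f_Y$. For the coupling identity: on the second branch $Y = W$ has a density and is independent of $X$, hence contributes nothing to $\PP(X = Y)$, so $\PP(X = Y) = \int m\,dx = p$ and therefore $\PP(X\neq Y) = 1 - p = \tfrac12\|f_X - f_Y\|_{L^1}$. For the conditional independence: $Y = \Phi(X, U, W)$ for a fixed Borel $\Phi$, and $(U,W)$ is independent of $\Omega$; thus for any $Z\colon\Omega\to\R$ the conditional law of $(Z, U, W)$ given $X = x$ is the product of the conditional law of $Z$ given $X = x$ with the law of $(U,W)$, and Fubini yields $\E[g(Y)\,h(Z)\mid X] = \E[g(Y)\mid X]\,\E[h(Z)\mid X]$ for all bounded Borel $g, h$, i.e.\ $Y\perp Z$ given $X$.

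The steps that need a little care — but none that is serious — are the degenerate cases $p\in\{0,1\}$ (only $p=1$ requires the separate treatment noted above; $p=0$ is subsumed by the general construction), the Borel measurability of $T$ and hence of $Y$, and the almost-sure positivity of $f_X$ along $X$, which is what legitimizes dividing by $f_X(X)$. I do not expect any of these to present a real obstacle.
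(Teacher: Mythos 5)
Your proposal is correct and is essentially the paper's own proof: the same maximal coupling with the same acceptance condition $U f_X(X)\le f_X(X)\wedge f_Y(X)$, the same residual density $(f_Y - f_X\wedge f_Y)/(1-p)$ for $W$, and the same verification of the law, the coupling identity, and the conditional independence. The only differences are presentational (you realize $\Omega'$ explicitly as $[0,1]^2$ via a pushforward map and spell out the conditional-independence computation, which the paper leaves implicit).
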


\begin{proof}
If RHS\eqref{almost same} vanishes then we may take $Y\equiv X$.  If $f_X$ and $f_Y$ are unequal, then we augment the probability space to include random variables $U$ and $W$ that are statistically independent of one another and of any random variable defined over $\Omega$.  We choose $U$ to be uniformly distributed over $[0,1]$ and $W$ to be $\R^2$-valued with probability density
$$
f_W(x) = 2  \|f_X-f_Y\|_{L^1}^{-1} \bigl[ f_Y(x) - f_X(x) \wedge f_Y(x) \bigr],
$$
where the wedge symbol denotes minimum.  We then define
$$
Y = \begin{cases} X &: U \cdot f_X(X) \leq f_X(X)\wedge f_Y(X) \\ W &: \text{otherwise.}\end{cases}
$$
The way that $U$ and $W$ are defined ensures that $Y$ is conditionally independent of every $Z:\Omega\to\R$ given $X$.

As $X$ and $W$ are independent and have absolutely continuous laws,
\begin{equation*}
\PP \bigl( X = Y\bigr) = \PP \bigl( U \cdot f_X(X) \leq f_X(X)\wedge f_Y(X) \bigr)  =  \int f_X(x) \wedge f_Y(x)\,dx
\end{equation*}
from which we deduce that
\begin{equation*}
\PP \bigl( X \neq Y\bigr) = \tfrac12 \int \bigl [ f_X(x) + f_Y(x) - 2 f_X(x) \wedge f_Y(x)\bigr]\,dx = \tfrac12 \| f_X -f_Y \|_{L^1}.
\end{equation*}

Lastly, dividing the expectation according to whether or not $X=Y$, we have
$$
\E\{ F(Y) \} = \int F(x) f_X(x)\wedge f_Y(x)\,dx + \int F(x) \bigl[ f_Y(x) - f_X(x) \wedge f_Y(x) \bigr]\,dx
$$
for any $F\in L^\infty(\R^2)$ and so $Y$ does indeed have law $f_Y$.
\end{proof}

Evidently, this argument would apply equally well to random vectors in $\R^d$ for any finite $d$.  However, we are specifically interested in the two-dimensional case because we wish to couple
\begin{align}\label{XY at L}
X_L = \begin{bmatrix} q_\infty(+L/2) \\ q_\infty(-L/2)\end{bmatrix} \qtq{and} Y_L =  \begin{bmatrix} \tilde q_L(+L/2) \\ \tilde q_L(-L/2)\end{bmatrix},
\end{align}
where $\tilde q_L$ and $q_\infty$ are Gibbs distributed and $L \in 2^\N$.  Although $\tilde q_L$ introduced here and $q_L$ appearing in Proposition~\ref{P:coupled q_L} have the same law, it will be notationally convenient for what follows to distinguish the two. 

\begin{lemma}\label{L:XY coup}
Given $L\in 2^\N$, let us write $f_{X_L}$ and $f_{Y_L}$ for the probability density functions of $X_L$ and $Y_L$ defined in \eqref{XY at L}.  Then 
\begin{align}\label{E:XY coup}
\| f_{X_L} - f_{Y_L} \|_{L^1(\R^2)} \lesssim e^{-\eps L},
\end{align}
for a universal $\eps>0$ (which may be chosen as in Proposition~\ref{P:heat}).
\end{lemma}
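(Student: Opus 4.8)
The plan is to write down the two joint densities explicitly from the finite-marginal formulas of Proposition~\ref{PROP:Gibbs} and to observe that their difference is controlled entirely by the Hilbert--Schmidt bound~\eqref{HS heat}. Set $K(a,b):=e^{-L\LL}(a,b)$ and $Z:=\tr\{e^{-2L\LL}\}$; the semigroup property and the symmetry of the heat kernel give $Z=\|K\|_{L^2(\R^2)}^2$, while the $n=0$ term of $Z=\sum_{n\ge0}e^{-2\lambda_n L}$ shows $Z\ge1$. Taking $n=2$ and $x_1=-\tfrac L2<x_2=\tfrac L2$ in the two displayed formulas of Proposition~\ref{PROP:Gibbs}, and using the cyclicity of the trace to rewrite $e^{-(L/2)\LL}f_1e^{-L\LL}f_2e^{-(L/2)\LL}$ as $e^{-L\LL}f_1e^{-L\LL}f_2$ in the finite-volume case, one reads off (after relabeling the two sample points, which is harmless since both expressions are symmetric) that the densities of $X_L$ and $Y_L$ from~\eqref{XY at L} are
\begin{align*}
f_{X_L}(a,b)=\psi_0(a)\,\psi_0(b)\,K(a,b) \qtq{and} f_{Y_L}(a,b)=\tfrac1Z\,K(a,b)^2 ;
\end{align*}
that $f_{Y_L}$ integrates to one is exactly the identity $Z=\|K\|_{L^2}^2$.

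Next I would pull the non-negative factor $K$ out of the difference. Writing $P(a,b):=\psi_0(a)\psi_0(b)$ and using $Z\ge1$ together with $|PZ-K|\le P(Z-1)+|K-P|$ and $P\ge0$,
\begin{align*}
\|f_{X_L}-f_{Y_L}\|_{L^1} = \frac1Z\iint K\,\bigl|PZ-K\bigr|\,da\,db \le (Z-1)\iint KP\,da\,db + \iint K\,|K-P|\,da\,db .
\end{align*}
For the first term, $\iint KP = \langle\psi_0,e^{-L\LL}\psi_0\rangle = 1$ since $\lambda_0=0$ and $\|\psi_0\|_{L^2}=1$; for the second, Cauchy--Schwarz gives $\iint K|K-P|\le\|K\|_{L^2}\,\|K-P\|_{L^2}=\sqrt Z\,\|K-P\|_{L^2}$. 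Thus $\|f_{X_L}-f_{Y_L}\|_{L^1}\le (Z-1)+\sqrt Z\,\|K-P\|_{L^2}$, and it remains only to estimate $Z-1$, $\sqrt Z$, and $\|K-P\|_{L^2}$.

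All three are immediate consequences of Proposition~\ref{P:heat}. The bound~\eqref{HS heat} at $t=L$ gives $\|K-P\|_{L^2(\R^2)}^2\lesssim(1+\tfrac1L)e^{-2\eps L}$; moreover, expanding $\|K-P\|_{L^2}^2=\|K\|_{L^2}^2-2\langle\psi_0,e^{-L\LL}\psi_0\rangle+\|\psi_0\|_{L^2}^4$ and using $\lambda_0=0$ shows $Z-1=\|K-P\|_{L^2}^2$. Hence, for $L\ge1$, we get $Z-1\lesssim e^{-2\eps L}$, so $Z$ is bounded uniformly and $\sqrt Z\lesssim1$, while $\|K-P\|_{L^2}\lesssim e^{-\eps L}$; the remaining small values of $L$ are irrelevant since~\eqref{E:XY coup} is an estimate for large $L$. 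Plugging these in yields $\|f_{X_L}-f_{Y_L}\|_{L^1}\lesssim e^{-2\eps L}+e^{-\eps L}\lesssim e^{-\eps L}$, which is~\eqref{E:XY coup} with the $\eps$ of Proposition~\ref{P:heat}. I do not expect a genuine obstacle here: once the two densities are identified, the estimate is a single application of Cauchy--Schwarz fed by~\eqref{HS heat}. The only points requiring care are the bookkeeping of which sample point corresponds to which argument of the density (so that the heat-kernel symmetry may be invoked), the cyclic-trace simplification of the finite-volume density, and the trivial but essential observation $Z\ge1$, which neutralizes the partition function.
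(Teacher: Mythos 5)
Your proof is correct and follows essentially the same route as the paper: identify the two densities as $\psi_0(a)\psi_0(b)K(a,b)$ and $Z^{-1}K(a,b)^2$, split the difference into a $(Z-1)$ piece and a $K|K-P|$ piece, and control both via Cauchy--Schwarz and the Hilbert--Schmidt bound \eqref{HS heat}. The only cosmetic difference is that you evaluate $Z-1=\|K-P\|_{L^2}^2$ exactly, where the paper bounds it by $\iint K|K-P|$; both land on the same estimate.
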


\begin{proof}
From the definition of the Gibbs processes, 
$$
[f_{Y_L} - f_{X_L}](y,y')=\Bigl[ \tfrac{1}{Z_L} e^{-L \LL}(y,y') - \psi_0(y)\psi_0(y') \Bigr]e^{-L \LL}(y',y).
$$
Here $Z_L = \tr( e^{-2L \LL})$ while $\psi_0$ is $L^2$-normalized and satisfies $\LL \psi_0 = 0$.

From Proposition~\ref{P:heat} and Cauchy--Schwartz, we have
\begin{align}\label{coup1}
\iint  e^{-L\LL}(y',y) \bigl| e^{-L\LL}(y,y') - \psi_0(y)\psi_0(y')\bigr| \,dy\,dy' \lesssim  e^{-\eps L}
\end{align}
uniformly for $L\geq 1$.  This leaves us to bound $|Z_L^{-1} -1|$.  

The spectral theory of $\LL$ shows  $Z_L = \sum e^{-2L\lambda_k} \geq 1$.  From this, we find
\begin{align*}
0 \leq 1 - Z_L^{-1} \leq Z_L - 1  = \iint  e^{-L\LL}(y',y) \bigl[ e^{-L\LL}(y,y') - \psi_0(y)\psi_0(y')\bigr] \,dy\,dy'  \lesssim  e^{-\eps L}.
\end{align*}
The last step is just \eqref{coup1}.  The conclusion \eqref{E:XY coup} now follows easily.
\end{proof}

By combining Lemmas~\ref{L:condit} and~\ref{L:XY coup}, we may deduce that as $L\to\infty$, all finite marginals of $\tilde q_L$ converge to those of $q_\infty$.  Concretely, one chooses $a=-L/2$ and $b=L/2$, then exploits the fact that the conditional marginals are identical.  This leads us to the means of coupling the processes: First couple $X_L$ to $Y_L$ exactly as in Lemma~\ref{L:XY coup}.  Because the conditional marginals are identical, we can actually make $q_L$ and $q_\infty$ identical on the interval $[-L/2,L/2]$ whenever $X_L=Y_L$.

Before we present the details, let us quickly review the important concept of extension by conditioning.

Concomitant with the notion of conditional expectation is that of disintegration.   The Disintegration Theorem guarantees that there is a map $y\mapsto \mu_y$ of points in $\R^2$ to Borel probability measures on $C(\T_L)$ with the property that
$$
\E\bigl[ F(\tilde q_L) \bigr] = \iint F\,d\mu_y f_{Y_L}(y)\,dy.
$$
The map $y\mapsto \mu_y$ is known as a probability kernel.

More generally, given a probability space $(\Omega_0,\PP_0)$ together with a random variable $Y:\Omega_0\to\R^2$ that has the same law as $Y_L$, we may define a probability kernel on $\Omega_0$ via $\omega\mapsto\nu_\omega:=\mu_{Y(\omega)}$.

This second probability kernel $\nu$ allows us to define an augmented probability space $\Omega_0\times C(\T_L)$ with expectation
$$
\E \bigl[ F(\omega,q) \bigr] = \iint F(\omega,q) \,d\nu_\omega(q) \,d\PP_0(\omega).
$$
By construction, the random variable $q$ is Gibbs distributed and it is conditionally independent of any random variable $Z:\Omega_0\to\R$ given $Y$.

In this example, we performed a single extension of a given probability space.  We would like to do this infinitely many times, once for each $L\in 2^\N$.  The fact that this is possible is know as the Ionescu-Tulcea Theorem; see, for example, \cite[\S8]{MR4226142}.

\begin{proof}[Proof of Proposition~\ref{P:coupled q_L}]Let us begin with a realization of $q_\infty$, which is Gibbs distributed on $\R$.  This probability space also realizes all random variables $X_L$  defined by \eqref{XY at L}.

By repeatedly applying Lemmas~\ref{R2 couple} and~\ref{L:XY coup}, it is possible to augment our original probability space to realize new random variables $Y_L$ whose laws match those given by \eqref{XY at L} and also satisfy 
$$
\PP ( X_L\neq Y_L)  \lesssim e^{-\eps L}
$$
uniformly in $L$.  Note that $q_\infty$ and $Y_L$ are conditionally independent given $X_L$.

Strictly speaking, the argument just presented applies only to finite collections of $L\in 2^\N$; however, this demonstrates the consistency of these joint distributions and so guarantees that one may realize the $Y_L$ for \emph{all} $L\in 2^\N$.

Combining the Disintegration and Ionescu-Tulcea Theorems, we may now realize processes $\tilde q_L$ that are Gibbs distributed on $\T_L$ for all $L\in 2^\N$ and satisfy the second identity in \eqref{XY at L}.

Our construction ensures that $q_\infty$ and $\tilde q_L$ are conditionally independent given $X_L$.  This is important because together with Lemma~\ref{L:condit} it guarantees that the four processes
\begin{align}\label{four processes}
q_\infty\big|_{[-L/2,L/2]},\quad q_\infty\big|_{[-L/2,L/2]^c}, \quad
	\tilde q_L\big|_{[-L/2,L/2]},\qtq{and} \tilde q_L\big|_{[L/2,3L/2]} 
\end{align}
are conditionally independent given $X_L$ and $Y_L$.  

We are now ready to define the random variables $q_L$: For $x\in[-L,L]$,
\begin{align}
q_L(x) = \begin{cases} q_{\infty} (x) &: x \in [-L/2,L/2]  \text{ and } Y_{L}=X_{L},\\
	\tilde q_L(x) &: \text{ otherwise.}\end{cases}
\end{align}
We continue this definition $2L$-periodically throughout $\R$.

It is clear that \eqref{E:coupled q_L} holds.  To verify that $q_L$ is indeed Gibbs distributed,  we just need to show that it has the same distribution as $\tilde q_L$.  This in turn follows from two observations: First, the values of $q_L$ on the complementary arcs $[-L/2,L/2]$ and $[L/2,3L/2]$ are conditionally independent given $X_L$ and $Y_L$.  Secondly, restricting attention to the arc $[-L/2,L/2]$, the conditional distributions of $q_L$ and $\tilde q_L$ given $X_L$ and $Y_L$ are identical.  This is a consequence of Lemma~\ref{L:condit}.
\end{proof}

\subsection{Mixing properties}\label{SS:mix}
It is well-known that the the whole-line Gibbs state is not only ergodic under translations, but even (strong) mixing.  Much later in the paper, we shall rely on ergodicity in order to understand the action of the Miura map; moreover, in Corollary~\ref{C:diff} we will use it to show that different chemical potentials and/or temperatures lead to mutually singular Gibbs measures.  Rather than settle for ergodicity, we will prove mixing because the following lemma, inspired by \cite{RosenSimon}, allows us to give an efficient treatment based on the preliminaries we have already presented.

\begin{lemma}\label{L:expo mix} Let $q$ be Gibbs distributed on the whole line and fix points $a<b$ in $\R$.  Given random variables $F,G \in L^2(d\PP)$ that are measurable with respect to the sigma algebras generated by $\{q(x):x\leq a\}$ and $\{q(x):x\geq b\}$, respectively, then
\begin{align}\label{expo mix}
\bigl| \E(FG) - \E(F)\E(G)\bigr| \leq e^{-\lambda_1(b-a)} \sqrt{ \E(F^2)\E(G^2) } \,.
\end{align}
\end{lemma}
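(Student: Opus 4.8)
The plan is to reduce the claim to a statement about the Schrödinger semigroup $e^{-t\LL}$ restricted to the orthogonal complement of its ground state $\psi_0$, where the spectral gap $\lambda_1$ furnishes the required exponential decay. By a standard density argument it suffices to establish \eqref{expo mix} when $F$ and $G$ are bounded cylinder functions, say $F = f(q(x_1),\dots,q(x_m))$ with all $x_i \leq a$ and $G = g(q(y_1),\dots,q(y_n))$ with all $y_j \geq b$; in fact, by a further approximation (Stone--Weierstrass, or just taking products of the exponentials appearing in \eqref{FKmuL2}), we may assume $F$ and $G$ are finite linear combinations of products $\prod_i f_i(q(x_i))$ and $\prod_j g_j(q(y_j))$ of the type appearing on the right-hand side of \eqref{FKmuL2}, so it is enough to treat a single such product for $F$ and a single one for $G$. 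Writing $x_m$ for the largest of the points $\leq a$ and $y_1$ for the smallest of the points $\geq b$, the Feynman--Kac representation \eqref{FKmuL2} together with the Markov/semigroup structure gives
\begin{align*}
\E(FG) = \bigl\langle \Phi, \, e^{-(y_1-x_m)\LL}\, \Psi \bigr\rangle
\qtq{and}
\E(F)\E(G) = \langle \Phi,\psi_0\rangle\,\langle\psi_0,\Psi\rangle,
\end{align*}
where $\Phi := f_m \,e^{-(x_m-x_{m-1})\LL}f_{m-1}\cdots e^{-(x_2-x_1)\LL}f_1\,\psi_0$ and $\Psi := g_1\, e^{-(y_2-y_1)\LL}g_2\cdots e^{-(y_n-y_{n-1})\LL}g_n\,\psi_0$ are vectors in $L^2(\R)$, and we have used $e^{-t\LL}\psi_0 = \psi_0$.

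The second step is the spectral estimate. Let $P_0$ be the rank-one projection onto $\psi_0$ and $P_0^\perp = I - P_0$. Since $e^{-(y_1-x_m)\LL}\psi_0 = \psi_0$, we have
\begin{align*}
\E(FG) - \E(F)\E(G)
= \bigl\langle \Phi,\, e^{-(y_1-x_m)\LL}P_0^\perp \Psi\bigr\rangle
= \bigl\langle P_0^\perp\Phi,\, e^{-(y_1-x_m)\LL}P_0^\perp \Psi\bigr\rangle,
\end{align*}
and since the spectrum of $\LL$ on $\mathrm{ran}\,P_0^\perp$ is contained in $[\lambda_1,\infty)$, the operator norm of $e^{-(y_1-x_m)\LL}P_0^\perp$ is at most $e^{-\lambda_1(y_1-x_m)}$. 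Because $y_1 \geq b > a \geq x_m$, this is bounded by $e^{-\lambda_1(b-a)}$. Cauchy--Schwarz then yields
\begin{align*}
\bigl| \E(FG) - \E(F)\E(G)\bigr|
\leq e^{-\lambda_1(b-a)}\, \|P_0^\perp\Phi\|_{L^2}\,\|P_0^\perp\Psi\|_{L^2}
\leq e^{-\lambda_1(b-a)}\, \|\Phi\|_{L^2}\,\|\Psi\|_{L^2}.
\end{align*}
It remains to identify $\|\Phi\|_{L^2}^2$ and $\|\Psi\|_{L^2}^2$. Using self-adjointness of each $e^{-t\LL}$ and positivity of all the heat kernels, $\|\Phi\|_{L^2}^2 = \langle \psi_0, f_1 e^{-(x_2-x_1)\LL}f_2\cdots \bar f_m f_m \cdots e^{-(x_2-x_1)\LL}\bar f_1\psi_0\rangle$, which is exactly the expression \eqref{FKmuL2} computes for $\E(|F|^2) = \E(F^2)$ (recall $q$, hence $F$, is real-valued, though the argument works verbatim for complex $F$ with $F^2$ replaced by $|F|^2$); similarly $\|\Psi\|_{L^2}^2 = \E(G^2)$. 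This gives \eqref{expo mix} for products, hence by linearity and the triangle inequality — more precisely, by the bilinearity of both sides in $(F,G)$ and a final Cauchy--Schwarz over the linear combination — for all cylinder $F,G$, and then by $L^2(d\PP)$-density for all $F \in L^2(\sigma(q(x):x\leq a))$ and $G \in L^2(\sigma(q(x):x\geq b))$.

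The only genuinely delicate point is the passage from a single product to a general linear combination while keeping the constant $\sqrt{\E(F^2)\E(G^2)}$ rather than a sum of products of $L^2$ norms of the individual factors: this is handled cleanly by viewing $\E(\cdot\,\cdot) - \E(\cdot)\E(\cdot)$ as a bilinear form, noting that the computation above shows it equals $\langle P_0^\perp\Phi_F, e^{-(b-a)\LL}P_0^\perp\Phi_G\rangle$-type expressions that are themselves bilinear and continuous in $L^2$, so that the bound $\|T\|_{\op}\le e^{-\lambda_1(b-a)}$ applies directly to the linear combinations $\Phi_F := P_0^\perp(\text{vector representing }F)$ and $\Phi_G$, whose norms are $\le \sqrt{\E(F^2)}$ and $\le\sqrt{\E(G^2)}$ respectively. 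Everything else is routine: the measure-theoretic density step uses that bounded cylinder functions are dense in the relevant $L^2$ spaces, and the interchange of limits is justified since both sides of \eqref{expo mix} are continuous in $F,G$ with respect to $L^2(d\PP)$ convergence.
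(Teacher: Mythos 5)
Your proof uses the same essential mechanism as the paper's (the spectral gap of $\LL$ on $\psi_0^\perp$), but it contains one false step. For $m\ge 2$ the asserted identity $\|\Phi\|_{L^2}^2=\E(F^2)$ is wrong: expanding the inner product does give the ``palindromic'' expression $\langle\psi_0,\, f_1 e^{-(x_2-x_1)\LL}f_2\cdots f_m^2\cdots f_2 e^{-(x_2-x_1)\LL}f_1\psi_0\rangle$, but this is \emph{not} what \eqref{FKmuL2} assigns to $\E(F^2)=\E\bigl[\prod_j f_j(q(x_j))^2\bigr]$, which is the time-ordered expression $\langle\psi_0,\, f_1^2 e^{-(x_2-x_1)\LL}f_2^2\cdots e^{-(x_m-x_{m-1})\LL}f_m^2\psi_0\rangle$. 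Already for $m=2$ and $f_2\equiv 1$ the two differ: the first becomes $\langle f_1\psi_0, e^{-2(x_2-x_1)\LL}f_1\psi_0\rangle$ while the second is $\|f_1\psi_0\|_{L^2}^2$. What is true --- and all your argument needs --- is the inequality $\|\Phi\|_{L^2}\le\sqrt{\E(F^2)}$. The clean way to see it: from $\E[F\,h(q(x_m))]=\langle \Phi, h\psi_0\rangle$ and the fact that $q(x_m)$ has law $\psi_0(y)^2\,dy$, one reads off $\Phi(y)=\E[F\mid q(x_m)=y]\,\psi_0(y)$, whence $\|\Phi\|_{L^2}^2=\E\bigl[\E(F\mid q(x_m))^2\bigr]\le\E(F^2)$ by conditional Jensen. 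This observation also dissolves the ``genuinely delicate point'' at the end: the map $F\mapsto \Phi_F:=\E[F\mid q(a)]\,\psi_0$ is linear and defined on all of $L^2$ of the past $\sigma$-algebra, so the bilinear form identity and the norm bound hold directly for general $F,G$, with no need to pass through products and patch the constant.

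Once repaired in this way, your route is essentially the paper's proof read backwards. The paper makes the conditional-expectation step the \emph{first} move: by the Markov property one may replace $F$ by $\E(F\mid q(a))$ and $G$ by $\E(G\mid q(b))$, which leaves the left side of \eqref{expo mix} unchanged and can only decrease the right side; this reduces the whole lemma to $F=f(q(a))$, $G=g(q(b))$, a single application of the two-point marginal, and the spectral gap. Your detour through cylinder functions, products of exponentials, and an $L^2$-density limit is thus avoidable, though not incorrect in principle.
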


\begin{proof}
By the Markov property, we may replace $F$ by $\E(F|q(a))$ and $G$ by $\E(G|q(b))$ without affecting the LHS\eqref{expo mix}.  This change may affect the RHS\eqref{expo mix}, but only by making it smaller.  These observations reduce the question to consideration of random variables $F=f(q(a))$ and $G=g(q(b))$.

For such random variables, the joint marginals \eqref{FKmuL2} show
\begin{align*}
\text{LHS\eqref{expo mix}} = \biggl|\iint f(y)\psi_0(y) \bigl[ e^{-(b-a)\LL}(y,y') - \psi_0(y)\psi_0(y')\bigr]\psi_0(y')g(y')\,dy\,dy'\biggr|.
\end{align*}
The spectral theory of the operator $\LL$ discussed earlier now guarantees that the quantity in square brackets is the integral kernel of an operator of norm $e^{-(b-a)\lambda_1}$ and consequently,
\begin{equation*}
\text{LHS\eqref{expo mix}} \leq e^{-\lambda_1(b-a)} \| f \psi_0\|_{L^2(\R)} \| g \psi_0\|_{L^2(\R)} = \text{RHS\eqref{expo mix}}.
	\qedhere
\end{equation*}
\end{proof}

This lemma already captures the intuitive notion of mixing that when observed on widely separated regions of space, a single sample from the Gibbs state yields seemingly independent observations.  The true definition of mixing, however, requires us to consider more general pairs of measurable events.  Recall that we are employing the Borel $\sigma$-algebra associated with the compact-open topology, which is also the $\sigma$-algebra generated by cylinder sets.

\begin{corollary}\label{C:mixing}
The infinite-volume Gibbs state is mixing under translations.
\end{corollary}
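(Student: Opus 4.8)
The plan is to deduce mixing directly from the quantitative decorrelation estimate of Lemma~\ref{L:expo mix}, preceded by two standard reductions. Writing $\{\tau_t\}_{t\in\R}$ for the translation group acting by $(\tau_t q)(x) = q(x+t)$, mixing under $\{\tau_t\}$ is equivalent to the statement that
$$
\E\bigl[ F \cdot (G\circ\tau_t)\bigr] \longrightarrow \E[F]\,\E[G] \qtq{as} |t|\to\infty
$$
for all $F,G\in L^2(d\PP)$; taking $F=\ind_A$ and $G=\ind_B$ recovers the usual formulation in terms of Borel sets. Since stationarity of the Gibbs process gives $\|G\circ\tau_t\|_{L^2}=\|G\|_{L^2}$, the bilinear forms $(F,G)\mapsto\E[F\cdot(G\circ\tau_t)]$ are bounded uniformly in $t$, as is the limiting form $(F,G)\mapsto\E[F]\,\E[G]$. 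Hence it suffices to verify the convergence for $F$ and $G$ drawn from a subset of $L^2(d\PP)$ whose linear span is dense.

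For this dense class I would take bounded functions that are measurable with respect to $\sigma\bigl(q(x):|x|\le R\bigr)$ for some finite $R$ — for instance the cylinder functions $f\bigl(q(x_1),\dots,q(x_n)\bigr)$ with $f$ bounded and $x_1,\dots,x_n\in[-R,R]$. Their span is dense in $L^2(d\PP)$ precisely because, as recalled just above the statement of the corollary, the Borel $\sigma$-algebra on $C(\R)$ coincides with the $\sigma$-algebra generated by the cylinder sets.

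Now fix such $F$ and $G$, both measurable with respect to $\sigma\bigl(q(x):|x|\le R\bigr)$. Then $G\circ\tau_t$ is measurable with respect to $\sigma\bigl(q(x):x\in[t-R,t+R]\bigr)$. When $t>2R$, this interval lies strictly to the right of the point $R$, so $F$ is measurable with respect to $\sigma\bigl(q(x):x\le R\bigr)$ while $G\circ\tau_t$ is measurable with respect to $\sigma\bigl(q(x):x\ge t-R\bigr)$. Applying Lemma~\ref{L:expo mix} with $a=R$ and $b=t-R$, and using stationarity in the form $\E[G\circ\tau_t]=\E[G]$ and $\E[(G\circ\tau_t)^2]=\E[G^2]$, yields
\begin{align*}
\bigl| \E[F\cdot(G\circ\tau_t)] - \E[F]\,\E[G] \bigr| \le e^{-\lambda_1(t-2R)}\sqrt{\E[F^2]\,\E[G^2]} ,
\end{align*}
and the right-hand side tends to $0$ as $t\to+\infty$ because $\lambda_1>0$. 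The limit $t\to-\infty$ is identical: for $t<-2R$ one applies Lemma~\ref{L:expo mix} with the two half-lines interchanged, obtaining the bound $e^{-\lambda_1(|t|-2R)}\sqrt{\E[F^2]\,\E[G^2]}$ (alternatively one may invoke the reflection symmetry $q(\cdot)\mapsto q(-\cdot)$ of the Gibbs law). This gives the desired convergence on the dense class, and the reduction in the first paragraph then upgrades it to all of $L^2(d\PP)$, proving mixing.

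As for the main obstacle: there is essentially none of substance — Lemma~\ref{L:expo mix} already supplies an \emph{exponential} rate of decorrelation between the two half-lines, which is far more than mixing requires. The only point that warrants any care is the density reduction, namely the assertion that it is enough to test the mixing property against functions depending on $q$ only through its restriction to a compact interval; this in turn rests on the identification of the Borel and cylinder $\sigma$-algebras and on the uniform boundedness of the bilinear forms, which is automatic from stationarity.
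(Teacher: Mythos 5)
Your proof is correct and follows essentially the same route as the paper: both deduce mixing from Lemma~\ref{L:expo mix} applied to data depending on $q$ only through a compact window of observations, and then pass to general measurable data by a standard generating-class argument. The only difference is cosmetic: the paper closes up with the $\pi$-$\lambda$ theorem applied to indicators of cylinder sets, whereas you use $L^2(d\PP)$-density of bounded cylinder functions together with the uniform boundedness (from stationarity and Cauchy--Schwarz) of the bilinear forms $(F,G)\mapsto\E\bigl[F\cdot(G\circ\tau_t)\bigr]$; both closures are valid.
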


\begin{proof}
Suppose first that $A$ and $B$ are cylinder subsets of $C(\R)$, which means that they are determined by the value of the function at finitely many points.  Then Lemma~\ref{L:expo mix} shows that
\begin{equation}\label{ABmix}
\PP\Bigl( q(\cdot) \in A \ \&\ q(\cdot-x) \in B\Bigr) \longrightarrow 
	\PP\bigl( q(\cdot) \in A\bigr) \PP\bigl( q(\cdot) \in B\bigr)  \qtq{as} |x|\to\infty.
\end{equation}

Observe now that for fixed cylinder set $B$, the collection of sets $A$ for which \eqref{ABmix} holds is a $\lambda$~system which contains the cylinder sets (a $\pi$~system).  Thus by the $\pi$-$\lambda$~Theorem, convergence holds for all measurable $A$.  Likewise, we see that for each measurable $A$, convergence holds not only for cylinder sets $B$ but for all measurable $B$.  This shows that $q$ is mixing.
\end{proof}

Recall that mixing implies ergodicity.  Next we use this to show that no distinct pairs of temperature and chemical potential lead to absolutely continuous Gibbs states; rather, they are mutually singular.  Recall that we previously used the scaling \eqref{scaling} to normalize the temperature and so simplify the presentation.  From this perspective, we see that changes in chemical potential are not be reversed by rescaling.

\begin{corollary}\label{C:diff}
The Gibbs states corresponding to different chemical potentials and/or temperatures are mutually singular.
\end{corollary}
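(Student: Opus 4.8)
The plan is to combine ergodicity of the Gibbs state --- a consequence of Corollary~\ref{C:mixing}, since mixing implies ergodicity --- with the standard dichotomy that two ergodic invariant measures for the same group action are either equal or mutually singular. Fix distinct parameters $(\beta_1,\mu_1)\neq(\beta_2,\mu_2)$ and let $\nu_1,\nu_2$ denote the corresponding whole-line Gibbs measures on $(C(\R),\mathcal B)$; both are invariant and ergodic under the translation flow $\tau_y:q(\cdot)\mapsto q(\cdot-y)$. It therefore suffices to produce a bounded measurable $\Phi:C(\R)\to\R$ with $\E_{\nu_1}[\Phi]\neq\E_{\nu_2}[\Phi]$. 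Indeed, Birkhoff's ergodic theorem then gives, for $i=1,2$, that $\tfrac1T\int_0^T\Phi(\tau_yq)\,dy\to\E_{\nu_i}[\Phi]$ for $\nu_i$-a.e.\ $q$, so the two events
\[
\Bigl\{q\in C(\R):\ \lim_{T\to\infty}\tfrac1T{\int_0^T}\Phi(\tau_yq)\,dy=\E_{\nu_i}[\Phi]\Bigr\},\qquad i=1,2,
\]
are disjoint yet carry full $\nu_1$-, respectively $\nu_2$-measure; this is precisely mutual singularity.

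The only point requiring verification is that distinct parameters yield genuinely distinct laws, and the cheapest witness is the one-point marginal, so I would take $\Phi(q)=f(q(0))$ for a suitable bounded continuous $f$. By stationarity and \eqref{FKmuL2}, combined with the scaling \eqref{scaling} used to pass from $\beta=1$ to general $\beta$, the law of $q(0)$ under the $(\beta,\mu)$-Gibbs state has density $h_{\beta,\mu}(y)^2$, where $h_{\beta,\mu}(y)=\beta^{1/6}\psi_0^{(\nu)}(\beta^{1/3}y)$, $\psi_0^{(\nu)}$ is the positive $L^2$-normalized ground state of $-\tfrac12\partial_y^2+\tfrac12 y^4-\tfrac{\nu}2 y^2+\textup{const}$, and $\nu=\mu\beta^{2/3}$. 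Since $\psi_0^{(\nu)}>0$, one recovers $h_{\beta,\mu}=\sqrt{h_{\beta,\mu}^2}$; a dilation together with a constant rescaling sends a Schr\"odinger ground state to the ground state of the dilated operator, so $h_{\beta,\mu}$ is the positive normalized ground state of $-\tfrac12\partial_y^2+\tfrac{\beta^2}{2}y^4-\tfrac{\mu\beta^2}{2}y^2+\textup{const}$. A ground state determines its potential up to an additive constant via $W=E+\psi''/(2\psi)$, so matching the $y^4$- and $y^2$-coefficients forces $\beta_1^2=\beta_2^2$ and $\mu_1\beta_1^2=\mu_2\beta_2^2$, i.e.\ $(\beta_1,\mu_1)=(\beta_2,\mu_2)$, a contradiction. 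Hence $h_{\beta_1,\mu_1}^2\neq h_{\beta_2,\mu_2}^2$ as probability densities, and any bounded continuous $f$ with $\int f\,h_{\beta_1,\mu_1}^2\neq\int f\,h_{\beta_2,\mu_2}^2$ provides the required $\Phi$.

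There is no serious obstacle here: the argument reduces to Birkhoff's theorem plus the elementary ODE remark above, which is the only slightly technical ingredient. I note two minor variants worth mentioning. First, the dependence on temperature alone can be handled without ergodicity at all, since the quadratic variation of a Gibbs sample over $[0,1]$ equals $1/\beta$ almost surely (compare the It\^o representation \eqref{E:Ito}), so that event already separates $\nu_1$ and $\nu_2$ whenever $\beta_1\neq\beta_2$; only the case $\beta_1=\beta_2$, $\mu_1\neq\mu_2$ then needs the ergodic-theorem step. Second, in place of the one-point marginal one could separate the measures using any finite-dimensional marginal from \eqref{FKmuL2}; the one-point marginal is simply the easiest to analyze.
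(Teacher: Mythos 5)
Your proposal is correct and follows essentially the same route as the paper: both separate the two Gibbs laws via their one-point stationary marginals $\psi_0^2\,dy$ (ground states of the associated anharmonic oscillators, which differ for distinct $(\beta,\mu)$) and then upgrade this to mutual singularity via ergodicity and the Birkhoff averages of $f(q(0))$. Your extra detail justifying that distinct parameters give distinct ground states (recovering the potential from $\psi''/(2\psi)$) simply fills in a step the paper asserts without proof.
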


\begin{proof}
From our previous discussion, we know that the stationary distribution for the infinite-volume Gibbs state at inverse temperature $\beta$ and chemical potential $\mu$ is given by the measure $\psi(y)^2\,dy$ where $\psi$ is the $L^2$-normalized ground state of the Schr\"odinger operator
$$
-\tfrac12 \partial_y^2 + \tfrac12 \beta^2 y^4 - \tfrac\mu2 y^2 .
$$
As no two such ground states agree, nor do any two stationary distributions.

Given pairs $(\beta_1,\mu_1)$ and $(\beta_2,\mu_2)$ that are not identical, we may choose a function $f\in C^\infty_c(\R)$ that witnesses the distinction between the two stationary distributions in the following way:
$$
\E_{\beta_1,\mu_1} \bigl\{ f\bigl(q(0)\bigr)\bigr\} < 0 < \E_{\beta_2,\mu_2} \bigl\{ f\bigl(q(0)\bigr)\bigr\}.
$$
By the ergodic theorem, it follows that the  Gibbs$(\beta_1,\mu_1)$ measure gives full weight to the event
$$
\liminf_{L\to\infty} \frac1L \int_0^L f\bigl(q(x)\bigr)\,dx < 0
$$
and that this event is a Gibbs$(\beta_2,\mu_2)$ null set.  
\end{proof}

\subsection{The infinite-volume Gibbs state as a diffusion}\label{SS:diffusion}
In the introduction, we mentioned the connection between the infinite-volume Gibbs state and solutions of the stochastic differential equation \eqref{E:Ito}.  This is quite a departure from the manner in which we defined Gibbs measure.  In this short subsection, we explain this connection using the martingale characterization of diffusions championed in \cite{MR2190038}.  This characterization will help us better describe the regularity properties both of the Gibbs samples $q(x)$ and of their image under the Miura map. 

In the language of diffusions, \eqref{E:Ito} says that $q(x)$ has diffusivity $a(y)\equiv 1$ and drift $b(y)=\psi_0'(y)/\psi_0(y)$.  As $\psi_0$ is the ground state of Schr\"odinger operator \eqref{HGibbs}, we see that $b(y)$ is smooth and odd; moreover, $b(y)= -y^2 + \tfrac\mu2 + o(1)$ as $y\to+\infty$.  Such a diffusion is characterized by the fact that for every $f\in C^\infty_c(\R)$, the process
\begin{align}\label{SVtest}
f\bigl(q(x_2)\bigr) - f\bigl(q(x_0)\bigr) - \int_{x_0}^{x_2} \tfrac12 f''\bigl(q(x)\bigr)  + b\bigl(q(x)\bigr)f'\bigl(q(x)\bigr)\,dx
\end{align}
indexed by $x_2$ is a martingale for $x_2>x_0$.

As we already know that $q(x)$ is a Markov process, it is enough to verify the following:

\begin{lemma} If $q$ is distributed according to the infinite-volume Gibbs law, then 
\begin{align}\label{SVtest'}
\E\biggl\{ f\bigl(q(x_2)\bigr) - \int_{x_1}^{x_2} \tfrac12 f''\bigl(q(x)\bigr)  + b\bigl(q(x)\bigr)f'\bigl(q(x)\bigr)\,dx \bigg| q(x_1) \biggr\}
	=  f\bigl(q(x_1)\bigr) 
\end{align}
for all $f\in C^\infty_c(\R)$ and all pairs $x_2>x_1$.
\end{lemma}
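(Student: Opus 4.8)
The plan is to verify \eqref{SVtest'} by computing both sides using the finite-marginal formula \eqref{FKmuL2} together with the semigroup and the conditional-law structure from Lemma~\ref{L:condit}, reducing everything to an identity involving the generator of $\LL$ acting on the ground state. First I would use the Markov property (established in Proposition~\ref{PROP:Gibbs} and discussed after Lemma~\ref{L:condit}) to write the conditional expectation given $q(x_1)$ in terms of the heat kernel: conditioned on $q(x_1)=y_1$, the law of $q(x_2)$ has density $\frac{e^{-(x_2-x_1)\LL}(y_1,y)\,\psi_0(y)}{\psi_0(y_1)}$, and more generally the joint conditional law of $\bigl(q(x),q(x_2)\bigr)$ for $x\in(x_1,x_2)$ is governed by the product of heat kernels normalized by $\psi_0(y_1)$. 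So the left side of \eqref{SVtest'}, after multiplying through by $\psi_0(y_1)$, becomes
\begin{align*}
\int e^{-(x_2-x_1)\LL}(y_1,y) f(y)\psi_0(y)\,dy
	- \int_{x_1}^{x_2}\!\!\int e^{-(x-x_1)\LL}(y_1,y)\bigl[\tfrac12 f''(y)+b(y)f'(y)\bigr]\psi_0(y)\,dy\,dx,
\end{align*}
where in the second term I have already integrated out $q(x_2)$ using $\int e^{-(x_2-x)\LL}(y,y')\psi_0(y')\,dy' = \psi_0(y)$, valid because $\LL\psi_0=0$.

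The key algebraic observation is that $\tfrac12 f''(y)+b(y)f'(y)$ is exactly the ground-state-transformed generator: writing $\mathcal G f := \tfrac12 f'' + \tfrac{\psi_0'}{\psi_0} f'$, a direct computation gives $\psi_0\cdot \mathcal G f = \tfrac12(\psi_0 f)'' - \tfrac12\tfrac{\psi_0''}{\psi_0}(\psi_0 f) = -\bigl(\LL - V\bigr)(\psi_0 f) - \tfrac12\tfrac{\psi_0''}{\psi_0}\psi_0 f$; since $\LL\psi_0=0$ means $\tfrac12\psi_0'' = V\psi_0$, i.e. $\tfrac{\psi_0''}{\psi_0}=2V$, this collapses to $\psi_0\cdot\mathcal G f = -\LL(\psi_0 f)$. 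Hence the inner $x$-integral in the second term above is
\begin{align*}
-\int_{x_1}^{x_2} e^{-(x-x_1)\LL}(y_1,\cdot)\,\LL(\psi_0 f)\,dx
	= \int_{x_1}^{x_2} \frac{d}{dx}\Bigl[ e^{-(x-x_1)\LL}(\psi_0 f)\Bigr](y_1)\,dx
	= \bigl[e^{-(x_2-x_1)\LL}(\psi_0 f)\bigr](y_1) - \psi_0(y_1)f(y_1),
\end{align*}
using $\partial_x e^{-(x-x_1)\LL} = -\LL e^{-(x-x_1)\LL}$ and evaluating at the endpoints. Substituting back, the $e^{-(x_2-x_1)\LL}(\psi_0 f)$ term cancels the first term in the displayed expression for the left side, leaving exactly $\psi_0(y_1)f(y_1)$; dividing by $\psi_0(y_1)$ yields \eqref{SVtest'}.

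The main obstacle is justifying the manipulations rigorously at the level of heat kernels rather than bounded operators: interchanging the $x$-integral with the kernel evaluation, differentiating $x\mapsto e^{-(x-x_1)\LL}(y_1,y)$ under the integral sign, and handling the fact that $\psi_0 f$ and its $\LL$-image need not be bounded with all the required decay. These are controlled by the estimates in Proposition~\ref{P:heat} --- in particular the pointwise bounds \eqref{short heat}, \eqref{long heat}, and \eqref{Carmona}, which give rapid decay of $\psi_0$ and local boundedness of the kernel --- together with the fact that $f\in C^\infty_c(\R)$, so $\psi_0 f\in C^\infty_c(\R)$ lies comfortably in the domain of every power of $\LL$ and the semigroup acts smoothly on it. One also uses continuity of the heat kernel on $(0,\infty)\times\R\times\R$ to make sense of the conditional densities pointwise; the boundary behavior as $x\downarrow x_1$ is the only delicate point, and there one invokes that $e^{-(x-x_1)\LL}(\psi_0 f)\to \psi_0 f$ uniformly on compacts (indeed in $L^2$ and, by the short-time Gaussian bound, locally uniformly), so the endpoint evaluation is legitimate.
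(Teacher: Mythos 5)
Your proof is correct and follows essentially the same route as the paper's: the heart of both arguments is the identity $\bigl[\tfrac12 f''+bf'\bigr]\psi_0=-\LL(f\psi_0)$ (a consequence of $\LL\psi_0=0$) combined with $\partial_x e^{-(x-x_1)\LL}=-\LL e^{-(x-x_1)\LL}$ and the Fundamental Theorem of Calculus. The only cosmetic difference is that you evaluate the conditional law pointwise via the heat-kernel densities, whereas the paper verifies the equivalent unconditional identity obtained by testing against $g(q(x_1))$ for $g\in C^\infty_c(\R)$.
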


\begin{proof}
We must show that
\begin{align}\label{SVtest''}
\E\biggl\{ \biggl[f\bigl(q(x_2)\bigr) - \int_{x_1}^{x_2} \bigl[\tfrac12 f'' + bf'\bigr]\bigl(q(x)\bigr)\,dx\biggr]
g\bigl(q(x_1)\bigr) \biggr\}
	=  \E\Bigl\{ \bigl[fg\bigr]\bigl(q(x_1)\bigr) \Bigr\}
\end{align}
for any $g\in C^\infty_c(\R)$.  This is a question directly accessible to analysis via \eqref{FKmuL2}.  The key observation is that
$\LL\psi_0=0$ implies that
$$
\bigl[\tfrac12 f'' + bf'\bigr](y) \cdot \psi_0(y) = - \LL [f\psi_0] (y).
$$
From this, we see that
\begin{align*}
\E\Bigl\{ \bigl[\tfrac12 f'' + bf'\bigr]\bigl(q(x)\bigr) \cdot g\bigl(q(x_1)\bigr) \Bigr\}
	&=  - \bigl\langle \LL(f \psi_0), \, e^{-(x-x_1)\LL} g\psi_0\bigr\rangle \\
&= \tfrac{d}{dx} \E\Bigl\{ f(q(x))\cdot g\bigl(q(x_1)\bigr) \Bigr\}
\end{align*}
and so \eqref{SVtest''} follows from the Fundamental Theorem of Calculus.
\end{proof}

We may now bring this part of the story to its natural conclusion:

\begin{corollary}\label{C:BM}
If $q$ is distributed according to the infinite-volume Gibbs law, then the process
\begin{align}\label{aBM}
B(x) := q(x) - q(0) - \int_0^x \tfrac{\psi_0'}{\psi_0}\bigl(q(x')\bigr)\,dx'  
\end{align}
is a Brownian motion and $q(x)$ solves \eqref{E:Ito} because
\begin{align}\label{aBM'}
q(x) = q(0) + B(x) +  \int_0^x \tfrac{\psi_0'}{\psi_0}\bigl(q(x')\bigr)\,dx'   .
\end{align}
\end{corollary}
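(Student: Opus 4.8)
The plan is to bootstrap the martingale characterization just established --- which a priori permits only test functions $f\in C^\infty_c(\R)$ --- to the two ``test functions'' $f(y)=y$ and $f(y)=y^2$, and then to invoke L\'evy's characterization of Brownian motion. Write $\mathcal A f=\tfrac12 f''+bf'$ for the generator, with $b(y)=\psi_0'(y)/\psi_0(y)$; by the Markov property together with the preceding lemma, the process \eqref{SVtest} is a genuine martingale with respect to the filtration $\mathcal F_x=\sigma\{q(x'):x'\le x\}$ for every $f\in C^\infty_c(\R)$. Since $\psi_0$ is the ground state of \eqref{HGibbs}, the drift $b$ is smooth, odd, and satisfies $b(y)=-y^2+\tfrac\mu2+o(1)$ as $y\to+\infty$; in particular $b(y)$ and $yb(y)$ are polynomially bounded.

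First I would extend the martingale property to $f(y)=y$ and $f(y)=y^2$. Applying the known result to $f_R(y)=y\,\chi(y/R)$ and to $f_R(y)=y^2\,\chi(y/R)$ with a fixed bump function $\chi$, and letting $R\to\infty$, the quantities $f_R(q(x))$ and $\mathcal A f_R(q(x))$ converge pointwise to $q(x)$ (resp.\ $q(x)^2$) and to $b(q(x))$ (resp.\ $1+2\,q(x)b(q(x))$). The tail bounds \eqref{LD one} and \eqref{max on I} of Proposition~\ref{PROP:Gibbs}, combined with stationarity --- so that $q(x')$, and hence $b(q(x'))$, has the same rapidly decaying law for every $x'$ --- supply the uniform integrability needed to pass to the limit inside $\E[\,\cdot\mid\mathcal F_{x_1}]$. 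This yields that the process $B(x)$ defined in \eqref{aBM} is an $\mathcal F_x$-martingale with $B(0)=0$, and likewise that $M_x:=q(x)^2-q(0)^2-\int_0^x\bigl(1+2\,q(x')b(q(x'))\bigr)\,dx'$ is an $\mathcal F_x$-martingale; the requisite integrability ($\E[q(x)^2]<\infty$ and $\E\int_0^x|q(x')b(q(x'))|\,dx'<\infty$) follows again from Proposition~\ref{PROP:Gibbs} and stationarity.

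Next I would compute the quadratic variation of $B$. Since $x\mapsto q(x)$ is continuous and $A(x):=\int_0^x b(q(x'))\,dx'$ is continuous and of locally bounded variation, $q=q(0)+A+B$ is a continuous semimartingale; It\^o's formula applied to $y\mapsto y^2$ gives $q(x)^2-q(0)^2=\int_0^x 2\,q(x')b(q(x'))\,dx'+\int_0^x 2\,q\,dB+\langle B\rangle_x$, where the stochastic integral is a continuous local martingale. Combining this with the expression for $M_x$ shows that $\langle B\rangle_x-x$ is a continuous local martingale; being of locally bounded variation and vanishing at $x=0$, it is identically zero, i.e.\ $\langle B\rangle_x=x$. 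By L\'evy's characterization, the continuous local martingale $B$ with $B(0)=0$ and $\langle B\rangle_x=x$ is a standard Brownian motion. Finally, \eqref{aBM'} is just the rearrangement of the defining identity \eqref{aBM}, so $q$ solves \eqref{E:Ito}.

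The only point requiring genuine care is the first step --- promoting the martingale identity to the unbounded test functions $y$ and $y^2$ while the drift $b$ grows quadratically --- and this is precisely where the strong tail estimates of Proposition~\ref{PROP:Gibbs} are used; the remainder is routine It\^o calculus. Alternatively, one could argue via well-posedness of the martingale problem for $\tfrac12\partial_y^2+b\,\partial_y$ --- using that $b$ is smooth and inward-pointing, so the associated diffusion does not explode --- in the spirit of \cite{MR2190038}, and then read off $B$ as the martingale part of the semimartingale $q$; the self-contained route above sidesteps that machinery.
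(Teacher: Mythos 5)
Your proposal is correct and follows essentially the same route as the paper: extend the martingale identity \eqref{SVtest'} to $f(y)=y$ and $f(y)=y^2$ by truncation using the tail bounds of Proposition~\ref{PROP:Gibbs}, deduce that $B$ and $B^2-x$ are continuous martingales, and conclude via L\'evy's characterization. Your It\^o-formula computation of $\langle B\rangle_x$ is simply an explicit rendering of the ``short computation'' the paper leaves to the reader.
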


\begin{proof}
In view of the bounds \eqref{max on I}, a simple limiting argument allows us to take $f(y)=y$ and $f(y)=y^2$ in \eqref{SVtest'}.

The choice $f(y)=y$ shows that $B(x)$ is a martingale.  Combining this with the result for $f(y)=y^2$, a short computation shows that $B(x)^2-x$ is also a martingale.  Noting that $B(x)$ has continuous sample paths, it follows from L\'evy's characterization (cf. \cite[\S7.1]{MR1267569}) that $B(x)$ is a Brownian motion.
\end{proof}

It follows from \eqref{aBM'} that $q(x)$ differs from Brownian motion by a (random) $C^1$ function, which justifies the remark made in the introduction that samples $q(x)$ have the same regularity as Brownian paths.  In particular, we see that $q(x)$ is not of bounded variation on any interval because Brownian motion is not of bounded variation on any interval.

\section{Lax operators, Green's functions, and approximating flows}\label{S:3}

We begin by developing a deterministic theory of the Lax operator
\begin{align}\label{LaxAKNS}
\Lax = \Lax_0  + Q \quad\text{with}\quad \Lax_0=\begin{bmatrix}
-\partial & 0\\
0 & \partial
\end{bmatrix} \quad\text{and}\quad Q=\begin{bmatrix}
0 & q\\
-q  & 0
\end{bmatrix} 
\end{align}
where $q$ is real-valued and satisfies
\begin{equation}\label{det L hyp}
\langle x \rangle^{-b} q(x) \in L^2(\R)
\end{equation}
for some $b>0$.  This includes the two regimes of greatest interest to us, namely, periodic and whole-line Gibbs distributed $q$; indeed, \eqref{Wsp moments} shows that \eqref{det L hyp} holds (almost surely) whenever $b>\frac12$.

The natural domain for $\Lax_0$ is $H^1(\R;\C^2)$.  Moreover, it is elementary to see that this yields an anti-selfadjoint operator, which is to say, $i\Lax_0$ is selfadjoint.  We define a corresponding free resolvent via
\begin{align}
\LaxR_{0}(\kappa)= (\Lax_0 + \kappa)^{-1} =
	\begin{bmatrix} (\kappa-\partial)^{-1} & 0\\ 0 &  (\kappa+\partial)^{-1} \end{bmatrix}; \label{R0}
\end{align}
here and in all that follows, we shall only need to consider those
\begin{equation}\label{kappa care}
\kappa\in\R \qtq{that satisfy} |\kappa|\geq 1.
\end{equation}

The free resolvent admits an integral kernel, namely, the free Green's function
\begin{align}
G_{0}(x,y;\kappa)=\sign(\kappa) e^{-|\kappa| |x-y|} 
\begin{bmatrix}
\ind_{(0,\infty)} (\kappa[y-x] ) & 0 \\[0.75ex]  0 & \ind_{[0,\infty)} (\kappa[x-y] )   \end{bmatrix}. \label{G0}
\end{align}
The two indicator functions are chosen not to overlap on the null set $\{x=y\}$ so that $\tr\{ G_0(x,y;\kappa)\}\equiv \sign(\kappa)$, irrespective of $x,y\in\R$.

\begin{proposition}\label{P:Lax Op}
Fix $b>0$ and $q$ satisfying \eqref{det L hyp}. Then \eqref{LaxAKNS} defines an anti-selfadjoint operator $\Lax$ on $L^2(\R;\C^2)$ with domain
\begin{equation}\label{D(L)}
\Bigl\{ \bigl[\begin{smallmatrix}\phi\\ \psi\end{smallmatrix} \bigr]\in L^2(\R;\C^2) : \phi,\psi\in H^1_\loc(\R)
	\text{ and } q\psi - \phi', \; \psi' - q\phi \in L^2(\R) \Bigr\}.
\end{equation}
Indeed, the restriction of $\,i\Lax$ to $C^\infty_c(\R;\C^2)$ is essentially selfadjoint.

For $\kappa$ as in \eqref{kappa care}, the Green's function $G(x,y;\kappa)$, which serves as the integral kernel for the resolvent $\LaxR(\kappa)=(\Lax +\kappa)^{-1}$, has the property that
\begin{align}\label{G-G0}
G(x,y;\kappa)-G_{0}(x,y;\kappa) 
\end{align}
is a continuous function of $(x,y)\in \R^2$, real-analytic in $\kappa$,  and 
\begin{align}\label{GinLinfty}
| G(y_1,y_2;\kappa) | \lesssim \langle y_1 \rangle^b \langle y_2 \rangle^b \bigl[ 1 + \kappa^{-1} \| \langle x\rangle^{-b} q \|_{L^2}^2  \bigr] .
\end{align}
\end{proposition}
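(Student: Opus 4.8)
The plan is to realize the Lax operator $\Lax$ via its quadratic form and to construct the resolvent by a Born-series / Neumann-series argument around the free resolvent $\LaxR_0(\kappa)$, exploiting the key structural feature emphasized in the introduction: the Dirac-type ``speed limit'' means that $G_0(x,y;\kappa)$ decays like $e^{-|\kappa||x-y|}$ and, crucially, is uniformly bounded ($|G_0|\le 1$) rather than singular on the diagonal. This is what makes the perturbation $Q$ relatively bounded even though $q$ is only locally $L^2$ with polynomial growth. First I would establish essential self-adjointness of $i\Lax$ on $C^\infty_c(\R;\C^2)$: since $i\Lax_0$ is self-adjoint on $H^1$ and $Q$ is symmetric, and since $q$ is locally $L^2$, one checks that $C^\infty_c$ is a core by a standard cutoff-and-mollify argument (or by invoking a commutator/Faris--Lavine type criterion adapted to first-order systems); alternatively one verifies directly that the operator on the domain \eqref{D(L)} is closed, symmetric, and has deficiency indices zero by solving the first-order ODE system $(\Lax \pm i)u = 0$ and showing no $L^2$ solutions exist, using that a nonzero solution of the homogeneous system has constant modulus squared $|\phi|^2 + |\psi|^2$ by a Wronskian-type identity, hence cannot be $L^2$ on $\R$.

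Next, for the resolvent bound, I would introduce the weighted space with weight $w(x) = \jb{x}^{-b}$ and write $Q = w^{-1} \cdot (w Q) $ — more precisely, factor $Q = A^* B$ (or work directly) so that $w Q w$ has an $L^2 \to L^2$ bounded ``Hilbert--Schmidt-like'' structure: the point is that $\| w \LaxR_0(\kappa) w \|$ is controlled, and $\| w^{-1} Q w^{-1}\cdot \text{(something)}\|$ can be absorbed. Concretely, because $|G_0(x,y;\kappa)| \le e^{-|\kappa||x-y|}$, Schur's test gives $\| \jb{x}^{-N} \LaxR_0(\kappa) \jb{x}^{-N}\| \lesssim |\kappa|^{-1}$ for suitable $N$, and one obtains the Born series
\begin{align*}
\LaxR(\kappa) = \LaxR_0(\kappa) - \LaxR_0(\kappa) Q \LaxR_0(\kappa) + \LaxR_0(\kappa) Q \LaxR_0(\kappa) Q \LaxR_0(\kappa) - \cdots
\end{align*}
which converges in a weighted operator norm once $|\kappa|$ is large relative to $\| \jb{x}^{-b} q\|_{L^2}$; self-adjointness then extends this to all $|\kappa|\ge 1$ by analytic continuation of the resolvent (with the bound \eqref{GinLinfty} degrading accordingly in the factor $1 + \kappa^{-1}\|\jb{x}^{-b}q\|_{L^2}^2$). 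For the continuity and real-analyticity of $G - G_0$ in \eqref{G-G0}, I would observe that every term in the Born series beyond the zeroth is of the form $\LaxR_0(\kappa) Q (\cdots) Q \LaxR_0(\kappa)$, and since $G_0(x,\cdot;\kappa)$ is continuous in $x$ as an $L^2_{\loc}$-valued (indeed weighted-$L^2$-valued) function and the middle factor $Q\LaxR_0 Q \cdots$ maps weighted $L^2$ to weighted $L^2$, each such term has a jointly continuous kernel; summing the series preserves this and gives real-analyticity in $\kappa$ since each term is (the series converges locally uniformly in $\kappa$). The pointwise bound \eqref{GinLinfty} then follows by summing the geometric series: the $x_1$- and $x_2$-dependence enters only through the outermost $G_0$ factors evaluated against $\jb{x}^b$-weights, producing the stated $\jb{y_1}^b\jb{y_2}^b$ prefactor.

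The main obstacle I anticipate is making the Born series argument work \emph{uniformly} down to $|\kappa| = 1$ rather than only for large $|\kappa|$: the naive Neumann series converges only when $\kappa^{-1}\|\jb{x}^{-b}q\|_{L^2}^2$ is small, whereas \eqref{GinLinfty} is claimed for all $|\kappa|\ge 1$ with the norm allowed to be large. The resolution is to \emph{first} get everything for large $|\kappa|$ (existence of $\LaxR(\kappa)$ with a good kernel, continuity, analyticity), and \emph{then} use that $\LaxR(\kappa)$ exists for all $\kappa$ with $|\kappa|\ge 1$ purely by self-adjointness ($i\Lax$ self-adjoint $\Rightarrow$ $(\Lax+\kappa)^{-1}$ bounded for real $\kappa\ne 0$) together with the resolvent identity $\LaxR(\kappa) = \LaxR(\kappa_0) - (\kappa-\kappa_0)\LaxR(\kappa_0)\LaxR(\kappa)$ to propagate the kernel regularity and to derive the quantitative bound \eqref{GinLinfty} — here one must be careful that the weighted bounds behave correctly under this propagation, which is where the Dirac structure again pays off since the relevant weighted norms of $\LaxR_0$ (and hence, inductively, of $\LaxR$) remain finite. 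A secondary technical point is justifying that the domain \eqref{D(L)} is exactly the set on which $\Lax$ acts as a closed operator with this resolvent; this is handled by checking $\LaxR(\kappa)$ maps $L^2$ onto \eqref{D(L)} and that $(\Lax+\kappa)\LaxR(\kappa) = I$ there, a computation that reduces to the ODE characterization already used for the deficiency-index argument.
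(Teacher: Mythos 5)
Your overall orientation --- perturbing around $\LaxR_0(\kappa)$ and exploiting the boundedness of the free Green's function --- is the right one, but both halves of the argument have genuine gaps. For the resolvent: the infinite Born series converges only when $\kappa^{-1}\|\jb{x}^{-b}q\|_{L^2}^2$ is small, and your proposed repair (establish everything for large $\kappa$, then propagate down to $\kappa\geq1$ via the first resolvent identity) does not close. Iterating $\LaxR(\kappa)=\LaxR(\kappa_0)-(\kappa-\kappa_0)\LaxR(\kappa_0)\LaxR(\kappa)$ requires weighted kernel bounds for arbitrarily high powers of $\LaxR(\kappa_0)$; since the weights $\jb{y_1}^b\jb{y_2}^b$ \emph{grow}, such compositions cannot be controlled without off-diagonal decay of the full (not free) Green's function, which at this stage of the paper is not yet available (it is the content of the subsequent Combes--Thomas proposition). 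Moreover, even where your series converges it yields a geometric-series bound of the form $(1-r)^{-1}$ rather than the stated $1+\kappa^{-1}\|\jb{x}^{-b}q\|_{L^2}^2$. The missing idea is to truncate the resolvent expansion exactly at second order, as in \eqref{basicRI}: $\LaxR-\LaxR_0=-\LaxR_0Q\LaxR_0+\LaxR_0Q\LaxR Q\LaxR_0$. Here the unknown resolvent appears exactly once, sandwiched between the $L^2$ vectors $Q\LaxR_0\delta_{y_1}$ and $Q\LaxR_0\delta_{y_2}$, and anti-selfadjointness supplies $\|\LaxR(\kappa)\|_{\op}\leq\kappa^{-1}$ for free; Cauchy--Schwarz then gives \eqref{GinLinfty}, continuity, and $\kappa$-analyticity for all $\kappa\geq1$ and all admissible $q$ in one stroke, with no smallness assumption, no infinite summation, and no continuation argument.

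For essential selfadjointness, the identity you lean on is false. The deficiency equation for $i\Lax$ is $\Lax u=\pm u$ (a real eigenvalue for the adjoint, not $(\Lax\pm i)u=0$), and for a solution $u=\bigl[\begin{smallmatrix}\phi\\ \psi\end{smallmatrix}\bigr]$ one computes $\tfrac{d}{dx}(|\phi|^2+|\psi|^2)=4q\Re(\bar\phi\psi)\pm2(|\psi|^2-|\phi|^2)$, which is not zero; the analogous computation for your equation gives $4q\Re(\bar\phi\psi)$, again not zero. The deficiency-index route is salvageable --- from $\tfrac{d}{dx}(|\psi|^2-|\phi|^2)=\pm2(|\phi|^2+|\psi|^2)$ one sees that $|\psi|^2-|\phi|^2$ is monotone and integrable, hence identically zero, whence $u\equiv0$ --- but as written the key step fails. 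The paper takes a different and simpler route: it identifies the adjoint domain of the minimal operator as \eqref{D(L)}, integrates by parts over $[a,b]$ and sends $a,b\to\pm\infty$ (the boundary terms vanish because all four components are $L^2$ and the integrand is $L^1$ by the domain conditions) to conclude that the adjoint is itself anti-symmetric, and then invokes the general fact that a symmetric operator whose adjoint is symmetric is essentially selfadjoint. This sidesteps both the deficiency computation and any cutoff or mollification of the unbounded potential.
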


\begin{proof}
Let us write $\mathfrak l$ for the operator given by \eqref{LaxAKNS} with domain $C^\infty_c(\R;\C^2)$.  It is not difficult to verify that $\mathfrak l$ is anti-symmetric and that the domain of its adjoint $\mathfrak l^*$ is the set given in \eqref{D(L)}.   In a moment, we will show that $\mathfrak l^*$ is anti-symmetric.  By general arguments, it then follows that $\mathfrak l^{*{\mkern -1mu}*}$, which is the closure of $\mathfrak l$, coincides with $-\mathfrak l^*$.  This verifies all the claims made in the first paragraph.

To show that $\mathfrak l^*$ is anti-symmetric, let $\bigl[\begin{smallmatrix}\phi\\ \psi\end{smallmatrix} \bigr]$ and $\bigl[\begin{smallmatrix}\varphi\\ \theta\end{smallmatrix} \bigr]$ belong to the domain of $\mathfrak l^*$, which is given by \eqref{D(L)}.  For every finite interval $[a,b]$, we may apply the fundamental theorem of calculus to obtain
\begin{align*}
\int_a^b \overline{\phi}[q\theta - \varphi'] + \overline{\psi} [\theta' - q\varphi] +
	[q\overline\psi - \overline\phi']\varphi + [\overline\psi' - q\overline\phi] \theta \, dx
		= \Bigl[ \overline\psi \theta - \overline{\phi} \varphi \Bigr]_a^b .
\end{align*}
The $H^1_\loc$ regularity of the four functions $\phi,\psi,\varphi,\theta$ was essential here; the final conditions in \eqref{D(L)} guarantee that the integrand on the left-hand side is actually $L^1$.  Moreover, as all four functions belong to $L^2$, we may send $a$ and $b$ to $\pm\infty$ and see that the integral over the full line vanishes.  This precisely shows that $\mathfrak l^*$ is anti-symmetric.

We turn now to the discussion of Green's functions.  From the explicit form \eqref{G0} and the elementary inequality
$\langle x\rangle \lesssim \langle y\rangle \langle x-y\rangle$, we find that for each $b\in\R$,
\begin{align}
| G_0(x,y;\kappa) | \lesssim_b  \langle y\rangle^b \langle x\rangle^{-b} \qtq{uniformly for} x,y\in\R \qtq{and} |\kappa|\geq 1.
\end{align}
From this, we deduce easily that 
\begin{align}
\| Q\LaxR_0(\kappa) \delta_y \|_{L^2} &\lesssim_b \langle y\rangle^{b} \| \langle x\rangle^{-b} q(x) \|_{L^2} 
\end{align}
and also that the map $y\mapsto  Q\LaxR_0(\kappa) \delta_y$ is $L^2$-continuous and depends analytically on $\kappa$.  Likewise, $y\mapsto\LaxR_0(\kappa) \delta_y$ is an analytic function of $\kappa$ that is $L^2$-continuous and
\begin{align}
\| \LaxR_0(\kappa) \delta_y \|_{L^2} &\lesssim \kappa^{-1/2} . 
\end{align}
In this way, the claims made about the function \eqref{G-G0}, including the bound \eqref{GinLinfty}, follow from the resolvent expansion
\begin{equation}\label{basicRI}
\LaxR(\kappa)-\LaxR_0(\kappa) = - \LaxR_0(\kappa) Q \LaxR_0(\kappa) + \LaxR_0(\kappa)Q\LaxR(\kappa)Q\LaxR_0(\kappa).  \qedhere 
\end{equation}
\end{proof}

From \eqref{max on I}, we see that periodic Gibbs samples are almost surely bounded.  For such $q\in L^\infty(\R)$, the domain of $\Lax$ simplifies to just $H^1(\R;\C^2)$.  In the infinite-volume case, however, Gibbs samples are almost surely unbounded (this follows from ergodicity) and correspondingly, the domain given in \eqref{D(L)} genuinely depends on $q$: it is random!

Our proof that $\Lax$ is anti-selfadjoint was inspired by arguments in \cite{MR1136037}.  It did not rely on the specific growth condition \eqref{det L hyp}, but merely that $q$ is square-integrable on every finite interval.  The fact that the Dirac-type operator $i\mathbf L$ can be essentially selfadjoint without growth restrictions on the potential marks a striking departure from the case of Schr\"odinger operators.  The physical origin of this distinction was elegantly articulated in \cite{Chernoff}: the relativistic speed limit inherent to Dirac operators ensures that the quantum particle cannot reach infinity in finite time.

For much of our analysis, the range of $\kappa$ presented in \eqref{kappa care} can be reduced to simply $\kappa\geq 1$ by virtue of the following symmetry: 
\begin{equation}\label{k symmetry}
-\Lax = \Lax^* = \sigma_1 \Lax \:\! \sigma_1
	\qtq{and so} \LaxR(-\kappa) = -\LaxR(\kappa)^* = -\sigma_1 \LaxR(\kappa)\sigma_1 .
\end{equation}
Here $\sigma_1$ denotes the first of the three Pauli matrices; indeed,
\begin{equation}\label{Lax Pauli}
\Lax = iq\sigma_2 - \sigma_3 \partial , \quad
	\sigma_1 := \bigl[\begin{smallmatrix} 0&1\\1&0	\end{smallmatrix}\bigr], \quad
	\sigma_2 := \bigl[\begin{smallmatrix} 0&-i\\ i&\,0	\end{smallmatrix}\bigr],  \qtq{and} 
	\sigma_3 := \bigl[\begin{smallmatrix} 1&0\\ 0&-1	\end{smallmatrix}\bigr].
\end{equation}

Pointwise bounds on the Green's function such as \eqref{GinLinfty} can also be interpreted as (weighted) $L^1\to L^\infty$ bounds on the resolvent; such bounds will be an important part of the subsequent analysis.

Another key ingredient in our analysis is off-diagonal decay of the Green's function.  There is a robust means of obtaining such decay, namely, the Combes--Thomas argument.  For the operators we study, this argument leads us to the particularly elegant result stated next.  Notice that the estimate \eqref{CT est} holds with a constant that is independent of the potential.  This plays a major role in allowing us to avoid the full complexity of a multiscale analysis in this paper.

\begin{proposition}
Given $\kappa\geq 1$ and disjoint intervals $I,J\subset\R$, we have
\begin{align}\label{CT est}
\bigl\| \chi_I \LaxR(\kappa) \chi_J \bigr\|_{\op} \leq \tfrac{2}{\kappa} \exp\bigl\{ -\tfrac\kappa2 \dist(I,J) \bigr\} .
\end{align}
Here $\chi_I$ and $\chi_J$ denote (multiplication by) the corresponding indicator functions.  Moreover, the Green's function satisfies
\begin{align}\label{CT G off}
|G(y_1,y_2;\kappa)| \lesssim_b \langle y_1\rangle^b \langle y_2\rangle^b \bigl[ 1 + \kappa^{-1} \| \langle x\rangle^{-b} q \|_{L^2}^2  \bigr]\exp\bigl\{ -\tfrac\kappa6 |y_1-y_2| \bigr\}.
\end{align}
\end{proposition}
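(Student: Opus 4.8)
\emph{Proof idea.} The argument is a Combes--Thomas estimate, and it becomes especially clean here because of the Dirac structure of $\Lax$: conjugating $\Lax = iq\sigma_2 - \sigma_3\partial$ by a weight produces only a \emph{bounded} perturbation, with no ``$|\rho'|^2$''-type term of the kind that appears for Schr\"odinger operators. Concretely, if $\rho$ is a bounded real-valued Lipschitz function with $\|\rho'\|_{L^\infty(\R)}\le 1$, then $e^{\pm\alpha\rho}$ are bounded operators with bounded inverses preserving the domain \eqref{D(L)}, and a one-line computation gives $e^{\alpha\rho}\Lax e^{-\alpha\rho} = \Lax + \alpha\rho'\sigma_3 =: \Lax_\alpha$. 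Since $\|\rho'\sigma_3\|_{\op}\le 1$ and $\|\LaxR(\kappa)\|_{\op}\le\kappa^{-1}$ by the anti-selfadjointness established in Proposition~\ref{P:Lax Op}, a Neumann series shows that $\Lax_\alpha+\kappa$ is invertible for every $\alpha$ with $|\alpha|<\kappa$ and that
\[
\LaxR_\alpha(\kappa) := (\Lax_\alpha+\kappa)^{-1} = e^{\alpha\rho}\LaxR(\kappa)e^{-\alpha\rho}, \qquad \|\LaxR_\alpha(\kappa)\|_{\op}\le \tfrac1{\kappa-|\alpha|}.
\]
Crucially, this bound is entirely independent of $q$, which is exactly the universality referred to in the statement.

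\emph{The operator estimate \eqref{CT est}.} Given disjoint intervals $I,J\subset\R$, one lies entirely to one side of the other; after a reflection $x\mapsto -x$ (a symmetry compatible with the structure of $\Lax$) we may assume $\sup I\le\inf J$, and set $d=\dist(I,J)$. Choose $\rho(x)=\min\{d,\max\{0,\inf J - x\}\}$, which is bounded, $1$-Lipschitz, identically $d$ on $I$ and identically $0$ on $J$. Then $\chi_I e^{-\alpha\rho}=e^{-\alpha d}\chi_I$ and $e^{\alpha\rho}\chi_J=\chi_J$, so the identity for $\LaxR_\alpha(\kappa)$ gives $\chi_I\LaxR(\kappa)\chi_J = e^{-\alpha d}\,\chi_I\LaxR_\alpha(\kappa)\chi_J$ and hence $\|\chi_I\LaxR(\kappa)\chi_J\|_{\op}\le(\kappa-\alpha)^{-1}e^{-\alpha d}$ for $0<\alpha<\kappa$; taking $\alpha=\kappa/2$ yields \eqref{CT est}.

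\emph{The pointwise estimate \eqref{CT G off}.} Fix $y_1\ne y_2$, put $d=|y_1-y_2|$, and assume $y_1<y_2$. Now take $\rho$ to be a bounded $1$-Lipschitz function equal to $d$ near $y_1$, equal to $0$ near $y_2$, and decreasing monotonically in between, arranged so that $\rho'$ is supported in a set of measure $\le d$. With this weight $\Lax_\alpha=\Lax_0+\widetilde Q$ with $\widetilde Q = Q+\alpha\rho'\sigma_3$ a matrix-valued multiplication operator; since the new piece is bounded and compactly supported, $\widetilde Q$ still obeys
\[
\|\langle x\rangle^{-b}\widetilde Q\|_{L^2}^2 \lesssim \|\langle x\rangle^{-b} q\|_{L^2}^2 + \alpha^2 d.
\]
The Green's function of $\Lax_\alpha$ is $G_\alpha(x,y;\kappa)=e^{\alpha\rho(x)}G(x,y;\kappa)e^{-\alpha\rho(y)}$, and the reasoning behind Proposition~\ref{P:Lax Op} — which used only the resolvent expansion \eqref{basicRI}, the bound $\|\LaxR_\alpha(\kappa)\|_{\op}\lesssim\kappa^{-1}$, and finiteness of $\|\langle x\rangle^{-b}\widetilde Q\|_{L^2}$, none of which is sensitive to $\widetilde Q$ failing to be off-diagonal or to $\Lax_\alpha$ being merely invertible rather than anti-selfadjoint — gives
\[
|G_\alpha(y_1,y_2;\kappa)| \lesssim_b \langle y_1\rangle^b\langle y_2\rangle^b\bigl[1 + \kappa^{-1}\|\langle x\rangle^{-b}q\|_{L^2}^2 + \kappa^{-1}\alpha^2 d\bigr].
\]
Since $\rho(y_1)=d$ and $\rho(y_2)=0$ we have $G(y_1,y_2;\kappa)=e^{-\alpha d}G_\alpha(y_1,y_2;\kappa)$; choosing $\alpha=\kappa/2$ and using that $s\mapsto s e^{-s/3}$ is bounded on $[0,\infty)$ to absorb the factor $\kappa d\, e^{-\kappa d/2}$ into $e^{-\kappa d/6}$ yields \eqref{CT G off}.

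\emph{The main obstacle.} The one genuinely delicate point is this last step: conjugating by a weight that changes by $d$ over the gap forces a term of size $\sim\alpha^2 d$ into the weighted $L^2$-norm of the effective potential $\widetilde Q$, and if we insisted on the optimal decay rate $e^{-\kappa d/2}$ (that is, $\alpha\uparrow\kappa$) this polynomial-in-$d$ loss could not be absorbed into the final bound. The resolution is precisely to conjugate at a \emph{sub-critical} rate $\alpha=\kappa/2$: the slack between $e^{-\kappa d/2}$ and the claimed $e^{-\kappa d/6}$ is exactly what swallows the $\kappa d$ factor. Everything else is routine — the conjugation identity is a direct computation, and the transfer of Proposition~\ref{P:Lax Op} to $\Lax_\alpha$ is verbatim.
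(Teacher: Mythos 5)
Your proof is correct. For the operator bound \eqref{CT est} your argument is essentially the paper's: the paper conjugates by the global linear weight $e^{-\kappa x/2}$, which turns $\Lax$ into $\Lax-\tfrac\kappa2\sigma_3$ and yields $\|e^{-\kappa x/2}\LaxR(\kappa)e^{\kappa x/2}\|_{\op}\le\tfrac2\kappa$ via the same Neumann-series/anti-selfadjointness observation you use; your bounded truncated weight at the sub-critical rate $\alpha=\kappa/2$ is a cosmetic variant giving identical constants. For \eqref{CT G off}, however, you take a genuinely different route. The paper keeps the potential fixed and instead distributes weights $e^{\pm\kappa|x-y_i|/2}$ across the factors of the resolvent expansion \eqref{basicRI}; the heart of its argument is the estimate \eqref{893}, proved by a three-interval partition of unity and nine applications of \eqref{CT est}, with the rate $\kappa/6$ emerging from the geometry ($\dist(I_-,I_+)=\tfrac13|y_1-y_2|$). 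You instead conjugate the whole operator by a weight adapted to $(y_1,y_2)$, absorb $\alpha\rho'\sigma_3$ into an enlarged potential $\widetilde Q$, and rerun the proof of \eqref{GinLinfty} verbatim; the rate $\kappa/6$ then comes from sacrificing part of the $e^{-\kappa d/2}$ decay to swallow the polynomial loss $\kappa^{-1}\alpha^2 d\sim\kappa d$. Your version avoids the partition-of-unity bookkeeping at the cost of checking — which you correctly do — that the proof of \eqref{GinLinfty} uses neither the off-diagonal structure of $Q$ nor anti-selfadjointness, only the resolvent bound $\|\LaxR_\alpha(\kappa)\|_{\op}\le(\kappa-\alpha)^{-1}$ and finiteness of $\|\langle x\rangle^{-b}\widetilde Q\|_{L^2}$. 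Two trivial quibbles: since $\rho$ drops by $d$ with $|\rho'|\le1$, the support of $\rho'$ has measure at least $d$ (take it equal to $d$, e.g.\ $\rho(x)=\min\{d,\max\{0,y_2-x\}\}$); and in your closing remark the "optimal" rate as $\alpha\uparrow\kappa$ would be $e^{-\kappa d}$, not $e^{-\kappa d/2}$ — but the actual computation with $\alpha=\kappa/2$ is exactly right.
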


\begin{proof}
Direct computation shows that
\begin{align*}
e^{-\kappa x/2} \Lax e^{+ \kappa x/2} = \Lax - \tfrac12 \kappa \sigma_3
	\ \ \text{and so}\ \ e^{-\kappa x/2} \LaxR(\kappa) e^{+ \kappa x/2} = \LaxR(\kappa)  \bigl[ 1 - \tfrac{\kappa}{2}\LaxR(\kappa) \sigma_3\bigr]^{-1}.
\end{align*}
The inverse operator appearing here really does exist: $\Lax$ is anti-selfadjoint and so $\kappa \LaxR(\kappa)$ has norm bounded by one. In this way, we deduce that
\begin{align*}
\| e^{-\kappa x/2} \LaxR(\kappa) e^{+ \kappa x/2} \|_{\op} \leq \tfrac{2}{\kappa}.
\end{align*}

When $I$ is to the left of $J$, we prove \eqref{CT est} by noting that
\begin{align*}
\bigl\| \chi_I \LaxR(\kappa) \chi_J \bigr\|_{\op}
	\leq \exp\bigl\{ -\tfrac12 \kappa \dist(I,J) \bigr\} \bigl\|  e^{-\kappa x/2} \LaxR(\kappa) e^{+\kappa x/2} \bigr\|_{\op}.
\end{align*}
When $I$ and $J$ are oppositely ordered, the proof runs precisely parallel but with the conjugating weights reversed.

To verify \eqref{CT G off}, we merely need to incorporate the Combes--Thomas estimate \eqref{CT est} into our earlier proof of \eqref{GinLinfty}.  The key observations are 
\begin{align}\label{893}
\| e^{-\kappa|x-y_2|/2} \LaxR(\kappa)  e^{-\kappa|x-y_1|/2}\|_{\op} \lesssim \kappa^{-1} e^{-\kappa|y_1-y_2|/6},
\end{align}
whose proof will be given below, and the elementary bound
\begin{align*}
\| e^{\kappa|x-y|/2} Q \LaxR_0(\kappa) \delta_y\|_{L^2_x} \lesssim_b \langle y\rangle^{b}\| \langle x\rangle^{-b} q \|_{L^2} .
\end{align*}

To prove \eqref{893}, we first define three intervals
$$
I_0= \{x: |x-\tfrac{y_1+y_2}{2}\bigr| \leq \tfrac{|y_1-y_2|}{6}\bigr\}
	\qtq{and} I_\pm = \{x: \pm\bigl[x-\tfrac{y_1+y_2}{2}\bigr] > \tfrac{|y_1-y_2|}{6}\bigr\}
$$
and then insert the partition of unity $\chi_{I_+}+\chi_{I_-}+\chi_{I_0}$ on either side of $\LaxR(\kappa)$.  This yields nine terms to estimate, many of which split into two cases depending on the ordering of $y_1$ and $y_2$.  Most cases can be treated just by exploiting the exterior weights.  As an example of the remaining cases, we note that \eqref{CT est} shows
$$
\| e^{-\kappa|x-y_2|/2} \chi_{I_-} \LaxR(\kappa) \chi_{I_+} e^{-\kappa|x-y_1|/2}\|_{\op} 
	\leq \tfrac{2}{\kappa} \exp\bigl\{ -\tfrac\kappa2 \cdot \tfrac{|y_1-y_2|}{3}\bigr\} ,
$$
which we need to use when $y_1\in I_+$ and $y_2\in I_-$.
\end{proof}

\begin{corollary}
For any $\alpha\in(0,1)$ and $\kappa\geq 1$,
\begin{align}\label{multcomm}
\|e^{\langle x\rangle^\alpha}\LaxR(\kappa)e^{-\langle x\rangle^\alpha} \|_{\op} + 
	\|e^{- \langle x\rangle^\alpha}\LaxR(\kappa)e^{\langle x\rangle^\alpha} \|_{\op} \lesssim_\alpha \kappa^{-1} .
\end{align}
\end{corollary}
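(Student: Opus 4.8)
The plan is to repeat the Combes--Thomas argument used to prove \eqref{CT est}, replacing the linear weight $\kappa x/2$ by the subexponential weight $w(x):=\langle x\rangle^\alpha$. Everything hinges on one elementary observation: because $\alpha<1$ and $|x|\le\langle x\rangle$, the derivative $w'(x)=\alpha x\langle x\rangle^{\alpha-2}$ obeys $|w'(x)|\le\alpha\langle x\rangle^{\alpha-1}\le\alpha<1$ for all $x$. Thus multiplication by $\sigma_3 w'(x)$ is a \emph{bounded} operator on $L^2(\R;\C^2)$, of norm at most $\alpha$, and this bound is independent of $\kappa$. By symmetry it suffices to estimate the first summand $e^{-w}\LaxR(\kappa)e^{w}$ in \eqref{multcomm}; the second, $e^{w}\LaxR(\kappa)e^{-w}$, is handled identically upon replacing $w$ by $-w$, which leaves $\|w'\|_{L^\infty}$ unchanged.

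First I would record the conjugation identity $e^{-w}\Lax\,e^{w}=\Lax-\sigma_3 w'(x)$, obtained exactly as in the proof of \eqref{CT est}; the only difference is that $w'$ is now a function rather than the constant $\kappa/2$. Since $\Lax$ is anti-selfadjoint we have $\|\LaxR(\kappa)\|_{\op}\le\kappa^{-1}$, so $\|\sigma_3 w'(x)\LaxR(\kappa)\|_{\op}\le\alpha\kappa^{-1}\le\alpha<1$, and therefore $1-\sigma_3 w'(x)\LaxR(\kappa)$ is invertible by Neumann series with inverse of norm at most $(1-\alpha)^{-1}$. Consequently $\Lax+\kappa-\sigma_3 w'(x)$ is invertible, with
\begin{align*}
\bigl(\Lax+\kappa-\sigma_3 w'(x)\bigr)^{-1}=\LaxR(\kappa)\bigl[1-\sigma_3 w'(x)\LaxR(\kappa)\bigr]^{-1}
\end{align*}
of operator norm at most $\tfrac{1}{(1-\alpha)\kappa}$.

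Next I would identify this bounded operator with the a priori only densely defined expression $e^{-w}\LaxR(\kappa)e^{w}$. For $f\in C^\infty_c(\R;\C^2)$ the function $e^{w}f$ still lies in $L^2$, so $u:=\LaxR(\kappa)(e^{w}f)$ belongs to the domain \eqref{D(L)} and solves $(\Lax+\kappa)u=e^{w}f$. Multiplying the conditions in \eqref{D(L)} by the smooth factor $e^{-w}$, whose logarithmic derivative $-w'$ is bounded, one checks that $v:=e^{-w}u$ again lies in $D(\Lax)$ and that $\Lax v=e^{-w}\Lax u+\sigma_3 w'(x)v$; hence $(\Lax+\kappa-\sigma_3 w'(x))v=e^{-w}(\Lax+\kappa)u=f$. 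Thus $e^{-w}\LaxR(\kappa)(e^{w}f)=v$ equals the bounded operator above on the dense set $C^\infty_c$, so its closure is bounded by $\tfrac{1}{(1-\alpha)\kappa}\lesssim_\alpha\kappa^{-1}$. Adding the analogous bound for $e^{w}\LaxR(\kappa)e^{-w}$ yields \eqref{multcomm}.

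I expect the main --- indeed the only --- obstacle to be this last bookkeeping: making precise the sense in which ``$e^{-w}\LaxR(\kappa)e^{w}$'' \emph{is} the bounded operator $(\Lax+\kappa-\sigma_3 w'(x))^{-1}$, given that $e^{\pm w}$ are unbounded multipliers. This is exactly the subtlety already present in the proof of \eqref{CT est} (where the weight $e^{\pm\kappa x/2}$ is likewise unbounded) and is resolved in the same way, namely by checking the identity on compactly supported functions, on which $e^{\pm w}$ map into $L^2$. No multiscale input or probabilistic estimate is needed: the corollary is purely deterministic, following from the anti-selfadjointness of $\Lax$ together with the boundedness of $w'$.
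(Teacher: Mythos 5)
Your proof is correct, but it takes a genuinely different route from the paper's. The paper does not conjugate by the subexponential weight directly; instead it partitions $\R$ into unit intervals $I_n$, applies the already-established Combes--Thomas bound \eqref{CT est} to each block $\chi_{I_n}\LaxR(\kappa)\chi_{I_m}$, invokes the elementary inequality $\bigl||n|^\alpha-|m|^\alpha\bigr|\le\tfrac14|n-m|+C(\alpha)$, and closes with Schur's test. Your argument instead reruns the Combes--Thomas conjugation itself with $w=\langle x\rangle^\alpha$, using the single observation that $\|w'\|_{L^\infty}\le\alpha<1\le\kappa$, so that $1-\sigma_3 w'\LaxR(\kappa)$ is invertible by Neumann series; this yields the explicit bound $\tfrac{1}{(1-\alpha)\kappa}$ and entirely avoids the discretization and Schur's test. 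What each approach buys: yours is more direct and quantitative, and the identification of $e^{-w}\LaxR(\kappa)e^{w}$ with the bounded inverse $(\Lax+\kappa-\sigma_3w')^{-1}$ via the domain description \eqref{D(L)} on $C^\infty_c$ is carried out more carefully than the analogous step in the paper's proof of \eqref{CT est}; the paper's version is more modular, deducing \eqref{multcomm} purely from the off-diagonal decay already recorded in \eqref{CT est} without revisiting the operator-theoretic conjugation. Both give constants degenerating as $\alpha\to1^-$, which is all that $\lesssim_\alpha$ requires.
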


\begin{proof}
We focus on $e^{\langle x\rangle^\alpha}\LaxR(\kappa)e^{-\langle x\rangle^\alpha}$; the other operator may be treated in a parallel manner.

We partition $\R$ into unit-length intervals $I_n:=[n-\frac12,n+\frac12)$, $n\in\Z$, with a view to using \eqref{CT est}.  For any $f,h\in L^2(\R)$, we may then estimate
\begin{align*}
\bigl| \langle f, e^{\langle x\rangle^\alpha}\LaxR(\kappa)e^{-\langle x\rangle^\alpha} h\rangle \bigr| 
	\lesssim \sum_{n,m\in \Z} \| f \|_{L^2(I_n)} e^{|n|^\alpha - |m|^\alpha}  \bigl\| \chi_{I_n} \LaxR(\kappa) \chi_{I_m} \bigr\|_{\op} \| h \|_{L^2(I_m)},
\end{align*}
by noting that $|\langle x\rangle^\alpha - |n|^\alpha| \leq 2$ for all $x\in I_n$ and $n\in\Z$.

Next we observe that as $\alpha\in(0,1)$, one may choose $C(\alpha)<\infty$ so that
$$
\bigl| |n|^\alpha - |m|^\alpha \bigr| \leq \tfrac14 | n-m| + C(\alpha) \quad \text{for all $n,m\in\Z$.}
$$
In this way, we may apply Schur's test and \eqref{CT est} to see that
\begin{align*}
\bigl| \langle f, e^{\langle x\rangle^\alpha}\LaxR(\kappa)e^{-\langle x\rangle^\alpha} h\rangle \bigr| 
	&\lesssim_\alpha  \kappa^{-1} \sum_{n,m\in \Z}  e^{ - \frac14 | n-m|}\| f \|_{L^2(I_n)} \| h \|_{L^2(I_m)} \\
	&\lesssim_\alpha \kappa^{-1} \| f \|_{L^2} \| h \|_{L^2},
\end{align*}
which proves \eqref{multcomm}.
\end{proof}

\subsection{Diagonal Green's functions} In this subsection, we consider the behaviour of the (matrix) Green's function $G(x,y)$ at the confluence of $x$ and $y$.   We continue to consider general $q$ satisfying \eqref{det L hyp}.

From \eqref{G0} we see that even when $q\equiv 0$, the Green's function has a jump discontinuity at the diagonal; nevertheless, the continuity demonstrated in Proposition~\ref{P:Lax Op} and the diagonal character of $G_0$ allow us to define continuous real-valued functions
\begin{align}
\gamma(x;\kappa)&:=[G-G_0]_{11}(x,x;\kappa)+[G-G_0]_{22}(x,x;\kappa), \label{gamma} \\
g_-(x;\kappa) & := G_{21}(x,x;\kappa)-G_{12}(x,x;\kappa),\label{r} \\
g_+(x;\kappa)& := G_{21}(x,x;\kappa)+G_{12}(x,x;\kappa).\label{p}
\end{align}
Here, the numerical subscripts indicate matrix entries.  By \eqref{k symmetry}, these functions satisfy the following symmetries
\begin{align}\label{922}
g_-(-\kappa) = g_-(\kappa), \quad g_+(-\kappa) = - g_+(\kappa), \qtq{and} \gamma(- \kappa)&= -\gamma(\kappa).
\end{align}

At first glance, it may seem we are omitting information: the Green's matrix has four entries and we have only defined three functions.  However, by \eqref{k symmetry},
$$
[G-G_0]_{11}(x,x;\kappa) = [G-G_0]_{22}(x,x;\kappa).
$$
In fact, there is one further constraint on the diagonal Green's function, namely, \eqref{qquad}.  This and other foundational facts about our three functions $\gamma$ and $g_\pm$ form the centerpiece of this subsection. 

\begin{proposition}[Properties of $\gamma$, $g_+$, and $g_-$]\label{P:det diag}
Fix $b>0$.  For $q$ satisfying \eqref{det L hyp}, we have
\begin{align}
\| \langle x \rangle^{-4b} \gamma\|_{H^{1}_{\kappa}(\R)}
	&\lesssim_b \kappa^{-\frac{1}{2}} \|\langle x\rangle^{-b}q\|_{L^2}^{2}+\kappa^{-\frac32}\|\langle x\rangle^{-b}q\|_{L^2}^{4}, \label{gbd'} \\
\|\langle x \rangle^{-3b} g_\pm \|_{H^{1}_{\kappa}(\R)}
	&\lesssim_b \|\langle x\rangle^{-b}q\|_{L^2} +\kappa^{-1}\|\langle x\rangle^{-b}q\|_{L^2}^{3}, \label{pbd'}\\
\|\langle x \rangle^{-3b} g_-\|_{H^{2}_{\kappa}(\R)}
	&\lesssim_b \kappa\|\langle x\rangle^{-b}q\|_{L^2} + \|\langle x\rangle^{-b}q\|_{L^2}^{3}, \label{rbd'}
\end{align}
uniformly for $\kappa\geq 1$.  Moreover, for such $\kappa$, we have the identities
\begin{align}
\gamma'(\kappa)& = 2q g_+(\kappa), \label{Gderiv} \\
g_+'(\kappa)& =-2\kappa g_-(\kappa)+2 q(1+\gamma(\kappa)), \label{pderiv}\\
g_-'(\kappa)&=-2\kappa g_+(\kappa), \label{rderiv} \\
\gamma+\tfrac{1}{2}\gamma^2 &= \tfrac{1}{2} (g_+^2-g_-^2), \label{qquad}
\end{align}
and the pointwise inequalities 
\begin{align}
|g_+(x;\kappa)| < 1+\gamma(x;\kappa)  \qtq{and}  |g_-(x;\kappa)| < 1 .  \label{gammap1}
\end{align}
\end{proposition}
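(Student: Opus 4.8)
The strategy is to establish the algebraic identities first, since these drive the bounds; throughout, $'$ denotes $\partial_x$, and I work with the (unambiguous, by Proposition~\ref{P:Lax Op}) symmetrized diagonal value $M(x;\kappa)$ of the Green's matrix, expanded in the Pauli basis as $M=m_0 I+m_1\sigma_1+m_2\sigma_2+m_3\sigma_3$. Comparing with the explicit kernel \eqref{G0} and using the relation $[G-G_0]_{11}(x,x)=[G-G_0]_{22}(x,x)$ recorded above the proposition, one reads off $m_3\equiv 0$, $m_0=\tfrac12(1+\gamma)$, $m_1=\tfrac12 g_+$, and $m_2=\tfrac1{2i}g_-$ for $\kappa\ge 1$.

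\emph{Step 1 (identities \eqref{Gderiv}--\eqref{qquad}).} From $(\Lax+\kappa)\LaxR=\LaxR(\Lax+\kappa)=I$ and the Pauli form \eqref{Lax Pauli}, the Green's matrix solves the first-order systems $\partial_x G=(q\sigma_1+\kappa\sigma_3)G$ and $\partial_y G=G(q\sigma_1-\kappa\sigma_3)G$ away from the diagonal (weakly in $x$, since $q$ is only $L^2_{\loc}$), with the appropriate jump of size $\sigma_3$ across $x=y$. Averaging the two one-sided diagonal limits then gives $M'=(q\sigma_1+\kappa\sigma_3)M+M(q\sigma_1-\kappa\sigma_3)$, and reading off the coefficients of $I,\sigma_1,\sigma_2$ (the $\sigma_3$-coefficient vanishing, in agreement with $m_3\equiv0$) produces exactly \eqref{Gderiv}, \eqref{pderiv}, \eqref{rderiv}. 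For \eqref{qquad}, put $\Phi:=\gamma+\tfrac12\gamma^2-\tfrac12(g_+^2-g_-^2)$; a one-line substitution of \eqref{Gderiv}--\eqref{rderiv} gives $\Phi'=0$, and since $\gamma,g_\pm$ are continuous (Proposition~\ref{P:Lax Op}) with locally $L^2$ derivatives, $\Phi$ is a $\kappa$-dependent constant. To show it is zero I would first take $q$ compactly supported: a direct computation with the resolvent expansion \eqref{basicRI} and the kernel \eqref{G0} shows that $\gamma(x;\kappa)$ and $g_\pm(x;\kappa)$ vanish (exponentially) as $x\to+\infty$, so $\Phi\equiv0$; the general case follows by truncating $q$ to $[-n,n]$ and invoking the continuous dependence of $G-G_0$ on $q$ furnished by \eqref{basicRI} together with the $Q\LaxR_0(\kappa)\delta_y$ bounds from the proof of Proposition~\ref{P:Lax Op}.

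\emph{Step 2 (inequalities \eqref{gammap1}).} Since $\Lax$ is anti-selfadjoint, for $\kappa\ge1$ one has $\LaxR(\kappa)+\LaxR(\kappa)^*=2\kappa\,\LaxR(\kappa)^*\LaxR(\kappa)\ge0$, and its kernel is continuous on the diagonal with value $\tfrac1{2\kappa}(M+M^*)=\tfrac1{2\kappa}\bigl[\begin{smallmatrix}1+\gamma&g_+\\ g_+&1+\gamma\end{smallmatrix}\bigr]$. For any vector $v$, $v^*(M+M^*)v=2\kappa\int|G(z,x;\kappa)v|^2\,dz$, which is strictly positive unless $G(\cdot,x;\kappa)v\equiv0$; but $z\mapsto G(z,x;\kappa)v$ lies in $L^2$ by the exponential decay in \eqref{CT G off} and solves $(\Lax+\kappa)[G(\cdot,x;\kappa)v]=\delta_x v$, hence cannot vanish for $v\ne0$. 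Thus the $2\times2$ matrix is strictly positive definite, i.e. $1+\gamma>|g_+|\ge0$; in particular $1+\gamma>0$ and $|g_+|<1+\gamma$. Rewriting \eqref{qquad} as $(1+\gamma)^2-g_+^2=1-g_-^2$ then forces $1-g_-^2>0$, i.e. $|g_-|<1$.

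\emph{Step 3 (norm bounds \eqref{gbd'}--\eqref{rbd'}).} I would prove these by bootstrapping off Step~1. The base step is a direct weighted $L^2$ estimate of $\gamma$ and $g_\pm$ from \eqref{basicRI} and the explicit $\LaxR_0(\kappa)$: the off-diagonal part of $G-G_0$ reduces (to leading order) to $q$ convolved with $e^{-2\kappa|\cdot|}$ or $\sign(\cdot)e^{-2\kappa|\cdot|}$, contributing the $\kappa^{-1}\|\langle x\rangle^{-b}q\|_{L^2}$-type terms, while $\gamma$ enters only at quadratic order and gains the extra $\kappa^{-1/2}$ from the middle free resolvent; the weights $\langle x\rangle^{-b}$ are propagated using $\langle x\rangle\lesssim\langle y\rangle\langle x-y\rangle$ against the exponential off-diagonal decay. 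Combined with the already-established pointwise bounds \eqref{GinLinfty}, \eqref{CT G off} (which give $\langle x\rangle^{-2b}\gamma,\langle x\rangle^{-2b}g_\pm\in L^\infty$) and the interpolation $\|f\|_{L^\infty}^2\lesssim\|f\|_{L^2}\|f'\|_{L^2}$, this controls the products $qg_+$ and $q(1+\gamma)$ appearing on the right of \eqref{Gderiv}--\eqref{pderiv}; the derivative contributions to the $H^1_\kappa$ (and $H^2_\kappa$, for $g_-$) norms are then obtained by reading \eqref{Gderiv}--\eqref{rderiv} as formulas for $\gamma',g_\pm'$ (and $g_-''=-4\kappa q(1+\gamma)+4\kappa^2g_-$), each trading a derivative for one power of $\kappa$ times a lower-order quantity, with the cross terms from differentiating $\langle x\rangle^{-cb}$ harmless because $\langle x\rangle^{-1}\le1$. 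I expect this last step — the careful bookkeeping of the $\langle x\rangle^b$-weights against the exponential decay, and the sharp tracking of powers of $\kappa$ (the $\kappa^{-1/2}$ and $\kappa^{-3/2}$ for $\gamma$, and the linear-in-$\kappa$ growth of $g_-$ in $H^2_\kappa$) — to be the main technical burden; Steps~1 and~2 are essentially algebra and soft operator theory.
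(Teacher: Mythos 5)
Your proposal is correct, and the algebraic/operator-theoretic core (the first-order system satisfied by the diagonal Green's matrix, and positivity of $\tfrac12[\LaxR+\LaxR^*]=\kappa\LaxR^*\LaxR$ tested at the diagonal) coincides with the paper's. The differences are organizational, in two places. For \eqref{qquad}, you show $\Phi:=\gamma+\tfrac12\gamma^2-\tfrac12(g_+^2-g_-^2)$ is constant and pin the constant down by sending $x\to\infty$ along the diagonal; since the diagonal quantities need not decay for general $q$ satisfying \eqref{det L hyp}, you are forced into a compact-support-plus-approximation detour. The paper instead proves $\det G(x,y;\kappa)=0$ for all $x\neq y$ (the determinant is constant on each component of $\{x\neq y\}$ and the \emph{off-diagonal} decay \eqref{CT G off} kills it as $y\to\pm\infty$ with $x$ fixed), which works in one stroke for all admissible $q$ and yields \eqref{qquad} by taking one-sided diagonal limits; your $\Phi'=0$ is just the $x$-derivative of that determinant identity. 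For the norm bounds, the paper first estimates $\gamma$ by duality — pairing \eqref{basicRI} against $F\in H^1_\kappa$ and invoking a weighted Hilbert--Schmidt bound for $\LaxR_0 F\LaxR_0$ (Lemma~\ref{L:bRb}) together with an HS bound on $Q\LaxR Q$ — and then reads off $g_\pm$ from the Fourier-multiplier formulas $g_-=\tfrac{4\kappa}{4\kappa^2-\partial^2}(q[1+\gamma])$, $g_+=-\tfrac{2\partial}{4\kappa^2-\partial^2}(q[1+\gamma])$ obtained by combining \eqref{pderiv} and \eqref{rderiv}; this delivers the $L^2$ and derivative parts of \eqref{gbd'}--\eqref{rbd'} simultaneously. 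Your route (direct kernel estimates for the weighted $L^2$ norms from the Born series, then the ODEs \eqref{Gderiv}--\eqref{rderiv} for the derivative parts) also closes, but note that to get $\|\langle x\rangle^{-4b}\gamma'\|_{L^2}=2\|\langle x\rangle^{-4b}qg_+\|_{L^2}\lesssim\kappa^{-1/2}\|\langle x\rangle^{-b}q\|_{L^2}^2$ you genuinely need the improved sup bound $\|\langle x\rangle^{-3b}g_+\|_{L^\infty}\lesssim\kappa^{-1/2}\|\langle x\rangle^{-b}q\|_{L^2}(1+\cdots)$, not the $O(1)$ bound coming from \eqref{GinLinfty}; your interpolation step supplies this provided you establish the weighted $L^2$ bounds on $g_\pm$ and $g_\pm'$ first, so the order of deductions matters. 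What the paper's duality/multiplier organization buys is precisely that this bookkeeping of $\kappa$-powers and weights is automated by Lemma~\ref{L:bRb} and the multiplier bounds, whereas your version carries it by hand.
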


Note that the first inequality in \eqref{gammap1} shows which of the two possible solutions of the quadratic relation \eqref{qquad} the function $\gamma$ must be:
\begin{align}\label{qquad'}
\gamma(\kappa) &= -1 + \sqrt{1+ g_+(\kappa)^2-g_-(\kappa)^2}  .
\end{align}

Before turning to the proof of this proposition, we prove the following lemma, which will be used to obtain bounds on $\gamma$ via duality.

\begin{lemma}\label{L:bRb}
Given $b\geq 0$ and $F:\R\to \C^{2\times 2}$, we have the Hilbert--Schmidt bound
\begin{align}\label{E:bRb}
\bigl\| \langle x\rangle^{2b} \LaxR_0(\kappa) F \langle x\rangle^{-4b} \LaxR_0(\kappa) \langle x\rangle^{2b}\bigr\|_{\HS}
	\lesssim_b \kappa^{-\frac12}\| F \|_{H^{-1}_\kappa} 
\end{align}
uniformly for $|\kappa|\geq 1$.
\end{lemma}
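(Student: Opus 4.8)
The plan is to prove \eqref{E:bRb} by stripping the (unbounded) polynomial weights onto the resolvents and then splitting $F$ into an honest $L^2$ piece and an exact derivative. First I would reduce to $\kappa\geq1$: the symmetry \eqref{k symmetry} holds with $q\equiv0$, and since $\sigma_1$-conjugation preserves the $\HS$-norm, the matrix norm on $\C^{2\times2}$, and the spaces $H^{\pm1}_\kappa$, replacing $F$ by $\sigma_1F\sigma_1$ exchanges $\kappa$ and $-\kappa$. A routine density argument lets me take $F\in C^\infty_c(\R;\C^{2\times2})$, so that every composition below is unambiguous; all bounds will be uniform in $\kappa$. Finally, since $\LaxR_0(\kappa)$ is diagonal, the operator $C_1M_FC_2$ (notation below) has $(i,j)$ entry $a_iM_{F_{ij}}b_j$ with $a_i,b_j$ \emph{scalar} operators, and both $\|\,\cdot\,\|_{\HS}^2$ and $\|F\|_{H^{-1}_\kappa}^2$ decompose additively over the four entries, so it suffices to treat scalar $F$.

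\emph{Stripping the weights.} Because scalar weights commute through $M_F$, one has $F\jb{x}^{-4b}=\jb{x}^{-2b}F\jb{x}^{-2b}$, so the operator in \eqref{E:bRb} equals $C_1M_FC_2$ with
\[
C_1:=\jb{x}^{2b}\LaxR_0(\kappa)\jb{x}^{-2b},\qquad C_2:=\jb{x}^{-2b}\LaxR_0(\kappa)\jb{x}^{2b}.
\]
From \eqref{G0}, $|G_0(x,y;\kappa)|\le e^{-\kappa|x-y|}$, while $\jb{x}^{2b}\jb{y}^{-2b}\lesssim_b\jb{x-y}^{2b}$ and $\jb{x-y}^{2b}e^{-\kappa|x-y|}\lesssim_b e^{-\kappa|x-y|/2}$ for $\kappa\ge1$; hence the kernels of $C_1$ and $C_2$ are $\lesssim_b e^{-\kappa|x-y|/2}$. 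Consequently $\|C_j\|_\op\lesssim_b\kappa^{-1}$ (Schur), and from the kernel bound $\|M_gC_2\|_{\HS}^2=\int|g(x)|^2\!\int|C_2(x,y)|^2\,dy\,dx\lesssim_b\kappa^{-1}\|g\|_{L^2}^2$ for any $g\in L^2$, and likewise $\|C_1M_g\|_{\HS}\lesssim_b\kappa^{-1/2}\|g\|_{L^2}$. I would also note that $C_1\partial$ and $\partial C_2$ are bounded on $L^2$ uniformly in $\kappa$: the kernel of $\LaxR_0(\kappa)\partial=\partial\LaxR_0(\kappa)$ is $\pm\delta(x-y)$ plus a term $\lesssim\kappa e^{-\kappa|x-y|}$, and $\jb{x}^{2b}(\jb{x}^{-2b})'$ is a bounded function, so commuting $\partial$ through the weights costs only factors of operator norm $\lesssim_b1$.

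\emph{Splitting $F$ and concluding.} Write $F=F_{\mathrm{lo}}+\partial h$ with $\widehat{F_{\mathrm{lo}}}=\hat F\,\ind_{\{|\xi|\le\kappa\}}$ and $\hat h(\xi)=(i\xi)^{-1}\hat F(\xi)\,\ind_{\{|\xi|>\kappa\}}$; Plancherel gives $\|F_{\mathrm{lo}}\|_{L^2}\lesssim\kappa\|F\|_{H^{-1}_\kappa}$ and $\|h\|_{L^2}\lesssim\|F\|_{H^{-1}_\kappa}$. For the low-frequency piece, $\|C_1M_{F_{\mathrm{lo}}}C_2\|_{\HS}\le\|C_1\|_\op\|M_{F_{\mathrm{lo}}}C_2\|_{\HS}\lesssim_b\kappa^{-1}\cdot\kappa^{-1/2}\|F_{\mathrm{lo}}\|_{L^2}\lesssim_b\kappa^{-1/2}\|F\|_{H^{-1}_\kappa}$. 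For the derivative piece, $M_{\partial h}=\partial M_h-M_h\partial$ yields $C_1M_{\partial h}C_2=(C_1\partial)(M_hC_2)-(C_1M_h)(\partial C_2)$, and, using the uniform bounds on $C_1\partial$ and $\partial C_2$, each term has $\HS$-norm $\lesssim_b\|M_hC_2\|_{\HS}+\|C_1M_h\|_{\HS}\lesssim_b\kappa^{-1/2}\|h\|_{L^2}\lesssim_b\kappa^{-1/2}\|F\|_{H^{-1}_\kappa}$. Adding the two contributions proves \eqref{E:bRb}. The one substantive point --- and the main obstacle --- is that $\jb{x}^{2b}$ is unbounded, so it must be absorbed by a resolvent rather than simply discarded; this succeeds precisely because the free Green's function \eqref{G0} decays exponentially at the \emph{universal} rate $\kappa\ge1$ (the Dirac speed limit), which dominates the polynomial weight ratios $\jb{x-y}^{2b}$ and also persists after $\partial$ is commuted through the weights. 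Everything else is routine bookkeeping.
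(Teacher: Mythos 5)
Your proof is correct, but the core of your argument is genuinely different from the paper's. Both proofs begin by stripping the polynomial weights onto the free resolvents using the fact that the exponential decay of $G_0$ at the universal rate $|\kappa|\geq 1$ absorbs the ratio $\jb{x}^{2b}\jb{y}^{-2b}\lesssim_b\jb{x-y}^{2b}$; the paper phrases this as the commutator identity \eqref{9:37} (so that $\jb{x}^{\mp2b}\LaxR_0\jb{x}^{\pm2b}=\LaxR_0\cdot[\text{bounded}]$, reducing matters literally to $b=0$), whereas you keep the conjugated operators $C_1,C_2$ and work only with their kernel bounds. The real divergence is in how the $H^{-1}_\kappa$ norm is produced. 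The paper, having reduced to $b=0$, is left with $\LaxR_0 F\LaxR_0$, a Fourier multiplier sandwiching a multiplication operator, and computes its Hilbert--Schmidt norm \emph{exactly} by Plancherel, obtaining $\kappa^{-1}\int|\hat F(\xi)|^2(\xi^2+4\kappa^2)^{-1}d\xi$ in one line. You cannot do this because $C_1,C_2$ are no longer convolution operators, so you substitute a frequency decomposition $F=F_{\mathrm{lo}}+\partial h$ at scale $\kappa$, trading the extra power $\kappa$ in $\|F_{\mathrm{lo}}\|_{L^2}\lesssim\kappa\|F\|_{H^{-1}_\kappa}$ against $\|C_1\|_{\op}\lesssim\kappa^{-1}$, and handling the high frequencies by commuting $\partial$ onto the uniformly bounded operators $C_1\partial$ and $\partial C_2$. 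This is more work than the paper's two-line computation, but it is self-contained, never needs the exact Fourier identity, and would survive replacing $\LaxR_0$ by any operator whose kernel obeys the same exponential bounds together with the boundedness of $\LaxR_0\partial$ --- a robustness the Plancherel computation does not have. All the individual estimates you assert (the Schur bounds, the mixed $\op\times\HS$ factorizations, the uniform boundedness of $C_1\partial$ and $\partial C_2$ after commuting $\partial$ through the weight) check out, as do the reductions to $\kappa\geq1$ and to scalar $F$.
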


\begin{proof}
Direct computation of the commutator of $\Lax_0$ and $\langle x\rangle^{-2b}$ reveals
\begin{align}\label{9:37}
\langle x\rangle^{-2b} \LaxR_0(\kappa) \langle x\rangle^{2b}
	= \LaxR_0(\kappa) \Bigl[1 + 2b\sigma_3 x \langle x\rangle^{-2b-2} \LaxR_0(\kappa) \langle x\rangle^{2b} \Bigr] .
\end{align}
Moreover, Schur's test and the explicit form \eqref{G0} show that the operator in square brackets is $L^2$-bounded uniformly for $|\kappa|\geq 1$.  In this way, the proof of \eqref{E:bRb} is reduced to the case where $b=0$.

When $b=0$, one may simply compute the Hilbert--Schmidt norm:
\begin{equation}
\bigl\| (\kappa\mp \partial)^{-1} f(\kappa \pm \partial)^{-1} \bigr\|_{\HS}^2
	= \frac{1}{\kappa} \int \frac{|\hat F(\xi)|^2\,d\xi}{\xi^2+4\kappa^2}. \qedhere \label{gamma2}
\end{equation}
\end{proof}

\begin{proof}[Proof of Proposition~\ref{P:det diag}]
The fact that the Green's function inverts $\mathcal L+\kappa$ implies
\begin{align}
\begin{split}
\partial_x G(x,y;\kappa)=  \begin{bmatrix} \kappa & q(x) \\ q(x) & -\kappa    \end{bmatrix} G(x,y;\kappa)
+\begin{bmatrix}
-\delta(x-y) & 0\\
0& \delta(x-y)
\end{bmatrix},\\
\partial_{y} G(x,y;\kappa)= G(x,y;\kappa) \begin{bmatrix} -\kappa & q(y) \\ q(y) & \kappa    \end{bmatrix} 
+\begin{bmatrix}
\delta(x-y) & 0\\
0& -\delta(x-y)
\end{bmatrix},
\end{split} \label{Gidents}
\end{align}
in the sense of distributions.  The identities \eqref{Gderiv}, \eqref{pderiv}, and \eqref{rderiv} follow easily.

Combining \eqref{pderiv} and \eqref{rderiv}, we find that
\begin{align}
g_- &= \tfrac{4 \kappa }{4\kappa^2 -\partial^2}( q[1+\gamma]) \quad \text{and} \quad g_+ = -\tfrac{2 \partial}{4\kappa^2 -\partial^2}( q[1+\gamma]), \label{prformula}
\end{align}
which we will use to estimate $g_\pm$ once we have estimated $\gamma$.

Given $F:\R\to\R$, we employ \eqref{basicRI} to write
\begin{align}\label{983}
\int \frac{F \gamma \,dx}{\langle x\rangle^{4b}} &= \tr \bigl\{ \langle x\rangle^{-4b} F \bigl[\LaxR-\LaxR_0\bigr]\bigr\} 
= \tr \bigl\{ \langle x\rangle^{-4b} F \LaxR_0 Q \LaxR Q \LaxR_0\bigr]\bigr\}
\end{align}
which allows us to estimate $\gamma$ by duality.  Note that the $F\LaxR_0Q\LaxR_0$ term does not contribute to the trace because it is purely off-diagonal.  

To estimate RHS\eqref{983}, we first observe that by \eqref{GinLinfty}
\begin{align}\label{1013}
\bigl\| \langle x\rangle^{-2b} Q \LaxR Q \langle x\rangle^{-2b} \bigr\|_{\HS}
	\lesssim \| \langle x\rangle^{-b} q\|_{L^2}^2 \bigl[ 1 + \kappa^{-1} \| \langle x\rangle^{-b} q \|_{L^2}^2  \bigr]  .
\end{align}
Together with Lemma~\ref{L:bRb} this allows us to deduce that
\begin{align}\label{980}
\Bigl| \tr \bigl\{ \langle x\rangle^{-4b} F \LaxR_0 Q \LaxR Q \LaxR_0 \bigr\}  \Bigr|
\lesssim \kappa^{-\frac12} \| F\|_{H^{-1}_\kappa} \| \langle x\rangle^{-b} q\|_{L^2}^2 \bigl[ 1 + \kappa^{-1} \| \langle x\rangle^{-b} q \|_{L^2}^2  \bigr],
\end{align}
which completes the proof of \eqref{gbd'}.

From \eqref{CT G off} we see that
\begin{align}\label{1000}
\bigl\| \langle x\rangle^{-2b}  [1+\gamma]\bigr\|_{L^\infty} \lesssim 1+ \kappa^{-1} \|\langle x\rangle^{-b}q\|_{L^2}^{2}.
\end{align}
The only obstacle to deducing \eqref{pbd'} and \eqref{rbd'} from \eqref{prformula} and \eqref{1000} is commuting $\langle x\rangle^{-3b}$ through the Fourier multipliers.  However, the means for doing so was provided already in \eqref{9:37}. 

It remains to prove \eqref{qquad} and \eqref{gammap1}.  We begin with the former, which is elementary once we prove the following more general fact:
\begin{align} \label{detG}
\det G(x,y)=0 \qtq{for all} x\neq y .
\end{align}
To see this, observe that \eqref{Gidents} implies that this determinant must be constant in each connected component of $x\neq y$.  On the other hand, if one fixes $x$ and sends $y\to\pm\infty$, then \eqref{CT G off} shows that the matrix (and so also its determinant) converges to zero.

Lastly, we prove the first inequality in \eqref{gammap1}. As $\Lax$ is anti-selfadjoint, so
$$
\tfrac12 \bigl[ \LaxR(\kappa) + \LaxR(\kappa)^*\bigr] = \kappa \LaxR(\kappa)^*\LaxR(\kappa)
$$
is positive definite.  Testing this against delta-functions reveals that
$$
\tfrac12\begin{bmatrix} 1+\gamma(x) & g_+(x) \\ g_+(x) & 1+\gamma(x) \end{bmatrix}
$$
is also positive definite, which is precisely the first inequality in \eqref{gammap1}.  The second inequality follows from the first by rewriting \eqref{qquad} as $1-g_-^2 = (1+\gamma)^2-g_+^2$.
\end{proof}

Our next proposition demonstrates that the diagonal Green's functions have a mild dependence on the potential~$q$.  The arguments needed are similar to those just presented, albeit more complicated.

\begin{proposition}[Lipschitz bounds for $g_\pm$ and $\gamma$]\label{P:G Lip}
Fix $b>0$. Adopting the abbreviation
\begin{align}\label{1055}
C_b := 1 + \kappa^{-1}\| \langle x\rangle^{-b} q_1 \|_{L^2}^2 + \kappa^{-1} \|\langle x\rangle^{-b}  q_2 \|_{L^2}^2
\end{align}
we have
\begin{align}\label{G lip b}
\bigl|G(y_1,y_2;q_1) & - G(y_1,y_2;q_2)\bigr| \notag \\
&\lesssim  C_b^2 \langle y_1\rangle^{5b/2} \langle y_2\rangle^{5b/2} \kappa^{-\frac12}  \|\langle x\rangle^{-b} [q_1-q_2] \|_{L^2}
	\exp\bigl\{ -\tfrac\kappa7 |y_1-y_2| \bigr\} 
\end{align}
uniformly for $y_1,y_2\in \R$ and $\kappa\geq 1$.  The diagonal Green's functions satisfy
\begin{align}\label{p lip b}
&\| \langle x\rangle^{-6b}[g_\pm(q_1)-g_\pm(q_2)]\|_{H^1_\kappa} 
\lesssim C_b^{5/2} \|\langle x\rangle^{-b} [q_1-q_2 ] \|_{L^2}, \\
\label{r lip b}%
&\| \langle x\rangle^{-6b}[ g_-(q_1)-g_-(q_2)] \|_{H^2_\kappa}
\lesssim \kappa C_b^{5/2} \|\langle x\rangle^{-b} [q_1-q_2 ] \|_{L^2},\\
\label{g lip b}
&\|\langle x\rangle^{-7b}[\gamma(q_1)-{}\gamma(q_2)]\|_{H^1_{\kappa}} \notag \\ 
& \qquad \lesssim \kappa^{-\frac12} C_b^{5/2} \|\langle x\rangle^{-b} [q_1-q_2 ] \|_{L^2} \bigl[ \| \langle x\rangle^{-b} q_1 \|_{L^2(\R)} + \| \langle x\rangle^{-b} q_2 \|_{L^2(\R)} \bigr] .
\end{align}
\end{proposition}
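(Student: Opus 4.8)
The plan is to run everything off the second resolvent identity
\[
\LaxR(\kappa;q_1)-\LaxR(\kappa;q_2) = -\LaxR(\kappa;q_1)(Q_1-Q_2)\LaxR(\kappa;q_2),
\]
where $Q_j$ denotes multiplication by $\bigl[\begin{smallmatrix}0&q_j\\ -q_j&0\end{smallmatrix}\bigr]$, recycling essentially verbatim the estimates already developed for Propositions~\ref{P:Lax Op} and~\ref{P:det diag}. Reading this identity at the level of integral kernels (legitimate thanks to the continuity from Proposition~\ref{P:Lax Op} and the decay \eqref{CT G off}) gives
\[
G(y_1,y_2;q_1)-G(y_1,y_2;q_2) = -\int G(y_1,x;q_1)\bigl[\begin{smallmatrix}0&q_1-q_2\\ -(q_1-q_2)&0\end{smallmatrix}\bigr](x)\,G(x,y_2;q_2)\,dx .
\]
Inserting \eqref{CT G off} for each factor, pulling out the long-range weight $\exp\{-\tfrac\kappa7|y_1-y_2|\}$, and estimating the remaining $x$-integral by Cauchy--Schwarz against $\|\langle x\rangle^{-b}[q_1-q_2]\|_{L^2}$ --- using $\langle x\rangle\lesssim\langle y_1\rangle^{1/2}\langle y_2\rangle^{1/2}\langle x-y_1\rangle^{1/2}\langle x-y_2\rangle^{1/2}$ to distribute the polynomial weights and the residual exponential decay (recall $\kappa\geq1$) to absorb the $\langle x-y_j\rangle$ factors --- yields \eqref{G lip b}, the two bracketed factors in \eqref{CT G off} supplying the $C_b^2$. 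Specializing $y_1=y_2=x$ records the pointwise consequence $\|\langle x\rangle^{-5b}[\gamma(q_1)-\gamma(q_2)]\|_{L^\infty}+\|\langle x\rangle^{-5b}[g_\pm(q_1)-g_\pm(q_2)]\|_{L^\infty}\lesssim C_b^2\kappa^{-1/2}\|\langle x\rangle^{-b}[q_1-q_2]\|_{L^2}$, which we use below.

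The bounds \eqref{p lip b} and \eqref{r lip b} then follow from the formulas \eqref{prformula}, exactly as \eqref{pbd'} and \eqref{rbd'} were deduced from \eqref{prformula} and \eqref{1000}. Writing $q_1[1+\gamma(q_1)]-q_2[1+\gamma(q_2)] = (q_1-q_2)[1+\gamma(q_1)]+q_2[\gamma(q_1)-\gamma(q_2)]$, the first summand is bounded in $L^2$ with weight $\langle x\rangle^{-3b}$ using $\|\langle x\rangle^{-2b}[1+\gamma(q_1)]\|_{L^\infty}\lesssim C_b$ from \eqref{1000}, and the second with weight $\langle x\rangle^{-6b}$ using the $L^\infty$ consequence just stated together with $\|\langle x\rangle^{-b}q_2\|_{L^2}$; since $\kappa^{-1/2}\|\langle x\rangle^{-b}q_2\|_{L^2}\leq C_b^{1/2}$, the two contributions combine to $C_b^{5/2}\|\langle x\rangle^{-b}[q_1-q_2]\|_{L^2}$. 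Applying the Fourier multipliers $\tfrac{4\kappa}{4\kappa^2-\partial^2}$ and $\tfrac{2\partial}{4\kappa^2-\partial^2}$ --- which map $L^2$ boundedly into $H^1_\kappa$, the former also into $H^2_\kappa$ with an extra factor $\kappa$ --- and commuting the weight $\langle x\rangle^{-6b}$ past them via the commutator identity \eqref{9:37}, just as in the proof of Proposition~\ref{P:det diag}, gives \eqref{p lip b} and \eqref{r lip b}.

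It remains to prove \eqref{g lip b}, where the real work lies. We argue by duality as in \eqref{983}: for $F:\R\to\R$ we must bound $\tr\{\langle x\rangle^{-7b}F[\LaxR(\kappa;q_1)-\LaxR(\kappa;q_2)]\}$. Writing $\LaxR_j:=\LaxR(\kappa;q_j)$, using the resolvent expansion \eqref{basicRI} for each $q_j$, and subtracting, we obtain $\LaxR_1-\LaxR_2 = -\LaxR_0(Q_1-Q_2)\LaxR_0 + \LaxR_0[Q_1\LaxR_1Q_1-Q_2\LaxR_2Q_2]\LaxR_0$; the first term is purely off-diagonal and drops out of the trace against the diagonal kernel $\langle x\rangle^{-7b}F$. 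For the second we decompose
\[
Q_1\LaxR_1Q_1-Q_2\LaxR_2Q_2 = (Q_1-Q_2)\LaxR_1Q_1 - Q_2\LaxR_1(Q_1-Q_2)\LaxR_2Q_1 + Q_2\LaxR_2(Q_1-Q_2),
\]
using $\LaxR_1-\LaxR_2=-\LaxR_1(Q_1-Q_2)\LaxR_2$ in the middle term. Each of the three resulting traces is estimated by pairing Lemma~\ref{L:bRb} for the outer $\LaxR_0\cdots\LaxR_0$ sandwich --- which contributes $\kappa^{-1/2}\|F\|_{H^{-1}_\kappa}$ --- against a Hilbert--Schmidt estimate for the middle operator built from \eqref{GinLinfty}, \eqref{CT G off}, and the elementary $L^2$ bounds on $Q\LaxR_0(\kappa)\delta_y$ and $\LaxR_0(\kappa)\delta_y$ from the proof of Proposition~\ref{P:Lax Op}, with the weights $\langle x\rangle^{\pm2b}$ apportioned as in \eqref{1013}. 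Every such estimate carries exactly one factor $\|\langle x\rangle^{-b}[q_1-q_2]\|_{L^2}$; the two outer terms carry one further factor $\|\langle x\rangle^{-b}q_1\|_{L^2}$, resp. $\|\langle x\rangle^{-b}q_2\|_{L^2}$, while the middle term carries two such factors together with an extra resolvent worth an additional power of $\kappa^{-1/2}$, which the inequality $\kappa^{-1/2}\|\langle x\rangle^{-b}q_j\|_{L^2}\leq C_b^{1/2}$ converts into the advertised $\|\langle x\rangle^{-b}q_1\|_{L^2}+\|\langle x\rangle^{-b}q_2\|_{L^2}$ (times a power of $C_b$). Summing and taking the supremum over $\|F\|_{H^{-1}_\kappa}\leq1$ gives \eqref{g lip b}.

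The main obstacle is the weight bookkeeping in this last step: one must check that, after sliding $\langle x\rangle^{-7b}$ through the outer free resolvents and collecting the resulting commutator corrections (cf. \eqref{9:37}), the various powers $\langle x\rangle^{\pm b}$ recombine to exactly $\langle x\rangle^{-7b}$ and the $C_b$ factors to at most $C_b^{5/2}$. Moreover, making the middle Hilbert--Schmidt integrals converge despite $q_j\notin L^\infty$ in the whole-line regime forces the particular order of the weighted factorizations used above, since every $Q_j$ must act on an $L^2$ function obtained by first applying a resolvent, never on a bare delta function. Everything else --- the Cauchy--Schwarz integrals behind \eqref{G lip b} and the multiplier mapping properties behind \eqref{p lip b}--\eqref{r lip b} --- is routine once the analogous steps of Propositions~\ref{P:Lax Op} and~\ref{P:det diag} are in hand.
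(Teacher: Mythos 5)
Your proposal is correct and follows essentially the same route as the paper's proof: \eqref{G lip b} via the second resolvent identity at the kernel level combined with \eqref{CT G off} and Cauchy--Schwarz; \eqref{p lip b}--\eqref{r lip b} via \eqref{prformula} after splitting $q_1[1+\gamma_1]-q_2[1+\gamma_2]$ and using the $L^\infty$ consequence of \eqref{G lip b}; and \eqref{g lip b} by duality through Lemma~\ref{L:bRb} with a Hilbert--Schmidt bound on a three-term decomposition of $Q_1\LaxR_1Q_1-Q_2\LaxR_2Q_2$ (yours is an algebraically equivalent variant of the paper's, which keeps $Q_1[\LaxR_1-\LaxR_2]Q_2$ intact and estimates it with \eqref{G lip b} rather than expanding the resolvent difference). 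The weight and $C_b$ bookkeeping you flag does close exactly as you describe.
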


\begin{proof}
To prove \eqref{G lip b}, we combine \eqref{CT G off} and the resolvent identity. This leads to
\begin{align*}
\text{LHS\eqref{G lip b}}\lesssim C_b^2 \langle y_1\rangle^{b} \langle y_2\rangle^{b}
	\! \int_\R \langle y \rangle^{2b} |q_1(y)-q_2(y)| e^{ -\kappa(|y_1-y| + |y-y_2|)/6} \,dy .
\end{align*}
The estimate \eqref{G lip b} now follows by employing the elementary inequality
$$
\tfrac{1}{6}|y_1-y| + \tfrac{1}{6} |y-y_2| \geq \tfrac{1}{7} |y_1-y_2| + \tfrac{1}{42} \bigl|y-y_1\bigr| + \tfrac{1}{42} \bigl|y-y_2\bigr| 
$$
and Cauchy--Schwarz.

Let us now jump ahead to \eqref{g lip b}.  Mimicking \eqref{983}, we have that
\begin{align}\label{1108a}
\int_\R [\gamma(x;q_1)-\gamma(x;q_2)] \frac{F(x) \, dx}{\langle x\rangle^{7b}}  &= \tr \bigl\{ \langle x\rangle^{-7b} F \LaxR_0 \bigl[Q_1 \LaxR_1 Q_1 - Q_2 \LaxR_2 Q_2\bigr] \LaxR_0\bigr]\bigr\} \!\!\!
\end{align}
where the subscript $0$ refers to $q\equiv 0$, while $1$ and $2$ refer to $q_1$ and $q_2$, respectively.

With a view to estimating RHS\eqref{1108a}, we write
\begin{align*}
Q_1 \LaxR_1 Q_1 - Q_2 \LaxR_2 Q_2 = Q_1 \LaxR_1 \bigl[ Q_1 - Q_2\bigr] + \bigl[ Q_1 - Q_2 \bigr] \LaxR_2  Q_2 
	+  Q_1 \bigl[\LaxR_1 - \LaxR_2\bigr] Q_2 .
\end{align*}
Employing \eqref{CT G off} and \eqref{G lip b}, we then obtain the Hilbert--Schmidt bound
\begin{align*}
\big\|\langle x&\rangle^{-7b/2} \bigl[ Q_1 \LaxR_1 Q_1 - Q_2 \LaxR_2 Q_2 \bigr]\langle x\rangle^{-7b/2} \bigr\|_\HS \\
&\lesssim C_b \|\langle x\rangle^{-b} [q_1-q_2] \|_{L^2}
	\bigl[ \| \langle x\rangle^{-b} q_1 \|_{L^2(\R)} + \| \langle x\rangle^{-b} q_2 \|_{L^2(\R)} \bigr] \\
&\qquad + C_b^2 \kappa^{-\frac12}\|\langle x\rangle^{-b} [q_1-q_2] \|_{L^2} \| \langle x\rangle^{-b} q_1 \|_{L^2(\R)} \| \langle x\rangle^{-b} q_2 \|_{L^2} .
\end{align*}

We are now ready to estimate RHS\eqref{1108a}.  To do so, we employ Lemma~\ref{L:bRb} (with $b$ set to $7b/4$) and the Hilbert--Schmidt bound we just obtained.  This yields
\begin{align*}
\text{RHS\eqref{1108a}}
	&\lesssim \kappa^{-\frac{1}{2}} C_b^{\frac52} \| F \|_{H^{-1}_\kappa} \|\langle x\rangle^{-b}[q_1-q_2]\|_{L^2} \bigl[ \| \langle x\rangle^{-b} q_1 \|_{L^2(\R)} + \| \langle x\rangle^{-b} q_2 \|_{L^2(\R)} \bigr],
\end{align*}
from which \eqref{g lip b} follows by duality.

Although \eqref{g lip b} implies an $L^\infty$ bound, we may employ \eqref{G lip b} directly to obtain
\begin{align*}
\| \langle x\rangle^{-5b} [\gamma_1 - \gamma_2] \|_{L^\infty} \lesssim \kappa^{-\frac12} C_b^2 \|\langle x\rangle^{-b} [q_1-q_2] \|_{L^2}
\end{align*}
and consequently,
\begin{align*}
\| \langle x\rangle^{-6b} [q_1(1+\gamma_1) - q_2(1+\gamma_2)] \|_{L^2} \lesssim C_b^{5/2} \|\langle x\rangle^{-b} [q_1-q_2] \|_{L^2}.
\end{align*}
The claims \eqref{p lip b} and \eqref{r lip b} then follow easily from the relations \eqref{prformula}.
\end{proof}

Although an $L^2$-based theory will suffice for us, we wish to make a few brief comments on how one may transfer additional regularity of $q$ to the diagonal Green's functions.  One approach is via \eqref{prformula}, noting that $\gamma$ is already significantly smoother than $q$; see \eqref{gbd'}.  An alternate approach is to use the bounds of Proposition~\ref{P:G Lip} with $q_2$ being a small translate of $q_1$.  This works well not only with H\"older spaces, but also in $W^{s,p}$ spaces through the Besov characterization.

\subsection{The periodic case} Thus far, our analysis of the Green's function has assumed the mild hypothesis \eqref{det L hyp}.   When $b>\frac12$, this covers the case of periodic potentials $q\in L^2(\T_L)$.

The central result of this subsection is Lipschitz bounds for $\gamma$ and $g_\pm$ in the periodic setting that will allow us to construct solutions to the $H_\kappa$ flow via contraction mapping.  Size will be measured using the one-period $L^2$ norm.  As it does not affect the arguments, we will allow $L=\infty$; however, such $q\in L^2(\R)$ will ultimately have no role in this paper.
Indeed, for samples from the Gibbs state, the $L^2(\T_L)$ norm diverges as $L\to\infty$.  Correspondingly, these periodic estimates do not suffice for the analysis of the Gibbs state in the  $L\to\infty$ limit.    A proper quantitative treatment of that case will be taken up in Section~\ref{S:5}.

As $g_+$, $g_-$, and $\gamma$ all vanish when $q\equiv 0$, specializing the Lipschitz estimates below to $q_2\equiv 0$ yields norm bounds.

\begin{proposition}[Periodic $g_\pm$ and $\gamma$]\label{P:PerioG}
Fix $1\leq L\leq \infty$.  Given $y_1,y_2\in \R$,
\begin{align}\label{1054}
(\kappa,q)\mapsto G(y_1,y_2;\kappa,q) - G_0(y_1,y_2;\kappa,q) 
\end{align}
is a jointly real-analytic function of $\kappa\in[1,\infty)$ and $q\in L^2(\T_L)$.  Adopting the abbreviation
\begin{align}\label{1155}
C_L := 1 + \kappa^{-1}\| q_1 \|_{L^2(\T_L)}^2 + \kappa^{-1} \| q_2 \|_{L^2(\T_L)}^2 ,
\end{align}
we have the Lipschitz bound
\begin{align}\label{CT G L}
\bigl|G(y_1,y_2;q_1) - G(y_1,y_2;q_2)\bigr| \lesssim  \kappa^{-\frac12} C_L^2 \|q_1-q_2\|_{L^2(\T_L)} \exp\bigl\{ -\tfrac\kappa8 |y_1-y_2| \bigr\}
\end{align}
uniformly for $y_1,y_2\in \R$, $\kappa\geq 1$, and $q_1,q_2\in L^2(\T_L)$.  Moreover,
\begin{align}\label{p lip}
\| g_\pm(q_1)-g_\pm(q_2)\|_{H^1_\kappa(\T_L)} 
&\lesssim C_L^{5/2}\|q_1-q_2\|_{L^2(\T_L)} ,\\
\label{r lip}%
\| g_-(q_1)-g_-(q_2)\|_{H^2_\kappa(\T_L)}
&\lesssim \kappa C_L^{5/2} \|q_1-q_2\|_{L^2(\T_L)} , \\
\label{g lip}
\|\gamma(q_1)-\gamma(q_2)\|_{H^1_{\kappa}(\T_L)}  
&\lesssim  C_L^{3} \|q_1-q_2\|_{L^2(\T_L)} .  
\end{align}
\end{proposition}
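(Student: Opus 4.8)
The plan is to obtain Proposition~\ref{P:PerioG} as the specialization of the weighted theory already developed in Propositions~\ref{P:det diag}, \ref{P:G Lip}, and the preceding Combes--Thomas estimates, but with all spatial weights $\langle x\rangle^{b}$ replaced by constants. The point is that for periodic $q\in L^2(\T_L)$ the hypothesis \eqref{det L hyp} holds with \emph{uniform} local $L^2$ bounds: on every unit interval $I_n$, $\|q\|_{L^2(I_n)}\le\|q\|_{L^2(\T_L)}$ (after possibly adjusting by a bounded factor depending on the minimal period). Consequently, every place in the arguments of Section~\ref{S:3} where $\|\langle x\rangle^{-b}q\|_{L^2}$ appeared can be replaced by the constant $\|q\|_{L^2(\T_L)}$, and the weights $\langle y_i\rangle^{b}$ that decorated the Green's-function bounds \eqref{GinLinfty}, \eqref{CT G off}, and \eqref{G lip b} become harmless constants. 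This immediately reduces the exponential rate losses: one can afford to waste less in the Combes--Thomas interpolation, which is why the exponents improve from $\frac\kappa6,\frac\kappa7$ to $\frac\kappa8$, and why the polynomial factors in $C_b$ simplify to powers of $C_L$.

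The concrete steps are as follows. First, establish joint real-analyticity of \eqref{1054}: this is the resolvent expansion \eqref{basicRI} together with the observation that, for periodic $q$, the map $q\mapsto Q\LaxR_0(\kappa)\delta_y$ is bounded and (complex-)analytic into $L^2$, uniformly on bounded sets; analyticity in $\kappa\in[1,\infty)$ was already noted in Proposition~\ref{P:Lax Op}, and the Neumann series for $[1+Q\LaxR_0]^{-1}$ converges locally uniformly once $\kappa$ is large, then meromorphic continuation plus anti-selfadjointness (no real poles) gives analyticity on all of $[1,\infty)$. Second, prove the pointwise Lipschitz bound \eqref{CT G L}: run the proof of \eqref{G lip b} verbatim, using \eqref{CT G off} with $b$ effectively $0$ (constants $C_L$ in place of $C_b\langle y_i\rangle^{b}$), then apply the elementary splitting $\tfrac16|y_1-y|+\tfrac16|y-y_2|\ge\tfrac18|y_1-y_2|+\tfrac1c(|y-y_1|+|y-y_2|)$ and Cauchy--Schwarz against $\|q_1-q_2\|_{L^2(\T_L)}$; the $\kappa^{-1/2}$ comes from the $L^2_x$ norm of $e^{-\kappa|x-y|/6}$, exactly as before. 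Third, derive \eqref{p lip} and \eqref{r lip} from the exact identities \eqref{prformula}: \eqref{CT G L} gives $\|1+\gamma_i\|_{L^\infty}\lesssim C_L$ and $\|\gamma_1-\gamma_2\|_{L^\infty}\lesssim\kappa^{-1/2}C_L^2\|q_1-q_2\|_{L^2(\T_L)}$, hence $\|q_1(1+\gamma_1)-q_2(1+\gamma_2)\|_{L^2(\T_L)}\lesssim C_L^{5/2}\|q_1-q_2\|_{L^2(\T_L)}$; since $\tfrac{4\kappa}{4\kappa^2-\partial^2}$ and $\tfrac{-2\partial}{4\kappa^2-\partial^2}$ map $L^2(\T_L)\to H^1_\kappa(\T_L)$ (and the latter also $L^2\to H^2_\kappa$ up to a factor $\kappa$) with norms bounded uniformly in $\kappa\ge1$ and $L$, the claims follow — and no weight-commutation step of the form \eqref{9:37} is needed, which is the simplification relative to \eqref{p lip b}, \eqref{r lip b}. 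Fourth, obtain \eqref{g lip} by the duality argument of \eqref{g lip b}: pair $\gamma_1-\gamma_2$ against $F/\!\!$ (no weight), expand $Q_1\LaxR_1Q_1-Q_2\LaxR_2Q_2$ into three pieces as there, estimate the resulting Hilbert--Schmidt norm via \eqref{CT G off} and \eqref{CT G L} (now with constant $C_L$ and $\|q_i\|_{L^2(\T_L)}\lesssim C_L^{1/2}\kappa^{1/2}$, which is what turns the messy $C_b^{5/2}(\cdots)$ bound into a clean $C_L^3$), and close with the unweighted version of Lemma~\ref{L:bRb}, i.e.\ $\|\LaxR_0(\kappa)F\LaxR_0(\kappa)\|_{\HS}\lesssim\kappa^{-1/2}\|F\|_{H^{-1}_\kappa}$ from \eqref{gamma2}.

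The main obstacle is purely bookkeeping rather than conceptual: one must verify that \emph{all} the implicit constants in the chain of estimates borrowed from the weighted theory are genuinely independent of the period $L\in[1,\infty]$. The two places that require a moment's care are (a) the passage from the torus $\T_L$ to $\R$ — a periodic $q$ is viewed as an element of $L^2_{\loc}(\R)$ with $\|q\|_{L^2(I_n)}$ bounded by $\|q\|_{L^2(\T_L)}$ uniformly in $n$ only because $|I_n|=1\le L$ (recall $L\ge1$), so the one-period norm really does dominate every unit-window norm; and (b) the uniform $L^2(\T_L)\to H^s_\kappa(\T_L)$ bounds for the Fourier multipliers in \eqref{prformula}, which are immediate from the symbol estimates $\tfrac{\kappa\langle\xi\rangle_\kappa}{\kappa^2+\xi^2}\lesssim1$ etc., valid on the dual lattice $\tfrac{\pi}{L}\Z$ uniformly in $L$. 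Once these uniformities are in hand, every inequality in the proofs of Propositions~\ref{P:det diag} and~\ref{P:G Lip} transcribes line-by-line with $\langle x\rangle^{b}\rightsquigarrow$ constant and $\|\langle x\rangle^{-b}q\|_{L^2}\rightsquigarrow\|q\|_{L^2(\T_L)}$, and the stated bounds \eqref{CT G L}--\eqref{g lip} drop out.
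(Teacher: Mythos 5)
Your treatment of the pointwise bound \eqref{CT G L} and of \eqref{p lip}, \eqref{r lip} is sound, and the latter two genuinely diverge from the paper: rather than transferring the weighted bounds \eqref{p lip b}, \eqref{r lip b}, you rerun the derivation of those bounds (pointwise control of $1+\gamma$ and of $\gamma_1-\gamma_2$ from unweighted Green's-function estimates, fed into \eqref{prformula}) directly on the torus, where the multiplier bounds $L^2(\T_L)\to H^1_\kappa(\T_L)$ are immediate. The paper instead converts \emph{every} weighted whole-line estimate into a periodic one by a single mechanism: translation covariance \eqref{G translate} together with the averaging identities \eqref{1182} and \eqref{1186}, applied after squaring the weighted bound for the translated potentials $q_i^h$ and integrating $h$ over $[-L,L]$. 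For \eqref{CT G L} itself the paper does not redo the proof of \eqref{G lip b}; it sets $y_1=0$ by translation, takes $b>\tfrac12$, and absorbs $\langle y_2\rangle^{5b/2}$ into the deterioration of the exponential rate from $\tfrac\kappa7$ to $\tfrac\kappa8$. Both routes work there.

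The genuine gap is in your fourth step, and more broadly in the claim that the proofs of Propositions~\ref{P:det diag} and~\ref{P:G Lip} transcribe line-by-line with the weights replaced by constants. That transcription works for pointwise estimates but fails for precisely the trace-duality argument you invoke for \eqref{g lip}: with the weight removed and $F,q_1,q_2$ periodic, the operators $F\LaxR_0[Q_1\LaxR_1Q_1-Q_2\LaxR_2Q_2]\LaxR_0$ and $Q_1\LaxR_1[Q_1-Q_2]$ are \emph{not} trace class or Hilbert--Schmidt on $L^2(\R)$ — their kernels are periodic along the diagonal, so the relevant integrals diverge, and the ``unweighted Lemma~\ref{L:bRb}'' you want to close with reads, via \eqref{gamma2}, $\kappa^{-1}\int|\hat F(\xi)|^2(\xi^2+4\kappa^2)^{-1}d\xi=\infty$ for periodic $F$. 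Nor can you silently substitute the resolvent of the operator on $L^2(\T_L)$: the proposition concerns the whole-line Green's function of Proposition~\ref{P:Lax Op} (its diagonal is what enters $A(\kappa,q)$ and the $H_\kappa$ flow), and that differs from the periodic one. Note also that the piece you actually need, $\kappa\|\gamma_1-\gamma_2\|_{L^2(\T_L)}\lesssim C_L^3\|q_1-q_2\|_{L^2(\T_L)}$ uniformly in $L$, cannot be salvaged from your $L^\infty$ control of $\gamma_1-\gamma_2$, which costs a factor $L^{1/2}$. The repair is exactly the paper's averaging device: apply the weighted bound \eqref{g lip b} to all translates $q_i^h$, square, integrate in $h$, and invoke \eqref{1182}--\eqref{1186}; the factors $\|\langle x\rangle^{-b}q_i\|_{L^2}\lesssim\|q_i\|_{L^2(\T_L)}\leq\kappa^{1/2}C_L^{1/2}$ then turn $\kappa^{-1/2}C_b^{5/2}(\cdots)$ into $C_L^3$ as you anticipated.
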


\begin{proof}
Consider first \eqref{CT G L}.  The Green's function has the following translation symmetry: for any $h\in\R$,
\begin{align}\label{G translate}
G\bigl(y_1+h,y_2+h;\kappa,q\bigr) = G\bigl(y_1,y_2;\kappa,q(\cdot+h)\bigr)
\end{align}
and correspondingly, it suffices to consider the case $y_1=0$ (with a translated potential).  We then see that \eqref{CT G L} follows by choosing  $b>\frac12$ in \eqref{G lip b} together with two further observations:  First, the slight deterioration in the exponential decay between \eqref{G lip b} and \eqref{CT G L} allows us to absorb the factor $\langle y_2\rangle^{5b/2}$.  Second, as $b>\frac12$ we see that
\begin{align}\label{1175}
\| \langle x\rangle^{-b} q_1 \|_{L^2(\R)} \lesssim \| q_1 \|_{L^2(\T_L)}
\end{align}
and similarly for $q_2$ and $q_1-q_2$.  In particular, this shows $C_b\lesssim C_L$.

To continue, we need the following improvement of \eqref{1175}:
\begin{align}\label{1182}
\| f \|_{L^2(\T_L)}^2 \approx_b \int_{-L}^L \int_\R |f(x+h)|^2 \langle x\rangle^{-2b} \, dx\,dh
\end{align}
valid for any $2L$-periodic $f$ and any $b>\frac12$.  This is easily verified by integrating first in $h$.  By the same method, one sees that for any $b>\frac12$,
\begin{align}\label{1186}
\| f \|_{H^1_\kappa(\T_L)}^2 \approx_b \int_{-L}^L \int_\R \bigl[ |f'(x+h)|^2 + \kappa^2 |f(x+h)|^2\bigr]\langle x\rangle^{-2b} \, dx\,dh 
\end{align}
and an analogous result for $H^2_\kappa(\T_L)$.  Using such relations, we can then derive \eqref{p lip}, \eqref{r lip}, and \eqref{g lip} from their corresponding counterparts in Proposition~\ref{P:det diag}.  We will illustrate the method using \eqref{p lip} as our example.

Let us temporarily adopt the notation $q^h(x)=q(x+h)$.  Note that
$$
C_b(q_1^h,q_2^h)\lesssim C_L(q_1^h,q_2^h)=C_L(q_1,q_2) \ \ \text{and}\ \  g_\pm^h(x;q):=g_\pm(x;q^h) = g_\pm(x+h;q);
$$
indeed, these follow from \eqref{1175} and \eqref{G translate}, respectively.  We now apply \eqref{p lip b} to the translated potentials $q_1^h$ and $q_2^h$.  After squaring, this yields
\begin{align}\label{p lip bL}
\| \langle x\rangle^{-6b}[g_\pm^h(q_1)-g_\pm^h(q_2)]\|_{H^1_\kappa(\R)}^2
&\lesssim C_L^{5} \|\langle x\rangle^{-b} [q_1^h-q_2^h ] \|_{L^2(\R)}^2,
\end{align}
which we integrate in $h$ over the interval $[-L,L]$.  In this way, \eqref{p lip} follows from \eqref{1182} and \eqref{1186}.

Let us turn finally to the question of analyticity.  Recall $\kappa \mapsto G(y_1,y_2;\kappa,q)$ was shown to be analytic in Proposition~\ref{P:Lax Op} using the resolvent identity \eqref{basicRI}.  In order to adapt that argument to joint analyticity, it suffices to show that $(\kappa,q)\mapsto \LaxR(\kappa,q)$ is jointly analytic (with values in the space of bounded operators).

The resolvent identity allows us to write
\begin{align*}
R(\kappa+\vk,q+f) = R(\kappa,q) + \sum_{\ell=1}^\infty  R(\kappa,q) (- B)^\ell	
	\qtq{with} B = \bigl[\begin{smallmatrix}\vk & f \\ -f & \vk\end{smallmatrix}\bigr] R(\kappa,q) .
\end{align*}
This is contingent, of course, on the convergence of this series, which we must demonstrate for small $\vk\in\R$ and $f\in L^2(\T_L)$ .  Happily,
\begin{align*}
\| B \|_{\op}^2 =  \| B^* \! B \|_{\op} \lesssim_q \vk^2 + \| f\|_{L^2(\T_L)} ^2
\end{align*}
as follows from Schur's test and the $q_2\equiv 0$ case of \eqref{CT G L}.
\end{proof}

\begin{proposition}[$L^2$ conservation laws]\label{P:Conserve}
Fix $1\leq L\leq \infty$.  Given  $q\in L^2(\T_L)$ and $\kappa\geq 1$, we define
\begin{align}\label{1208}
A(\kappa,q) := \int_{-L}^L \frac{q(x)g_-(x;\kappa,q)}{2+\gamma(x;\kappa,q)}\,dx.
\end{align}
This is a real-analytic function of $\kappa$ and of $q\in L^2(\T_L)$; moreover,
\begin{align}\label{1212}
\frac{\partial A}{\partial\kappa} = \int_{-L}^L \gamma(x)\,dx 
	\qtq{and} \frac{\delta A}{\delta q} = g_- .
\end{align}
The functionals $A(\kappa)$, $A(\vk)$, and $M$ all Poisson commute throughout $L^2(\T_L)$ for any $\vk,\kappa\geq 1$; moreover, $A(\kappa)$ commutes with the Hamiltonian $H_\mKdV$ throughout $H^1(\T_L)$.
\end{proposition}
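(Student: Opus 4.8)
\emph{Overall plan.} I would treat the four assertions in turn; all but the commutativity $\{A(\kappa),A(\vk)\}=0$ follow quickly from the machinery already in place, so that last identity is where the real work lies. By Proposition~\ref{P:PerioG}, $\gamma$ and $g_\pm$ depend jointly real-analytically on $(\kappa,q)\in[1,\infty)\times L^2(\T_L)$, with values in $H^1_\kappa(\T_L)$ (and $g_-$ in $H^2_\kappa(\T_L)$). The first inequality in \eqref{gammap1} gives $2+\gamma>1$ pointwise, so $q\mapsto q\,g_-/(2+\gamma)$ is a composition of real-analytic maps into $L^1(\T_L)$ (the integrand is a product of the $L^2$ function $q$ with the $L^\infty$ function $g_-/(2+\gamma)$), and integration over a period is a bounded linear functional; hence $A$ is real-analytic. (When $L=\infty$, \eqref{prformula} shows $g_-\in H^2(\R)$ once $q(1+\gamma)\in L^2(\R)$, so the integrand is still in $L^1$.) To obtain \eqref{1212}, one differentiates \eqref{1208} under the integral sign: for $\partial_\kappa A$, the first-order system \eqref{Gderiv}, \eqref{pderiv}, \eqref{rderiv} together with \eqref{qquad} rewrites the resulting integrand as $\gamma$ plus $\partial_x$ of a smooth periodic function, which integrates to zero over $[-L,L]$; for $\delta A/\delta q$, the first-order variation $\delta\LaxR(\kappa)=-\LaxR(\kappa)(\delta Q)\LaxR(\kappa)$ expresses the $q$-derivatives of $\gamma$ and $g_\pm$ through products $G(x,z)\,\sigma_2\,G(z,x)$ of the (off-diagonal) Green's matrix, the rank-one identity \eqref{detG} reduces these to polynomial expressions in $\gamma(\cdot;\kappa)$ and $g_\pm(\cdot;\kappa)$, and a further use of \eqref{Gderiv}--\eqref{qquad} plus a total-derivative cancellation over the period collapses $\delta A/\delta q$ to $g_-$. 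These computations run parallel to, but are heavier than, those in the proofs of Propositions~\ref{P:det diag} and~\ref{P:G Lip}.

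\emph{Commutativity with $M$ and $H_\mKdV$.} Since $\delta M/\delta q=q$, an integration by parts over the period followed by \eqref{rderiv} and \eqref{Gderiv} gives
\begin{align*}
\{A(\kappa),M\} &= \int_{-L}^L g_-(\kappa)\,\partial_x q\,dx = -\int_{-L}^L \bigl(\partial_x g_-(\kappa)\bigr)\,q\,dx \\
	&= 2\kappa\int_{-L}^L g_+(\kappa)\,q\,dx = \kappa\int_{-L}^L \partial_x\gamma(\kappa)\,dx = 0 .
\end{align*}
(Equivalently, $A(\kappa)$ is manifestly translation invariant by \eqref{G translate} and periodicity.) On $H^1(\T_L)$ we have $\delta H_\mKdV/\delta q=-q''+2q^3$, so an integration by parts and \eqref{rderiv} give $\{A(\kappa),H_\mKdV\}=2\kappa\int_{-L}^L g_+(\kappa)\,(-q''+2q^3)\,dx$. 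Differentiating \eqref{pderiv} and substituting \eqref{rderiv} and \eqref{Gderiv} yields $\partial_x^2 g_+=(4\kappa^2+4q^2)g_+ + 2q'(1+\gamma)$; feeding this in after two more integrations by parts, and using $\int q g_+=\tfrac12\int\partial_x\gamma=0$ together with $\int q q'(1+\gamma)=-\tfrac12\int q^2\partial_x\gamma=-\int q^3 g_+$, one finds $\int g_+(-q''+2q^3)\,dx=0$, hence $\{A(\kappa),H_\mKdV\}=0$. These manipulations are valid for $q\in C^\infty(\T_L)$; the case $q\in H^1(\T_L)$ follows because both sides are continuous in $q\in H^1(\T_L)$, by the Lipschitz bounds of Proposition~\ref{P:PerioG} and density of $C^\infty(\T_L)$.

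\emph{The main point: $\{A(\kappa),A(\vk)\}=0$.} Here $\{A(\kappa),A(\vk)\}=\int_{-L}^L g_-(\kappa)\,\partial_x g_-(\vk)\,dx$, and the manipulations above, carried out with only the single-parameter system \eqref{Gderiv}--\eqref{rderiv}, merely close up tautologically; one must bring in the resolvent identity $\LaxR(\kappa)-\LaxR(\vk)=(\vk-\kappa)\LaxR(\kappa)\LaxR(\vk)$. Following the pattern established for diagonal-Green's-function conserved quantities (cf.\ \cite{KV,Forlano}), the plan is to introduce the mixed diagonal Green's functions — the regularized diagonal entries of the kernel of $\LaxR(\kappa)\LaxR(\vk)$, which by the resolvent identity equal $(\vk-\kappa)^{-1}$ times $\gamma(\kappa)-\gamma(\vk)$ and $g_\pm(\kappa)-g_\pm(\vk)$ — derive the first-order system in $x$ they satisfy (by differentiating the resolvent identity and using \eqref{Gidents}), and then combine it with the systems at $\kappa$ and at $\vk$ so as to exhibit $g_-(\kappa)\,\partial_x g_-(\vk)$ as $\partial_x$ of a periodic function plus a term antisymmetric under $\kappa\leftrightarrow\vk$; integrating over the period and invoking antisymmetry of the Poisson bracket then forces $\{A(\kappa),A(\vk)\}=0$. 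I expect pinning down this product identity exactly to be the technical heart of the argument. As in the previous paragraph, one first establishes everything for $q\in C^\infty(\T_L)$ and then passes to $q\in L^2(\T_L)$ using the Lipschitz bounds of Proposition~\ref{P:PerioG} and density.
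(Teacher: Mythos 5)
Your treatment of analyticity and of $\{A(\kappa),M\}=\{A(\kappa),H_\mKdV\}=0$ is sound; in particular your computation for $H_\mKdV$ (two integrations by parts plus the identity $g_+''=(4\kappa^2+4q^2)g_+ + 2q'(1+\gamma)$ and the cancellations $\int qg_+=0$, $\int qq'(1+\gamma)=-\int q^3g_+$) is a correct, slightly different packaging of the paper's identity \eqref{1282}. But the two central claims are not actually proved. For $\delta A/\delta q=g_-$ you only assert that the first variation ``collapses'' after using \eqref{detG} and a total-derivative cancellation; the paper instead computes $\tfrac{\delta}{\delta q}\tfrac{\partial A}{\partial\kappa}=\tfrac{\partial}{\partial\kappa}g_-$ by a trace/periodization argument and then integrates in $\kappa$ from $\infty$, checking that the boundary term vanishes — a step your sketch does not supply. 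And for $\{A(\kappa),A(\vk)\}=0$ you explicitly defer the key identity (``I expect pinning down this product identity exactly to be the technical heart''), so the heart of the proposition is missing.

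Moreover, your premise there is wrong: the single-parameter system \eqref{Gderiv}--\eqref{rderiv} does \emph{not} close up tautologically, and no mixed resolvent $\LaxR(\kappa)\LaxR(\vk)$ is needed. After the integration by parts
\begin{align*}
\bigl\{ A(\vk), A(\kappa) \bigr\} = \int_{\T_L} \frac{\vk\, g_-(\vk) g_-'(\kappa)- \kappa\, g_-'(\vk) g_-(\kappa)}{\vk+\kappa} \,dx,
\end{align*}
one checks directly from \eqref{Gderiv}, \eqref{pderiv}, \eqref{rderiv} that
\begin{align*}
(\vk-\kappa)\bigl[\vk\, g_-(\vk) g_-'(\kappa) - \kappa\, g_-'(\vk) g_-(\kappa)\bigr]
	= \vk\kappa \Bigl(g_+(\vk) g_+(\kappa) - g_-(\vk) g_-(\kappa) -[1+\gamma(\vk)][1+\gamma(\kappa)]\Bigr)',
\end{align*}
so the integrand is a total derivative and the bracket vanishes. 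Note also that your fallback logic is backwards: exhibiting the leftover as \emph{antisymmetric} under $\kappa\leftrightarrow\vk$ is consistent with the antisymmetry of the Poisson bracket and forces nothing; you would need the leftover to be \emph{symmetric} (or, as above, identically a derivative) to conclude. Finally, for $\partial A/\partial\kappa=\int\gamma$, the statement you assert — that $\partial_\kappa$ of the integrand equals $\gamma$ plus an exact $x$-derivative — is precisely the nontrivial identity \eqref{1237}, which the paper imports from \cite{HGKV}; it is not something that ``rewrites'' from \eqref{Gderiv}--\eqref{rderiv} and \eqref{qquad} without substantial algebra, and you should either derive it or cite it.
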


\begin{proof}
The inequality \eqref{gammap1} shows that $2+\gamma\geq 1$.  Together with the other estimates of Proposition~\ref{P:PerioG}, this guarantees that the integrand in \eqref{1208} is indeed absolutely integrable.  The joint analyticity of $A(\kappa,q)$ follows from the analyticity of the Green's function shown there.

To proceed, we need the following important identity discovered in \cite{HGKV}:
\begin{align}\label{1237}
\gamma &= \frac{\partial\ }{\partial x} \Bigl(
	\frac{g_{+}\tfrac{\partial g_{-}}{\partial\kappa} - \tfrac{\partial g_{+}}{\partial\kappa} g_{-}}{2(2+\gamma)} \Bigr)
	+ \frac{\partial\ }{\partial\kappa} \Bigl(\frac{qg_-}{2+\gamma} \Bigr).
\end{align}
We do not repeat the proof.  It is involved; however, it relies only on algebraic manipulations of the identities presented in Proposition~\ref{P:det diag} and their $\kappa$-derivatives.

Integrating both sides over one period confirms the left identity in \eqref{1212}.  Our next goal is to use this identity to understand $\tfrac{\delta\ }{\delta q}\tfrac{\partial A}{\partial\kappa}$.

Consider the directional derivative in the direction $f\in L^2(\T_L)$.  Using the left identity in \eqref{1212}, \eqref{G translate}, and the periodicity of $f$ and $q$, we find that
\begin{align*}
\bigl\langle f, \tfrac{\delta\ }{\delta q} \tfrac{\partial A}{\partial\kappa} \bigr\rangle
	&= - \tr \int_{-L}^L \int_\R G(y,x) \bigl[\begin{smallmatrix}0 & f(x) \\ -f(x) & 0 \end{smallmatrix}\bigr]  G(x,y)\,dx\,dy\\
&= - \sum_{m\in\Z} \tr \int_{-L}^L \int_{-L}^L G(y,x+2mL) \bigl[\begin{smallmatrix}0 & f(x) \\ -f(x) & 0\end{smallmatrix}\bigr]  G(x+2mL,y)\,dx\,dy\\
&= - \sum_{m\in\Z} \tr \int_{-L}^L \int_{-L}^L G(y-2mL,x) \bigl[\begin{smallmatrix}0 & f(x) \\ -f(x) & 0\end{smallmatrix}\bigr]  G(x,y-2mL)\,dx\,dy\\
&= - \tr \int_\R \int_{-L}^L  G(y,x) \bigl[\begin{smallmatrix}0 & f(x) \\ -f(x) & 0 \end{smallmatrix}\bigr]  G(x,y)\,dx\,dy.
\end{align*}
Cycling the trace and performing the $y$ integral, causes the integral kernel of the operator $\LaxR^2=-\tfrac{\partial\ }{\partial\kappa} \LaxR$ to appear.  In this way, our computation reveals that
\begin{align}\label{1255}
\tfrac{\delta\ }{\delta q} \tfrac{\partial A}{\partial\kappa} = \tfrac{\partial\ }{\partial\kappa}  g_- .
\end{align}

Evidently, the second identity in \eqref{1212} should follow by integrating both sides of \eqref{1255} with respect to $\kappa$.  One need only show that there is no boundary contribution at infinity.  To this end, we observe that
\begin{align*}
\| g_-\|_{L^2} + \bigl\| \tfrac{\delta g_-}{\delta q}\bigr\|_{\op} + \bigl\| \tfrac{\delta \gamma}{\delta q}\bigr\|_{\op}
	 \to 0 \qtq{as} \kappa\to\infty
\end{align*}
as can be seen from \eqref{p lip} and \eqref{g lip}.  

Next we verify commutativity under the Poisson structure defined in \eqref{PB}.  As a first example, we combine \eqref{1212}, \eqref{Gderiv}, and \eqref{rderiv} to obtain
\begin{align}\label{1267}
\{ M, A(\kappa) \} = \int_{\T_L} q(x) g_-'(x;\kappa)\,dx  = -\kappa \int_{\T_L} \gamma'(x;\kappa) \,dx =0.
\end{align}

Next we verify the commutativity of $A(\kappa)$ and $A(\vk)$.  Integration by parts shows,
\begin{align*}
\bigl\{ A(\vk), A(\kappa) \bigr\} &=  \int_{\T_L}\frac{(\vk+\kappa)g_-(x;\vk) g_-'(x;\kappa)}{\vk+\kappa} \,dx \\
	&= \int_{\T_L} \frac{\vk g_-(x;\vk) g_-'(x;\kappa)- \kappa g_-'(x;\vk) g_-(x;\kappa)}{\vk+\kappa} \,dx.
\end{align*}
To see that this integral vanishes, we then employ the identity
\begin{align*}
(\vk-\kappa)\bigl[\vk & g_-(\vk) g_-'(\kappa) - \kappa g_-'(\vk) g_-(\kappa)\bigr] \\
	&= \vk\kappa \Bigl(g_+(\vk) g_+(\kappa) - g_-(\vk) g_-(\kappa) -[1+\gamma(\vk)][1+\gamma(\kappa)]\Bigr)',
\end{align*}
which is easily verified by combining \eqref{Gderiv}, \eqref{pderiv}, and \eqref{rderiv}.

In a similar fashion, \eqref{Gderiv}, \eqref{pderiv}, and \eqref{rderiv} imply
\begin{align}\label{1282}
[-q''+2q^3]g_-' = \partial_x\bigl[ 2\kappa q'g_+ + 4\kappa^2 qg_- + 4\kappa^3\gamma - 2\kappa q^2(1+\gamma)\bigr],
\end{align}
which then demonstrates that $\{H_\mKdV,A(\kappa)\}=0$.
\end{proof}

The Poisson commutativity of $A(\kappa;q)$ and $H_\mKdV$ shows that $A(\kappa;q)$ is conserved under \eqref{mkdv}.  In \cite{HGKV}, for example, this was crucial for controlling the low-regularity norms of solutions to \eqref{mkdv}.  As samples of the finite-volume Gibbs state are in $L^2(\T_L)$, we may exploit conservation of $M(q)$ for this role.

In this paper, the principal role of the Poisson commutativity of $A(\kappa;q)$ and $H_\mKdV$ is ensuring that the \eqref{mkdv} flow commutes with the approximate flow that we will be employing.  Our approximate flows are based on the Hamiltonians
\begin{align}\label{Hk defn}
H_\kappa(q) := 4\kappa^2 M(q) - 4\kappa^3 A(\kappa;q) .
\end{align}
Proposition~\ref{P:Conserve} ensures that these Poisson commute with $H_\mKdV$. Let us now present the basic properties of the flows generated by $H_\kappa(q)$:

\begin{proposition}\label{P:HK well}
Fix $1\leq L\leq \infty$ and $\kappa\geq 1$. Then the Hamiltonian flow induced by $H_{\kappa}$, namely,
\begin{align}\label{Hkflow}
\tfrac{d}{dt}q(x)= \partial_{x} \{ 4\kappa^2 q - 4\kappa^3 g_-(\kappa)\}= 4\kappa^2 q'(x)+8\kappa^4 g_+(x; \kappa), 
\end{align}
is globally well-posed on $L^2(\T_L)$.  This evolution preserves $M(q)$ and $A(\vk;q)$ for all $\vk\geq 1$ and commutes with the $H_\vk$ flow.  Moreover, under this flow, 
\begin{align}\label{+Hkflow}
\tfrac{d}{dt} g_+(\vk) & = 4\kappa^2 g_+'(\vk) + \tfrac{4\kappa^4}{\kappa^2 - \vk^2} \Big\{ g_+(\kappa)\bigl[ 1 + \gamma(\vk) \big]
	- \big[ 1 + \gamma(\kappa)\bigr] g_+(\vk) \Big\}' \\
\label{-Hkflow}
\tfrac{d}{dt} g_-(\vk) & = 4\kappa^2 g_-'(\vk) - \tfrac{8\kappa^4\vk}{\kappa^2-\vk^2} \Big\{ g_+(\kappa)\bigl[ 1 + \gamma(\vk) \big]
	- \big[ 1 + \gamma(\kappa)\bigr] g_+(\vk) \Big\} \\
\label{gHkflow}
\tfrac{d}{dt} \gamma(\vk) & = 4\kappa^2 \gamma'(\vk) - \tfrac{8\kappa^5\vk }{(\kappa^2-\vk^2)^{2}} \Bigl\{ g_-(\kappa)g_-(\vk) \Bigr\}' \\
& \qquad + \tfrac{4\kappa^4(\kappa^2+\vk^2) }{(\kappa^2-\vk^2)^{2}}
	\Bigl\{ g_+(\kappa)g_+(\vk) - \gamma(\kappa)- \gamma(\vk) - \gamma(\kappa)\gamma(\vk)\Bigr\}'  \notag
\end{align}
for any distinct  $\vk,\kappa \geq 1$.
\end{proposition}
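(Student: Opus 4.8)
The four assertions will be handled in the order stated: first global well-posedness, then the conservation laws, then commutation with the $H_\vk$ flow, and finally the evolution formulas \eqref{+Hkflow}--\eqref{gHkflow}, which constitute the technical core.

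For well-posedness, I would write \eqref{Hkflow} in Duhamel form,
\[
q(t) = S(t)q_0 + 8\kappa^4\int_0^t S(t-s)\,g_+\bigl(\cdot\,;\kappa,q(s)\bigr)\,ds, \qquad S(t):=e^{4\kappa^2 t\partial_x},
\]
noting that $S(t)$ is unitary on $L^2(\T_L)$ and on $H^1_\kappa(\T_L)$. By the Lipschitz bound \eqref{p lip}, $q\mapsto g_+(\cdot;\kappa,q)$ maps $L^2(\T_L)$ into $H^1_\kappa(\T_L)$ with a Lipschitz constant bounded on bounded subsets of $L^2$; a contraction-mapping argument then yields a unique solution in $C([0,T];L^2(\T_L))$ with $T=T(\|q_0\|_{L^2})$. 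Since the Duhamel integral in fact takes values in $H^1_\kappa(\T_L)$, the computation of $\tfrac{d}{dt}\|q(t)\|_{L^2}^2$ is legitimate, and using $\int_{\T_L}qq'\,dx=0$ together with \eqref{Gderiv} (which gives $\int_{\T_L}q\,g_+(\kappa)\,dx=\tfrac12\int_{\T_L}\gamma(\kappa)'\,dx=0$) we see that $M(q(t))$ is conserved. This a priori bound promotes the local solution to a global one on $L^2(\T_L)$.

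With the global flow in hand, conservation of $A(\vk;q)$ for every $\vk\geq 1$ and commutation with the $H_\vk$ flow both reduce to Poisson-commutativity statements already established in Proposition~\ref{P:Conserve}: since $H_\kappa = 4\kappa^2 M - 4\kappa^3 A(\kappa)$ and $M$, $A(\kappa)$, $A(\vk)$ mutually Poisson commute, we have $\{H_\kappa,A(\vk)\}=0$ and $\{H_\kappa,H_\vk\}=0$. For the conservation law one pairs $\tfrac{\delta A(\vk)}{\delta q}=g_-(\vk)\in H^1_\kappa(\T_L)$ against $\dot q=\partial_x\tfrac{\delta H_\kappa}{\delta q}\in H^{-1}$, so that $\tfrac{d}{dt}A(\vk;q(t))=\{A(\vk),H_\kappa\}(q(t))=0$; alternatively, once \eqref{gHkflow} is available one integrates it over a period (every term on the right being an $x$-derivative) to get $\tfrac{d}{dt}\int_{\T_L}\gamma(\vk)\,dx=0$, hence $\tfrac{d}{dt}A(\vk)=0$ after integrating in $\vk$ using $A(\vk),\int\gamma(\vk)\to 0$ as $\vk\to\infty$ (cf. \eqref{p lip}, \eqref{g lip}, \eqref{1212}). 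Commutation of the $H_\kappa$ and $H_\vk$ flows is the usual consequence of $\{H_\kappa,H_\vk\}=0$: using the global well-posedness and the conservation of $H_\vk$ along the $H_\kappa$ flow, one checks that $t\mapsto\Phi^{H_\vk}_s\Phi^{H_\kappa}_t q_0$ solves the $H_\kappa$ flow with datum $\Phi^{H_\vk}_s q_0$ and concludes by uniqueness; the point needing care is that $\Phi^{H_\vk}_s$ preserves the Poisson structure, which is verified on the dense set of smooth data and extended using continuity of the flow maps in $L^2$.

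The heart of the matter is \eqref{+Hkflow}--\eqref{gHkflow}. Perturbing the potential along the flow changes the Lax resolvent by $\tfrac{d}{dt}\LaxR(\vk) = -\LaxR(\vk)\,\dot Q\,\LaxR(\vk)$, where $\dot Q$ is multiplication by $\dot q\cdot\bigl[\begin{smallmatrix}0&1\\-1&0\end{smallmatrix}\bigr]$, so at the level of kernels
\[
\tfrac{d}{dt}G(y_1,y_2;\vk) = -\int G(y_1,z;\vk)\,\bigl[\begin{smallmatrix}0&1\\-1&0\end{smallmatrix}\bigr]\,G(z,y_2;\vk)\,\dot q(z)\,dz,
\]
the integral being absolutely convergent by the Combes--Thomas decay \eqref{CT G off}. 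I would then split $H_\kappa=4\kappa^2M-4\kappa^3A(\kappa)$: the $M$-part generates pure translation and contributes exactly the $4\kappa^2(\,\cdot\,)'$ terms in each formula, while for the $A(\kappa)$-part one substitutes $\dot q=\partial_x g_-(\cdot;\kappa)$, integrates by parts in $z$, and uses \eqref{Gidents} both to compute $\partial_z[G(y_1,z;\vk)\,\bigl[\begin{smallmatrix}0&1\\-1&0\end{smallmatrix}\bigr]\,G(z,y_2;\vk)]$ (which equals $-2\vk\,G(y_1,z;\vk)\sigma_1 G(z,y_2;\vk)$ away from $z=y_1,y_2$) and to track the $\delta$-contributions at $z=y_1,y_2$. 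The resulting operator products, now involving both $\LaxR(\vk)$ and $\LaxR(\kappa)$, are collapsed by the first resolvent identity $\LaxR(\vk)\LaxR(\kappa)=(\kappa-\vk)^{-1}[\LaxR(\vk)-\LaxR(\kappa)]$ --- this is the source of the rational prefactors $(\kappa^2-\vk^2)^{-1}$ and $(\kappa^2-\vk^2)^{-2}$ --- after which one restricts to the diagonal $y_1=y_2=x$ and rewrites everything in terms of $\gamma, g_\pm$ at $\kappa$ and $\vk$ via the algebraic identities \eqref{Gderiv}--\eqref{qquad} and their $\kappa$-derivatives; this is precisely the computation carried out in \cite{HGKV} for these diagonal Green's functions, which I would follow. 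The main obstacle is not any single estimate but this bookkeeping: correctly collapsing the $\LaxR(\vk)$--$\LaxR(\kappa)$ products into partial fractions, keeping track of the $\delta$-contributions from \eqref{Gidents} (which, once one uses \eqref{pderiv}, are responsible for the $q(1+\gamma)$-type terms), and massaging the output into the stated closed form.
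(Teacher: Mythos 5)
Your proposal is correct and follows essentially the same route as the paper: Duhamel reformulation around the translation flow with contraction via the Lipschitz bound \eqref{p lip}, globalization through conservation of $M$ (via \eqref{Gderiv}), conservation of $A(\vk)$ and flow commutation from Proposition~\ref{P:Conserve} checked on smooth data and extended by well-posedness, and an appeal to \cite[Corollary~4.8]{HGKV} for \eqref{+Hkflow}--\eqref{gHkflow}. The only difference is that you additionally sketch the first-principles derivation of the Green's-function evolutions (resolvent perturbation, \eqref{Gidents}, and the first resolvent identity to produce the partial fractions), which the paper omits as "rather lengthy" in favor of the citation; your sketch, including the identity $BJ+JA=-2\vk\sigma_1$ implicit in your $\partial_z$ computation, is sound.
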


\begin{proof}
The formulas for the $H_\kappa$ flow follow from \eqref{1212} and then \eqref{rderiv}.
To prove well-posedness of this flow, we first formulate the problem as the integral equation
\begin{align}
q(t,x)=q(0,x+4\kappa^2 t) +8 \kappa^4 \int_{0}^{t} g_+(x+4\kappa^2(t-t'); \kappa, q(t')) \, dt, \label{Hkduhamel}
\end{align}
which corresponds to treating \eqref{Hkflow} as a perturbation of the translation flow with Hamiltonian $4\kappa^2M(q)$.  From \eqref{p lip}, we know that $q\mapsto g_+$ is $L^2$-Lipschitz and so well-posedness follows easily from contraction mapping.  

Global well-posedness then follows from the conservation of $M(q)$,  which is a consequence of Proposition~\ref{P:Conserve} or can be deduced by direct computation using \eqref{Gderiv}.  Likewise, the conservation of $A(\vk)$ follows from Proposition~\ref{P:Conserve}.

The commutativity of the $H_\kappa$ and $H_\vk$ flows is easily checked for smooth data.  This extends to all of $L^2(\T_L)$ via well-posedness. 

The derivation of the evolution equations for the diagonal Green's functions from first principles is rather lengthy.  However, these results can be deduced directly from \cite[Corollary~4.8]{HGKV}.  We caution the reader that the regularized Hamiltonian used here and in \cite{HGKV} are different because the Poisson structures are different; however, the evolution \eqref{Hkflow} under discussion here is indeed the specialization of the flow studied in \cite{HGKV}  to real initial data. 
\end{proof}

As discussed in the Introduction, our approach to treating \eqref{mkdv} in the thermodynamic limit rests on first extending the $H_\kappa$ flow to the full line and then sending $\kappa\to\infty$.  Toward this goal, we will verify that the $H_\kappa$ flow preserves the finite-volume Gibbs state in the next section.  As a small detour, we will also show how our approach leads to a new proof of invariance of the finite-volume Gibbs state under \eqref{mkdv} dynamics.  The existence of solutions for such initial data follows easily from the well-posedness of \eqref{mkdv} in $L^2(\T_L)$, which was first proved in \cite{MR2131061}.  Our future arguments, however, rely on the fact that the $H_\kappa$ flows converge to \eqref{mkdv} as $\kappa\to\infty$.  This was proved in \cite{Forlano} as a key part of providing a new proof that \eqref{mkdv} is well-posed in $L^2(\T_L)$.

\begin{theorem}[\cite{Forlano,MR2131061}]\label{T:1404}
The \eqref{mkdv} flow is globally well-posed in $L^2(\T_L)$ and it commutes with the $H_\kappa$ flow. Moreover, for fixed $q^0\in L^2(\T_L)$ and $T>0$,
\begin{align}\label{L2 flow conv}
\lim_{\kappa \to \infty} \sup_{|t|\leq T}\bigl\|e^{tJ\nabla H_\kappa}q^0-e^{tJ\nabla H_\mKdV}q^0\bigr\|_{L^2 (\T_L)}=0.
\end{align}
\end{theorem}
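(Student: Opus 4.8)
The plan has three parts, matching the three assertions. Global well-posedness of \eqref{mkdv} in $L^2(\T_L)$ I would simply invoke from \cite{MR2131061}; alternatively, the commuting-flows proof of \cite{Forlano}, which rests on precisely the Lipschitz estimates recorded in Proposition~\ref{P:PerioG} and the well-posedness of the $H_\kappa$ flow in Proposition~\ref{P:HK well}, furnishes an independent route. I would take this as given and concentrate on commutativity and on the limit \eqref{L2 flow conv}.

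For commutativity, the starting point is Proposition~\ref{P:Conserve}: since $A(\kappa;\cdot)$ Poisson commutes with $H_\mKdV$ on $H^1(\T_L)$, so does $H_\kappa=4\kappa^2 M-4\kappa^3 A(\kappa;\cdot)$. On smooth initial data both flows preserve regularity --- for $H_\kappa$ this is part of the theory behind Proposition~\ref{P:HK well}, for \eqref{mkdv} it is classical --- and on such data the vanishing of the Poisson bracket upgrades to commutation of the two Hamiltonian flows by the standard argument (e.g.\ differentiating the conjugated flow $e^{-tJ\nabla H_\kappa}e^{sJ\nabla H_\mKdV}e^{tJ\nabla H_\kappa}q^0$ in $t$ and noting that the result vanishes). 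I would then pass to general $L^2(\T_L)$ data using density of $C^\infty(\T_L)$ together with the continuity of both data-to-solution maps (Proposition~\ref{P:HK well} and \cite{MR2131061}).

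The heart of the matter is \eqref{L2 flow conv}, and here I would follow \cite{Forlano}. First I would pin down the asymptotics of the $H_\kappa$ vector field: rewriting \eqref{Hkflow} via \eqref{prformula} expresses $4\kappa^2 q'+8\kappa^4 g_+(\kappa)$ as $-4\kappa^2\partial(q\gamma(\kappa))-\partial^3(q[1+\gamma(\kappa)])$ plus a term of lower order in $\kappa$, while \eqref{prformula} also gives $\kappa g_-(\kappa)=\tfrac{4\kappa^2}{4\kappa^2-\partial^2}(q[1+\gamma(\kappa)])\to q$ and hence, via \eqref{qquad'}, $\gamma(\kappa)=-\tfrac12 g_-(\kappa)^2+O(\kappa^{-4})=-\tfrac12\kappa^{-2}q^2+o(\kappa^{-2})$. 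Consequently $-4\kappa^2\partial(q\gamma(\kappa))\to 6q^2q'$ and the full vector field converges to $-q'''+6q^2q'$; moreover, using the quantitative bounds of Proposition~\ref{P:PerioG}, this convergence is quantitative in a sufficiently negative Sobolev norm, uniformly on bounded subsets of $H^s(\T_L)$ for $s$ large enough. For smooth $q^0$ I would then obtain uniform-in-$\kappa$ control of a few Sobolev norms of both $q_\kappa(t)=e^{tJ\nabla H_\kappa}q^0$ and $q(t)=e^{tJ\nabla H_\mKdV}q^0$ on $[-T,T]$ --- conservation of $M$ along both flows plus Proposition~\ref{P:PerioG} (or higher conservation laws) --- and feed the vector-field asymptotics into the Duhamel formulation, so that Gronwall gives $\sup_{|t|\le T}\|q_\kappa(t)-q(t)\|_{L^2}\to 0$ for smooth $q^0$.

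Finally I would remove the smoothness by a three-term split: approximating $q^0\in L^2(\T_L)$ by a smooth $\tilde q^0$, the middle term $\|e^{tJ\nabla H_\kappa}\tilde q^0-e^{tJ\nabla H_\mKdV}\tilde q^0\|_{L^2}$ is handled by the smooth case, the \eqref{mkdv} end term by well-posedness in $L^2(\T_L)$, and the $H_\kappa$ end term $\|e^{tJ\nabla H_\kappa}q^0-e^{tJ\nabla H_\kappa}\tilde q^0\|_{L^2}$ by equicontinuity of the maps $e^{tJ\nabla H_\kappa}$ on $L^2(\T_L)$, uniformly in $\kappa\ge 1$ and $|t|\le T$. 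I expect this last point to be the main obstacle: a naive energy estimate fails because the Lipschitz constant in \eqref{p lip} depends on the $L^2$ norm, so one must use the conservation of $M$ under the $H_\kappa$ flow (Proposition~\ref{P:HK well}) to freeze that constant along the evolution --- exactly the mechanism exploited in \cite{Forlano}. Assembling the three estimates yields \eqref{L2 flow conv}.
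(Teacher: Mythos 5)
This theorem is not proved in the paper at all: it is imported from \cite{Forlano,MR2131061}, and the only guidance the paper gives is the remark immediately following the statement, namely that the argument of \cite{Forlano} proceeds via a diffeomorphic change of unknown (gauge transformation) and that the convergence \eqref{L2 flow conv} is deduced from convergence of the gauge variable $g_-$. Measured against that, your proposal contains a genuine gap precisely at the point you flag as ``the main obstacle.'' The uniform-in-$\kappa$ equicontinuity of the maps $e^{tJ\nabla H_\kappa}$ on $L^2(\T_L)$ is not a technical footnote to be dispatched by freezing $M(q)$ along the flow; it is essentially the whole theorem. Conservation of $M$ only fixes the constant $C_L$ in \eqref{p lip}, but the vector field in \eqref{Hkflow} is $8\kappa^4 g_+(\kappa;\cdot)$, and \eqref{p lip} gives an $L^2$-Lipschitz constant for it of order $\kappa^4\cdot\kappa^{-1}C_L^{5/2}=\kappa^3 C_L^{5/2}$ (the $H^1_\kappa$ norm only buys one power of $\kappa$). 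Gronwall then produces a modulus of continuity of the form $e^{C\kappa^3 T}$, which is vacuous as $\kappa\to\infty$. The same derivative loss already obstructs your Gronwall argument for the middle (smooth-data) term: after subtracting the skew-adjoint linear parts, the difference of the nonlinear parts still carries a factor $\tfrac{16\kappa^4\partial}{4\kappa^2-\partial^2}$ acting on $q\gamma(\kappa)$, and the available Lipschitz bounds \eqref{g lip} on $\gamma$ do not absorb all of these powers of $\kappa$. This is exactly the difficulty that the gauge transformation in \cite{Forlano} is designed to circumvent: one proves convergence and equicontinuity for $t\mapsto g_-(\kappa_0;q(t))$ at a fixed large $\kappa_0$, whose evolution equations \eqref{-Hkflow} and \eqref{-flow} have no derivative loss in $q$, and then transfers back to $q$ using the local bi-Lipschitz invertibility of $q\mapsto g_-(\kappa_0;q)$ near the data. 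Without that (or some substitute for it), your three-term splitting does not close.

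The remaining parts of your outline are fine: global well-posedness is correctly attributed to \cite{MR2131061} (or \cite{Forlano}), and the commutativity argument --- Poisson commutation from Proposition~\ref{P:Conserve} on smooth data, upgraded to $L^2(\T_L)$ by density and continuity of both data-to-solution maps --- is the standard and intended route. Your computation of the $\kappa\to\infty$ asymptotics of the $H_\kappa$ vector field via \eqref{prformula} and \eqref{qquad'} is also correct and matches the expansion the paper itself uses in Section~\ref{S:6}. The defect is confined to, but fatal at, the passage from smooth to general $L^2$ data (and, more quietly, to the Gronwall step for smooth data), where the gauge-transformation mechanism cannot be replaced by conservation of mass alone.
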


A key ingredient in the argument in \cite{Forlano} is to make a diffeomorphic change of unknown (= gauge transformation).  This is to overcome the difficulty that the nonlinearity in \eqref{mkdv} does not make sense (as a distribution) pointwise in time for $C_t^{ } L^2_x$ solutions.  Convergence in \eqref{L2 flow conv} is actually deduced from convergence of the gauge variable $g_-$, whose evolution under the $H_\kappa$ flow was presented in \eqref{-Hkflow}.

Under the \eqref{mkdv} flow, the diagonal Green's functions evolve as follows:
\begin{align}\label{+flow}
\tfrac{d}{dt} g_+ & = -g_+''' + 6(q^2 g_+)' \\
\label{-flow}
\tfrac{d}{dt} g_- & = - g_-''' + 6 q^2 g_-' \\
\label{gflow}
\tfrac{d}{dt} \gamma & = - \gamma'''  + \bigl\{ 6q^2[1+\gamma] - 12\vk q g_- - 12\vk^2 \gamma \bigr\}'   .
\end{align}
As well as playing an important role in the proof of Theorem~\ref{T:1404}, these formulae also allow us to give an intrinsic definition of what it means for a wide class of low-regularity functions to be solutions of \eqref{mkdv}:

\begin{definition}\label{D:mkdv green}
Suppose $\langle x\rangle^{-b} q(t,x)$ is $L^2_x(\R)$-continuous in time for some $b\geq 0$.  We say that $q$ is a \emph{green solution} to \eqref{mkdv} if for all times $t_1<t_2$ and all $\vk\geq 1$,
\begin{gather}\label{Green-}
g_-(t_2) - g_-(t_1) = \int_{t_1}^{t_2} - g'''_-(t,x) + 6 q(t,x)^2 g_-'(t,x) \,dt \\
\label{Greeng}
 \gamma(t_2) - \gamma(t_1) = \int_{t_1}^{t_2} - \gamma'''  + \bigl\{ 6q^2[1+\gamma] - 12\vk q g_- - 12\vk^2 \gamma \bigr\}' \,dt
\end{gather}
in the sense of distributions.
\end{definition}

The estimate \eqref{rbd'} together with the well-known embedding $H^1\hookrightarrow L^\infty$ and its dual $L^1\hookrightarrow H^{-1}$ ensure that RHS\eqref{Green-} belongs to $H^{-1}_\loc(\R)$ and so is certainly a distribution.  Similarly, \eqref{gbd'} ensures that RHS\eqref{Greeng} belongs to $H^{-2}_\loc(\R)$.

There is no reason to demand that green solutions satisfy a parallel identity corresponding to \eqref{+flow}, since this can be derived from \eqref{Green-} by taking a spatial derivative and using \eqref{rderiv}.

We include \eqref{Greeng} in our definition for an important reason:  The function $\gamma(\vk)$ serves as a one-parameter family of microscopic conservation laws for \eqref{mkdv}, as \eqref{gflow} demonstrates; correspondingly, green solutions retain this important expression of complete integrability.  Indeed, the name of this type of solution was chosen to reflect this double meaning: green in the sense of Green's function, but also in the sense of conservationism.   For comparison, the notion of solution introduced in \cite{Christ'05} allows for non-conservative solutions of completely integrable systems.

In Section~\ref{S:7}, we will describe the notion of green solution appropriate to \eqref{kdv}, which was introduced already in \cite{KMV}.

\section{Invariance of the periodic Gibbs measure}\label{S:invariance}

At the conclusion of this section, we provide a new proof of the invariance of the Gibbs measure under \eqref{mkdv} on $\T_L$ for finite $L$.  Our proof relies on the invariance of this measure under the $H_\kappa$ flow; this also plays a central role in the infinite-volume analysis.

\begin{theorem}\label{THM:HKinv}
Fix $1\leq L<\infty$, $\kappa\geq 1$, and let $q^0$ be Gibbs distributed.\\
\slp{a} Almost surely, the $H_\kappa$ flow admits a global and unique $C_t(\R;L^2(\T_L))$ solution $q(t)$ with initial data $q^0$.\\
\slp{b} The solution $q(t)$ is Gibbs distributed at all times $t\in\R$.\\
\slp{c} For each $p<\infty$, $s<\frac12$, and $b>\frac1p$, there is an $\alpha>0$ so that
\begin{align}\label{bounds kL}
\sup_{\kappa\geq 1} \ \sup_{L\geq 1} \ \E \Bigl\{  \bigl\| \langle x\rangle^{-b} \, q(t,x) \|_{C^\alpha([-T,T];W^{s,p}(\T_L))}^r  \Bigr\}  < \infty
\end{align} 
for every choice of $T<\infty$ and $1\leq r<\infty$.
\end{theorem}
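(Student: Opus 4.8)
Part \slp{a} is immediate from the deterministic theory: by \eqref{log13} the Gibbs sample $q^0$ lies in $L^2(\T_L)$ almost surely, so Proposition~\ref{P:HK well} produces the global unique solution $q(t)\in C_t(\R;L^2(\T_L))$, with $\|q(t)\|_{L^2(\T_L)}=\|q^0\|_{L^2(\T_L)}$ conserved. For part \slp{b}, the plan is to deduce the invariance from the \emph{infinitesimal invariance} $\E[\{G,H_\kappa\}(q^0)]=0$, valid for $q^0$ Gibbs distributed and for $G$ a character $G(q)=\exp(i\int q\psi)$ with $\psi\in C^\infty(\T_L)$, followed by an approximation argument. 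To prove infinitesimal invariance write $H_\kappa=4\kappa^2 M-4\kappa^3 A(\kappa)$. The term $\E[\{G,M\}]$ vanishes because $M$ generates translations and the Gibbs law is translation invariant (Proposition~\ref{PROP:Gibbs}). For $\E[\{G,A(\kappa)\}]$ the tool is the KMS identity \eqref{E:C^1 KMS} of Theorem~\ref{T:KMS}, whose right-hand side equals $\E[\{F_N,\,H_\mKdV-\mu M\}\,G_N]$ once one recognizes $-q''+V'(q)$ as $\delta(H_\mKdV-\mu M)/\delta q$, together with the fact that $A(\kappa)$ Poisson commutes with both $H_\mKdV$ and $M$ (Proposition~\ref{P:Conserve}). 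Since $A(\kappa)$ itself fails the boundedness condition \eqref{C^1 on L^2}, one applies \eqref{E:C^1 KMS} with $F=\chi_R(\|q\|_{L^2(\T_L)})A(\kappa)$ and $G$ a character, sends $R\to\infty$ for fixed $N$ (the $\nabla\chi_R$ contribution, the only place the amplitude cutoff survives the later $N\to\infty$ limit, is killed by the tail bound \eqref{log13}), and then sends $N\to\infty$. The key algebraic point is that, using self-adjointness of $P_{\leq N}$ and $[P_{\leq N},\partial_x]=0$, the right-hand side of \eqref{E:C^1 KMS} collapses to $\E[\{A(\kappa),H_\mKdV-\mu M\}(q_{\leq N})\,G_N]$ plus an error stemming from the frequency mismatch between $(q^3)_{\leq N}$ and $(q_{\leq N})^3$ in the cubic nonlinearity: the first term is \emph{identically zero} because $q_{\leq N}\in H^1(\T_L)$, where Proposition~\ref{P:Conserve} applies, while the error tends to zero because $g_-=\delta A/\delta q$ carries a full derivative more than $q$ (it lies in $H^2_\kappa$ by \eqref{rbd'}, \eqref{r lip}). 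The left-hand side converges to $\E[\{A(\kappa),G\}]$, giving the claim.

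To upgrade infinitesimal invariance to invariance at all times I would run the standard argument: the $H_\kappa$ flow $\Phi_t$ is a one-parameter group of symplectomorphisms conserving $H_\kappa$, so $\{G,H_\kappa\}\circ\Phi_t=\{G\circ\Phi_t,H_\kappa\}$ and hence $\tfrac{d}{dt}\E[G(q(t))]=\E[\{G\circ\Phi_t,H_\kappa\}]$; to make this vanish one needs infinitesimal invariance for a class of functionals stable under composition with the flow, which one arranges by working with the amplitude-cutoff flows $\Phi^{(R)}_t$ generated by $\chi_R(\|q\|_{L^2(\T_L)})H_\kappa$ — these have globally Lipschitz Hamiltonian vector fields on $L^2(\T_L)$, conserve $\chi_R(\|q\|_{L^2(\T_L)})$ and keep $G\circ\Phi^{(R)}_t$ inside \eqref{C^1 on L^2}, and they agree with $\Phi_t$ on $\{\|q\|_{L^2(\T_L)}<R/2\}$ — so that sending $R\to\infty$ recovers the statement for the $H_\kappa$ flow. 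The main obstacle throughout \slp{b} is making the limits in the KMS argument rigorous: one must pair the rough distribution $\delta(H_\mKdV-\mu M)/\delta q$ against $\partial_x$ of various quantities, exploiting to the last drop the single spare derivative on $g_-$ supplied by Section~\ref{S:3}; controlling the contribution of the cubic nonlinearity $\partial_x(q^3)$ in particular is precisely the difficulty that the commuting-flows framework, and its attendant gauge transformation (cf.\ \cite{Forlano}), were built to manage.

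For part \slp{c}, the spatial regularity is a consequence of \slp{b}: at each fixed $t$ the law of $q(t)$ is the Gibbs law, so part (i) of Proposition~\ref{PROP:Gibbs} bounds $\E\|\langle x\rangle^{-b}q(t)\|_{W^{s',p}(\T_L)}^r$ uniformly in $t$, $\kappa$, and $L$ for any $s'\in(s,\tfrac12)$. The uniform-in-$\kappa$ time regularity is the real point. Although \eqref{Hkflow} reads $\partial_tq=4\kappa^2q'+8\kappa^4 g_+$ with ostensibly $\kappa$-singular prefactors, substituting $g_+=-2\partial(4\kappa^2-\partial^2)^{-1}(q[1+\gamma])$ from \eqref{prformula} produces the cancellation $\partial_tq=-4\kappa^2(q\gamma)'+2\kappa^2 g_+''$, from which (tracking powers of the weight as in Section~\ref{S:3}) one obtains $\|\langle x\rangle^{-b}\partial_tq(t)\|_{H^{-3}_\kappa(\T_L)}$ bounded by a polynomial in $\|q^0\|_{L^2(\T_L)}$, uniformly in $\kappa$ and $L$: the first term is controlled using that $\|1+\gamma\|_{L^\infty}$ is bounded by such a polynomial (cf.\ \eqref{1000}) together with the smoothing of $H^{-2}_\kappa$, the second using $\|g_+\|_{H^1_\kappa}\lesssim(1+\kappa^{-1}\|q\|_{L^2(\T_L)}^2)^{5/2}\|q\|_{L^2(\T_L)}$ (cf.\ \eqref{pbd'}, \eqref{p lip}). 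Thus $\|\langle x\rangle^{-b}(q(t)-q(s))\|_{H^{-3}_\kappa(\T_L)}\lesssim|t-s|$ times such a polynomial. Interpolating this Lipschitz-in-time control in $H^{-3}_\kappa$ against the uniform-in-time control in $W^{s',p}$ — and exploiting that the weight $(\kappa^2+\xi^2)^{-3}$ defining $H^{-3}_\kappa$ is flat below frequency $\kappa$, so that a Littlewood--Paley split at a frequency chosen in terms of $|t-s|$ produces no loss in $\kappa$ — gives $\E\|\langle x\rangle^{-b}(q(t)-q(s))\|_{W^{s,p}(\T_L)}^r\lesssim|t-s|^{\alpha r}$ for a suitable $\alpha=\alpha(s,s',p)>0$, uniformly in $\kappa$ and $L$; Kolmogorov's continuity criterion in the time variable then yields \eqref{bounds kL}. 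The obstacle here is exactly the identification and use of the cancellation $\partial_tq=-4\kappa^2(q\gamma)'+2\kappa^2g_+''$, which converts the naive $\kappa$-dependent transport into a $\kappa$-uniformly controlled quantity, and then carrying that uniformity through the interpolation.
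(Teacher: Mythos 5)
Parts \slp{a} and \slp{b} follow the paper's route. Part \slp{a} is identical. For \slp{b}, your plan --- apply the KMS identity \eqref{E:C^1 KMS} with $F$ an amplitude-cutoff version of $A(\kappa)$ and $G$ the character composed with the (cutoff) flow, reduce the right-hand side via the total-derivative identities behind $\{A(\kappa),H_\mKdV\}=\{A(\kappa),M\}=0$ to the frequency-mismatch error $\langle g_{-,N}',P_{\leq N}(q^3)-(P_{\leq N}q)^3\rangle$, and kill that in the limit $N\to\infty$ --- is exactly the paper's argument (equations \eqref{KMS1}--\eqref{KMS2}). The only cosmetic differences are that the paper composes the flow with the translation $e^{-4\kappa^2 t\partial}$ so that only $A(\kappa)$ needs to be tested, and uses nested cutoffs $\chi_B,\chi_{2B}$ satisfying \eqref{KMS chi} so that the $\nabla\chi$ term vanishes identically rather than in a limit $R\to\infty$.

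Part \slp{c} contains a genuine gap. You correctly identify the cancellation $\partial_tq=-4\kappa^2(q\gamma)'+2\kappa^2 g_+''$ (which is the same regrouping the paper uses), and your Lipschitz bound for $\partial_t q$ in the $\kappa$-adapted space $H^{-3}_\kappa$ is indeed uniform in $\kappa$. The problem is the final interpolation: the claim that the weight $(\kappa^2+\xi^2)^{-3}$ being ``flat below frequency $\kappa$'' produces ``no loss in $\kappa$'' is backwards. On frequencies $|\xi|\leq N\leq\kappa$ one only recovers $\|P_{\leq N}f\|_{L^2}\lesssim(\kappa^2+N^2)^{3/2}\|f\|_{H^{-3}_\kappa}\approx\kappa^3\|f\|_{H^{-3}_\kappa}$, and since your Lipschitz constant carries no compensating power of $\kappa^{-3}$, the Littlewood--Paley split at $N\sim|t-s|^{-\theta}$ gives a low-frequency contribution of size $\kappa^3|t-s|$, which is not uniformly small in $\kappa$. (The $H^{-3}_\kappa$ norm is simply blind to frequencies below $\kappa$; e.g.\ the term $2\kappa^2 g_+''$, bounded only via $\|g_+\|_{H^1_\kappa}$, could in principle contribute $O(\kappa)$ to unit frequencies consistently with your estimate.) The fix, which is what the paper does, is to bound $\partial_tq$ in the $\kappa$-\emph{independent} space $\langle x\rangle^{9}H^{-3}$: write $2\kappa^2g_+''=-\partial^3\tfrac{4\kappa^2}{4\kappa^2-\partial^2}(q[1+\gamma])$ so that all three derivatives are absorbed by $H^{-3}$ and the multiplier $\tfrac{4\kappa^2}{4\kappa^2-\partial^2}$ is uniformly bounded; and for the remaining $\kappa^2(q\gamma)'$ term, the uniform bound in $H^{-3}$ genuinely requires the $\kappa^{-2}$ gain $\|\langle x\rangle^{-8}\gamma\|_{L^1}\lesssim\kappa^{-2}(\dots)$ of \eqref{1558}, which comes from the quadratic identity \eqref{qquad} expressing $\gamma$ as (essentially) $\tfrac12(g_+^2-g_-^2)$ with each factor of size $\kappa^{-1}$ in $L^2$. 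You never invoke \eqref{qquad}, and the weaker $L^2$ or $L^\infty$ bounds on $\gamma$ from \eqref{gbd'} and \eqref{1000} do not supply enough decay to make $\kappa^2\|q\gamma\|_{L^1}$ uniformly bounded. Once the Lipschitz bound is placed in $\langle x\rangle^{9}H^{-3}$, the interpolation with the stationary $W^{s',p}_b$ moments and the Kolmogorov criterion (Lemma~\ref{LEM:prob3}) proceed as you describe.
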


As the reader may well intuit, the uniformity in $L$ expressed in \eqref{bounds kL} will allow us to send $L\to\infty$ in the next section.  The limit will then inherit these bounds.  The uniformity in $\kappa$ will then be exploited to send $\kappa\to\infty$ in Section~\ref{S:6}.

We will break the proof of Theorem~\ref{THM:HKinv} into two parts, verifying parts (a) and (b) first and then returning to part (c) later, after Lemma~\ref{LEM:prob3}.  Note that part (c) is unnecessary for proving invariance of the Gibbs state under \eqref{mkdv} in finite volume; moreover, our insistence on uniformity in $\kappa$ and $L$ makes the proof of this part somewhat involved.

\begin{proof}[Proof of \slp{a} and \slp{b}]
Let $q^0$ be Gibbs distributed on $\T_L$.  As $q^0\in L^2$ almost surely, Proposition~\ref{P:HK well} guarantees the existence of a global $H_\kappa$ evolution $q(t)= e^{tJ\nabla H_{\kappa}} q^0$.

Suppose, towards a contradiction, that the Gibbs measure is not invariant under the $H_{\kappa}$ flow. Then, there is a $\phi\in C^\infty(\T_L)$, a time $T>0$, and a $B>0$ so that 
\begin{align}
\E\bigl\{ e^{i\jb{q(T),\phi}} \chi_B(q(T)) \bigr\} \neq \E\bigl\{ e^{i\jb{q^0,\phi}}  \chi_B(q^0) \bigr\}. \label{contra}
\end{align}
Here $\chi_B(q) = \chi ( M(q)/B )$ and $\chi\in C^\infty_c$ satisfies $\chi(x)=1$ for $|x|\leq 1$ and $\chi(x)=0$ for $|x|\geq 2$.  Notice that this construction guarantees
\begin{equation}\label{KMS chi}
\chi_B (q)  \cdot \chi_{2B}(q) \equiv \chi_B (q)  \qtq{and}
	\chi_B (q)  \cdot \tfrac{\delta\ }{\delta q}\chi_{2B}(q) \equiv 0 .
\end{equation}

We define $G:\R\times L^2(\T_L)\to \mathbb{C}$ by 
\begin{align}\label{5:02}
G(t,q^0) := \exp\bigl( i\bigl\langle\phi, e^{tJ\nabla H_{\kappa}} e^{-4\kappa^2 t\partial} q^0 \bigr\rangle\bigr)
	\cdot \chi_B\bigl( e^{tJ\nabla H_{\kappa}} e^{-4\kappa^2 t\partial} q^0 \bigr),
\end{align}
which combines the $H_{\kappa}$ flow map with translation.  Translation invariance of the Gibbs measure shows that \eqref{contra} is equivalent to the claim $\E[ G(0,q^0) ] \neq \E[G(T,q^0)]$.  We wish to reach a contradiction by proving that 
\begin{align}\label{5:03}
\partial_{t} \E\bigl[G(t,q^0) \bigr] = 0.
\end{align}

We consider the combined $H_\kappa$ and translation flows here for the same reason we did in the proof of Proposition~\ref{P:HK well}, namely, $L^2$ well-posedness is a direct application of the Cauchy--Peano contraction mapping argument.  This argument shows that $G$ is a $C^1$ function of time and of the initial data; moreover,
\begin{align}\label{G for inv}
\bigl| G(q) \bigr| +  \big\| \tfrac{\delta G}{\delta q}\big\|_{L^2(\T_L)} \lesssim_{B,|t|} 1
\end{align}
and so $G$ is $C^1$ in the sense \eqref{C^1 on L^2}.  In view of this differentiability, we may recast our earlier goal \eqref{5:03} as showing
\begin{align}\label{5:03'}
 \E\bigl[  \bigl\langle g_- ' (\kappa;q^0), \tfrac{\delta G}{\delta q}(t,q^0) \bigr\rangle \bigr] = 0.
\end{align}
Henceforth, $t$ will be regarded as fixed.

The combined $H_\kappa$ and translation flow has Hamiltonian $-4\kappa^3A(\kappa,q)$.  This is real-analytic and from \eqref{gammap1}, \eqref{1212}, and \eqref{p lip}, we see that
\begin{align}\label{A is C1}
\bigl| A \bigr|^2 + \kappa \bigl\|\tfrac{\delta A}{\delta q}\bigr\|_{L^2}^2
	\lesssim \kappa^{-1} \| q\|_{L^2(\T_L)}^2 \bigl( 1 + \kappa^{-1}\| q\|_{L^2(\T_L)}^2 \bigr)^6 .
\end{align}
In order to apply Theorem~\ref{T:KMS}, we must ensure that \eqref{C^1 on L^2} holds.  For this reason, we introduce a further cut-off, defining $F(q):=\chi_{2B}(q) A(\kappa;q)$.

Combining the above definitions with \eqref{KMS chi} and \eqref{1212}, we find that LHS\eqref{E:C^1 KMS} takes the form
\begin{align}\label{KMS1}
\E\Bigl[ \bigl\{ F_N , G_N \bigr\} \Bigr] = \E \Bigl[ \bigl\langle P_{\leq N} \bigl(g_{-,N}'\bigr) , \tfrac{\delta G_N}{\delta q} \bigr\rangle \Bigr]
	\ \text{ where }\ g_{-,N}(x) := g_-(x;\kappa,q^0_{\leq N}).
\end{align}
In view of \eqref{G for inv} and \eqref{p lip}, this quantity converges to LHS\eqref{5:03'} as $N\to\infty$.  Thus, our goal has become to show that this limit is zero, or equivalently, by Theorem~\ref{T:KMS}, to show that
\begin{align}\label{1410}
\lim_{N\to\infty} \E\Bigl[  \Bigl\langle P_{\leq N} \bigl(g_{-,N}'\bigr),\;
	-\partial_x^2 q^0 + V'(q^0) \Bigr\rangle    G_N(q^0)  \Bigr] =0.
\end{align}

Moving $P_{\leq N}$ across the inner product and employing the identities \eqref{1267} and \eqref{1282} with $q=q^0_{\leq N}$, we can further simplify \eqref{1410} to the claim that
\begin{align}\label{KMS2}
\lim_{N\to\infty}   \E\Bigl[ \Bigl\langle g_{-,N}' ,\; P_{\leq N}([q^0]^3) - [P_{\leq N} q^0]^3 \Bigr\rangle G_N(q)\Bigr] = 0.
\end{align}
This in turn follows from dominated convergence, using \eqref{Wsp moments}, \eqref{p lip}, and the fact that $P_{\leq N} f \to f$ in $L^p$ for any $f\in L^p$.
\end{proof}

To prove H\"older continuity in time, we exploit the Kolmogorov criterion in the following quantitative form.  This particular formulation appears as  \cite[Lemma~2.4]{KMV}, where a proof can be found.

\begin{lemma}\label{LEM:prob3}
 Given $T,\eps>0$, $\alpha\in(0,1)$, $1 \leq r<\infty$, a Banach space $X$, and a process $F:[-T,T]\to X$ that is almost surely continuous, 
\begin{align*}
\E\Big\{ \|F\|_{C_{t}^{\alpha}X}^{r}\Big\} \lesssim_{r,\eps, T}
	\E\Big\{ \|F(0)\|_{X}^{r}\Big\} +\sup_{-T \leq s<t\leq T} \E \bigg\{ \frac{\|F(t)-F(s)\|_{X}^{r}}{|t-s|^{1+\alpha r +\eps}} \bigg\}.
\end{align*}
\end{lemma}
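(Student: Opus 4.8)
The plan is to run the classical Kolmogorov--Chentsov dyadic chaining argument, keeping careful track of the constants so as to extract the quantitative bound. Write $D_n := \{-T + k\cdot 2T\cdot 2^{-n} : 0\le k\le 2^n\}$ for the level-$n$ dyadic partition of $[-T,T]$, put $D := \bigcup_{n\ge 0} D_n$, and introduce the dyadic oscillations
\[
K_n := \max_{0\le k< 2^n} \bigl\| F\bigl(-T+(k+1)\cdot 2T\cdot 2^{-n}\bigr) - F\bigl(-T+k\cdot 2T\cdot 2^{-n}\bigr)\bigr\|_X .
\]
Let $Q$ denote the second term on the right-hand side of the asserted inequality, so that $\E\{\|F(t)-F(s)\|_X^r\} \le Q\,|t-s|^{1+\alpha r+\eps}$ for all $-T\le s<t\le T$. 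Bounding the maximum by the sum and invoking this moment hypothesis for the $2^n$ increments of common length $2T\cdot 2^{-n}$ gives
\[
\E\{K_n^r\} \le 2^n\cdot Q\,(2T\cdot 2^{-n})^{1+\alpha r+\eps} = Q\,(2T)^{1+\alpha r+\eps}\,2^{-n(\alpha r+\eps)} .
\]

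Next I would establish the deterministic (pathwise) chaining bound. For $x\in D$ let $\pi_n(x)$ be the largest point of $D_n$ not exceeding $x$; since $\pi_n(x)$ and $\pi_{n+1}(x)$ are either equal or consecutive points of $D_{n+1}$, a finite telescoping sum yields $\|F(x)-F(\pi_N(x))\|_X\le \sum_{n>N}K_n$ for every $x\in D$ and every $N\ge 0$. Given $s<t$ in $D$, choose the integer $N\ge 0$ with $2T\cdot 2^{-N-1}\le t-s\le 2T\cdot 2^{-N}$; then $\pi_N(t)-\pi_N(s)\in\{0,\,2T\cdot 2^{-N}\}$, so $\|F(\pi_N(t))-F(\pi_N(s))\|_X\le K_N$, and the triangle inequality gives $\|F(t)-F(s)\|_X\le 2\sum_{n\ge N}K_n$. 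Since $2^{N\alpha}\le (2T)^\alpha (t-s)^{-\alpha}$, this produces the pathwise bound for the Hölder seminorm over the countable set $D$,
\[
[F]_{C^\alpha(D)} := \sup_{s,t\in D,\ s\ne t} \frac{\|F(t)-F(s)\|_X}{|t-s|^\alpha} \ \le\ 2(2T)^\alpha \sum_{n\ge 0} 2^{n\alpha}K_n .
\]

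Taking $L^r(d\PP)$ norms, applying the triangle inequality in $L^r$ (this is the one place the hypothesis $r\ge 1$ is used), and inserting the moment bound for $K_n$ gives
\[
\bigl\|[F]_{C^\alpha(D)}\bigr\|_{L^r(d\PP)} \lesssim (2T)^\alpha\, Q^{1/r}\,(2T)^{(1+\alpha r+\eps)/r}\sum_{n\ge 0} 2^{n\alpha}\,2^{-n(\alpha+\eps/r)} ,
\]
and the geometric series $\sum_{n\ge 0} 2^{-n\eps/r}=(1-2^{-\eps/r})^{-1}$ converges; this factor accounts for the dependence of the implicit constant on $r$ and $\eps$ (and the powers of $2T$ for the dependence on $T$). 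Because $F$ is almost surely continuous and $D$ is dense in $[-T,T]$, the seminorm over $D$ coincides almost surely with the full seminorm $[F]_{C^\alpha([-T,T];X)}$, whence $\E\{[F]_{C^\alpha_t X}^r\}\lesssim_{r,\eps,T} Q$. Finally I would treat the supremum part via $\sup_{t\in[-T,T]}\|F(t)\|_X\le \|F(0)\|_X + T^\alpha [F]_{C^\alpha_t X}$ and combine, using $\|F\|_{C^\alpha_t X}^r\lesssim_r \|F\|_{C_t X}^r + [F]_{C^\alpha_t X}^r$, to reach
\[
\E\{\|F\|_{C^\alpha_t X}^r\} \lesssim_{r,\eps,T} \E\{\|F(0)\|_X^r\} + Q ,
\]
which is the claim.

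There is no genuine obstacle here; the argument is entirely standard. The only points demanding a little care are the bookkeeping in the chaining step (the choice $2T\cdot 2^{-N-1}\le t-s\le 2T\cdot 2^{-N}$ is precisely what forces $\pi_N(t)-\pi_N(s)$ to be a single mesh step), the appeal to almost sure continuity in order to pass from the countable dyadic set $D$ to all of $[-T,T]$, and the use of Minkowski's inequality in the probability variable, which is exactly why one restricts to $1\le r<\infty$.
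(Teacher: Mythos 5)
Your proof is correct and is the standard Kolmogorov--Chentsov dyadic chaining argument; the paper does not supply its own proof of this lemma but defers to \cite[Lemma~2.4]{KMV}, whose argument is essentially the one you give. The only quibble is a harmless bookkeeping slip: from $2T\cdot 2^{-N-1}\le t-s$ one gets $(t-s)^{-\alpha}\le 2^{\alpha}(2T)^{-\alpha}2^{N\alpha}$, so the prefactor in your pathwise bound should be $(2T)^{-\alpha}$ rather than $(2T)^{\alpha}$ --- immaterial here since the implicit constant is allowed to depend on $T$.
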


\begin{proof}[Proof of Theorem~\ref{THM:HKinv}\slp{c}]
First we demonstrate that for any $1\leq r<\infty$,
\begin{align}\label{two pt kL}
\sup_{\kappa\geq 1} \ \sup_{L\geq 1} \ \E \Bigl\{
	\| \langle x\rangle^{-9}  [ q(t_2) - q(t_1) ] \|_{H^{-3}}^r \Bigr\}  \lesssim_r | t_2-t_1|^{r}
\end{align}
uniformly for $t_1,t_2\in\R$.

Combining \eqref{prformula} and \eqref{Hkflow} we may write the $H_\kappa$ flow as
\begin{align*}
\tfrac{d}{dt} q=  - \partial_x^3 \tfrac{4\kappa^2}{4\kappa^2-\partial^2}  q - \partial_x \tfrac{16\kappa^4}{4\kappa^2-\partial^2} \gamma q . 
\end{align*}
Using elementary commutations and the embedding $L^1\hookrightarrow H^{-1}$, we deduce that
\begin{align*}
\bigl\| \langle x\rangle^{-9}  [ q(t_2) - q(t_1) ] \bigr\|_{H^{-3}}  \lesssim  \int_{t_1}^{t_2}
	\bigl\| \langle x\rangle^{-1} q(t) \bigr\|_{L^2} + \kappa^2 \bigl\| \langle x\rangle^{-9} q(t)  \gamma(\kappa,q(t)) \bigr\|_{L^1} \, dt .
\end{align*}

Recall that $q(t)$ is Gibbs distributed at all times $t\in\R$.  Thus, by Minkowski's inequality, we then find
\begin{align*}
\E \Bigl\{ \| \langle x\rangle^{-9}  [ q(t_2) - q(t_1) ] \|_{H^{-3}}^r \Bigr\} \lesssim  |t_2 - t_1|^r \,
	\E \Bigl\{ \| \langle x\rangle^{-9} q \bigr\|_{L^2}^r + \kappa^{2r} \| \langle x\rangle^{-9} q \gamma \bigr\|_{L^1}^r \Bigr\},
\end{align*}
where $q$ is Gibbs distributed and $\gamma=\gamma(x;\kappa,q)$.  Notice that \eqref{two pt kL} will follow once we prove a bound on this expectation (for each $r$) uniformly in $\kappa$ and $L$.

Combining \eqref{qquad}, \eqref{gbd'}, and \eqref{pbd'} we obtain
\begin{align}\label{1558}
\| \langle x\rangle^{-8} \gamma \|_{L^1_x} &\lesssim  \| \langle x\rangle^{-4} g_+ \|_{L^2_x}^2 +
  	\| \langle x\rangle^{-4} g_- \|_{L^2_x}^2 + \| \langle x\rangle^{-4} \gamma(t) \|_{L^2_x}^2 \\
&\lesssim \kappa^{-2}  \|\langle x\rangle^{-1}q\|_{L^2}^{2} \Bigl( 1 +\kappa^{-1}\|\langle x\rangle^{-1}q\|_{L^2}^{2}\Bigr)^3, \notag
\end{align}
which, crucially, gains two powers of $\kappa$.  Thus, using Proposition~\ref{PROP:Gibbs} we find
$$
\sup_{\kappa\geq 1} \ \sup_{L\geq 1} \  \E \Bigl\{ \| \langle x\rangle^{-9} q \bigr\|_{L^2}^r
	+ \kappa^2 \| \langle x\rangle^{-1} q \bigr\|_{L^\infty}^r \| \langle x\rangle^{-8} \gamma \bigr\|_{L^1}^r \Bigr\} < \infty
$$
and so have justified \eqref{two pt kL}.

One may also get a bound on $q(t_2)-q(t_1)$ that is independent of $\kappa$ and $L$ by estimating the terms individually using \eqref{Wsp moments} and invariance of the Gibbs measure:
\begin{align*}
\sup_{\kappa\geq 1} \ \sup_{L\geq 1} \ \E \Bigl\{
	\| \langle x\rangle^{-\tilde b}  [ q(t_2) - q(t_1) ] \|_{W^{\tilde s,\tilde p}(\R)}^r \Bigr\}  < \infty
\end{align*}
for any $\tilde p<\infty$, $\tilde s<\frac12$, and $\tilde b > \frac1{\tilde p}$.
Interpolating between this and \eqref{two pt kL} yields
\begin{align}\label{two pt Wsp kL}
\sup_{\kappa\geq 1} \ \sup_{L\geq 1} \ \E \Bigl\{
	\| \langle x\rangle^{-b}  [ q(t_2) - q(t_1) ] \|_{W^{s,p}(\R)}^r \Bigr\}  \lesssim_r | t_1-t_2|^{c r}
\end{align}
for a small constant $c(p,s,b)>0$.

The estimate \eqref{bounds kL} now follows from Lemma~\ref{LEM:prob3}.  As $r$ can be made arbitrarily large, the only restriction on the H\"older exponent is that $\alpha<c(p,s,b)$.
\end{proof}

\begin{theorem}[Gibbs measure invariance under the defocusing periodic mKdV]
Fix $1\leq L<\infty$ and let $q^0$ be Gibbs distributed on $\T_L$. If $q$ is the global solution to \eqref{mkdv} with initial data $q^0$, then $q(t)$ is Gibbs distributed for every $t\in \R$.\label{T:4.3}
\end{theorem}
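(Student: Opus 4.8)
The plan is to deduce Theorem~\ref{T:4.3} from the two facts already in hand: invariance of the Gibbs measure under the regularized flows (Theorem~\ref{THM:HKinv}) and convergence of those flows to \eqref{mkdv} (Theorem~\ref{T:1404}). Since the assertion is that the law of $q(t)=e^{tJ\nabla H_\mKdV}q^0$ agrees with that of $q^0$, and since bounded Lipschitz functions on the separable metric space $L^2(\T_L)$ form a measure-determining class, it suffices to show
\begin{align*}
\E\bigl[F(q(t))\bigr]=\E\bigl[F(q^0)\bigr]
\end{align*}
for every bounded Lipschitz $F:L^2(\T_L)\to\R$ and every $t\in\R$.

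First I would record that for each fixed $\kappa\geq1$, Theorem~\ref{THM:HKinv}(a)--(b) gives $\E[F(e^{tJ\nabla H_\kappa}q^0)]=\E[F(q^0)]$, since $e^{tJ\nabla H_\kappa}q^0$ is almost surely well defined and Gibbs distributed. Next, since Gibbs samples lie in $L^2(\T_L)$ almost surely, Theorem~\ref{T:1404} applies realization-by-realization and yields $\|e^{tJ\nabla H_\kappa}q^0-q(t)\|_{L^2(\T_L)}\to0$ almost surely as $\kappa\to\infty$. Boundedness and $L^2$-continuity of $F$ then let me pass to the limit by dominated convergence, giving $\E[F(e^{tJ\nabla H_\kappa}q^0)]\to\E[F(q(t))]$ and hence the desired identity. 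Finally, to conclude that $q(t)$ is ``Gibbs distributed'' in the sense of Proposition~\ref{PROP:Gibbs}, i.e.\ as a process on $C(\T_L)$, I would note that finite-volume Gibbs samples are almost surely bounded by \eqref{max on I}, so the Gibbs law, a priori defined on $C(\T_L)$, is carried by a Borel subset of $L^2(\T_L)$ on which the inclusion is a Borel isomorphism onto its image; matching the $L^2(\T_L)$-laws therefore matches the $C(\T_L)$-laws.

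Given the heavy machinery already assembled, there is no serious obstacle here; the one point that requires a moment's care is that the convergence $e^{tJ\nabla H_\kappa}q^0\to q(t)$ supplied by Theorem~\ref{T:1404} holds only in $L^2(\T_L)$ and not pointwise or in sup norm, which is why one tests against $L^2$-continuous functionals rather than the finite-dimensional marginals of Proposition~\ref{PROP:Gibbs} directly, and why the short descriptive-set-theoretic remark of the previous paragraph is needed. An alternative that sidesteps that remark is available: the uniform-in-$\kappa$ bounds of Theorem~\ref{THM:HKinv}(c), together with the compact embedding $W^{s,p}(\T_L)\hookrightarrow C(\T_L)$ for $sp>1$, show that $\{e^{tJ\nabla H_\kappa}q^0\}_\kappa$ is tight in $C(\T_L)$; combined with the $L^2$ convergence this upgrades (along a subsequence, almost surely) to convergence in $C(\T_L)$, after which one may test directly against the functions $\prod_j f_j(q(x_j))$ that determine the Gibbs law (Proposition~\ref{PROP:Gibbs}). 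Either route works, and neither introduces a genuine difficulty.
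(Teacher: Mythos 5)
Your proposal is correct and follows essentially the same route as the paper: invariance under the $H_\kappa$ flows from Theorem~\ref{THM:HKinv}, the almost-sure $L^2(\T_L)$ convergence \eqref{L2 flow conv} from Theorem~\ref{T:1404}, and dominated convergence against a measure-determining class of bounded $L^2$-continuous functionals (the paper uses the characteristic functionals $e^{i\langle q,\phi\rangle}$, $\phi\in C^\infty(\T_L)$, where you use bounded Lipschitz functions). Your closing remarks on reconciling the $L^2(\T_L)$ and $C(\T_L)$ laws address a point the paper leaves implicit, but do not change the argument.
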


\begin{proof}
As $q^0 \in L^2(\T_L)$ almost surely, we obtain unique global solutions $q_{\kappa}(t)$ to the $H_{\kappa}$ flows and a unique global solution $q$ for the mKdV flow. By Theorem~\ref{THM:HKinv}, we know that the solutions $q_{\kappa}(t)$ are Gibbs distributed for all $t\in \R$, that is, 
\begin{align}\label{characHk}
\E\bigl[ e^{i \jb{q_{\kappa}(t),\phi}}\bigr] = \E \bigl[ e^{i \jb{q^0,\phi}}\bigr]
\end{align}
for every $t\in \R$ and $\phi\in C^{\infty}(\T_L)$. Sending $\kappa\to\infty$ using \eqref{L2 flow conv} we deduce that
\begin{align*}
\E\bigl[ e^{i \jb{q(t),\phi}}\bigr] = \E \bigl[ e^{i \jb{q^0,\phi}}\bigr]
\end{align*}
and so that Gibbs measure on $\T_L$  is invariant under \eqref{mkdv}.
\end{proof}

\section{$H_\kappa$ dynamics and invariance in infinite volume}\label{S:5}

Our goals in this section are to construct global solutions to the $H_\kappa$ flow \eqref{Hkflow} for almost every sample from the infinite-volume Gibbs distribution and to show that these dynamics have all the properties that we will need to send $\kappa\to\infty$ in the next section in order to construct \eqref{mkdv} dynamics.

We will construct infinite-volume solutions as limits of the finite-volume solutions described in Proposition~\ref{P:HK well} and Theorem~\ref{THM:HKinv}.  This requires the probabilistic set-up described in Section~\ref{ss2.2}; specifically, we work on a complete probability space $\Omega$ with random variables $q_L^0$ indexed by $L\in 2^\N\cup\{\infty\}$ that are Gibbs distributed on $\T_L$ and satisfy
\begin{align}\label{1562}
\PP\Bigl( q_L^0(x) \not\equiv q_\infty^0(x) \text{ on the interval } \bigl[-\tfrac L2,\tfrac L2 \bigr] \Bigr) \lesssim e^{-c L}
\end{align}
uniformly in $L$ for some fixed $c>0$.  Recall that Proposition~\ref{P:coupled q_L} shows that such coupling is possible.

Next, let us clarify what we will mean by a solution of the $H_\kappa$ flow in infinite volume.  To this end, we introduce some notations:
\begin{gather*}
\chi_L (x) \qtq{denotes the indicator function of} [-L,L] \\
L^p_b := \bigl\{ f:\R\to\R : \langle x\rangle^{-b} f(x) \in L^p(\R) \bigr\} \qtq{and} \| f \|_{L^p_b} := \| \langle x\rangle^{-b} f \|_{L^p(\R)} \\
W^{s,p}_b:=\bigl\{ f:\R\to\R : \langle x\rangle^{-b} f(x) \in W^{s,p}(\R) \bigr\} \qtq{and} \| f \|_{W^{s,p}_b} := \| \langle x\rangle^{-b} f \|_{W^{s,p}(\R)}  .
\end{gather*}
We shall only be employing $\chi_L$ when $L\in 2^\N\cup\{\infty\}$.  Similarly, we only consider
\begin{align}\label{spb}
(s,p,b) \qtq{with} 0\leq s<\tfrac12, \quad 1\leq p<\infty \qtq{and} b>\tfrac1p;
\end{align}
this ensures that Gibbs samples belong to $\Wspb$; see Proposition~\ref{PROP:Gibbs}.

\begin{definition}[Good solution]
Fix $\kappa\geq 1$. We say that $q:\R\to L^2_1$ is a \emph{good solution} of the $H_\kappa$ flow if it is continuous,
\begin{align}\label{1592}
q(t,x)=q(0,x+4\kappa^2 t) +8 \kappa^4 \int_{0}^{t} g_+(x+4\kappa^2(t-t'); \kappa, q(t'))\,dt,
\end{align}
for each $t\in\R$ and almost every $x$, and for each $T>0$ there is a $C_T>0$ so that
\begin{align}\label{1592'}
\int_{-T}^T \bigl\| \chi_\ell \LaxR(\kappa;q(t)) \bigr\|_{L^2\to L^\infty}^2 + \bigl\| \chi_\ell q(t) \bigr\|_{L^\infty}^2 \,dt \leq C_T \log^{C_T} (\ell) 
\end{align}
for all $\ell\geq 2$.
\end{definition}

Notice that good solutions are, by definition, \emph{global} solutions.  Also, if $q(t)$ is a good solution, then so too is $t\mapsto q(t+\tau)$ for any $\tau\in\R$.  As $q$ is $L^2_1$-continuous, \eqref{pbd'} guarantees that both sides of \eqref{1592} can equivalently be interpreted as elements of the space $L^2_3$ (rather than merely almost everywhere).

\begin{theorem}\label{T:1601}
Given $\kappa\in2^\N$, there is a subset $\mathcal G_\kappa\subseteq L^2_1(\R)$ of full Gibbs measure and a measure-preserving group $\Phi_\kappa:\R\times\mathcal G_\kappa\to\mathcal G_\kappa$ with the following properties:\\
\slp{a} For each $q^*\in\mathcal G_\kappa$, the orbit $q(t):=\Phi_\kappa(t,q^*)$ is a good solution of the $H_\kappa$ flow.\\
\slp{b} The diagonal Green functions satisfy \eqref{+Hkflow}, \eqref{-Hkflow}, and \eqref{gHkflow}.\\
\slp{c} For every triple \eqref{spb}, there is an $\alpha>0$ so that
\begin{align}\label{1610}
\sup_\kappa  \ \E\Bigl\{ \|\Phi_\kappa(t,q_\infty^0) \|_{C_{t}^{\alpha}([-T,T];\Wspb)}^{r} \Bigr\}<\infty
\end{align}
for each $1\leq r<\infty$ and every $T\in 2^\N$.
\end{theorem}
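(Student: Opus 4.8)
The strategy is to build the infinite‑volume orbit $\Phi_\kappa(t,q_\infty^0)$ as the $L\to\infty$ limit of the finite‑volume flows supplied by Theorem~\ref{THM:HKinv}, exploiting the very weak transport of the $H_\kappa$ flow. I would work on the coupled probability space of Proposition~\ref{P:coupled q_L}, carrying Gibbs samples $q_L^0$, $L\in 2^\N\cup\{\infty\}$, with $q_L^0\equiv q_\infty^0$ on $[-\tfrac L2,\tfrac L2]$ outside an event of probability $\lesssim e^{-cL}$; by Borel--Cantelli these coincidences hold simultaneously for all large $L\in 2^\N$ on a set of full measure. For finite $L$ set $q_L(t):=e^{tJ\nabla H_\kappa}q_L^0$; by Theorem~\ref{THM:HKinv} these are $C_tL^2(\T_L)$, Gibbs distributed at every time, and obey the $\kappa,L$‑uniform bounds \eqref{bounds kL}. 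Viewing each $q_L(t)$ as a $2L$‑periodic whole‑line potential, it satisfies \eqref{det L hyp} for any $b>\tfrac12$ and $\|\langle x\rangle^{-b}q_L(t)\|_{L^2(\R)}$ is bounded in every $L^r(d\PP)$ uniformly in $L$ and $t\in[-T,T]$ (periodicity, invariance, Proposition~\ref{PROP:Gibbs}, and the time modulus from \eqref{bounds kL}); hence, a.s., these norms are $\lesssim L^\epsilon$ for large $L\in 2^\N$. Consequently the whole‑line Green's‑function estimates of Section~\ref{S:3} --- crucially the \emph{potential‑independent} Combes--Thomas rate in \eqref{CT est}, \eqref{CT G off}, and the Lipschitz bounds of Propositions~\ref{P:PerioG} and~\ref{P:G Lip} --- apply to all $q_L(t)$ with uniformly controlled constants.

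The heart of the matter is an approximate finite‑speed‑of‑propagation property. Although the forcing $8\kappa^4 g_+(x;\kappa,q)$ in \eqref{Hkduhamel} is nonlocal, the diagonal Green's function at $x$ depends on $q$ only through an $O(\kappa^{-1})$ window up to errors exponentially small in $\kappa$ times the window length: from the resolvent identity \eqref{basicRI} and \eqref{CT G off}, if $q_1\equiv q_2$ on $[x-R,x+R]$ then $|g_+(x;\kappa,q_1)-g_+(x;\kappa,q_2)|\lesssim e^{-c\kappa R}\langle x\rangle^{C}\bigl(1+\|\langle x\rangle^{-b}q_1\|_{L^2}^{C}+\|\langle x\rangle^{-b}q_2\|_{L^2}^{C}\bigr)$, and in general the difference is controlled by $C\sup_{|y-x|\le R}|q_1(y)-q_2(y)|$ plus such an exponentially small term. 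Feeding this into \eqref{Hkduhamel} for $q_L(t)$ and $q_M(t)$ ($L<M$ in $2^\N$, with data agreeing on $[-\tfrac L2,\tfrac L2]$ on the full‑measure event) and running a Gronwall iteration over $[0,T]$ --- during which the relevant spatial window shifts and shrinks by only $O_\kappa(T)$ --- yields $\|q_L(t)-q_M(t)\|_{L^\infty([-N,N])}\lesssim_{N,T,\kappa,\omega}e^{-cL}$ for $|t|\le T$, once $L\gtrsim_\kappa N+T$. Thus $(q_L(t))_{L\in 2^\N}$ is a.s.\ Cauchy locally uniformly in $(t,x)$; I would define $q_\infty(t,x)$ to be its limit, a continuous function with $q_\infty(0)=q_\infty^0$. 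I expect this step --- making the near‑locality quantitative, summable against the exponential coupling rate, and uniform in $L$ --- to be the main obstacle; it is precisely the universality of the Combes--Thomas constant that supplies the needed uniformity without any multiscale analysis.

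Next I would verify that $q_\infty$ is a \emph{good solution}. The integral identity \eqref{1592} passes to the limit in \eqref{Hkduhamel}: the transport terms converge a.e.\ in $x$, and the $g_+$ terms converge locally uniformly by the Lipschitz bounds of Propositions~\ref{P:PerioG}/\ref{P:G Lip} together with the local convergence $q_L\to q_\infty$. Taking $p=2$, $s=0$, $b=1$ in \eqref{bounds kL} and applying Fatou (lower semicontinuity of weighted Sobolev norms under local uniform convergence) gives $q_\infty\in C^\alpha_t L^2_1$ and, more precisely, the bound \eqref{1610} --- uniform in $\kappa$ because \eqref{bounds kL} is. For the logarithmic bound \eqref{1592'}, the one‑step resolvent identity $\LaxR=\LaxR_0-\LaxR_0 Q\LaxR$ together with the exponential decay of $G_0$ and \eqref{CT G off} gives $\|\chi_\ell\LaxR(\kappa;q)\|_{L^2\to L^\infty}^2\lesssim \kappa^{-1}+\kappa^{-2}\|\chi_{2\ell}\,q\|_{L^\infty}^2+e^{-c\kappa\ell}\|\langle x\rangle^{-b}q\|_{L^2}^2$; invariance of the Gibbs law at each time (established below) combined with \eqref{log13} and Fubini bounds the time integral of the left side of \eqref{1592'} in expectation by $\lesssim_T\log^{2/3}(\ell)$, and a Borel--Cantelli argument over dyadic $\ell$ (using the high moments in \eqref{log13}) upgrades this to the a.s.\ statement \eqref{1592'} with a random constant $C_T$.

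Invariance follows because $q_L(t)$ has the law of $q_L^0$, namely Gibbs on $\T_L$, for every $t$, while $q_L(t)\to q_\infty(t)$ a.s.\ locally uniformly; hence $q_\infty(t)$ carries the weak limit of the $\T_L$‑Gibbs laws, which is the whole‑line Gibbs law established in subsection~\ref{ss2.2}. Uniqueness of good solutions --- a Gronwall argument on expanding spatial windows using the same near‑locality estimate and the control \eqref{1592'} --- shows $q_\infty(t)$ depends only on $q^*:=q_\infty^0$, so one sets $\Phi_\kappa(t,q^*):=q_\infty(t)$ and lets $\mathcal G_\kappa$ be the (full Gibbs measure) set of initial data admitting a good solution; since $s\mapsto q_\infty(t+s)$ is the good solution with data $q_\infty(t)$, the map $\Phi_\kappa$ is a group preserving $\mathcal G_\kappa$ and, by invariance, the Gibbs measure. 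Finally, part~(b) is obtained by passing to the limit in the finite‑volume evolution equations \eqref{+Hkflow}--\eqref{gHkflow} written in integrated (distributional) form: the polynomial expressions in $g_\pm(\vk),g_\pm(\kappa),\gamma(\vk),\gamma(\kappa)$ and their $x$‑derivatives converge in $L^1_{\mathrm{loc}}$, thanks to the weighted bounds of Proposition~\ref{P:det diag} (uniform in $L$ by the first paragraph) and the local convergence $q_L\to q_\infty$.
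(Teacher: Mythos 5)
Your proposal follows the same overall architecture as the paper: finite-volume solutions on the coupled probability space of Proposition~\ref{P:coupled q_L}, a Gronwall argument exploiting the near-locality of $q\mapsto g_+(\kappa;q)$ supplied by the universal Combes--Thomas bound \eqref{CT est}, uniqueness of good solutions to define $\Phi_\kappa$ and obtain the group property, invariance by passing the $\T_L$-Gibbs laws to the limit, part (c) by Fatou from the $\kappa,L$-uniform bounds \eqref{bounds kL}, and part (b) from the Lipschitz continuity of the diagonal Green's functions. The one genuine methodological difference is the metric in which the convergence and uniqueness Gronwall arguments are run: you propose local $L^\infty$ norms on light cones $[-N,N]$, whereas the paper (Lemma~\ref{L:1603}, Proposition~\ref{P:unique}) works with the single global weighted norm $\|e^{-\langle x\rangle^\alpha}\cdot\|_{L^2}$. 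The sub-exponential weight is chosen precisely so that it commutes with the resolvent up to bounded errors, see \eqref{multcomm}, which packages the ``window plus exponentially small leakage'' bookkeeping into one estimate; your light-cone version has to re-derive this by hand at every Gronwall step.

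Two points in your sketch are under-specified and are exactly where the weighted-norm choice pays off. First, the assertion that the relevant window ``shifts and shrinks by only $O_\kappa(T)$'' is not right as stated: each application of the Duhamel formula \eqref{Hkduhamel} enlarges the window by the near-locality scale of $g_+$ in addition to the transport $4\kappa^2|t-t'|$, and the leakage from outside the current window must be controlled at every step by a priori weighted bounds on $q_L$ and $q_M$; you need to either truncate the Picard iteration or absorb the tails into a forcing term, and neither is spelled out. Second, the Gronwall exponent is random and grows with $L$ (it involves $\|\chi_\ell\LaxR\|_{L^2\to L^\infty}$ and $\|\chi_\ell q\|_{L^\infty}$, whose moments grow like powers of $\log L$ by \eqref{chi R} and \eqref{log13}); one must verify that $e^{C(\omega)\log^{C}L}$ is beaten by the $e^{-cL}$ coupling rate uniformly over dyadic $L$, which the paper does by showing $\E\{X_L^T\}\lesssim_\kappa T\log^{2/3}(2L)$ and hence $\sup_L\log^{-2}(L)\,X_L^T<\infty$ almost surely. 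Both issues are repairable with the ingredients you already cite, so I would classify this as an incomplete implementation of the correct argument rather than a wrong approach.
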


The proof of this theorem is given at the end of the section and rests on two principal pillars: (i) proving convergence of finite-volume solutions  and (ii) showing that good solutions are unique.

Uniqueness will be addressed in Proposition~\ref{P:unique}.  It is crucial not only for ensuring that $\Phi_\kappa$ is well-defined, but also for verifying the group property.  

We do not include a commutativity statement in Theorem~\ref{T:1601} because the precise manifestation of this property that we need for the developments in Section~\ref{S:6} has a rather technical character; see Lemma~\ref{L:commune}.  In order to prove this lemma, we must add an extra layer of complexity to the earlier parts of this section.

Our next three results relate to the existence part of the argument.  The following has a rather obvious connection to the property \eqref{1592'}.  It will also play an important role in constructing infinite-volume solutions as limits of finite-volume solutions, a topic taken up in the subsequent Lemma~\ref{L:1603} and Proposition~\ref{P:L limit}.  

\begin{lemma}\label{L:1613} 
For $1\leq r<\infty$,
\begin{align}\label{chi R}
\E\Bigl\{  \bigl\| \chi_\ell \LaxR(\kappa;q_L^0) \bigr\|_{L^2\to L^\infty}^r
	+ \bigl\| \LaxR(\kappa;q_L^0) \chi_\ell  \bigr\|_{L^1\to L^2}^r \Bigr\} \lesssim_r \kappa^{-r/2} \log^{r/3} ( \ell\wedge L )
\end{align}
uniformly for $L\in 2^\N\cup\{\infty\}$ and $2\leq\ell\leq \infty$.
\end{lemma}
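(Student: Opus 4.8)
The plan is to bound the two operator norms in \eqref{chi R} pointwise by quantities controlled by the sup of $|q_L^0|$ on suitable intervals, and then invoke the Gibbs moment bound \eqref{log13}. The key structural input is the universal Combes--Thomas estimate \eqref{CT est}, which gives $\|\chi_{I}\LaxR(\kappa)\chi_{J}\|_{\op}\leq \tfrac2\kappa e^{-\tfrac\kappa2\dist(I,J)}$ with a constant independent of $q$. This is what lets us localize: to estimate $\|\chi_\ell\LaxR(\kappa;q)\|_{L^2\to L^\infty}$ it suffices to understand the Green's function $G(y_1,y_2;\kappa)$ for $y_1\in[-\ell,\ell]$ and all $y_2\in\R$, and \eqref{CT G off} already shows it decays like $e^{-\kappa|y_1-y_2|/6}$ (up to polynomial weights in $\langle y_1\rangle,\langle y_2\rangle$, which are harmless on the ball of radius $\ell$ but would cost powers of $\ell$ if used crudely). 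The honest route is therefore to redo the resolvent-expansion proof of Proposition~\ref{P:Lax Op} with \emph{local} $L^2$ norms of $q$ in place of the global weighted norm: write $\LaxR=\LaxR_0-\LaxR_0 Q\LaxR_0+\LaxR_0 Q\LaxR Q\LaxR_0$ (as in \eqref{basicRI}), and for the middle and last terms insert partitions of unity into unit intervals $I_n$, using \eqref{CT est} to pay $e^{-\kappa|n-m|/2}$ for resolvent factors joining far-apart intervals and paying $\|\chi_{I_n}q\|_{L^2}$ for each factor of $Q$.

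Concretely, for $\|\chi_\ell\LaxR(\kappa;q)\|_{L^2\to L^\infty}$ one tests against $\delta_{y_1}$, $y_1\in[-\ell,\ell]$, and $f\in L^2$; the $\LaxR_0$ term contributes $O(1)$ after \eqref{G0}; in the remaining terms each appearance of $Q\LaxR_0\delta_{y_1}$ or $\chi_{I_m}Q\LaxR_0 f$ is bounded using the free Green's function's exponential decay $e^{-\kappa|x-y_1|}$ together with $\|\chi_{I_m}q\|_{L^2}$, and each internal $\LaxR$ factor is bounded in operator norm by $\kappa^{-1}$ (anti-selfadjointness). Schur-summing the resulting doubly-indexed geometric series in $n,m$ over $\Z$ (the kernel $e^{-\kappa|n-m|/2}$ is summable with constant $\lesssim\kappa^{-1}$) produces
\[
\bigl\|\chi_\ell\LaxR(\kappa;q)\bigr\|_{L^2\to L^\infty}\lesssim \kappa^{-1/2}\Bigl(1+\kappa^{-1}\sup_{|n|\lesssim\ell}\|\chi_{I_n}q\|_{L^2}^2\Bigr)\lesssim \kappa^{-1/2}\bigl(1+\kappa^{-1}\|q\|_{L^\infty([-\ell-1,\ell+1])}^2\bigr),
\]
and the analogous bound (by the transpose symmetry \eqref{k symmetry}) for $\|\LaxR(\kappa;q)\chi_\ell\|_{L^1\to L^2}$. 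In the periodic case $L<\infty$ the same estimate holds with $\ell$ replaced by $\ell\wedge L$ because $q_L^0$ is $2L$-periodic, so its restriction to any window of length $\gtrsim L$ already exhibits the full period and $\|q_L^0\|_{L^\infty(I)}$ for $|I|\gtrsim L$ is comparable to $\|q_L^0\|_{L^\infty(\T_L)}$; this is the one place the periodicity enters.

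Taking $r$-th moments, using $(a+b)^r\lesssim_r a^r+b^r$, it remains to bound $\E\{\|q_L^0\|_{L^\infty(I)}^{2r}\}$ for an interval $I$ of length $\approx \ell\wedge L$, and this is exactly \eqref{log13}, which gives $\lesssim_r \log^{2r/3}(2+\ell\wedge L)$, uniformly in $L\in2^\N\cup\{\infty\}$ by Proposition~\ref{PROP:Gibbs}. Combined with the $\kappa^{-1/2}$ prefactor this yields \eqref{chi R}. The main obstacle is bookkeeping rather than conceptual: one must run the resolvent expansion so that \emph{only} the local $L^2$-mass of $q$ in a bounded neighborhood of $[-\ell,\ell]$ appears (so that the polynomial weights $\langle y\rangle^b$ from \eqref{CT G off} are avoided and one does not accidentally pick up $\log$-powers with a large exponent or factors of $\ell$), and one must verify the Schur sums converge with the claimed $\kappa$-dependence; the exponential off-diagonal decay from Combes--Thomas is precisely strong enough to make this work with a constant independent of $\kappa$ beyond the explicit $\kappa^{-1}$ from each free resolvent.
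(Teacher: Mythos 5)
Your overall strategy is the same as the paper's: reduce the $L^1\to L^2$ bound to the $L^2\to L^\infty$ bound by taking adjoints and using \eqref{k symmetry}, expand the resolvent about $\LaxR_0$, use the explicit kernel \eqref{G0} to get $\|\chi_\ell\LaxR_0\|_{L^2\to L^\infty}\lesssim\kappa^{-1/2}$ together with exponential off-diagonal decay, and then close with \eqref{log13} plus the observation that periodicity caps the relevant window at size $L$. However, there are two concrete slips in the execution. First, your displayed deterministic bound, which involves only $\|q\|_{L^\infty([-\ell-1,\ell+1])}$, cannot be correct: $\chi_\ell\LaxR(\kappa;q)$ depends on $q$ globally, and under the standing hypothesis \eqref{det L hyp} the potential far from $[-\ell,\ell]$ may be arbitrarily large, so the far-field contributions to the expansion (which your own Schur sum over all $n,m\in\Z$ correctly produces) cannot simply be dropped. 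What the expansion actually yields is of the form
\begin{align*}
\| \chi_\ell \LaxR(\kappa;q) \|_{L^2\to L^\infty}  \lesssim \kappa^{-1/2} \Bigl[ 1 +  \| \chi_\ell q \|_{L^\infty}
	+ \sum_{l\in\ell 2^\N}  e^{-\kappa|l-\ell|} \| \chi_l q \|_{L^\infty} \Bigr],
\end{align*}
and the tail must be retained through the probabilistic step: there \eqref{log13} gives $\E\|\chi_l q_L^0\|_{L^\infty}^r\lesssim\log^{r/3}(l\wedge L)$ on each annulus, and the exponential weights make the sum over $l$ collapse to $\log^{r/3}(\ell\wedge L)$. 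This is a fixable bookkeeping error, but as written the intermediate estimate is false for deterministic $q$.

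Second, by basing the expansion on the second-order identity \eqref{basicRI} you end up with a quadratic dependence $\kappa^{-1}\|q\|_{L^\infty}^2$, which after taking $r$-th moments produces $\log^{2r/3}(\ell\wedge L)$ rather than the stated $\log^{r/3}(\ell\wedge L)$ (the factor $\kappa^{-1}$ does not rescue this uniformly in $\ell$). The paper instead uses the first-order identity $\LaxR=\LaxR_0-\LaxR_0Q\LaxR$, which is legitimate because $\|\LaxR\|_{L^2\to L^2}\leq\kappa^{-1}$ by anti-selfadjointness; this keeps the dependence on $q$ linear and yields exactly the exponent $r/3$. Your weaker exponent would in fact suffice for every application of the lemma later in the paper (only \emph{some} power of the logarithm is ever needed), but it does not prove the statement as written; switching to the first-order expansion removes the discrepancy.
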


\begin{proof}
We will treat the $L^2\to L^\infty$ bound.  Notice that for any $q$,
\begin{align}\label{chi R R chi}
\| \chi_\ell \LaxR(\kappa;q) \|_{L^2\to L^\infty} =  \| \LaxR(\kappa;q) \chi_\ell \|_{L^1\to L^2},
\end{align}
as may be seen by taking adjoints and using \eqref{k symmetry}.

The explicit formula \eqref{G0} and Cauchy--Schwarz shows that
\begin{align*}
\| \chi_\ell \LaxR_0(\kappa) \|_{L^2\to L^\infty} \lesssim \kappa^{-1/2}
\end{align*}
and that for $l\geq 2\ell$,
\begin{align*}
\| \chi_\ell \LaxR_0(\kappa) [\chi_{2l}-\chi_l] \|_{L^2\to L^\infty} \lesssim \kappa^{-1/2} e^{-\kappa|l-\ell|} .
\end{align*}
Combining this with the identity
$$
\chi_\ell\LaxR = \chi_\ell\LaxR_0 - \chi_\ell \LaxR_0 Q \LaxR = \chi_\ell\LaxR_0 - \chi_\ell \LaxR_0 \chi_\ell Q \LaxR
	- \sum \chi_\ell \LaxR_0(\kappa) [\chi_{2l}-\chi_l] Q \LaxR,
$$
where the sum is taken over $l\in \ell 2^\N$, we deduce that
\begin{align*}
\| \chi_\ell \LaxR(\kappa) \|_{L^2\to L^\infty}  \lesssim \kappa^{-1/2} \Bigl[ 1 +  \| \chi_\ell q \|_{L^\infty}
	+ \sum  e^{-\kappa|l-\ell|} \| \chi_l q \|_{L^\infty} \Bigr] .
\end{align*}

This analysis has been deterministic.  The lemma now follows by invoking \eqref{log13} and noting that, by periodicity, $q^0_L$ achieves its global maximum already on the interval $[-L,L]$.
\end{proof}

\begin{lemma}\label{L:1603}
Fix $\kappa\geq 1$, $\alpha\in(0,1)$, and $\eta>0$.  Suppose we have processes $q_L^1(x)$, for each $L\in 2^\N$, that are Gibbs distributed on $\T_L$ and are coupled in such a manner that
\begin{align}\label{1606}
\sup_{L\in 2^{\N}} e^{\eta L^{\alpha}} \bigl\|e^{-\langle x\rangle^{\alpha}}[ q_{2L}^1-q_{L}^1]\bigr\|_{L^2} <\infty
\end{align}
almost surely. Let $q_L(t)$ be defined as the \slp{global and unique} solution to the $H_\kappa$ flow with initial data $q^1_L$.  Then, there exists $\eps>0$ so that for any $T>0$,
\begin{align}\label{1607}
\sum_{L\in 2^{\N}}  e^{\eps L^{\alpha}}  \sup_{ t\in [-T,T]} \bigl\|e^{-\langle x\rangle^{\alpha}}[ q_{2L}(t)-q_{L}(t)]\bigr\|_{L^2} <\infty
\end{align}
almost surely.
\end{lemma}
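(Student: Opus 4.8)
The plan is to bound, for each fixed $L\in 2^\N$, the difference $D(t):=q_{2L}(t)-q_L(t)$ (viewing both solutions as periodic functions on $\R$) in the weighted norm $\|e^{-\langle x\rangle^\alpha}\,\cdot\,\|_{L^2(\R)}$ via a Gronwall argument run off the Duhamel formula \eqref{Hkduhamel} of the $H_\kappa$ flow, and then to sum over $L$ using the exponential smallness \eqref{1606} of the data together with a Borel--Cantelli argument controlling the Gronwall constant. Throughout, $\kappa$, $\alpha$ and $T$ are fixed and implicit constants may depend on them.

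The key analytic input is a weighted Lipschitz estimate for the nonlinearity: for bounded $q_1,q_2$ and some absolute constant $C$,
\[
\bigl\|e^{-\langle x\rangle^\alpha}\bigl[g_+(\kappa,q_1)-g_+(\kappa,q_2)\bigr]\bigr\|_{L^2}
\lesssim_{\kappa,\alpha}\bigl(1+\|q_1\|_{L^\infty}+\|q_2\|_{L^\infty}\bigr)^{C}\,\bigl\|e^{-\langle x\rangle^\alpha}[q_1-q_2]\bigr\|_{L^2}.
\]
I would prove this by the methods of Section~\ref{S:3}. Writing $r=q_1-q_2$ and $\tilde Q=\bigl[\begin{smallmatrix}0&r\\-r&0\end{smallmatrix}\bigr]$, the resolvent identity presents $g_+(q_1)-g_+(q_2)$, at the point $x$, as the sum of the $(1,2)$ and $(2,1)$ entries of the matrix $[-\LaxR_1\tilde Q\LaxR_2](x,x)$; expanding $\LaxR_2=\LaxR_0-\LaxR_2 Q_2\LaxR_0$ places a free resolvent next to $\tilde Q$. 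Dualizing against $e^{-\langle x\rangle^\alpha}f$ with $\|f\|_{L^2}=1$ and distributing the exponential weight, one arranges $\tilde Q$ to appear as $e^{-\langle x\rangle^\alpha}\tilde Q$ paired with a string ending in $\LaxR_0$: this is a Hilbert--Schmidt factor whose norm is $\lesssim_{\kappa,\alpha}\|e^{-\langle x\rangle^\alpha}r\|_{L^2}$ for the bare $\LaxR_0$, and carries an extra $\|q_2\|_{L^\infty}$ for $\LaxR_2 Q_2\LaxR_0$, using the size and decay bounds for Green's functions from Section~\ref{S:3} (which, for the bounded periodic potentials at hand, involve only $\|q_j\|_{L^\infty}$). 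The remaining full resolvent $\LaxR_1$, together with the $f$-factor and the leftover weights, is conjugated by $e^{\pm\langle x\rangle^\alpha}$ and controlled by the \emph{potential-independent} estimate \eqref{multcomm}, so it contributes only $\kappa$-dependent constants. The essential point of the bookkeeping is that the output is measured in the very same weighted $L^2$ norm by which the input $q_1-q_2$ is measured.

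Granting this estimate, here is the main body. Both $q_{2L}(t)$ and $q_L(t)$ satisfy the Duhamel identity \eqref{Hkduhamel}, so
\[
D(t,x)=D(0,x+4\kappa^2t)+8\kappa^4\int_0^t\bigl[g_+(x+4\kappa^2(t-t');q_{2L}(t'))-g_+(x+4\kappa^2(t-t');q_L(t'))\bigr]\,dt'.
\]
Since $|\langle x-a\rangle^\alpha-\langle x\rangle^\alpha|\le C(\alpha)|a|$, translation by $a$ changes the weighted $L^2$ norm by a factor at most $e^{C(\alpha)|a|}$; with $|a|\le 8\kappa^2T$ this is a fixed constant $C_T$. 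Combining with the Lipschitz estimate and Minkowski's inequality, $u(t):=\|e^{-\langle x\rangle^\alpha}D(t)\|_{L^2}$ obeys $u(t)\le C_Tu(0)+C_T\int_0^{|t|}\Theta_L(t')u(t')\,dt'$, where $\Theta_L(s):=c_{\kappa,\alpha}\bigl(1+\|q_{2L}(s)\|_{L^\infty}+\|q_L(s)\|_{L^\infty}\bigr)^{C}$, whence Gronwall gives $\sup_{|t|\le T}u(t)\le C_Tu(0)\exp\bigl(\int_{-T}^{T}\Theta_L(s)\,ds\bigr)$. By Theorem~\ref{THM:HKinv}(b), for each $s$ the function $q_{2L}(s)$ (resp.\ $q_L(s)$) is Gibbs distributed on $\T_{2L}$ (resp.\ $\T_L$), so by \eqref{log13} and Minkowski's integral inequality $\bigl\|\int_{-T}^{T}\Theta_L(s)\,ds\bigr\|_{L^p(d\PP)}\lesssim_{p,T}(\log(3+L))^{C/3}$ uniformly in $L$; Chebyshev then gives $\PP\bigl[\int_{-T}^{T}\Theta_L\,ds>(\log(3+L))^{C/3+1}\bigr]\lesssim_p(\log(3+L))^{-p}$, which is summable over $L\in 2^\N$ once $p>1$. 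By Borel--Cantelli (absorbing the finitely many exceptional, a.s.\ finite, terms into a random constant) one obtains, almost surely, $\int_{-T}^{T}\Theta_L(s)\,ds\le B(\omega)(\log(3+L))^{C/3+1}$ for all $L\in 2^\N$. Feeding this and $u(0)=\|e^{-\langle x\rangle^\alpha}[q_{2L}^1-q_L^1]\|_{L^2}\le A(\omega)e^{-\eta L^\alpha}$ (from \eqref{1606}) into the Gronwall bound and choosing $\eps:=\eta/2$, the series in \eqref{1607} is dominated, up to the random prefactor, by $\sum_{n\ge1}e^{-(\eta/2)2^{n\alpha}}e^{B(\omega)(n\log2+\log3)^{C/3+1}}$, which converges almost surely since $2^{n\alpha}$ grows exponentially in $n$ while the second exponent grows only polynomially in $n$.

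The main obstacle is the weighted Lipschitz estimate of the second paragraph: organizing the weight bookkeeping so that $g_+(q_1)-g_+(q_2)$ is controlled in the very norm by which $q_1-q_2$ is measured, with a constant merely polynomial in $\|q_j\|_{L^\infty}$ and hence, for Gibbs samples on $\T_L$, polylogarithmic in $L$. Once this is in place, the translation bound, the Gronwall step and the Borel--Cantelli argument are routine.
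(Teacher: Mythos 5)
Your proposal is correct and follows essentially the same route as the paper's proof: a weighted Lipschitz estimate for $g_+$ obtained from the resolvent identity, Hilbert--Schmidt factorizations, and the weight-conjugation bound \eqref{multcomm}; a Gronwall argument whose exponent is controlled, via the invariance of the Gibbs measure and \eqref{log13}, by a power of $\log L$ almost surely; and a final summation in which the polylogarithmic Gronwall factor is beaten by the stretched-exponential smallness $e^{-\eta L^\alpha}$ of the initial differences. The only (cosmetic) differences are that the paper differentiates $\|e^{-\langle x\rangle^\alpha}[q_{2L}-q_L]\|_{L^2}^2$ in time and integrates the transport term by parts rather than using Duhamel plus boundedness of translations on the weighted space, expresses the Lipschitz constant through $\|\LaxR\|_{L^2\to L^\infty}$ and $\|\LaxR\|_{L^1\to L^2}$ (Lemma~\ref{L:1613}) rather than through $\|q\|_{L^\infty}$, and closes the probabilistic step with a first-moment bound on a weighted sum rather than Chebyshev and Borel--Cantelli.
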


As we will observe during the Proof of Theorem~\ref{T:1601}, the fact that $q_L^0$ satisfy \eqref{1562} ensures that they also satisfy \eqref{1606}.  However, we need the greater generality presented here in order to handle questions of commutativity.  Notice also that fixing some time $t\in\R$, the conclusion \eqref{1607} can be used to see that $q_{L}(t)$ verify the hypothesis \eqref{1606}, albeit for a different $\eta$.  In this way, $q_{L}(t)$ can be used as initial data for the $H_\vk$ flow (with $\vk\neq \kappa$). 

\begin{proof}
Setting $\eps =\frac14 \eta$, our overall ambition is to control 
\begin{align*}
d_{L}(t) =  \|e^{-\langle x\rangle^{\alpha}}[ q_{2L}(t)-q_{L}(t)]\|_{L^2}^{2}. 
\end{align*}
over the interval $[-T,T]$ via Gronwall's inequality.  From our hypothesis \eqref{1606},
\begin{align}\label{dL0}
d_L(0) \leq C e^{-8\eps L^\alpha\!} \qtq{uniformly for}  L \in 2^\N
\end{align}
for some random but almost surely finite $C$.

In order to run our Gronwall argument, we will need to control the difference between $g^L_+(t,x):=g_+(x;\kappa,q_L(t))$ and $g_+^{2L}$ in terms of $q_{2L}(t)-q_L(t)$.  We pause to make the necessary preparations; these culminate in \eqref{1658}.

For generic potentials $q$ and $\tilde q$ and $F:\R\to\C^{2\times2}$, we have 
\begin{align}\label{1655}
\tr\bigl\{e^{-\langle x\rangle^{\alpha}} F [\LaxR-\tilde\LaxR]\bigr\}
=&{} - \tr\bigl\{ F e^{-\langle x\rangle^{\alpha}} \LaxR_0 e^{\langle x\rangle^{\alpha}} \cdot e^{-\langle x\rangle^{\alpha}} [Q-\tilde Q] \tilde\LaxR\bigr\} \\
&{} + \tr\bigl\{  F \LaxR_0 \cdot Q \cdot \LaxR [Q-\tilde Q]e^{-\langle x\rangle^{\alpha}} \cdot
		e^{\langle x\rangle^{\alpha}}  \tilde\LaxR \;\!e^{-\langle x\rangle^{\alpha}}  \bigr\}, \notag
\end{align}
by the resolvent identity.  Our goal here is to obtain $L^2$ bounds by maximizing over $F\in L^2$.  To this end, we employ a variety of estimates, including,
\begin{gather}\label{1656a}
\bigl\|  F e^{-\langle x\rangle^{\alpha}} \LaxR_0 e^{\langle x\rangle^{\alpha}} \bigr\|_{\HS} \lesssim \kappa^{-1/2} \| F \|_{L^2}
	\qtq{and}  \bigl\|  F \LaxR_0 \bigr\|_{\HS} \lesssim \kappa^{-1/2} \| F \|_{L^2},
\end{gather}
which are easily deduced from the explicit formula \eqref{G0}.  We also use
\begin{align}\label{1656b}
\bigl\|  \LaxR [Q-\tilde Q]e^{-\langle x\rangle^{\alpha}} \bigr\|_{\HS}
		&\leq \bigl\| [Q-\tilde Q]e^{-\langle x\rangle^{\alpha}} \bigr\|_{L^2} \sup _y \bigl\|  G(x,y;\kappa) \bigr\|_{L^2_x}   \\
		&\leq \bigl\| e^{-\langle x\rangle^{\alpha}}[q-\tilde q] \bigr\|_{L^2} \| \LaxR \|_{L^1\to L^2}, \notag
\end{align}
as well as the parallel
\begin{align}\label{1657}
\bigl\|  e^{-\langle x\rangle^{\alpha}} [Q-\tilde Q] \tilde\LaxR \bigr\|_{\HS}
	\leq \bigl\| e^{-\langle x\rangle^{\alpha}} [q-\tilde q] \bigr\|_{L^2} \| \tilde\LaxR \|_{L^2\to L^\infty}.
\end{align} 
Returning to \eqref{1655} and using \eqref{multcomm} and the estimates just described, we find that
\begin{align}\label{1658}
\| e^{-\langle x\rangle^{\alpha}} [g_\pm - \tilde g_\pm]\|_{L^2} 
&\lesssim \Bigl[\kappa^{-\frac12}  \| \tilde\LaxR \|_{L^2\to L^\infty} + \kappa^{-\frac32} \| q \|_{L^\infty} \| \LaxR \|_{L^1\to L^2} \Bigr] \\
&\qquad\times \bigl\| e^{-\langle x\rangle^{\alpha}} [q-\tilde q] \bigr\|_{L^2} , \notag
\end{align}
for any pair of potentials $q$ and $\tilde q$.

From the definition \eqref{Hkflow} of the $H_\kappa$ flow and integration by parts, we have
\begin{align*}
\tfrac{d\ }{dt}  d_{L}(t) =& 8\kappa^{2} \alpha \bigl\langle \tfrac{x}{\langle x\rangle^{2-\alpha}}  e^{-\langle x\rangle^{\alpha}} [q_{2L}-q_{L}],
	 e^{-\langle x\rangle^{\alpha}} [q_{2L}-q_{L}]  \bigr\rangle \\
& +16\kappa^{4} \bigl\langle  e^{-\langle x\rangle^{\alpha}}[g_+^{2L}(\kappa)-g_+^{L}(\kappa)],
	e^{-\langle x\rangle^{\alpha}} [q_{2L}-q_{L}]  \bigr\rangle .
\end{align*}
Using \eqref{1658} and Gronwall's inequality, we deduce that
\begin{align}\label{1672}
\sup_{|t|\leq T}  d_{L}(t) \lesssim e^{X_L^T} d_L(0) 
\end{align}
where
\begin{align*}
 X_L^T \lesssim_\kappa T + \int_{-T}^T \Bigl[ \| \LaxR(\kappa;q_{2L}(t)) \|_{L^2\to L^\infty}
 	+ \| q_L(t) \|_{L^\infty} \| \LaxR(\kappa;q_{L}(t)) \|_{L^1\to L^2} \Bigr] \,dt.
\end{align*}

By Theorem~\ref{THM:HKinv}, $q_L(t)$ and $q_{2L}(t)$ are Gibbs distributed at all times.  Thus, we may combine \eqref{chi R} and \eqref{log13} to find that
\begin{align*}
 \E\bigl\{ X_L^T \bigr\} \lesssim_\kappa T \log^{2/3}( 2L )
 	\qtq{and thence that} \E\Bigl\{\sup_L \ \log^{-2}(L)\, X_L^T \Bigr\}< \infty.
\end{align*}
From this and \eqref{dL0}, our bound \eqref{1672} guarantees that
\begin{align*}
\sup_{|t|\leq T}  e^{2\eps L^\alpha} d_{L}(t) \leq C e^{2\eps L^\alpha} e^{C\log^2(L) - 8\eps L^\alpha} \qtq{uniformly for}  L \in 2^\N
\end{align*}
for some random but almost surely finite $C$.  Even after taking the square-root, this bound is summable over dyadic $L$ and so \eqref{1607} follows.
\end{proof}

Evidently, Lemma~\ref{L:1603} gives a very mild form of convergence of the $q_L(t)$ as $t\to\infty$.  We now substantially upgrade this:

\begin{proposition}[Infinite-volume limits]\label{P:L limit} 
Let $\eta>0$ be given, along with Gibbs-distributed processes $q_L^1(x)$, $L\in 2^\N$, that satisfy \eqref{1606}.  For each $\kappa\in 2^\N$, let $q_L(t;\kappa)$ denote the \slp{global and unique} solution of the $H_\kappa$ flow with initial data $q_L^1$.  Then, for every $\kappa,T\in 2^\N$, the sequence $q_L(t;\kappa)$ converges in $C_t([-T,T];L^2_1)$ almost surely.

The processes $q_\infty(t;\kappa)$ defined as these limits are Gibbs distributed at every $t\in\R$ and are almost surely good solutions of the $H_\kappa$ flow.  Moreover, for each triple \eqref{spb}, there is an $\alpha=\alpha(s,p,b)>0$ so that
\begin{align}\label{1776}
\sup_{\kappa\geq 1} \  \E \Bigl\{ \bigl\| q_\infty(t;\kappa) \|_{C^\alpha_t([-T,T];\Wspb)}^r  \Bigr\}  < \infty \qtq{for every} T\in 2^\N.
\end{align} 
\end{proposition}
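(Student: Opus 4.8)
The plan is to realise $q_\infty(t;\kappa)$ as the $C([-T,T];L^2_1)$-limit of the finite-volume flows $q_L(t;\kappa)$ and to transfer their properties through this limit; throughout, $\kappa\in 2^\N$ is fixed. The hypothesis \eqref{1606} is exactly what is needed to invoke Lemma~\ref{L:1603} (with the given $\alpha$ and $\eta$): it produces an $\eps>0$ and, almost surely, a finite random constant with $\sup_{|t|\le T}\|e^{-\langle x\rangle^\alpha}[q_{2L}(t)-q_L(t)]\|_{L^2}\lesssim e^{-\eps L^\alpha}$ for all $L\in 2^\N$. The difficulty — and the main obstacle of the whole proof — is that this proximity lives only in an exponentially weighted norm, while convergence is sought in $L^2_1$, whose weight decays polynomially; the tails must be controlled separately. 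To do this I would split $\|\langle x\rangle^{-1}[q_{2L}(t)-q_L(t)]\|_{L^2}^2$ at radius $R_L=cL$ with $c=c(\eps,\alpha)$ small. On $|x|\le R_L$ one has $\langle x\rangle^{-2}\le e^{2\langle R_L\rangle^\alpha}e^{-2\langle x\rangle^\alpha}$, so the inner contribution is $\lesssim e^{2\langle R_L\rangle^\alpha-2\eps L^\alpha}$, summable over dyadic $L$ once $c$ is chosen so that $\langle R_L\rangle^\alpha\le\tfrac\eps2 L^\alpha+O(1)$. On $|x|>R_L$ one has $\langle x\rangle^{-2}\le R_L^{-1/2}\langle x\rangle^{-3/2}$, so the outer contribution is $\lesssim L^{-1/2}\bigl(\|q_{2L}(t)\|_{L^2_{3/4}}^2+\|q_L(t)\|_{L^2_{3/4}}^2\bigr)$; since $(0,2,\tfrac34)$ is an admissible triple \eqref{spb}, Theorem~\ref{THM:HKinv}\slp{c} gives $\sup_L\E\{\sup_{|t|\le T}\|q_L(t)\|_{L^2_{3/4}}^r\}<\infty$ for every $r$, so Chebyshev and Borel--Cantelli force $\sup_{|t|\le T}\|q_L(t)\|_{L^2_{3/4}}\lesssim\log^2 L$ almost surely for all large $L$, whence the outer contribution is $\lesssim L^{-1/2}\log^4 L$. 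Summing the square roots over $L\in 2^\N$ shows $\sum_L\sup_{|t|\le T}\|q_{2L}(t)-q_L(t)\|_{L^2_1}<\infty$ almost surely, so $\{q_L(\cdot;\kappa)\}$ is Cauchy in $C([-T,T];L^2_1)$; its limit $q_\infty(\cdot;\kappa)$ is consistent across $T\in 2^\N$ and defines a continuous map $\R\to L^2_1$.

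Next I would identify the law. Each $q_L(t;\kappa)$ is Gibbs distributed on $\T_L$ by Theorem~\ref{THM:HKinv}\slp{b}, so on any fixed compact interval the increment bounds \eqref{LD one}--\eqref{LD two}, being uniform in $L$, make $\{q_L(t;\kappa)\}_L$ almost surely equicontinuous there; together with $L^2_1$-convergence this upgrades to uniform-on-compacts convergence, hence pointwise convergence at every sample point, and shows $q_\infty(t;\kappa)$ has continuous sample paths. Consequently the finite marginals of $q_\infty(t;\kappa)$ are the limits of those of $q_L(t;\kappa)$, which converge to the infinite-volume Gibbs marginals as recorded in Subsection~\ref{ss2.2}; by the uniqueness part of Proposition~\ref{PROP:Gibbs}, $q_\infty(t;\kappa)$ is infinite-volume Gibbs distributed for every $t$.

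To see that $q_\infty(\cdot;\kappa)$ is a good solution I would pass to the limit in the finite-volume Duhamel formula of Proposition~\ref{P:HK well}: the translation term converges in $L^2_1$, and, using the Lipschitz bound \eqref{p lip b} with $b=1$ (so $\|g_+(q_1)-g_+(q_2)\|_{L^2_6}\lesssim C_1^{5/2}\|q_1-q_2\|_{L^2_1}$) together with the uniform-in-$t$ convergence $q_L(t;\kappa)\to q_\infty(t;\kappa)$ on $[-T,T]$, the $g_+$-integral converges in $L^2_6$; thus \eqref{1592} holds in $L^2_6$, hence for a.e.\ $x$ (consistently with the $L^2_3$ interpretation noted after the definition of good solution, since $q_\infty(\cdot;\kappa)$ is $L^2_1$-continuous). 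For the logarithmic bound \eqref{1592'} I would apply Lemma~\ref{L:1613} with $L=\infty$: since $q_\infty(t;\kappa)$ is Gibbs distributed, Minkowski's inequality, Lemma~\ref{L:1613}, and \eqref{log13} give, for every $r<\infty$, $\E\bigl\{\bigl(\int_{-T}^T\|\chi_\ell\LaxR(\kappa;q_\infty(t))\|_{L^2\to L^\infty}^2+\|\chi_\ell q_\infty(t)\|_{L^\infty}^2\,dt\bigr)^r\bigr\}\lesssim_{r,T,\kappa}\log^{2r/3}(2\ell)$, so Chebyshev with $r>3$ and Borel--Cantelli over $\ell\in 2^\N$ produce, almost surely, a bound of the required form $C_T\log^{C_T}\ell$ for all $\ell\ge 2$ (first for $T\in\N$, then for all $T$ by monotonicity). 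The evolution equations \eqref{+Hkflow}--\eqref{gHkflow} are not needed for this statement but would follow by the same limiting procedure from Proposition~\ref{P:HK well}.

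Finally, the moment bound \eqref{1776} follows from Theorem~\ref{THM:HKinv}\slp{c} by lower semicontinuity and Fatou. For each admissible triple, the $C^\alpha_t([-T,T];\Wspb)$ norm is lower semicontinuous under $C([-T,T];L^2_1)$-convergence: for fixed $t$ one has $q_L(t;\kappa)\to q_\infty(t;\kappa)$ in $L^2_1$, hence in $L^2_{\loc}$, hence (along subsequences) a.e., and Fatou applied to the Gagliardo seminorm and the $L^p$ part gives $\|q_\infty(t;\kappa)\|_{\Wspb}\le\liminf_L\|q_L(t;\kappa)\|_{\Wspb}$, and likewise for time increments; taking suprema yields $\|q_\infty(\cdot;\kappa)\|_{C^\alpha_t([-T,T];\Wspb)}\le\liminf_L\|q_L(\cdot;\kappa)\|_{C^\alpha_t([-T,T];\Wspb)}$. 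Taking $r$-th powers, applying Fatou, and using that the bound in Theorem~\ref{THM:HKinv}\slp{c} is uniform in both $L$ and $\kappa$, we obtain $\sup_\kappa\E\{\|q_\infty(t;\kappa)\|_{C^\alpha_t([-T,T];\Wspb)}^r\}<\infty$ for the same $\alpha=\alpha(s,p,b)$ and every $T\in 2^\N$. As already indicated, the crux is the first step: reconciling the exponential-in-$L$ proximity of Lemma~\ref{L:1603} with the polynomially weighted target norm, which the split at radius $\sim cL$ together with the fractional-weight moment bound from Theorem~\ref{THM:HKinv}\slp{c} is designed to accomplish.
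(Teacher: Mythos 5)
Your proof is correct and follows essentially the same route as the paper: convergence in $L^2_1$ is obtained by reconciling the exponential proximity from Lemma~\ref{L:1603} with the polynomial weight via the uniform moment bounds of Theorem~\ref{THM:HKinv}\slp{c}, the Gibbs law passes to the limit, the Duhamel identity follows from \eqref{p lip b}, the bound \eqref{1592'} comes from Lemma~\ref{L:1613} applied to the (now Gibbs-distributed) limit, and \eqref{1776} follows from Fatou. The only deviations are cosmetic: you split at an explicit radius $\sim cL$ and invoke Borel--Cantelli where the paper uses a single elementary weight inequality and bounds the expectation of a log-weighted sum, and you obtain \eqref{1776} by lower semicontinuity of the H\"older norm rather than by first upgrading to convergence in $C_t\Wspb$ via interpolation.
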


\begin{proof}
Fix $\kappa,T\in 2^\N$. In view of \eqref{bounds kL}, we know that
\begin{align*}
\E\Bigl\{ \sum_L \log^{-3}(L)  \ \sup_{|t|\leq T}\  \| \langle x \rangle^{-\frac23}[q_{2L} - q_L] \|_{L^2} \Bigr\} < \infty .
\end{align*}
Combining this with the elementary
\begin{align*}
\langle x \rangle^{-1} \lesssim L \exp\{- \langle x\rangle^{1/9}\} + \log^{-3}(L) \langle x \rangle^{-\frac23}
	\qtq{uniformly for} L\in 2^\N
\end{align*}
and Lemma~\ref{L:1603}, we deduce that for each $T>0$,
\begin{align*}
\sum_L \ \sup_{|t|\leq T}\  \| q_{2L} - q_L \|_{L^2_1} < \infty \quad\text{almost surely.}
\end{align*}
This guarantees the almost sure convergence of $q_L(t;\kappa)$ in $C_t([-T,T];L^2_1)$.

As there are only countably many choices, we immediately deduce simultaneous convergence for \emph{all} $\kappa,T\in 2^\N$ also holds with probability one. 

By Fatou's Lemma and \eqref{bounds kL}, we know that
\begin{align}\label{1900}
\sup_{\kappa\geq 1} \  \E \Bigl\{ \; \liminf_{L\to\infty} \; \bigl\| q_L(t,x;\kappa) \|_{C^\alpha_t([-T,T];\Wspb)}^r  \Bigr\}  < \infty
\end{align} 
for every $1\leq r<\infty$, every $T\in 2^N$, and every triple \eqref{spb}.  Interpolating with convergence in $C_t([-T,T];L^2_1)$, we deduce convergence in $C_t([-T,T];\Wspb)$.  Then applying \eqref{1900} once again,  we obtain \eqref{1776}.

Recall that the finite-volume Gibbs distributions converge to the infinite volume Gibbs distribution as $L\to\infty$.  Moreover, from Theorem~\ref{THM:HKinv}, we know that $q_L(t;\kappa)$ are Gibbs distributed on $\T_L$ for every $\kappa,L\in 2^\N$.    As we have shown that these random variables converge almost surely in $L^2_1$ to $q_\infty(t;\kappa)$, it follows that $q_\infty(t;\kappa)$ is Gibbs distributed in infinite volume.

It remains to verify that $q_\infty(t;\kappa)$ is almost surely a good solution.  The finite-volume solutions $q_L(t;\kappa)$ satisfy the integral equation \eqref{1592} by construction.  Convergence in $C_t([-T,T];L^2_1)$ and \eqref{p lip b} extend this to the infinite-volume case.

As $q_\infty(t)$ is always Gibbs distributed, Proposition~\ref{PROP:Gibbs} and Lemma~\ref{L:1613} show that
\begin{align*}
\E \biggl\{ \int_{-T}^T \bigl\| \chi_\ell \LaxR(\kappa;q_\infty(t)) \bigr\|_{L^2\to L^\infty}^2 + \bigl\| \chi_\ell q_\infty(t) \bigr\|_{L^\infty}^2 \,dt \biggr\} 
	\lesssim_\kappa T \log^{2/3} (\ell) 
\end{align*}
uniformly for $\ell\in 2^\N$.  It follows that
$$
\sum_\ell \frac{1}{\log^2(\ell)} \int_{-T}^T \bigl\| \chi_\ell \LaxR(\kappa;q_\infty(t)) \bigr\|_{L^2\to L^\infty}^2
	+ \bigl\| \chi_\ell q_\infty(t) \bigr\|_{L^\infty}^2  \ dt
$$
is almost surely finite (it has finite expectation) and thus \eqref{1592'} holds almost surely for each (and thence every) $T\in 2^\N$.
\end{proof}

Let us now demonstrate the uniqueness of good solutions, building on ideas introduced already in Lemma~\ref{L:1603}.

\begin{proposition}[Uniqueness of good solutions]\label{P:unique}  Fix $\kappa\geq 1$.  For each initial data in $L^2_1$ there is at most one good solution to the $H_\kappa$ flow.
\end{proposition}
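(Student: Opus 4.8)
The plan is to argue by a Gronwall/contraction estimate, exactly as in the proof of Lemma~\ref{L:1603}, but now comparing two good solutions $q(t)$ and $\tilde q(t)$ of the \emph{same} $H_\kappa$ flow with $q(0)=\tilde q(0)$. First I would fix $T>0$ and, for $\alpha\in(0,1)$ a parameter to be chosen (any value will do; $\alpha=\tfrac12$ is convenient), set
\[
d(t) := \bigl\| e^{-\langle x\rangle^{\alpha}} [q(t)-\tilde q(t)] \bigr\|_{L^2}^2 ,
\]
which is finite and continuous because both orbits take values in $L^2_1$. Differentiating the integral equation \eqref{1592} (equivalently, using the differential form \eqref{Hkflow}) and integrating by parts in the translation term, I obtain
\[
\tfrac{d}{dt} d(t) = 8\kappa^2\alpha\bigl\langle \tfrac{x}{\langle x\rangle^{2-\alpha}} e^{-\langle x\rangle^\alpha}[q-\tilde q],\ e^{-\langle x\rangle^\alpha}[q-\tilde q]\bigr\rangle
 + 16\kappa^4 \bigl\langle e^{-\langle x\rangle^\alpha}[g_+(q)-g_+(\tilde q)],\ e^{-\langle x\rangle^\alpha}[q-\tilde q]\bigr\rangle .
\]
The first term is bounded by $C_\kappa\, d(t)$ trivially. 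For the second, I would invoke the difference estimate \eqref{1658} from the proof of Lemma~\ref{L:1603}, namely
\[
\| e^{-\langle x\rangle^{\alpha}} [g_+(q) - g_+(\tilde q)]\|_{L^2}
\lesssim \Bigl[\kappa^{-1/2} \| \LaxR(\kappa;\tilde q) \|_{L^2\to L^\infty} + \kappa^{-3/2} \| q \|_{L^\infty} \| \LaxR(\kappa;q) \|_{L^1\to L^2} \Bigr] \| e^{-\langle x\rangle^{\alpha}} [q-\tilde q] \|_{L^2},
\]
so that after Cauchy--Schwarz,
\[
\tfrac{d}{dt} d(t) \lesssim_\kappa \bigl[ 1 + \| \LaxR(\kappa;\tilde q(t)) \|_{L^2\to L^\infty}^2 + \| q(t) \|_{L^\infty}^2 \| \LaxR(\kappa;q(t)) \|_{L^1\to L^2}^2 \bigr]\, d(t) =: \omega(t)\, d(t).
\]

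The crucial point is that $\omega\in L^1([-T,T])$: by \eqref{chi R R chi} the two resolvent norms agree, and both $\int_{-T}^T \|\chi_\ell \LaxR(\kappa;q(t))\|_{L^2\to L^\infty}^2\,dt$ and $\int_{-T}^T\|\chi_\ell q(t)\|_{L^\infty}^2\,dt$ (and likewise for $\tilde q$) are finite for every $\ell$ by the definition of good solution \eqref{1592'}; since $q,\tilde q$ take values in $L^2_1$, passing $\ell\to\infty$ via \eqref{pbd'}/\eqref{GinLinfty} controls the full-line quantities, so $\omega$ is genuinely integrable on $[-T,T]$ (one only needs finiteness, not the logarithmic rate). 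Gronwall's inequality then gives $d(t)\le d(0)\exp(\int_0^t\omega)=0$ for all $|t|\le T$, whence $q(t)=\tilde q(t)$ a.e., and by continuity for all $t$; letting $T\to\infty$ finishes the proof.

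The main obstacle — and the only place demanding care — is justifying that the weighted $L^2$ function $d(t)$ is genuinely differentiable in $t$ with the stated derivative, and that the formal integration by parts producing the first term is legitimate. Since good solutions are only asserted to be $L^2_1$-continuous and to satisfy the \emph{integral} equation \eqref{1592}, I would instead work directly with the integral equation: write $q(t)-\tilde q(t) = 8\kappa^4\int_0^t [g_+(\cdots;q(t'))-g_+(\cdots;\tilde q(t'))]\,dt'$ after undoing the common translation, apply the weight $e^{-\langle x\rangle^\alpha}$ and the triangle inequality in $L^2$, and use \eqref{1658} together with the fact that translation by $4\kappa^2(t-t')$ distorts the weight $e^{-\langle x\rangle^\alpha}$ by at most a factor $e^{C_\kappa T^\alpha}$ on $[-T,T]$ (this replaces the boundary term from integration by parts). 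This yields
\[
\bigl\| e^{-\langle x\rangle^\alpha}[q(t)-\tilde q(t)]\bigr\|_{L^2} \lesssim_{\kappa,T} \int_0^t \omega(t')^{1/2}\, \bigl\| e^{-\langle x\rangle^\alpha}[q(t')-\tilde q(t')]\bigr\|_{L^2}\,dt' ,
\]
and Gronwall's inequality in the form with an $L^1$ kernel again forces the left side to vanish. This route avoids any differentiability question and uses only the already-established estimates \eqref{1658}, \eqref{pbd'}, and the defining bound \eqref{1592'}.
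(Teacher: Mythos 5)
Your overall strategy (Gronwall on the weighted difference $d(t)$, using the resolvent-difference estimate to control $g_+(q)-g_+(\tilde q)$) is the same as the paper's, and your reformulation via the integral equation \eqref{1592} to avoid differentiating $d(t)$ is a legitimate technical refinement. But there is a genuine gap at the decisive step: you claim that the Gronwall coefficient
$\omega(t) = 1 + \| \LaxR(\kappa;\tilde q(t)) \|_{L^2\to L^\infty}^2 + \| q(t) \|_{L^\infty}^2 \| \LaxR(\kappa;q(t)) \|_{L^1\to L^2}^2$
is integrable on $[-T,T]$ because ``passing $\ell\to\infty$ via \eqref{pbd'}/\eqref{GinLinfty} controls the full-line quantities.'' It does not. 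For $q\in L^2_1$ the Green's function bound \eqref{GinLinfty}/\eqref{CT G off} degrades like $\langle y_1\rangle^b\langle y_2\rangle^b$ with $b=1$, so $\sup_{y_1}\|G(y_1,\cdot)\|_{L^2}$ — i.e.\ the \emph{global} $L^2\to L^\infty$ norm of $\LaxR$ — is in general infinite; likewise $\|q\|_{L^\infty}=\infty$ for the solutions this proposition must cover (infinite-volume Gibbs samples are a.s.\ unbounded). The definition of good solution \eqref{1592'} deliberately controls only the \emph{localized} quantities $\|\chi_\ell\LaxR\|_{L^2\to L^\infty}$ and $\|\chi_\ell q\|_{L^\infty}$, and permits them to grow like $\log^{C_T}(\ell)$; there is no finite $\ell\to\infty$ limit to pass to. So \eqref{1658} as stated cannot be applied, and your $\omega$ is not in $L^1$.

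The missing idea — and the actual content of the paper's proof — is to redo the estimate on $e^{-\langle x\rangle^\alpha}[g_+(q)-g_+(\tilde q)]$ using the dyadic partition of unity $1=\chi_L+\sum_{\ell\in L2^\N}[\chi_{2\ell}-\chi_\ell]$ inserted around the resolvents. The near-diagonal pieces produce a Gronwall coefficient $X_L(t)$ built only from the localized norms $\|\chi_L\LaxR\|_{L^2\to L^\infty}$, $\|\chi_{2L}q\|_{L^\infty}$ (which \eqref{1592'} makes time-integrable, with a bound $C\log^C(L)$), while the far pieces are not absorbed into the Gronwall term at all: they are estimated by an additive forcing term $Y_L(t)e^{-L^\alpha/100}$, using the Combes--Thomas decay \eqref{CT G off} and the exponential weight to beat the polynomial growth $\langle y\rangle^b$ of the Green's function. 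Gronwall then yields $\sup_{|t|\le T}d(t)\le C\log^C(L)\exp\{C\log^C(L)-\tfrac1{100}L^\alpha\}$, and one concludes by sending the auxiliary scale $L\to\infty$, since the exponentially small forcing dominates the at-most-$\exp\{\log^C L\}$ Gronwall factor. Without this localization-plus-remainder structure your argument does not close.
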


\begin{proof}
Let $q$ and $\tilde q$ denote two good solutions with the same initial data.  We seek to control 
\begin{align*}
d(t) =  \|e^{-\langle x\rangle^{\alpha}}[ q(t)-\tilde q(t)]\|_{L^2}^{2} 
\end{align*}
over any interval $[-T,T]$ via Gronwall's inequality.  Evidently, $d(0)=0$.

To control the difference between $g_+(t,x):=g_+(x;\kappa,q(t))$ and $\tilde g_+:=g_+(x;\kappa,\tilde q(t))$, we will generalize some of the arguments used to prove Lemma~\ref{L:1603}.  Recall \eqref{1655}:
\begin{align*}
\tr\bigl\{e^{-\langle x\rangle^{\alpha}} F [\LaxR-\tilde\LaxR]\bigr\}
={}& - \tr\bigl\{ F e^{-\langle x\rangle^{\alpha}} \LaxR_0 e^{\langle x\rangle^{\alpha}} \cdot e^{-\langle x\rangle^{\alpha}}
		[Q-\tilde Q] \tilde\LaxR\bigr\} \\
& + \tr\bigl\{  F \LaxR_0 \cdot Q \LaxR [Q-\tilde Q]e^{-\langle x\rangle^{\alpha}} \cdot
		e^{\langle x\rangle^{\alpha}}  \tilde\LaxR \;\!e^{-\langle x\rangle^{\alpha}}  \bigr\} .
\end{align*}

To estimate the right-hand side, we employ \eqref{1656a} and \eqref{multcomm} exactly as before, but need new estimates for the remaining two operators.  In order to obtain such estimates, we will use the following partition of unity:
\begin{align}\label{1952}
1 = \chi_L + \sum [\chi_{2\ell}-\chi_\ell] \qtq{where summation is over} \ell\in L 2^\N .
\end{align}  
Inserting this immediately before $\tilde\LaxR$ we see that
\begin{align}\label{1953}
\bigl\|  e^{-\langle x\rangle^{\alpha}} [Q-\tilde Q] \tilde\LaxR \bigr\|_{\HS}
	&\lesssim \bigl\| e^{-\langle x\rangle^{\alpha}} [q-\tilde q] \bigr\|_{L^2} \| \chi_L \tilde\LaxR \|_{L^2\to L^\infty} \\
& \qquad + \Bigl( \|q\|_{L^2_1} + \|\tilde q\|_{L^2_1}\Bigr) \sum \langle 2\ell\rangle e^{-\langle\ell\rangle^\alpha}
	\| \chi_\ell \tilde\LaxR \|_{L^2\to L^\infty}, \notag
\end{align}  
where the sum is taken over $\ell\in L 2^\N$.

Next we consider the operator $Q \LaxR [Q-\tilde Q]e^{-\langle x\rangle^{\alpha}}$, which we also need to bound in Hilbert--Schmidt norm.
We expand this operator by placing the partition of unity \eqref{1952} to the right of $\LaxR$ and to its left, we place the analogue starting at length scale $2L$.  We estimate the first such term thus:
\begin{align}\label{1954}
\bigl\| Q \chi_{2L} \LaxR \chi_L [Q-\tilde Q]e^{-\langle x\rangle^{\alpha}} \bigr\|_\HS 
&\lesssim \bigl\| \chi_{2L} q \bigr\|_{L^\infty} \bigl\| \LaxR \chi_L \bigr\|_{L^1\to L^2}
	\bigl\| e^{-\langle x\rangle^{\alpha}} [q-\tilde q] \bigr\|_{L^2} .
\end{align}

Using \eqref{CT G off}, we note that for $\ell'\in 2L2^\N$, we have
\begin{align*}
\bigl\| \langle x\rangle [\chi_{2\ell'}-\chi_{\ell'}] \LaxR \chi_L \bigr\|_{L^1\to L^\infty}
& \lesssim \Bigl( 1 + \| q \|_{L^2_1}^2 \Bigr)	\langle\ell'\rangle^2 \langle L \rangle e^{-\frac1{12} \ell'} .
\end{align*}
In this way, we deduce that
\begin{align*}
\sum_{\ell'\geq 2L} \bigl\| & Q [\chi_{2\ell'}-\chi_{\ell'}] \LaxR \chi_L [Q-\tilde Q]e^{-\langle x\rangle^{\alpha}} \bigr\|_\HS \notag \\
& \lesssim \| q \|_{L^2_1} \Bigl( 1 + \| q \|_{L^2_1}^2 \Bigr)	\bigl\| e^{-\langle x\rangle^{\alpha}} [q-\tilde q] \bigr\|_{L^2}
	\sum_{\ell'\geq 2L} \langle\ell'\rangle^2\langle L \rangle e^{-\frac1{12} \ell'} \\
&\lesssim \Bigl( 1 + \| q \|_{L^2_1}^2 + \| \tilde q \|_{L^2_1}^2 \Bigr)^2 e^{-\frac{1}{100} L^\alpha}. \notag
\end{align*}

Using \eqref{CT G off} in a similar manner, we find 
\begin{align*}
\sum_{\ell\geq L} \bigl\| Q \chi_{2L} & \LaxR [\chi_{2\ell}-\chi_{\ell}] [Q-\tilde Q]e^{-\langle x\rangle^{\alpha}} \bigr\|_\HS  \notag \\
&\lesssim \Bigl( 1 + \| q \|_{L^2_1}^2 + \| \tilde q \|_{L^2_1}^2 \Bigr)^2  \sum_{\ell\geq L}  \langle L \rangle^2 \langle\ell\rangle^2 
	  e^{- \langle \ell\rangle^{\alpha}} \\
&\lesssim \Bigl( 1 + \| q \|_{L^2_1}^2 + \| \tilde q \|_{L^2_1}^2 \Bigr)^2 e^{-\frac{1}{100} L^\alpha}. \notag
\end{align*}

For the last part of our decomposition of $Q \LaxR [Q-\tilde Q]e^{-\langle x\rangle^{\alpha}}$, we first use \eqref{CT G off} to see that
\begin{align*}
\bigl\| Q [\chi_{2\ell'}-\chi_{\ell'}] & \LaxR [\chi_{2\ell}-\chi_{\ell}] [Q-\tilde Q]e^{-\langle x\rangle^{\alpha}} \bigr\|_\HS  \\
&\lesssim \Bigl( 1 + \| q \|_{L^2_1}^2 + \| \tilde q \|_{L^2_1}^2 \Bigr)^2 \langle\ell\rangle^2 \langle\ell'\rangle^2 
	e^{ -\frac1{6} d(\ell,\ell')}  e^{-\frac12 \langle \ell\rangle^{\alpha}}  \notag
\end{align*}
where $d(\ell,\ell'):=\dist(\supp(\chi_{2\ell}-\chi_\ell),\supp(\chi_{2\ell}-\chi_{\ell'}))$.  This must then be summed over all $\ell'\geq 2L$ and $\ell\geq L$, which yields
\begin{align*}
&\sum_{\ell,\ell'} \bigl\| Q [\chi_{2\ell'}-\chi_{\ell'}] \LaxR [\chi_{2\ell}-\chi_{\ell}] [Q-\tilde Q]e^{-\langle x\rangle^{\alpha}} \bigr\|_\HS
\lesssim \Bigl( 1 + \| q \|_{L^2_1}^2 + \| \tilde q \|_{L^2_1}^2 \Bigr)^2e^{-\frac{1}{100} L^\alpha}  .
\end{align*}

Combining all these estimates as in the proof of Lemma~\ref{L:1603}, we find that
\begin{align*}
\|e^{-\langle x\rangle^{\alpha}}[ g_+(t)-\tilde g_+(t)]\|_{L^2}
\lesssim X_L(t) \bigl\| e^{-\langle x\rangle^{\alpha}} [q(t)-\tilde q(t)] \bigr\|_{L^2} + Y_L(t)  e^{-\frac1{100} L^\alpha}   
\end{align*}
where, after using \eqref{chi R R chi} on \eqref{1954}, we may choose
\begin{gather*}
X_L(t) = \| \chi_L \LaxR(\kappa;\tilde q(t)) \|_{L^2\to L^\infty}
	+ \|\chi_{2L} q(t)\|_{L^\infty}^2 + \| \chi_L \LaxR(\kappa;q(t)) \|_{L^2\to L^\infty}^2 \\
Y_L(t) = \Bigl( 1 + \|q(t)\|_{L^2_1} + \|\tilde q(t)\|_{L^2_1}\Bigr)^2 \Bigl( 1
	+ \sum \ell e^{-\frac1{100}\ell^\alpha}  \| \chi_\ell \LaxR(\kappa;\tilde q(t)) \|_{L^2\to L^\infty}  \Bigr).
\end{gather*}

The fact that $q$ and $\tilde q$ are both good solutions guarantees that
$$
8\kappa^2 T  +  16\kappa^4 \int_{-T}^T X_L(t) + Y_L(t) \,dt \leq C \log^C(L)
$$
for some $C$ depending on $q$, $\tilde q$, $\kappa$, and $T$, but not on $L\geq 2$.  Thus, applying Gronwall's inequality as in the proof of Lemma~\ref{L:1603},
we find that
$$
\sup_{|t|\leq T} d(t) \leq C \log^C(L) e^{ C \log^C(L) - \frac1{100} L^\alpha}.
$$
Sending $L\to\infty$ completes the proof.
\end{proof}

\begin{proof}[Proof of Theorem~\ref{T:1601}]
Fix $\kappa\in 2^\N$.  For each $L\in 2^\N$, let us write $\Phi_{\kappa,L}$ for the one-parameter group of homeomorphisms on $L^2(\T_L)$ defined by the $H_\kappa$ flow.  Recall that each of these preserves Gibbs measure on $\T_L$.

We define $\mathcal G_\kappa$ as the set of those $q^*_\infty\in L^2_1$ for which there is a sequence $q^*_L\in L^2(\T_L)$ meeting both of the following criteria: (i) For all $T\in 2^\N$, the sequence 
\begin{align}
\Phi_{\kappa,L}(t,q^*_L) \qtq{converges in}  C([-T,T];L^2_1)
\end{align}
as $L\to\infty$; (ii) The limit function is a good solution of the $H_\kappa$ flow. 
(We use the $*$ superscript to distinguish these initial data from the random variables $q^0_L$).

We then define $\Phi_\kappa:\R\times\mathcal G_\kappa\to L^2_1$ as the limiting function.  Proposition~\ref{P:unique} guarantees that this function is well-defined, which is to say, the trajectory is uniquely determined by the initial data (irrespective of which sequence lead to it).

In fact, $\Phi_\kappa$ takes values in $\mathcal G_\kappa$.  To see this, we note that all our criteria are invariant under time translation.  In a similar vein, we see that $\Phi_\kappa$ inherits the group property from the fact that it is true in finite volume.

The Lipschitz dependence demonstrated in Proposition~\ref{P:G Lip} shows that the evolution equations for the diagonal Green's functions presented in Proposition~\ref{P:HK well} carry over to the infinite volume setting.  This verifies part (b).

Turning to probabilistic matters, for $L\in 2^\N\cup\{\infty\}$, let $q_L^0$ be Gibbs distributed and suppose these are coupled together in the manner of Proposition~\ref{P:coupled q_L}.

Let $E_L$ denote the event that $q_L^0\equiv q_{2L}^0$ throughout $[-L/2,L/2]$.  By the coupling condition \eqref{E:coupled q_L}, we know that $\PP(E_L^c) \lesssim e^{-cL}$.  On the other hand, \eqref{Wsp moments} guarantees that
\begin{align*}
\E\Bigl\{ \bigl\|e^{-\langle x\rangle^{\alpha}} q_{L}^0 \bigr\|_{L^2}^2 \Bigr\}  \lesssim 1
	\qtq{and} \E\Bigl\{ \bigl\|e^{-\langle x\rangle^{\alpha}} [1-\chi_{L/2}]q_{L}^0 \bigr\|_{L^2}^2 \Bigr\}  \lesssim_\alpha e^{-L^\alpha\!/2},
\end{align*}
both uniformly in $L$.  By Cauchy--Schwarz, we then see that
\begin{align}\label{2052}
\E\Bigl\{ \sum_L e^{L^\alpha\!/8} \bigl\|e^{-\langle x\rangle^{\alpha}} [ q_{2L}^0-q_{L}^0] \bigr\|_{L^2} \Bigr\}
	\lesssim_\alpha \sum_L \bigl[e^{-c L/2} + e^{-L^\alpha\!/4}\bigr] e^{L^\alpha\!/8} < \infty
\end{align}
and consequently, the processes $q_L^0$ satisfy \eqref{1606} with $\eta=\frac18$.  

Let $q_L(t;\kappa)$ denote the (unique and global) solution to the $H_\kappa$ flow with initial data $q_L^0$.  According to Proposition~\ref{P:L limit}, there is an event $E$ of full probability on which all of the following happen: (i) The sequence $q_L(t;\kappa)$ converges in $C_t^{ } L^2_1$ uniformly on compact time intervals;  (ii) The limit $q_\infty(t;\kappa)$ is a good solution of the $H_\kappa$ flow;  (iii)  For every triple \eqref{spb}, the curves $q_\infty(t;\kappa)$ are $\Wspb$ continuous.

There is one small wrinkle in this argument: there are uncountably many triples \eqref{spb} so one cannot just take an intersection based on \eqref{1776}. However, by the imbedding properties of these spaces, we need only consider rational triples; this ensures the intersection has full measure.

Let $\mathcal G^0$ denote the image under $q_\infty^0$ of the event $E$.  Evidently, $\mathcal G^0 \subset \mathcal G_\kappa$ and $\mathcal G^0$ has full measure.  Thus $\mathcal G_\kappa$ has full measure.

By uniqueness of good solutions, $q_\infty(t;\kappa)\equiv \Phi_\kappa(t,q^0_\infty)$ on $E$.  In this way, we see that \eqref{1610} is a consequence of \eqref{1776} and that the measure-preserving property of $\Phi_\kappa$ follows from the fact that $q_\infty(t;\kappa)$ is Gibbs distributed for all $t\in \R$.
\end{proof}

\begin{lemma}\label{L:commune}
Fix $\kappa\in 2^\N$ and let $q_\infty^0$ be Gibbs distributed.  Then there is a random process $q:\R\to L^2_1$ that is almost surely continuous and satisfies:\\
\slp{a} For each $t\in\R$, $q(t) = \Phi_{2\kappa}\bigl(t,\Phi_\kappa(-t,q^0_\infty)\bigr)$, almost surely.\\
\slp{b} $q(t)$ is Gibbs distributed at each $t\in\R$.\\ 
\slp{c} $q(t)$ is a solution of the $H_{2\kappa}-H_\kappa$ flow in the sense that
\begin{gather}\label{ccmm}
q(t)=e^{12t\kappa^2\partial_x}q^0_\infty +8 \kappa^4 \int_{0}^{t} e^{12t\kappa^2\partial_x}[16g_+(2\kappa, q(s)) - g_+(\kappa, q(s))] ds. 
\end{gather}
\end{lemma}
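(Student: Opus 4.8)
The plan is to build $q(\cdot)$ as a limit of finite--volume \emph{difference} flows and then, at each fixed time, identify it with the composition of the infinite--volume flows. Work on the coupled probability space of Proposition~\ref{P:coupled q_L}, with Gibbs--distributed $q^0_L$, $L\in 2^\N\cup\{\infty\}$, satisfying \eqref{1562}; exactly as in \eqref{2052}, these also satisfy the exponential coupling hypothesis \eqref{1606}. In finite volume the $H_\kappa$ and $H_{2\kappa}$ flows on $L^2(\T_L)$ commute (Proposition~\ref{P:HK well}), so $\Psi_{\kappa,L}(t):=\Phi_{2\kappa,L}(t)\circ\Phi_{\kappa,L}(-t)$ is a one--parameter group, and since each factor preserves Gibbs measure on $\T_L$ (Theorem~\ref{THM:HKinv}), so does $\Psi_{\kappa,L}(t)$. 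Its generator is $J\nabla(H_{2\kappa}-H_\kappa)$; because $H_{2\kappa}-H_\kappa=12\kappa^2 M-32\kappa^3 A(2\kappa)+4\kappa^3 A(\kappa)$, the identities \eqref{1212} and \eqref{rderiv} show that $q_L(t):=\Psi_{\kappa,L}(t,q^0_L)$ solves the Duhamel equation \eqref{ccmm} on $\T_L$. A contraction mapping argument based on the $L^2(\T_L)$--Lipschitz bounds \eqref{p lip} for $g_+(\kappa)$ and $g_+(2\kappa)$ gives local well--posedness of \eqref{ccmm} on $\T_L$, and conservation of $M$ (Proposition~\ref{P:Conserve}) makes it global; thus $q_L(t)$ is a well--defined, Gibbs--distributed solution of \eqref{ccmm} on $\T_L$.

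The infinite--volume limit is obtained by re--running the analysis of Section~\ref{S:5} with the $H_\kappa$ flow replaced by the $H_{2\kappa}-H_\kappa$ flow; I anticipate only the bookkeeping of carrying two spectral parameters, no new difficulty. Concretely: the two--point estimate \eqref{two pt kL} and the H\"older bound \eqref{bounds kL} transfer, using \eqref{prformula} to express $g_+(\kappa),g_+(2\kappa)$ as Fourier multipliers applied to $q[1+\gamma]$ and invoking the $\kappa^{-2}$--gain \eqref{1558} at parameters $\kappa$ and $2\kappa$, together with Gibbs--invariance of $q_L(t)$; the Gronwall convergence estimate of Lemma~\ref{L:1603} transfers, since the $g_+$--difference bound \eqref{1658} holds at each spectral parameter and the growth factor is controlled by Lemma~\ref{L:1613} (uniform for parameter $\ge 1$) and \eqref{log13}; and the uniqueness argument of Proposition~\ref{P:unique} transfers in the same way, after modifying the notion of good solution by replacing \eqref{1592} with \eqref{ccmm} and retaining \eqref{1592'} (now for both $\LaxR(\kappa)$ and $\LaxR(2\kappa)$). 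This yields a continuous process $q:\R\to L^2_1$, Gibbs distributed at every $t$, which is the $C_t([-T,T];L^2_1)$--limit of $q_L$ for every $T\in 2^\N$ and solves \eqref{ccmm} (passing to the limit in \eqref{ccmm} uses the $L^2_1\to L^2_3$ Lipschitz bound \eqref{p lip b} for $g_+$). This establishes parts (b) and (c).

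It remains to prove (a), which is where the ``extra layer'' enters, and which I expect to be the main obstacle: because $q$ is built from the \emph{composed} maps $\Phi_{2\kappa,L}(t)\circ\Phi_{\kappa,L}(-t)$, one cannot simply chain two continuous--dependence statements to recognize the limit as $\Phi_{2\kappa}(t)\circ\Phi_\kappa(-t)$, since $t$ enters ``diagonally'' in both factors. The resolution is to freeze $t=t_0$. By Theorem~\ref{T:1601} (or Proposition~\ref{P:L limit}) applied to the $H_\kappa$ flow, $r_L:=\Phi_{\kappa,L}(-t_0,q^0_L)$ converges in $L^2_1$ almost surely to $r_\infty:=\Phi_\kappa(-t_0,q^0_\infty)$, and the $r_L$ are Gibbs distributed on $\T_L$; moreover the quantitative bound \eqref{1607}, evaluated at the single time $-t_0$, shows that the family $r_L$ again satisfies the exponential coupling \eqref{1606} (with a new constant)—this is precisely the remark following Lemma~\ref{L:1603}, which is why that lemma was stated for general coupled Gibbs families. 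Hence Proposition~\ref{P:L limit} applies to the $H_{2\kappa}$ flow with initial data $r_L$: the curves $s\mapsto\Phi_{2\kappa,L}(s,r_L)$ converge in $C_s L^2_1$ to a good solution of the $H_{2\kappa}$ flow, which by uniqueness of good solutions (Proposition~\ref{P:unique}, with $\kappa\mapsto 2\kappa$) must be $s\mapsto\Phi_{2\kappa}(s,r_\infty)$. Evaluating at $s=t_0$ and recalling $q_L(t_0)=\Phi_{2\kappa,L}(t_0,\Phi_{\kappa,L}(-t_0,q^0_L))=\Phi_{2\kappa,L}(t_0,r_L)$, we conclude $q(t_0)=\lim_L q_L(t_0)=\Phi_{2\kappa}(t_0,\Phi_\kappa(-t_0,q^0_\infty))$ almost surely, which is part (a).
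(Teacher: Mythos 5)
Your proposal is correct and follows essentially the same route as the paper: solve the finite-volume Duhamel equation \eqref{ccmm'} by contraction mapping, identify the solution with $\Phi_{2\kappa,L}(t)\circ\Phi_{\kappa,L}(-t,q_L^0)$ via finite-volume commutativity, pass to the infinite-volume limit by rerunning the first paragraph of Proposition~\ref{P:L limit} (using the analogue of \eqref{bounds kL} for the difference flow), and prove (a) at a frozen time by observing — exactly as in the remark after Lemma~\ref{L:1603} — that $\Phi_{\kappa,L}(-t_0,q_L^0)$ again satisfies \eqref{1606}, so Proposition~\ref{P:L limit} and uniqueness of good solutions identify the limit of $\Phi_{2\kappa,L}(t_0,\cdot)$ applied to these data as $\Phi_{2\kappa}(t_0,\Phi_\kappa(-t_0,q_\infty^0))$. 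The only (harmless) difference is that you propose transferring the full good-solution/uniqueness machinery to the $H_{2\kappa}-H_\kappa$ flow, whereas the paper needs only the convergence estimates for that flow and invokes uniqueness solely for the single-parameter $H_{2\kappa}$ flow.
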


\begin{proof}
By augmenting the probability space, if necessary, we may introduce Gibbs distributed processes $q_L^0$ for each $L\in 2^\N\cup\{\infty\}$ that are  coupled to $q_\infty^0$ as in Proposition~\ref{P:coupled q_L}.

The natural finite-volume analogue of \eqref{ccmm} is 
\begin{gather}\label{ccmm'}
q_L(t)=e^{12t\kappa^2\partial_x}q^0_L +8 \kappa^4 \int_{0}^{t} e^{12t\kappa^2\partial_x}[16g_+(2\kappa, q_L(s)) - g_+(\kappa, q_L(s))] ds. 
\end{gather}
For almost every sample $q^0_L$, this is easily solved (uniquely and globally) by the same contraction mapping argument used to prove Proposition~\ref{P:HK well}; indeed, one just employs the Lipschitz estimates of Proposition~\ref{P:PerioG}.  Clearly, $q_L:\R\to L^2_1$ is almost surely continuous; moreover, by commutativity of flows,
\begin{gather}\label{ccmm''}
q_L(t)= \Phi_{2\kappa,L}(t) \circ \Phi_{\kappa,L}(-t, q_L^0).
\end{gather}
This also shows, by Theorem~\ref{THM:HKinv}, that $q_L(t)$ is Gibbs distributed for every $t\in\R$.

One may then show that $q_L(t)$ converges, almost surely, in $C_t([-T,T];L^2_1)$ by repeating the first paragraph of the proof of Proposition~\ref{P:L limit}.  This requires the following mild form of \eqref{bounds kL}: 
\begin{align}\label{bounds kL''}
 \sup_{L\geq 1} \ \E \Bigl\{  \bigl\| \langle x\rangle^{-\frac23} \, q_L(t,x) \|_{C^\alpha([-T,T];L^2(\T_L))}  \Bigr\}  < \infty .
\end{align} 
All the details necessary to establish \eqref{bounds kL''} appear already in the Proof of Theorem~\ref{THM:HKinv}(c); indeed, one may just estimate the vector field for the $H_{2\kappa}-H_\kappa$ flow via the triangle inequality and the estimates given there for a single $H_\kappa$ flow.

We define $q(t)$ as the (almost sure) limit of $q_L(t)$.  Continuity, as well as claims (b) and (c) follow immediately.  It remains to verify (a).  

Recall that $\mathcal G_\kappa$ and $\Phi_\kappa$ were defined via finite-volume limits.  In particular, after verifying in \eqref{2052} that $q_L^0$ satisfy \eqref{1606} with $\eta=\frac18$, we applied Proposition~\ref{P:L limit} to deduce that
$$
q_\infty^0 \in \mathcal G_\kappa \qtq{and} \lim_{L\to\infty} \Phi_{\kappa,L}(-t,q_L^0)=\Phi_\kappa(-t,q_\infty^0)  \quad\text{in $L^2_1$ sense}
$$
both hold with probability one.  To derive (a), we apply the same argument to the sequence $\Phi_{\kappa,L}(-t,q_L^0)$, which converges to $\Phi_\kappa(-t,q_\infty^0)$:  By \eqref{2052} and Lemma~\ref{L:1603}, the sequence $\Phi_{\kappa,L}(-t,q_L^0)$ also satisfies \eqref{1606}, with some $\eta>0$.  Thus, applying Proposition~\ref{P:L limit} in the same manner, we conclude that
$$
\Phi_\kappa(-t,q_\infty^0) \in \mathcal G_{2\kappa} \qtq{and}
	q(t) = \lim_{L\to\infty} q_L(t) = \Phi_{2\kappa}(t) \circ \Phi_\kappa(-t,q_\infty^0)
$$
both hold with probability one.
\end{proof}

\section{Dynamics and invariance for mKdV in infinite volume}\label{S:6}

As we will be working solely in infinite volume, we may revert to a much simpler probabilistic set-up, with just one process $q^0_\infty$ which is Gibbs distributed on $\R$.

Dynamics with initial data $q_\infty^0$ will be constructed as the $\kappa\to\infty$ limit of the infinite-volume $H_\kappa$ flow maps $\Phi_\kappa$ described by Theorem~\ref{T:1601}.  As there, we shall only consider $\kappa\in 2^\N$:

\begin{theorem}[Dynamics of the infinite-volume Gibbs state]\label{T:1936}\leavevmode\\
With probability one, the sequence of trajectories $\Phi_\kappa(t,q_\infty^0)$ converges, as $\kappa\to\infty$, in $C_t([-T,T];\Wspb)$ for every $T\in 2^\N$ and every triple \eqref{spb}.  This almost sure limit defines a measure-preserving group, whose orbits $q(t):=\Phi(t,q^0_\infty)$ satisfy\\
\slp{a} Almost surely, $q(t)$ is a distributional solution to \eqref{mkdv}.\\
\slp{b} Almost surely, $q(t)$ is a green solution to \eqref{mkdv}; see Definition~\ref{D:mkdv green}.\\
\slp{c} For every triple \eqref{spb}, there is an $\alpha>0$ so that
\begin{align}\label{1910}
 \E\Bigl\{ \| q(t) \|_{C_{t}^{\alpha}([-T,T];\Wspb)}^{r} \Bigr\}<\infty
\end{align}
for each $1\leq r<\infty$ and every $T\in 2^\N$.\\
\end{theorem}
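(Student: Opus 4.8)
The proof proceeds by sending $\kappa\to\infty$ in the family of infinite-volume $H_\kappa$ flows, mirroring at the level of the whole line the passage from Theorem~\ref{THM:HKinv} to Theorem~\ref{T:4.3} carried out in finite volume, but now with quantitative control supplied by the commutativity packaged in Lemma~\ref{L:commune}. First I would fix $q_\infty^0$ Gibbs distributed and consider the trajectories $q^{(\kappa)}(t):=\Phi_\kappa(t,q_\infty^0)$ for $\kappa\in 2^\N$. The key point is to compare $q^{(\kappa)}$ and $q^{(2\kappa)}$: both can be written via the respective Duhamel formulas \eqref{1592}, and their difference is governed by the $H_{2\kappa}-H_\kappa$ flow studied in Lemma~\ref{L:commune}. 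Concretely, $q^{(2\kappa)}(t)$ equals the $H_{2\kappa}-H_\kappa$ flow (integrated from time $t$) applied to $\Phi_{2\kappa}(-t,q_\infty^0)$ composed appropriately — or, more cleanly, one uses Lemma~\ref{L:commune} to realize $\Phi_{2\kappa}(t)\circ\Phi_\kappa(-t,q_\infty^0)$ as the solution of \eqref{ccmm}, whose vector field $8\kappa^4 e^{12t\kappa^2\partial_x}[16 g_+(2\kappa,\cdot)-g_+(\kappa,\cdot)]$ is small in a suitable weighted space because of cancellation between the two $g_+$ terms as $\kappa\to\infty$. The expansion \eqref{prformula}, together with the bounds \eqref{pbd'}, \eqref{gbd'}, and the improved $\kappa$-gain exhibited in \eqref{1558}, should give that this difference flow moves the initial data by $O(\kappa^{-\delta})$ in $L^2_1$ (in fact in $W^{s,p}_b$) uniformly over $|t|\le T$, with the relevant moments controlled since all the processes in sight remain Gibbs distributed at every time by Theorem~\ref{T:1601} and Lemma~\ref{L:commune}. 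Summing the dyadic telescoping series $\sum_\kappa \|q^{(2\kappa)}-q^{(\kappa)}\|_{C_t W^{s,p}_b}$ then yields almost sure convergence; the passage from $L^2_1$-type bounds to $C^\alpha_t W^{s,p}_b$ bounds is handled exactly as in the proof of Theorem~\ref{THM:HKinv}(c), interpolating the quantitative difference estimate against the uniform-in-$\kappa$ bounds \eqref{1610} and invoking Lemma~\ref{LEM:prob3}. This simultaneously establishes \eqref{1910} via Fatou.

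Having the limit $q(t):=\Phi(t,q_\infty^0)$, the group property is inherited: each $\Phi_\kappa$ is a group by Theorem~\ref{T:1601}, and $L^2_1$-convergence uniform on compact time intervals passes the identities $\Phi_\kappa(t+s)=\Phi_\kappa(t)\circ\Phi_\kappa(s)$ to the limit (one argues along a fixed probability-one event, using continuity of the limiting flow in the initial data, which itself follows from the uniform-in-$\kappa$ Lipschitz-type control). Measure preservation is immediate: $q^{(\kappa)}(t)$ is Gibbs distributed for every $t$ and $\kappa$ by Theorem~\ref{T:1601}, and almost sure $L^2_1$-convergence of a sequence of identically distributed random variables forces the limit to have that same law; hence $q(t)$ is Gibbs distributed for all $t$, which is exactly measure preservation of $\Phi$.

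For part (b), the green-solution identities \eqref{Green-} and \eqref{Greeng} follow by sending $\kappa\to\infty$ in the corresponding $H_\kappa$-flow identities \eqref{-Hkflow} and \eqref{gHkflow}, which hold for $q^{(\kappa)}$ by Theorem~\ref{T:1601}(b). One fixes $\vk\ge 1$; as $\kappa\to\infty$ the coefficients $4\kappa^2$ in the transport term combine with the explicit $\kappa$-dependent prefactors so that \eqref{-Hkflow} converges (distributionally, after testing against a fixed $C^\infty_c$ function of $(t,x)$) to \eqref{-flow}, using the Lipschitz bounds \eqref{p lip b}, \eqref{r lip b}, \eqref{g lip b} to pass the nonlinear terms $q^2 g_-$, $q^2[1+\gamma]$ etc.\ to the limit, together with the uniform moment control from Gibbs invariance; the term $-\tfrac{8\kappa^4\vk}{\kappa^2-\vk^2}\{\cdots\}$ on the right of \eqref{-Hkflow} tends to the required expression because its coefficient tends to $-8\vk$ while the bracketed quantity converges. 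The identity \eqref{gHkflow} is handled the same way, noting the coefficients $\tfrac{8\kappa^5\vk}{(\kappa^2-\vk^2)^2}$ and $\tfrac{4\kappa^4(\kappa^2+\vk^2)}{(\kappa^2-\vk^2)^2}$ converge to $0$ and $8\vk^2$ respectively, reproducing \eqref{gflow}. For part (a), the distributional form of \eqref{mkdv} is then recovered from the green-solution property: differentiating the $g_-$ identity \eqref{Green-} in $x$ and using \eqref{rderiv} ($g_-'=-2\kappa g_+$, and $g_+ = -\tfrac{2\partial}{4\kappa^2-\partial^2}(q[1+\gamma])$) expresses $q$ itself as a $\kappa$-dependent Fourier multiplier applied to $q[1+\gamma]$; sending $\kappa\to\infty$ in this relation — now with $\kappa$ as a free parameter rather than fixed — recovers $q=\lim_\kappa$ of a multiplier that converges to the identity, and tracking the time-derivative terms reproduces $\partial_t q = -\partial_x^3 q + 6 q^2\partial_x q$ in the sense of distributions. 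Alternatively, as the paper suggests in the discussion after Definition~\ref{D:mkdv green}, one observes directly that a green solution with the regularity \eqref{1910} is automatically a distributional solution of \eqref{mkdv}.

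The main obstacle is the quantitative $\kappa\to\infty$ difference estimate for the two infinite-volume flows: one must show, with moments, that $\Phi_{2\kappa}(t)\circ\Phi_\kappa(-t,\cdot)$ displaces Gibbs data by a negligible amount in a weighted space. In finite volume this is Theorem~\ref{T:1404}/\eqref{L2 flow conv} from \cite{Forlano}, but here one cannot invoke that $L^2(\T_L)$ result directly because the infinite-volume Gibbs sample is not in $L^2(\R)$ and the transport in \eqref{mkdv} is strong; the whole reason for introducing the $H_\kappa$ flows is that they transport weakly. The plan is therefore to exploit \eqref{ccmm}: the $H_{2\kappa}-H_\kappa$ vector field is $8\kappa^4 e^{12t\kappa^2\partial_x}[16 g_+(2\kappa,q)-g_+(\kappa,q)]$, and using \eqref{prformula} one writes $8\kappa^4\cdot 16\,g_+(2\kappa)=-16\kappa^4\cdot\tfrac{2\partial}{16\kappa^2-\partial^2}(q[1+\gamma(2\kappa)])$ and $8\kappa^4 g_+(\kappa)=-8\kappa^4\tfrac{2\partial}{4\kappa^2-\partial^2}(q[1+\gamma(\kappa)])$; the leading $-\partial_x^3$-type parts must be shown to cancel against the explicit transport generator $e^{12t\kappa^2\partial_x}$ up to lower-order terms that carry a genuine negative power of $\kappa$ after one uses the smoothing in \eqref{gbd'} (which is what \eqref{1558} quantifies) and the fact that $\gamma$ is one derivative smoother than $q$. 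Controlling the resulting Duhamel integral in the weighted spaces $W^{s,p}_b$, uniformly on $|t|\le T$ and with all $r$-th moments finite via Gibbs invariance along the trajectory (Lemma~\ref{L:commune}(b)) and the off-diagonal Combes--Thomas bounds \eqref{CT G off}, is the technical heart of the argument; everything else is soft limiting analysis built on Theorem~\ref{T:1601} and Lemma~\ref{L:commune}.
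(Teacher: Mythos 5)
Your overall architecture matches the paper's: you compare $\Phi_{2\kappa}$ and $\Phi_{\kappa}$ through the $H_{2\kappa}-H_\kappa$ difference flow of Lemma~\ref{L:commune}, extract a negative power of $\kappa$ from the cancellation $\kappa^2\gamma(\kappa)+\tfrac12 q^2\to 0$ (the paper's \eqref{2015}, which is exactly the quantification of \eqref{1558} you point to), sum the dyadic telescoping series, interpolate against the uniform-in-$\kappa$ bounds \eqref{1610} via Lemma~\ref{LEM:prob3}, and obtain parts (a) and (b) by passing to the limit in the $H_\kappa$ evolution equations for $q$, $g_-$, and $\gamma$ (this is the content of the paper's Proposition~\ref{P:1951}, Corollary~\ref{C:1951}, and Lemma~\ref{L:2094}). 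All of that is sound and is essentially the paper's route.

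There is, however, one step that would fail as written: your derivation of the group property. You propose to pass $\Phi_\kappa(t+s)=\Phi_\kappa(t)\circ\Phi_\kappa(s)$ to the limit ``using continuity of the limiting flow in the initial data, which itself follows from the uniform-in-$\kappa$ Lipschitz-type control.'' No such uniform continuity in the initial data is available: the only stability estimate in infinite volume is the Gronwall argument of Proposition~\ref{P:unique}, whose constants involve the $\kappa^4$ prefactor of the vector field and the trajectory-dependent quantities in \eqref{1592'}, and which in any case degenerates as $\kappa\to\infty$; moreover $\Phi$ itself is only defined on a full-measure set, so pointwise continuity at the moving point $\Phi_\kappa(s,q)$ is not even meaningful. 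The difficulty is precisely that almost-sure convergence $\Phi_\kappa(t,\cdot)\to\Phi(t,\cdot)$ at a \emph{fixed} argument does not yield convergence along the sequence of arguments $\Phi_\kappa(s,q)$. The paper resolves this with a purely measure-theoretic device (Lemma~\ref{L:aegp}): one tests against bounded continuous $F$, uses that $\Phi_\kappa(s)$ preserves the measure to transfer the convergence question back to Gibbs-distributed data, and approximates the merely measurable composition $F\circ\Phi(t)$ by a continuous function via Lusin's theorem. Some argument of this type (or another mechanism replacing initial-data continuity) is needed to close your proof of the group law; once that is supplied, the rest of your proposal goes through.
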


The group property of the transformations $\Phi$ constructed here is this:
\begin{equation}\label{aegp}
\Phi(t+s,q) = \Phi(t)\circ \Phi(s,q) \quad\text{almost surely.}
\end{equation}
In Theorem~\ref{T:1601}, by contrast, there were no exceptional null sets.  For all practical purposes, the two notions of measure-preserving group are equivalent; see \cite{MR0143874} and the earlier work referenced therein.  This minor nuisance arises precisely from the uncountablitity of the set of times $t$.  Our first lemma shows that the almost-sure limit of measure-preserving groups (in the sense \eqref{aegp}) is likewise a measure preserving group.

\begin{lemma}\label{L:aegp}
Let $Q$ be a Polish space which is endowed with a Borel probability measure.  For each $\kappa\in 2^\N$, let $\Phi_\kappa:\R\times Q\to Q$ be a one-parameter group of measure preserving transformations.  Suppose that for each $t\in\R$, the sequence $\Phi_\kappa(t,q)$ converges almost surely \slp{as $\kappa\to\infty$} and let $\Phi(t,q)$ denote the limiting function.  Then $\Phi:\R\times Q\to Q$ is a one-parameter group of measure-preserving transformations of $Q$.
\end{lemma}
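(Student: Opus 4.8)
The plan is to verify, in turn, the three defining features of a one-parameter group of measure-preserving transformations: (i) measurability of the limit map $\Phi(t,\cdot)$ and preservation of the measure; (ii) the identity $\Phi(0,q)=q$ for a.e.\ $q$; and (iii) the group law $\Phi(t+s,q)=\Phi(t,\Phi(s,q))$ almost surely for each pair $(s,t)$. None of the steps is deep; the only thing one must be careful about is keeping track of null sets and of the fact that almost-sure convergence is compatible with composition.

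First I would handle measurability and measure preservation. For each fixed $t$, $\Phi(t,\cdot)$ is defined as the a.e.\ pointwise limit of the measurable maps $\Phi_\kappa(t,\cdot):Q\to Q$ along the countable sequence $\kappa\in 2^\N$; since $Q$ is Polish, its Borel $\sigma$-algebra is countably generated, and an a.e.\ limit of measurable maps into a Polish space is measurable (after modification on a null set, which we absorb into the definition of $\Phi(t,\cdot)$). Measure preservation then follows from the portmanteau theorem: for any bounded continuous $f:Q\to\R$,
\begin{align*}
\int_Q f\bigl(\Phi(t,q)\bigr)\,d\mu(q)
  = \lim_{\kappa\to\infty}\int_Q f\bigl(\Phi_\kappa(t,q)\bigr)\,d\mu(q)
  = \int_Q f\,d\mu,
\end{align*}
by dominated convergence for the first equality (using $\Phi_\kappa(t,q)\to\Phi(t,q)$ a.e.\ and $f$ bounded) and the hypothesis that each $\Phi_\kappa(t,\cdot)$ preserves $\mu$ for the second. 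Since bounded continuous functions determine a Borel probability measure on a Polish space, $\Phi(t,\cdot)_\ast\mu=\mu$. The identity statement (ii) is immediate: $\Phi_\kappa(0,q)=q$ for every $\kappa$, so the limit is $q$ for every $q$ on which the limit is defined, hence a.e.

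The group law (iii) is where the (mild) care is needed, and I expect it to be the only genuinely non-automatic point. Fix $s,t\in\R$. On the one hand $\Phi_\kappa(t+s,q)\to\Phi(t+s,q)$ a.e. On the other, $\Phi_\kappa(t+s,q)=\Phi_\kappa(t,\Phi_\kappa(s,q))$ by the group property of each $\Phi_\kappa$, so it suffices to show $\Phi_\kappa(t,\Phi_\kappa(s,q))\to\Phi(t,\Phi(s,q))$ a.e. Write $\mu_\kappa$ for the law of $\Phi_\kappa(s,\cdot)$ under $\mu$; each $\mu_\kappa$ equals $\mu$ since $\Phi_\kappa(s,\cdot)$ is measure-preserving. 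Since $\Phi_\kappa(s,q)\to\Phi(s,q)$ a.e.\ and $\Phi(s,\cdot)$ is also measure-preserving, a standard argument shows that $\Phi_\kappa(t,\Phi_\kappa(s,q))$ and $\Phi_\kappa(t,\Phi(s,q))$ have the same a.e.\ limit: indeed, for bounded uniformly continuous $f$ one has
\begin{align*}
\E\bigl|f\bigl(\Phi_\kappa(t,\Phi_\kappa(s,q))\bigr)-f\bigl(\Phi_\kappa(t,\Phi(s,q))\bigr)\bigr|
  \le \E\bigl[\omega_f\bigl(d_Q(\Phi_\kappa(t,X_\kappa),\Phi_\kappa(t,Y))\bigr)\bigr],
\end{align*}
which one cannot control pointwise in $\kappa$ directly; the cleaner route is to argue in distribution. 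Precisely: $\Phi_\kappa(t,\Phi(s,q))\to\Phi(t,\Phi(s,q))$ a.e.\ by applying the convergence hypothesis at the point $\Phi(s,q)$ (valid for a.e.\ realization of $\Phi(s,q)$, hence for a.e.\ $q$ since $\Phi(s,\cdot)$ preserves $\mu$ and the bad set has measure zero), so it remains only to see that $\Phi_\kappa(t,\Phi_\kappa(s,q))$ and $\Phi_\kappa(t,\Phi(s,q))$ converge in probability to the same limit. Both converge in probability (a.e.\ convergence implies convergence in probability), and one shows they have the same limit by observing that for every bounded uniformly continuous $f$,
\begin{align*}
\E\bigl|f\bigl(\Phi_\kappa(t,\Phi_\kappa(s,q))\bigr)-f\bigl(\Phi(t,\Phi(s,q))\bigr)\bigr|\longrightarrow 0,
\end{align*}
which follows by decomposing over the event $\{d_Q(\Phi_\kappa(s,q),\Phi(s,q))<\delta\}$ and its complement, using Egorov/uniform-continuity on the former and the vanishing probability of the latter. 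Passing to a subsequence along which both sides converge a.e., we conclude $\Phi(t+s,q)=\Phi(t,\Phi(s,q))$ a.e., which is \eqref{aegp}. Finally, taking $s=-t$ in the group law and using $\Phi(0,q)=q$ shows $\Phi(t,\cdot)$ is a.e.-invertible with a.e.\ inverse $\Phi(-t,\cdot)$, completing the verification that $\Phi$ is a measure-preserving group.
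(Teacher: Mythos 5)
Your treatment of measurability, measure preservation, and the identity at $t=0$ is fine and matches the paper's argument (dominated convergence against bounded continuous test functions, which determine a Borel probability measure on a Polish space). The group law is where the content lies, and your decomposition is genuinely different from the paper's: you route through $\Phi_\kappa(t,\Phi(s,q))$ (approximating outer map, limiting inner point), whereas the paper routes through $\Phi(t)\circ\Phi_\kappa(s,q)$ (limiting outer map, approximating inner point). Your second comparison, $\Phi_\kappa(t,\Phi(s,q))\to\Phi(t,\Phi(s,q))$ almost everywhere, is correctly justified by pulling the null set of the convergence hypothesis back through the measure-preserving map $\Phi(s,\cdot)$.

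The gap is in the first comparison, between $\Phi_\kappa(t,\Phi_\kappa(s,q))$ and $\Phi_\kappa(t,\Phi(s,q))$. Decomposing over the event $\{d_Q(\Phi_\kappa(s,q),\Phi(s,q))<\delta\}$ and invoking ``Egorov/uniform continuity'' does not close this step: uniform continuity of $f$ is useless because the composition passes through $\Phi_\kappa(t,\cdot)$, which is merely measurable, and Egorov only controls the difference $\Phi_\kappa(t,\cdot)-\Phi(t,\cdot)$ on a large set --- it supplies no modulus of continuity for $\Phi_\kappa(t,\cdot)$ or for $\Phi(t,\cdot)$, so closeness of the inner arguments cannot be converted into closeness of the outputs. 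The missing ingredient is Lusin's theorem: one must restrict to a compact set of measure $>1-\eps$ on which the limit map $\Phi(t,\cdot)$ (equivalently, the fixed measurable function $f\circ\Phi(t,\cdot)$) is continuous, combine this with Egorov, and use measure preservation to guarantee that both random inner points lie in the good set with high probability; with that ingredient your route does go through. The paper's decomposition is cleaner precisely because it never compares $\Phi_\kappa(t,\cdot)$ at two nearby points: the term $\E\bigl|F\circ\Phi(t)\circ\Phi_\kappa(s,q)-F\circ\Phi_\kappa(t)\circ\Phi_\kappa(s,q)\bigr|$ collapses, by measure preservation of $\Phi_\kappa(s)$, to $\E\bigl|F\circ\Phi(t,q)-F\circ\Phi_\kappa(t,q)\bigr|\to0$ (dominated convergence), and only the single fixed measurable function $F\circ\Phi(t,\cdot)$ requires a Lusin approximation, in the remaining term $\E\bigl|F\circ\Phi(t)\circ\Phi(s,q)-F\circ\Phi(t)\circ\Phi_\kappa(s,q)\bigr|$. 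You should either adopt that decomposition or make the Lusin step explicit in yours.
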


\begin{proof}
The fact that $\Phi(t)$ must also be measure-preserving is elementary: For any bounded continuous function $F:Q\to\R$, the dominated convergence theorem shows
\begin{align*}
\E\bigl\{ F \circ \Phi(t,q) \bigr\} = \lim_{\kappa\to\infty} \E\bigl\{ F \circ \Phi_\kappa(t,q) \bigr\}
	= \E\bigl\{ F (q) \bigr\}.
\end{align*}

In a similar vein, we may verify \eqref{aegp} by proving that 
\begin{align}\label{aegp'}
\E\bigl\{ \bigl| F \circ \Phi(t)\circ\Phi(s,q) - F \circ \Phi(t+s,q) \bigr|\bigr\} = 0
\end{align}
for any bounded continuous function $F:Q\to\R$.  Evidently, for any $\kappa\in 2^\N$,
\begin{align}
\text{LHS\eqref{aegp'}}
&\leq\E\bigl\{ \bigl| F\circ\Phi(t)\circ\Phi(s,q) - F\circ\Phi(t)\circ\Phi_\kappa(s,q) \bigr| \bigr\}\notag\\
&\quad + \E\bigl\{ \bigl| F\circ\Phi(t)\circ\Phi_\kappa(s,q) - F\circ\Phi_\kappa(t)\circ\Phi_\kappa(s,q) \bigr|\bigr\} \label{aegp''}\\
&\quad + \E\bigl\{ \bigl| F\circ\Phi_\kappa(t+s,q) - F\circ\Phi(t+s,q)  \bigr|\bigr\}. \notag
\end{align}
Our goal is to show that each term here vanishes in the limit $\kappa\to\infty$.  For the last term this is immediate from the dominated convergence theorem.  The middle term may be treated likewise after first exploiting the measure-preserving property of $\Phi_\kappa(s)$.  The first term in RHS\eqref{aegp''} requires a little more discussion.

We apply Lusin's theorem to the bounded measurable function $F\circ\Phi(t,q)$:  For every $\eps>0$, there is a bounded continuous function $F_\eps:Q\to\R$ so that
$$
\E \{|  F\circ\Phi(t,q) - F_\eps(q)|\} < \eps.
$$
From this observation, the measure preserving property of both $\Phi$ and $\Phi_\kappa$, as well as the dominated convergence theorem, we find that
\begin{align*}
\limsup_{\kappa\to\infty}\E\bigl\{ & \bigl| F\circ\Phi(t)\circ\Phi(s,q) - F\circ\Phi(t)\circ\Phi_\kappa(s,q) \bigr| \bigr\} \\
&\leq 2\eps +  \limsup_{\kappa\to\infty} \E\bigl\{ \bigl| F_\eps\circ\Phi(s,q) - F_\eps\circ\Phi_\kappa(s,q) \bigr| \bigr\}
	\leq 2\eps .
\end{align*}
As $\eps>0$ was arbitrary this completes the proof of \eqref{aegp'} and so of the lemma.
\end{proof}

The next proposition compares the flows under $H_{2\kappa}$ and $H_\kappa$.  The rather poor metric employed here will be upgraded in Corollary~\ref{C:1951}.

\begin{proposition}\label{P:1951}
Fix $\kappa\in 2^{\mathbb{N}}$ and $T>0$. Then, for any $1\leq r<\infty$,
\begin{align}\label{1961}
\sup_{|t|\leq T} \  \E \Big\{ \bigl\| \langle x\rangle^{-9} \bigl[ \Phi_{2\kappa} (t,q_\infty^0) - \Phi_{\kappa} (t,q_\infty^0)\bigl]
	\bigr\|_{H^{-5}_{x}}^{r} \Big\} 	\lesssim \kappa^{-r/8}, 
\end{align}
with an implicit constant that is independent of $\kappa$.
\end{proposition}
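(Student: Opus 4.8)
The plan is to use the key structural fact established in Lemma~\ref{L:commune}: the difference of the two flow maps, evaluated at $q_\infty^0$, is (up to a harmless translation) the solution $q(t)$ of the $H_{2\kappa}-H_\kappa$ flow started from $q_\infty^0$, and crucially this $q(t)$ is \emph{Gibbs distributed at every time $t$}. Indeed, writing $\tilde q(t):=\Phi_\kappa(-t,q_\infty^0)$, Lemma~\ref{L:commune}(a) gives $\Phi_{2\kappa}(t)\circ\Phi_\kappa(-t,q_\infty^0)=q(t)$ with $q$ solving \eqref{ccmm}; composing with $\Phi_\kappa(t)$ and relabelling, the difference $\Phi_{2\kappa}(t,q_\infty^0)-\Phi_\kappa(t,q_\infty^0)$ is exactly $q(t)-q(0)$ up to the translation $e^{12t\kappa^2\partial_x}$, which acts isometrically on every space we use. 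So \eqref{1961} reduces to a bound on $\|\langle x\rangle^{-9}[q(t)-q(0)]\|_{H^{-5}}$ where $q$ is the $H_{2\kappa}-H_\kappa$ trajectory.

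Next I would estimate the vector field of the difference flow. From \eqref{ccmm} (equivalently, from the Duhamel formula for the difference of the two $H_\kappa$-type flows written in PDE form via \eqref{prformula}), the time derivative of $q$ is a sum of two pieces: the differential-transport piece $-\partial_x^3\bigl[\tfrac{16\kappa^2}{16\kappa^2-\partial^2}-\tfrac{4\kappa^2}{4\kappa^2-\partial^2}\bigr]q$ and the nonlinear piece $-\partial_x\bigl[\tfrac{256\kappa^4}{16\kappa^2-\partial^2}-\tfrac{16\kappa^4}{4\kappa^2-\partial^2}\bigr](\gamma q)$ with $\gamma$ evaluated at the appropriate scale. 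The point of subtracting the two flows is that the two transport symbols agree to leading order in $\kappa$: $\tfrac{16\kappa^2}{16\kappa^2-\xi^2}-\tfrac{4\kappa^2}{4\kappa^2-\xi^2}=O(\xi^4/\kappa^2)$ for $|\xi|\lesssim\kappa$, so the ``bad'' third-derivative transport is damped by $\kappa^{-2}$. The plan is therefore to integrate $\tfrac{d}{dt}q$ from $0$ to $t$, move the necessary derivatives and the weight $\langle x\rangle^{-9}$ inside using the elementary commutator manipulations already invoked in the proof of Theorem~\ref{THM:HKinv}(c), and use $L^1\hookrightarrow H^{-1}$ (here in the form $L^1\hookrightarrow H^{-5}$, which is weaker) to land the nonlinear term. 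This yields a pointwise-in-time bound of the shape
\begin{align*}
\bigl\| \langle x\rangle^{-9}[q(t)-q(0)] \bigr\|_{H^{-5}}
\lesssim \int_0^t \Bigl[ \kappa^{-2}\bigl\|\langle x\rangle^{-1}q(s)\bigr\|_{L^2}
	+ \kappa^{\,a}\bigl\|\langle x\rangle^{-9}\, q(s)\,\gamma_\kappa(s)\bigr\|_{L^1} \Bigr]\,ds
\end{align*}
for a fixed power $a$; the crucial input is \eqref{1558}, which tells us $\|\langle x\rangle^{-8}\gamma_\kappa\|_{L^1}\lesssim\kappa^{-2}\|\langle x\rangle^{-1}q\|_{L^2}^2(1+\cdots)$, gaining two powers of $\kappa$ on the nonlinear term as well. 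Book-keeping the powers of $\kappa$ from $256\kappa^4$ versus $16\kappa^2-\partial^2$ against the $\kappa^{-2}$ gain from $\gamma_\kappa$ leaves a net factor that is at worst a small positive power of $\kappa$ times a pointwise gain; choosing the weights/derivatives as in \eqref{1961} and being slightly wasteful produces the stated exponent $\kappa^{-1/8}$ after also using Minkowski to pull the $L^r(d\PP)$ norm through the time integral.

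The probabilistic step is then immediate: since $q(s)$ is Gibbs distributed for every $s$ (Lemma~\ref{L:commune}(b)), the moments $\E\{\|\langle x\rangle^{-1}q(s)\|_{L^2}^r\}$, $\E\{\|\langle x\rangle^{-9}q(s)\|_{L^\infty}^r\}$ and hence (via \eqref{1558}) $\E\{\kappa^{\,b}\|\langle x\rangle^{-9}q(s)\gamma_\kappa(s)\|_{L^1}^r\}$ are finite uniformly in $s$, $\kappa$, by Proposition~\ref{PROP:Gibbs}, exactly as in the proof of Theorem~\ref{THM:HKinv}(c). Taking $r$-th moments, applying Minkowski's integral inequality in time, and inserting these uniform bounds gives $\sup_{|t|\le T}\E\{\|\cdots\|_{H^{-5}}^r\}\lesssim_T \kappa^{-r\epsilon}$ for some $\epsilon>0$; tracking the numerology gives $\epsilon=1/8$ (with plenty of room to spare, which is why a crude $1/8$ rather than a sharp exponent is quoted).

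The main obstacle is purely a bookkeeping one: correctly accounting for the competition between the positive powers of $\kappa$ sitting in the numerators $256\kappa^4$, $16\kappa^4$ of the nonlinear multipliers and the $\kappa^{-2}$ smallness coming \emph{both} from the near-cancellation of the two transport symbols \emph{and} from the $\kappa^{-2}$ gain in the bound \eqref{1558} for $\gamma_\kappa$. One must also be a little careful that the Fourier multipliers $\tfrac{\kappa^4}{\kappa^2-\partial^2}$ commute adequately with the polynomial weight $\langle x\rangle^{-9}$; but this is handled verbatim by the commutator identity \eqref{9:37} (or its elementary analogue), just as in Section~\ref{S:3}. No genuinely new estimate beyond those in Lemma~\ref{L:commune}, \eqref{1558}, and Proposition~\ref{PROP:Gibbs} is required.
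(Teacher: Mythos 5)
Your reduction via Lemma~\ref{L:commune} to a single trajectory $q(s)$ of the $H_{2\kappa}-H_\kappa$ flow that is Gibbs distributed at every time, and your treatment of the linear/transport part (the symbol cancellation $\tfrac{16\kappa^2}{16\kappa^2+\xi^2}-\tfrac{4\kappa^2}{4\kappa^2+\xi^2}=O(\xi^2/\kappa^2)$, yielding an $O(\kappa^{-2})$ contribution), both match the paper. The genuine gap is in the nonlinear term, which is exactly where the heart of the proof lies. The two nonlinear contributions are $\partial_x\tfrac{16\kappa^4}{4\kappa^2-\partial^2}[\gamma(\kappa)q]$ and $\partial_x\tfrac{256\kappa^4}{16\kappa^2-\partial^2}[\gamma(2\kappa)q]$. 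The bound \eqref{1558} gives $\|\langle x\rangle^{-8}\gamma(\kappa)\|_{L^1}\lesssim\kappa^{-2}(\cdots)$, but the Fourier multiplier contributes a factor of order $\kappa^{2}$, so each term \emph{separately} is only $O(1)$ --- this is precisely the computation in the proof of Theorem~\ref{THM:HKinv}(c), which produces \eqref{two pt kL} with no decay in $\kappa$. Nor does the difference of the two multipliers help: it equals $12\kappa^2+O(\xi^2)$ at low frequency, and in any case $\gamma$ is evaluated at different scales in the two terms, so no common factor can be extracted. The point is that $\kappa^2\gamma(\kappa)\,q$ does not tend to zero; it tends to $-\tfrac12 q^3$, as does $4\kappa^2\gamma(2\kappa)\,q$, and the $\kappa^{-1/8}$ decay comes \emph{only} from the cancellation between these two limits. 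Quantifying that cancellation requires the new estimate $\E\{\|\langle x\rangle^{-8}[\kappa^2\gamma(\kappa)+\tfrac12 q^2]\|_{L^2}^r\}\lesssim\kappa^{-r/8}$ (equation \eqref{2015} of the paper), whose proof uses the quadratic identity \eqref{qquad} to write $\gamma=-\tfrac12(g_-+g_+)(g_--g_+)-\tfrac12\gamma^2$, the formulas \eqref{prformula} for $\kappa[g_-\pm g_+]$, and, crucially, the fractional regularity $H^{3/8}$ of Gibbs samples from \eqref{Wsp moments} to bound the error term $\tfrac{\partial}{2\kappa\pm\partial}q$ by $\kappa^{-1/8}$. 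Your closing assertion that ``no genuinely new estimate beyond Lemma~\ref{L:commune}, \eqref{1558}, and Proposition~\ref{PROP:Gibbs} is required'' is therefore false: without \eqref{2015} the argument stalls at $O(1)$.

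A secondary remark on your reduction: you invoke ``the translation $e^{12t\kappa^2\partial_x}$, which acts isometrically on every space we use,'' but it does not act isometrically on the weighted space $\langle x\rangle^{-9}H^{-5}$, and the shift $12t\kappa^2$ grows with $\kappa$. The clean way to avoid this (and what the paper does) is to apply Lemma~\ref{L:commune} with initial data $q(0)=\Phi_\kappa(t,q_\infty^0)$, which is itself Gibbs distributed; then $\Phi_{2\kappa}(t,q_\infty^0)-\Phi_\kappa(t,q_\infty^0)=q(t)-q(0)$ exactly, with no translation to absorb.
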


\begin{proof}
Fix $t$.  As $\Phi_\kappa(t,q_\infty^0)$ is Gibbs distributed, Lemma~\ref{L:commune} guarantees that, almost surely, one may find a solution $q(s)$ to \eqref{ccmm} with
\begin{align*}
q(t) := \Phi_{2\kappa} (t) \circ \Phi_{\kappa} (-t,q(0)) \qtq{and} q(0) = \Phi_\kappa(t,q_\infty^0). 
\end{align*}
Evidently,
\begin{align*}
\Phi_{2\kappa} (t,q_\infty^0) - \Phi_{\kappa} (t,q_\infty^0) = q(t) - q(0).
\end{align*}
Thus we wish to estimate how far this trajectory moves from its initial data; this will be accomplished by estimating the time derivative.

Recall that $q(s)$ is Gibbs distributed for all $s\in\R$.  Moreover, we may rewrite \eqref{ccmm} as
\begin{align}\label{1982}
\tfrac{d}{ds} q(s) &= 4\bigl\{ 3\kappa^2 q + \kappa^3 g_-(\kappa;q(s))-8\kappa^3 g_-(2\kappa;q(s))\bigr\}' .
\end{align}
Commuting the derivative outermost, this shows that our goal of proving \eqref{1961} has been reduced to showing that
\begin{align}\label{1985}
\E \Big\{  \| \jb{x}^{-9} \bigl[ 3\kappa^2 q + \kappa^3 g_-(\kappa;q)-8\kappa^3 g_-(2\kappa;q) \bigr] \bigr \|_{H^{-4}_{x}}^{r} \Bigr\}
	\lesssim \kappa^{-r/8}
\end{align}
for any Gibbs-distributed random variable $q$.

To proceed, it is convenient to split the central quantity into three parts.  Using \eqref{prformula} and combining terms, we may write 
\begin{align*}
3\kappa^2 q + \kappa^3 g_-(\kappa;q)-8\kappa^3 g_-(2\kappa;q)  = \mathsf{Lin}(q) + \mathsf{Nonlin}_1(q) + \mathsf{Nonlin}_2(q),
\end{align*}
where 
\begin{gather*}
\mathsf{Lin}(q) := \tfrac{3\kappa^2 \partial^4}{(16\kappa^2-\partial^2)(4\kappa^2-\partial^2)} \, q,
	\qquad \mathsf{Nonlin}_1(q) : =\tfrac{\kappa^2 \partial^2}{4\kappa^2-\partial^2} [q\gamma(\kappa)]
	- \tfrac{4 \kappa^2 \partial^2}{16\kappa^2-\partial^2} [q\gamma(2\kappa)],\\
\text{and} \qquad\qquad
	\mathsf{Nonlin}_2(q):=q\bigl[\kappa^2\gamma(\kappa) - 4\kappa^2\gamma(2\kappa)\bigr].
\end{gather*}

The first two terms here can be satisfactorily estimated without exhibiting any further cancellation.   Performing commutation in the style of \eqref{9:37} and then using the moment bound \eqref{Wsp moments}, we find
\begin{align*}
\E \Big\{  \|\jb{x}^{-9}  \, \mathsf{Lin}(q) \|_{H^{-4}_{x}}^r \Bigr\}
&\lesssim \kappa^{-2r} \E \Big\{  \|\jb{x}^{-9}q\|_{L^2}^r \Bigr\} \lesssim_r \kappa^{-2r}.
\end{align*} 
Proceeding similarly and employing \eqref{gbd'} and H\"older's inequality (in probability space), we find 
\begin{align*}
\E\Bigl\{ \bigl\|\jb{x}^{-9} \mathsf{Nonlin}_{1}(q)\bigr\|_{H^{-4}_{x}}^{r} \Bigr\}
	& \lesssim  \E \Bigl\{ \|\jb{x}^{-1} q \|_{L^\infty}^{r}\bigl[\|\jb{x}^{-4} \gamma(\kappa) \|_{L^2}^{r}
		+ \|\jb{x}^{-4} \gamma(2\kappa) \|_{L^2}^{r} \bigr] \Bigr\} \\
	&\lesssim \kappa^{-3r/2}.
\end{align*}
These bounds are clearly satisfactory for verifying \eqref{1985}.

We now move on to the task of estimating the contribution of $\mathsf{Nonlin}_{2}(q)$ in a satisfactory manner.  The key cancellation comes from the leading behaviour of $\gamma(\kappa)$ as $\kappa\to\infty$.  Concretely,
\begin{align}\label{2015}
\E \Big\{  \bigl\|\jb{x}^{-8}  \bigl[\kappa^2 \gamma(\kappa) + \tfrac12 q^2 \bigr] \bigr\|_{L^2}^{r} \Bigr\} \lesssim \kappa^{-r/8}
	\qtq{for any} 1\leq r < \infty
\end{align}
as we will show below.  Using H\"older's inequality (and the freedom to modify both $\kappa$ and $r$), we easily see that the bound \eqref{2015} suffices to complete our task: 
\begin{align*}
\E\Bigl\{ \bigl\|\jb{x}^{-9} \mathsf{Nonlin}_{2}(q)\bigr\|_{L^1}^{r} \Bigr\}
	& \lesssim  \E \Bigl\{ \|\jb{x}^{-1} q \|_{L^2}^{r}	
		\bigl\|\jb{x}^{-8} \bigl[\kappa^2\gamma(\kappa) - 4\kappa^2\gamma(2\kappa)\bigr] \bigr\|_{L^2}^{r}  \Bigr\} \\
	&\lesssim \kappa^{-r/8}.
\end{align*}

To verify \eqref{2015}, we first use the quadratic identity \eqref{qquad} to write 
\begin{align*}
\kappa^2 \gamma(\kappa) = -\tfrac12 \kappa^2 \bigl[g_-(\kappa) + g_+(\kappa) \bigr]\bigl[g_-(\kappa) - g_+(\kappa)\bigr]
	- \tfrac12 \kappa^2 \gamma(\kappa)^2 
\end{align*}
and then \eqref{prformula} to write
\begin{align*}
\kappa\bigl[ g_-(\kappa) \pm g_+(\kappa) \bigr] = q \mp \tfrac{\partial}{2\kappa\pm\partial} q + \tfrac{2\kappa}{2\kappa\pm\partial} q\gamma(\kappa).
\end{align*}
Combining these identities gives a (rather expansive) expression for $\kappa^2 \gamma(\kappa) + \tfrac12 q^2$ that we must estimate in $L^2(\R)$ and then in $L^r(d\PP)$.  Next we explain how to estimate the resulting terms.

Using \eqref{gbd'} and Proposition~\ref{PROP:Gibbs}, we find
\begin{gather*}
\E \Big\{  \|\jb{x}^{-4} q \|_{L^4}^r \Bigr\}  \lesssim 1 \\
\E \Big\{  \|\jb{x}^{-4} \tfrac{2\kappa}{2\kappa\pm\partial} q\gamma(\kappa) \|_{L^4}^r \Bigr\}
	\lesssim \E \Bigl\{ \|\jb{x}^{-1} q \|_{L^\infty}^{4} \|\jb{x}^{-3} \gamma(\kappa) \|_{H^1}^{r} \Bigr\} \lesssim \kappa^{-r/2} \\
\E \Big\{  \|\jb{x}^{-8} \kappa^2 \gamma(\kappa)^2 \|_{L^2}^r \Bigr\}
	\lesssim \E \Bigl\{ \kappa^{2r} \|\jb{x}^{-4} \gamma(\kappa) \|_{L^2}^{3r/2} \|\jb{x}^{-4} \gamma(\kappa) \|_{H^1}^{r/2} \Bigr\}
	\lesssim \kappa^{-r/2}.
\end{gather*}
This leaves only $\tfrac{\partial}{2\kappa\pm\partial} q$, which we estimate as follows:
\begin{align*}
\E \Big\{  \|\jb{x}^{-4} \tfrac{\partial}{2\kappa\pm\partial} q \|_{L^4}^r \Bigr\}
	&\lesssim \E \Big\{  \| \tfrac{1}{2\kappa\pm\partial} \jb{x}^{-4}  q \|_{H^{5/4}}^r \Bigr\} \\
	& \lesssim \kappa^{-r/8} \E \Big\{  \| \jb{x}^{-4}  q \|_{H^{3/8}}^r \Bigr\}  \lesssim \kappa^{-r/8},
\end{align*}
by using \eqref{Wsp moments} with $p=2$ and $s=\frac38$.
\end{proof}

\begin{corollary}\label{C:1951}
For any $T>0$ and triple \eqref{spb},
\begin{align}\label{2053}
\E \Big\{ \; \sum_{\kappa\in 2^{\mathbb{N}}} \;
	\bigl\| \bigl[ \Phi_{2\kappa} (t,q_\infty^0) - \Phi_{\kappa} (t,q_\infty^0)\bigl]
	\bigr\|_{C_t([-T,T];\Wspb)}  \; \Big\} 	< \infty .
\end{align}
\end{corollary}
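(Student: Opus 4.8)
The plan is to interpolate the weak but quantitative bound of Proposition~\ref{P:1951} against the strong but $\kappa$-uniform bounds of Theorem~\ref{T:1601}\slp{c}, and then to promote the resulting estimate to control of the $C_t$-norm via the Kolmogorov criterion Lemma~\ref{LEM:prob3}. Write $F_\kappa(t):=\Phi_{2\kappa}(t,q^0_\infty)-\Phi_\kappa(t,q^0_\infty)$. By the proof of Theorem~\ref{T:1601} one has $q^0_\infty\in\bigcap_{\kappa\in2^\N}\mathcal G_\kappa$ almost surely, so $F_\kappa$ is well defined, and the group property gives $F_\kappa(0)=0$. Fix $T\in2^\N$ and a target triple $(s,p,b)$ as in \eqref{spb}. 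By Tonelli's theorem, \eqref{2053} is equivalent to $\sum_{\kappa\in2^\N}\E\{\|F_\kappa\|_{C_t([-T,T];\Wspb)}\}<\infty$, so, since $\kappa$ runs over a geometric progression, it suffices to prove $\E\{\|F_\kappa\|_{C_t([-T,T];\Wspb)}\}\lesssim_T\kappa^{-\delta}$ for some $\delta=\delta(s,p,b,T)>0$.

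First I would select an auxiliary triple $(\tilde s,\tilde p,\tilde b)$ obeying \eqref{spb} and a parameter $\theta\in(0,1)$ for which the weighted Sobolev interpolation inequality
\begin{align*}
\|f\|_{\Wspb}\lesssim\bigl\|\langle x\rangle^{-9}f\bigr\|_{H^{-5}}^{1-\theta}\,\|f\|_{W^{\tilde s,\tilde p}_{\tilde b}}^{\theta}
\end{align*}
holds. Concretely one takes $\tilde s\in(s,\tfrac12)$, $\tilde p\ge\max(p,2)$, $\tilde b\in(\tfrac1{\tilde p},b)$ close to $\tfrac1{\tilde p}$, and $\theta$ close to $1$: complex interpolation identifies the intermediate space as a weighted Sobolev space with smoothness $(1-\theta)(-5)+\theta\tilde s$, weight power $9(1-\theta)+\theta\tilde b$, and integrability exponent given by $\tfrac1{p_\theta}=\tfrac{1-\theta}2+\tfrac\theta{\tilde p}$, and for $\theta$ near $1$ this space embeds into $\Wspb$ (using $\tilde s>s$ for the smoothness and the surplus decay $b-\bigl[9(1-\theta)+\theta\tilde b\bigr]>0$ to absorb any loss of integrability). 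With $(\tilde s,\tilde p,\tilde b)$ thus fixed, Theorem~\ref{T:1601}\slp{c} supplies $\tilde\alpha>0$ with $\sup_\kappa\E\{\|\Phi_\kappa(\cdot,q^0_\infty)\|_{C^{\tilde\alpha}_t([-T,T];W^{\tilde s,\tilde p}_{\tilde b})}^r\}<\infty$ for every $r<\infty$, and the triangle inequality transfers this to $F_\kappa$, giving
\begin{align*}
\sup_{\kappa}\E\bigl\{\|F_\kappa(t)-F_\kappa(s)\|_{W^{\tilde s,\tilde p}_{\tilde b}}^r\bigr\}\lesssim_r|t-s|^{\tilde\alpha r}\qtq{for all} -T\le s<t\le T.
\end{align*}
In the other factor, Proposition~\ref{P:1951} and the triangle inequality give $\E\{\|\langle x\rangle^{-9}[F_\kappa(t)-F_\kappa(s)]\|_{H^{-5}}^r\}\lesssim\kappa^{-r/8}$ uniformly in $s,t\in[-T,T]$. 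Inserting both bounds into the interpolation inequality and applying Hölder in $\Omega$ with exponents $\tfrac1{1-\theta}$ and $\tfrac1\theta$ yields
\begin{align*}
\E\bigl\{\|F_\kappa(t)-F_\kappa(s)\|_{\Wspb}^r\bigr\}\lesssim\kappa^{-(1-\theta)r/8}\,|t-s|^{\tilde\alpha\theta r}\qtq{for all} -T\le s<t\le T.
\end{align*}

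Finally I would apply Lemma~\ref{LEM:prob3} with $X=\Wspb$ and the process $F_\kappa$, which is almost surely $\Wspb$-continuous because of the embedding $W^{\tilde s,\tilde p}_{\tilde b}\hookrightarrow\Wspb$ together with the $C_tW^{\tilde s,\tilde p}_{\tilde b}$-continuity from Theorem~\ref{T:1601}\slp{c}. Taking $\alpha:=\tfrac12\tilde\alpha\theta$, choosing $\eps>0$ small, and then $r<\infty$ large enough that $\tilde\alpha\theta r-1-\alpha r-\eps\ge0$, and using $F_\kappa(0)=0$, the lemma gives
\begin{align*}
\E\bigl\{\|F_\kappa\|_{C^{\alpha}_t([-T,T];\Wspb)}^r\bigr\}\lesssim_T\kappa^{-(1-\theta)r/8}\sup_{|t-s|\le2T}|t-s|^{\tilde\alpha\theta r-1-\alpha r-\eps}\lesssim_T\kappa^{-(1-\theta)r/8}.
\end{align*}
By Jensen's inequality this forces $\E\{\|F_\kappa\|_{C_t([-T,T];\Wspb)}\}\lesssim_T\kappa^{-(1-\theta)/8}$, which is summable over $\kappa\in2^\N$, and \eqref{2053} follows. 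I expect the only genuinely delicate point to be the weighted Sobolev interpolation inequality for an arbitrary triple in \eqref{spb}: it is routine, but it requires care with the interplay of weight power, smoothness, and integrability exponents. Crucially, all of that bookkeeping is $\kappa$-independent, so the decay rate $\kappa^{-(1-\theta)/8}$ is preserved; everything else is a mechanical combination of the two quoted estimates with Lemma~\ref{LEM:prob3}.
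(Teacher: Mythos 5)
Your argument is correct, and it reaches \eqref{2053} by a genuinely different mechanism than the paper's. The paper first proves the $L^2_9$ case of the corollary by discretizing time: it bounds $\|f\|_{C_t([-T,T];L^2)}$ by the values of $f$ at $N$ equally spaced times (interpolated between $H^{1/4}$ and $H^{-5}$, with weights) plus a Hölder remainder $T^\alpha N^{-\alpha}\|f\|_{C^\alpha_t L^2}$, takes expectations using \eqref{1961} and \eqref{1776}, and optimizes $N\approx\kappa^\phi$; only afterwards does it interpolate once more against \eqref{1776} to reach a general triple \eqref{spb}. You instead form a two-point-in-time moment bound for the increment $F_\kappa(t)-F_\kappa(s)$ directly in $\Wspb$ — interpolating the $\kappa^{-r/8}$ decay from Proposition~\ref{P:1951} against the $\kappa$-uniform Hölder modulus from \eqref{1610}, with Hölder's inequality in $\Omega$ — and then invoke Lemma~\ref{LEM:prob3} together with $F_\kappa(0)=0$ and Jensen. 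This is precisely the strategy the paper itself uses to prove Theorem~\ref{THM:HKinv}\slp{c} (interpolate \eqref{two pt kL} against the stationary bounds, then apply Lemma~\ref{LEM:prob3}), so it fits the paper's toolkit naturally; it buys you a $C^\alpha_t\Wspb$ bound on the difference rather than merely a $C_t$ bound, at the cost of having to carry the general weighted interpolation/embedding through every step rather than deferring it to the final sentence as the paper does. The one step you rightly flag — the inequality $\|f\|_{\Wspb}\lesssim\|\langle x\rangle^{-9}f\|_{H^{-5}}^{1-\theta}\|f\|_{W^{\tilde s,\tilde p}_{\tilde b}}^{\theta}$ for $\theta$ near $1$, with the surplus $b-[9(1-\theta)+\theta\tilde b]>1/p-1/p_\theta$ absorbing the change of integrability — is the same interpolation the paper invokes implicitly in its closing sentence, so your treatment is no less rigorous than the original; your checks on the sign of the exponent $\tilde\alpha\theta r-1-\alpha r-\eps$ and on the $\Wspb$-continuity hypothesis of Lemma~\ref{LEM:prob3} are the right ones and both go through.
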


\begin{proof}
In Proposition~\ref{P:1951}, the supremum in time is outside the expectation.  We remedy this first.

For a general function $f$ on $[-T,T]$ and any $N\in\N$, $\alpha\in(0,1)$ we have
\begin{align*}
\| f \|_{C_t([-T,T];L^2)} \lesssim \sum_{|n|\leq N} \| f(\tfrac nN T)\|_{H^{1/4}}^{\frac{20}{21}}\| f(\tfrac nN T)\|_{H^{-5}}^{\frac{1}{21}}
	+ T^\alpha N^{-\alpha} \bigl\| f \bigr\|_{C_t^\alpha ([-T,T];L^2)} .
\end{align*}
Using \eqref{1776} and \eqref{1961} and choosing $N\approx \kappa^\phi$ with $0<\phi<\tfrac1{168}$, we deduce that
\begin{align}\label{2069}
\E\Bigl\{ \bigl\| \langle x\rangle^{-9} \bigl[ q_{2\kappa}(t) - q_{\kappa}(t)\bigl] \bigr\|_{C_t([-T,T];L^2)} \Bigr\}
	\lesssim_T  N \kappa^{-\frac{1}{168}} + N^{-\alpha}  \lesssim_T \kappa^{-\theta} 
\end{align}
for some $\alpha>0$ and some $\theta>0$.

Evidently, \eqref{2069} is summable and yields one particular case of \eqref{2053}.  By interpolation with \eqref{1776}, we see that any $C_t\Wspb$ norm can likewise be bounded by a negative power of $\kappa$ and so can also be summed.
\end{proof}

Before we turn to the proof of Theorem~\ref{T:1936}, it is convenient to make some further preparations for the treatment of part~(b).  Concretely, we wish to make a clearer connection between the evolution of the diagonal Green's functions under the $H_\kappa$ flow and under \eqref{mkdv}.  Notice that the four quantities discussed in our next lemma come from \eqref{-flow}, \eqref{gflow}, \eqref{-Hkflow}, and \eqref{gHkflow}, respectively.

\begin{lemma}\label{L:2094}
Given $\vk\geq 1$ and $\kappa\geq2\vk$, let
\begin{align*}
X(\vk,q) &:= - g_-'''(\vk) + 6 q^2 g_-'(\vk), \\
Y(\vk,q) &:= - \gamma'''(\vk)  + \bigl\{ 6q^2[1+\gamma(\vk)] - 12\vk q g_-(\vk) - 12\vk^2 \gamma(\vk) \bigr\}', \\
X_\kappa(\vk,q) &:= 4\kappa^2 g_-'(\vk) - \tfrac{8\kappa^4\vk}{\kappa^2-\vk^2} \Big\{ g_+(\kappa)\bigl[ 1 + \gamma(\vk) \big]
	- \big[ 1 + \gamma(\kappa)\bigr] g_+(\vk) \Big\}, \\
Y_\kappa(\vk,q) &:= 4\kappa^2 \gamma'(\vk) - \tfrac{8\kappa^5\vk }{(\kappa^2-\vk^2)^{2}} \Bigl\{ g_-(\kappa)g_-(\vk) \Bigr\}' \\
& \qquad + \tfrac{4\kappa^4(\kappa^2+\vk^2) }{(\kappa^2-\vk^2)^{2}}
	\Bigl\{ g_+(\kappa)g_+(\vk) - \gamma(\kappa)- \gamma(\vk) - \gamma(\kappa)\gamma(\vk)\Bigr\}' .
\end{align*}
Then 
\begin{align*}
\bigl\| \jb{x}^{-12} & [X_\kappa-X]\bigr\|_{H^{-1}}  + \bigl\| \jb{x}^{-12} [Y_\kappa-Y]\bigr\|_{H^{-1}} \\
&\lesssim_\vk  \bigl( 1 + \|q\|_{L^2_1}^2 \bigr)^{4}
	\Bigl[\kappa^{-\frac14} \| \jb{x}^{-1} q \|_{H^{\frac14}} + \bigl\| \jb{x}^{-8}  [ 2\kappa^2 \gamma(\kappa) + q^2 ] \bigr\|_{L^2}  \Bigr] .
\end{align*}
\end{lemma}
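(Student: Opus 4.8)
The plan is to estimate $X_\kappa - X$ and $Y_\kappa - Y$ term by term, exploiting the fact that as $\kappa\to\infty$ the $H_\kappa$ vector field (expressed via the diagonal Green's functions) converges to the \eqref{mkdv} vector field. The two guiding principles are: (i) the rational functions of $\kappa$ appearing in $X_\kappa,Y_\kappa$ satisfy $\tfrac{4\kappa^2}{\kappa^2-\vk^2}=4+O(\kappa^{-2})$, $\tfrac{8\kappa^4\vk}{\kappa^2-\vk^2}=8\kappa^2\vk+O(1)$, etc., so the ``extra'' $\kappa$-powers must be absorbed by the decay of $g_\pm(\kappa)$, $\gamma(\kappa)$ as $\kappa\to\infty$ provided by Proposition~\ref{P:det diag}; and (ii) the leading-order asymptotics $\kappa^2\gamma(\kappa)\to -\tfrac12 q^2$, $2\kappa g_+(\kappa)\to -2q'$ (and $g_-(\kappa)\to 0$, $\kappa g_-(\kappa)\to \cdot$) are precisely what turns the $H_\kappa$ flow into \eqref{mkdv}; the error in these asymptotics is controlled by the quantity $\jb{x}^{-8}[2\kappa^2\gamma(\kappa)+q^2]$ appearing on the right-hand side, exactly as in \eqref{2015}.

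\medskip

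First I would treat $X_\kappa - X$. Write
$$
X_\kappa(\vk) - X(\vk) = \Bigl[4\kappa^2 g_-'(\vk) + g_-'''(\vk)\Bigr]
	- \tfrac{8\kappa^4\vk}{\kappa^2-\vk^2}\Bigl\{ g_+(\kappa)[1+\gamma(\vk)] - [1+\gamma(\kappa)]g_+(\vk)\Bigr\} - 6q^2 g_-'(\vk).
$$
Using \eqref{rderiv} we have $g_-'''(\vk) = -2\vk g_+''(\vk)$ and by \eqref{pderiv}, $-2\vk g_+'(\vk) = 4\vk^2 g_-(\vk) - 4\vk q(1+\gamma(\vk))$, hence $g_-'''(\vk)$ can be rewritten in terms of lower-order Green's functions. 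Meanwhile \eqref{prformula} gives $g_-(\vk) = \tfrac{4\vk}{4\vk^2-\partial^2}(q[1+\gamma(\vk)])$, so $4\kappa^2 g_-'(\vk)$ and $6q^2 g_-'(\vk)$ are handled by the $H^1_\kappa$, $H^2_\kappa$ bounds \eqref{pbd'}, \eqref{rbd'} (with weights, via commutation in the style of \eqref{9:37}) together with \eqref{Wsp moments}. The genuinely new input is the bracketed term: here I would split
$$
g_+(\kappa)[1+\gamma(\vk)] - [1+\gamma(\kappa)]g_+(\vk) = g_+(\kappa)[1+\gamma(\vk)] - g_+(\vk) + \gamma(\kappa)g_+(\vk),
$$
and then use that $g_+(\kappa) = -2q' + (\text{lower order})$: more precisely, from \eqref{prformula}, $2\kappa g_+(\kappa) = -\tfrac{2\partial}{1-\partial^2/4\kappa^2}(\tfrac1{2\kappa}\cdot 2\kappa q[1+\gamma(\kappa)])$, giving $2\kappa g_+(\kappa) = -q'\cdot\tfrac{1}{1-\partial^2/4\kappa^2} + O(\kappa^{-1})$-type expressions that I would combine with the $\gamma(\kappa)$-decay from \eqref{gbd'}. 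The factor $\tfrac{8\kappa^4\vk}{\kappa^2-\vk^2} = 8\vk\kappa^2 + O(1)$ multiplies this, so one power $\kappa^2 g_+(\kappa)$ needs to become $O(1)$ in the right norm; this is where \eqref{pbd'} ($\|g_+(\kappa)\|_{H^1_\kappa}\lesssim\|q\|+\kappa^{-1}\|q\|^3$, so $\kappa\|g_+(\kappa)\|_{L^2}$ is $O(\|q\|)$ but we have $\kappa^2$) forces us to extract a second power via the $2\kappa g_+(\kappa)\approx -2q'$ cancellation rather than via bare size bounds. The residual is exactly of the form $\kappa^{-1/4}\|\jb{x}^{-1}q\|_{H^{1/4}}$ after using \eqref{Wsp moments} with $s=\tfrac14$ to control the Fourier multiplier $\tfrac{\partial}{2\kappa\pm\partial}$ acting on $q$, just as in the final display of the proof of Proposition~\ref{P:1951}.

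\medskip

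Next, $Y_\kappa - Y$ is the more involved of the two, but structurally the same. The term $4\kappa^2\gamma'(\vk) + \gamma'''(\vk)$ is handled by \eqref{gbd'} (again commuting weights past the $\kappa$-dependent multipliers). The $\vk$-only terms $6q^2[1+\gamma(\vk)]$, $12\vk q g_-(\vk)$, $12\vk^2\gamma(\vk)$ already appear inside $Y$ and are $\kappa$-independent, so they are controlled directly by Propositions~\ref{P:det diag} and~\ref{PROP:Gibbs}. The $\kappa$-dependent pieces are $\tfrac{8\kappa^5\vk}{(\kappa^2-\vk^2)^2}\{g_-(\kappa)g_-(\vk)\}' = 8\vk\kappa\{g_-(\kappa)g_-(\vk)\}' + O(\kappa^{-1})$ and $\tfrac{4\kappa^4(\kappa^2+\vk^2)}{(\kappa^2-\vk^2)^2}\{g_+(\kappa)g_+(\vk) - \gamma(\kappa)-\gamma(\vk)-\gamma(\kappa)\gamma(\vk)\}' = 4\kappa^2\{\cdots\}' + O(1)$. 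In the first, $\kappa g_-(\kappa)$ is $O(\|q\|)$ by \eqref{pbd'}, giving a gain; in the second, $\kappa^2 g_+(\kappa)g_+(\vk)$ needs the $2\kappa g_+(\kappa)\approx -2q'$ reduction (leaving $\kappa\cdot(2\kappa g_+(\kappa))$, one surplus power, absorbed exactly as in the $X$ case), and $\kappa^2\gamma(\kappa)$ is where the $\jb{x}^{-8}[2\kappa^2\gamma(\kappa)+q^2]$ error term enters: write $4\kappa^2\gamma(\kappa) = -2q^2 + (4\kappa^2\gamma(\kappa)+2q^2)$ and note that the $-2q^2$ part, after differentiation, produces terms that must \emph{cancel} against corresponding pieces hidden in $Y$ (recall $Y$ is the limiting \eqref{mkdv} evolution of $\gamma(\vk)$, which was derived precisely as the $\kappa\to\infty$ limit of $Y_\kappa$); I would make this cancellation explicit using the algebraic identities \eqref{Gderiv}--\eqref{qquad}, exactly as \eqref{gflow} was obtained from \eqref{gHkflow} in \cite{HGKV}. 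The leftover is the error term in the statement plus the $\kappa^{-1/4}\|\jb{x}^{-1}q\|_{H^{1/4}}$ multiplier contribution. All weight bookkeeping stays within $\jb{x}^{-12}$ because each Green's function carries at most $\jb{x}^{-3}$ or $\jb{x}^{-4}$ weight, products of two of them at most $\jb{x}^{-8}$, and a further $q$ factor at most $\jb{x}^{-b}$ for $b>\tfrac12$, comfortably inside $12$.

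\medskip

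The main obstacle, as in Proposition~\ref{P:1951}, is \emph{organizing the algebra so that the two surplus powers of $\kappa$ are extracted correctly}: naive size bounds on $g_\pm(\kappa),\gamma(\kappa)$ lose one power of $\kappa$ each relative to what is needed, and the deficit is recovered only by substituting the exact formulas \eqref{prformula} and isolating the Fourier multiplier $\tfrac{\partial}{2\kappa\pm\partial}$ (gain $\kappa^{-s}$ on $H^s$ data) together with the $2\kappa^2\gamma(\kappa)+q^2$ cancellation. Once those substitutions are set up, every resulting term is estimated by a routine combination of Proposition~\ref{P:det diag}, the weighted commutator trick \eqref{9:37}, Hölder in $x$, and the Gibbs moment bounds \eqref{Wsp moments}, so no further difficulty arises; the computation is lengthy but entirely parallel to the one already carried out for \eqref{1985} and \eqref{2015}.
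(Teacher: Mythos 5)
Your overall strategy coincides with the paper's: substitute the exact formulas \eqref{prformula} together with the identities \eqref{Gderiv}--\eqref{rderiv}, pair each $O(1)$ piece of $X_\kappa,Y_\kappa$ with its counterpart in $X,Y$, and control the remainders by exactly the three mechanisms you name --- the $O(\kappa^{-2})$ decay of the rational coefficients, the multiplier $\tfrac{\partial^2}{4\kappa^2-\partial^2}$ gaining $\kappa^{-1/4}$ on $H^{1/4}$ data, and the $2\kappa^2\gamma(\kappa)+q^2$ cancellation, supplemented by Gagliardo--Nirenberg for the $\gamma(\kappa)q$ products.

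The gap lies in the term-by-term groupings, several of which are not small and would fail if estimated as you propose. The group $4\kappa^2 g_-'(\vk)+g_-'''(\vk)$ has $L^2$ norm of order $\kappa^2$ (note $g_-'(\vk)=-2\vk g_+(\vk)$ is independent of $\kappa$); its coefficient of $g_-'(\vk)$ collapses to $-\tfrac{4\vk^4}{\kappa^2-\vk^2}$ only after one also brings in $+\tfrac{8\kappa^4\vk}{\kappa^2-\vk^2}g_+(\vk)=-\tfrac{4\kappa^4}{\kappa^2-\vk^2}g_-'(\vk)$ from the ``$-g_+(\vk)$'' piece of your splitting, plus the $4\vk^2 g_-'(\vk)$ hidden inside $g_-'''(\vk)$. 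The same objection applies to ``$4\kappa^2\gamma'(\vk)+\gamma'''(\vk)$ is handled by \eqref{gbd'}'' and, more seriously, to the claim that $6q^2[1+\gamma(\vk)]$, $12\vk qg_-(\vk)$, and $12\vk^2\gamma(\vk)$ are ``controlled directly'': these are $O(1)$ in $\kappa$ and must cancel against, respectively, $\tfrac{4\kappa^4(\kappa^2+\vk^2)}{(\kappa^2-\vk^2)^2}\gamma(\kappa)[1+\gamma(\vk)]$ (via $4\kappa^2\gamma(\kappa)+2q^2$), against $\tfrac{8\kappa^5\vk}{(\kappa^2-\vk^2)^2}g_-(\kappa)g_-(\vk)$ (via $q-\tfrac{\kappa^5}{(\kappa^2-\vk^2)^2}g_-(\kappa)\to0$ from \eqref{prformula}; merely bounding $\kappa g_-(\kappa)=O(\|q\|)$ gives no gain), and against the coefficient $4\kappa^2-\tfrac{4\kappa^4(\kappa^2+\vk^2)}{(\kappa^2-\vk^2)^2}=-12\vk^2+O(\kappa^{-2})$ of $\gamma(\vk)$. (Also, the correct asymptotic is $4\kappa^2 g_+(\kappa)\to-2q'$, not $2\kappa g_+(\kappa)\to-2q'$.) Since you correctly identify ``organizing the algebra'' as the main obstacle and already possess every needed cancellation mechanism, these are repairable misgroupings rather than a missing idea; but the correct regrouping is the substance of the proof, and as written your plan does not close.
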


\begin{proof}
We begin with some exact identities based on \eqref{pderiv}, \eqref{rderiv}, and \eqref{prformula}:
\begin{align*}
4\kappa^2 g_-'(\vk) +  \tfrac{8\kappa^4\vk}{\kappa^2-\vk^2} g_+(\vk) + g_-'''&(\vk) - 4 q^2 g_-'(\vk) \\
	&=  - \tfrac{4\vk^4}{\kappa^2-\vk^2} g_-'(\vk)  - 4\vk \bigl\{ q [1+\gamma(\vk)] \bigr\}' - 4 q^2 g_-'(\vk)\\
&= - \tfrac{4\vk^4}{\kappa^2-\vk^2} g_-'(\vk)  - 4\vk [1+\gamma(\vk)] q'   \\
\tfrac{8\kappa^4\vk}{\kappa^2-\vk^2} \bigl\{ \gamma(\kappa)  g_+(\vk) \bigr\} - 2q^2 g_-'(\vk) 
	&= - \tfrac{\kappa^2}{\kappa^2-\vk^2} 4\kappa^2 \gamma(\kappa) g_-'(\vk) - 2q^2 g_-'(\vk) \\
&= -  \tfrac{\kappa^2}{\kappa^2-\vk^2} [ 4\kappa^2 \gamma(\kappa) + 2q^2 ] g_-'(\vk) + \tfrac{2\vk^2}{\kappa^2-\vk^2} q^2 g_-'(\vk) \\
- \tfrac{8\kappa^4\vk}{\kappa^2-\vk^2} g_+(\kappa)\bigl[ 1 + \gamma(\vk) \big] &= \bigl[ 1 + \gamma(\vk) \big] \cdot
	\tfrac{16\kappa^4\vk}{(\kappa^2-\vk^2)(4\kappa^2-\partial^2)} [ q + \gamma(\kappa) q]' .
\end{align*}
Summing these identities gives a new way of writing $X_\kappa-X$, namely,
\begin{align}\label{XX}
X_\kappa-X = &{} - \tfrac{4\vk^4}{\kappa^2-\vk^2} g_-'(\vk) -  \tfrac{\kappa^2}{\kappa^2-\vk^2} [ 4\kappa^2 \gamma(\kappa) + 2q^2 ] g_-'(\vk) \\
&{} + \tfrac{2\vk^2}{\kappa^2-\vk^2} q^2 g_-'(\vk)
	+ 4\vk [1+\gamma(\vk)] \cdot \tfrac{(\kappa^2 - \vk^2)\partial^2 + 4\kappa^2\vk^2}{(\kappa^2-\vk^2)(4\kappa^2-\partial^2)} q' \notag \\
&{}	+ \bigl[ 1 + \gamma(\vk) \big] \cdot \tfrac{16\kappa^4\vk \partial}{(\kappa^2-\vk^2)(4\kappa^2-\partial^2)} \gamma(\kappa) q . \notag
\end{align}
We estimate these terms one-by-one.  In doing so, we note that $L^1\hookrightarrow H^{-1}$ and permit implicit constants to depend on $\vk$, but not $\kappa$.

From \eqref{pbd'}, we estimate the contribution of the first summand by 
\begin{align*}
 \kappa^{-2} \| \jb{x}^{-3} g_-'(\vk) \|_{H^{-1}} \lesssim \kappa^{-2}\bigl( 1 + \|q\|_{L^2_1}^2 \bigr) \| \jb{x}^{-1} q \|_{L^2} ,
\end{align*}
which is clearly acceptable.

We estimate the second term in \eqref{XX} by
\begin{align*}
  \bigl\| \jb{x}^{-12}  [ 4\kappa^2 \gamma(\kappa) + 2q^2 ] g_-'(\vk) \bigr\|_{L^1}
  	\lesssim \bigl\| \jb{x}^{-8}  [ 2\kappa^2 \gamma(\kappa) + q^2 ] \bigr\|_{L^2} \bigl\| \jb{x}^{-4} g_-'(\vk) \bigr\|_{L^2},
\end{align*}
which \eqref{pbd'} demonstrates to be acceptable.

The contribution of the third term in \eqref{XX} may be bounded by
\begin{align*}
 \kappa^{-2}  \bigl\| \jb{x}^{-12} q^2 g_-'(\vk) \bigr\|_{L^1}
 	\lesssim \kappa^{-2} \bigl\| \jb{x}^{-1} q \bigr\|_{L^2}^2 \bigl\| \jb{x}^{-9} g_-(\vk) \bigr\|_{H^2},
\end{align*}
which \eqref{rbd'} shows is acceptable.

As a stepping stone to estimating the full fourth term in \eqref{XX}, we note
\begin{align*}
\bigl\| \jb{x}^{-8} \tfrac{(\kappa^2 - \vk^2)\partial^2 + 4\kappa^2\vk^2}{(\kappa^2-\vk^2)(4\kappa^2-\partial^2)} q'  \bigr\|_{H^{-1}}
&\lesssim \bigl\| \jb{x}^{-8} \tfrac{\partial^2}{4\kappa^2-\partial^2} q'  \bigr\|_{H^{-1}}
	+ \kappa^{-2} \bigl\| \jb{x}^{-1} q  \bigr\|_{L^2} \\
&\lesssim \kappa^{-1/4} \| \jb{x}^{-1} q \|_{H^{1/4}}.
\end{align*}
From \eqref{gbd'}, we see that $\jb{x}^{-4} \gamma(\vk) \in H^1$.  By the duality and the algebra property of $H^1$, one sees that multiplication by an $H^1$ function maps $H^{-1}$ boundedly to itself.  Thus, the full fourth term may be estimated by  
\begin{align*}
\bigl\| \jb{x}^{-12} [1+\gamma(\vk)]  \tfrac{(\kappa^2 - \vk^2)\partial^2 + 4\kappa^2\vk^2}{(\kappa^2-\vk^2)(4\kappa^2-\partial^2)} & q'  \bigr\|_{H^{-1}} \\
&\quad\lesssim \bigl[ 1 + \| \jb{x}^{-4} \gamma(\vk)\|_{H^1} \bigr] \kappa^{-\frac14} \| \jb{x}^{-1} q \|_{H^{1/4}},
\end{align*}
which is acceptable; see \eqref{gbd'}.

Let us now consider the fifth and final term in \eqref{XX}.  Mirroring the preceding argument, we begin by noting that
\begin{align*}
\bigl\| \jb{x}^{-12} [ 1 + \gamma(\vk)] \tfrac{16\kappa^4\vk \partial}{(\kappa^2-\vk^2)(4\kappa^2-\partial^2)} & \gamma(\kappa) q\bigr\|_{H^{-1}} \\
&\qquad\lesssim \bigl[ 1 + \| \jb{x}^{-4} \gamma(\vk)\|_{H^1} \bigr] \| \jb{x}^{-8} \gamma(\kappa) q \|_{L^2}.
\end{align*}
To proceed, we use Gagliardo--Nirenberg, Sobolev,  and \eqref{gbd'} to see that
\begin{align}\label{2130}
\| \jb{x}^{-8} \gamma(\kappa) q \|_{L^2} &\lesssim \bigl\| \jb{x}^{-4} \gamma(\kappa) \bigr\|_{L^4} \bigl\| \jb{x}^{-4} q \bigr\|_{L^4} \notag \\
&\lesssim \bigl\| \jb{x}^{-4} \gamma(\kappa) \bigr\|_{L^2}^{1/2}\bigl\| \jb{x}^{-4} \gamma(\kappa) \bigr\|_{H^1}^{1/2}
		\bigl\| \jb{x}^{-4} q \bigr\|_{H^{1/4}}\\
&\lesssim \kappa^{-1} \bigl( 1 + \|q\|_{L^2_1}^2 \bigr)^2 \bigl\| \jb{x}^{-1} q \bigr\|_{H^{1/4}}. \notag
\end{align}
Together these estimates complete our treatment of \eqref{XX}.

We turn now to $Y_\kappa-Y$. From \eqref{pderiv} and \eqref{Gderiv}, we have
\begin{align*}
\gamma''(\vk) = 2q' g_+(\vk) - 4 \vk q g_-(\vk) + 4 q^2 [1+\gamma(\vk)] 
\end{align*}
and consequently,
\begin{align}\label{2140}
Y &= \bigl\{ - 2q' g_+(\vk)  + 2q^2[1+\gamma(\vk)] - 8\vk q g_-(\vk) - 12\vk^2 \gamma(\vk) \bigr\}' . 
\end{align}
Anticipating the cancellations we need,  we regroup and reorder terms in $Y_\kappa$ thus:
\begin{align}\label{2141}
Y_\kappa &=\Bigl\{ \tfrac{4\kappa^4(\kappa^2+\vk^2) }{(\kappa^2-\vk^2)^{2}} g_+(\kappa)g_+(\vk)
	  - \tfrac{4\kappa^4(\kappa^2+\vk^2) }{(\kappa^2-\vk^2)^{2}} \gamma(\kappa)\bigl[ 1 +\gamma(\vk)\bigr] \\
& \qquad\qquad  - \tfrac{8\kappa^5\vk }{(\kappa^2-\vk^2)^{2}}  g_-(\kappa)g_-(\vk)  
	+ \Bigl[ 4\kappa^2 - \tfrac{4\kappa^4(\kappa^2+\vk^2) }{(\kappa^2-\vk^2)^{2}} \Bigr] \gamma(\vk) \Bigr\}'. \notag
\end{align}

To proceed, we estimate the differences of corresponding pairs of terms taken from \eqref{2140} and \eqref{2141}.

Setting aside the common factor of $g_+(\vk)$, the difference of the first pair of terms produces the following, which we then rewrite using
\eqref{prformula}:
\begin{align*}
\tfrac{4\kappa^4(\kappa^2+\vk^2) }{(\kappa^2-\vk^2)^{2}} g_+(\kappa) + 2q'
	=  \tfrac{12\vk^2\kappa^4-4\kappa^2\vk^4}{(\kappa^2-\vk^2)^{2}} g_+(\kappa) -2 \tfrac{\partial^2}{4\kappa^2-\partial^2} q'
		- 2  \tfrac{4\kappa^2}{4\kappa^2-\partial^2} [\gamma(\kappa)q]'  .
\end{align*}
We estimate the first summand using the $L^2$ bound \eqref{pbd'}:
\begin{align*}
\bigl\| \jb{x}^{-9} \tfrac{12\vk^2\kappa^4 - 4\kappa^2\vk^4}{(\kappa^2-\vk^2)^{2}} g_+(\kappa) \bigr\|_{H^{-1}}
	\lesssim \kappa^{-1} \bigl( 1 + \|q\|_{L^2_1}^2 \bigr) \| \jb{x}^{-1} q \|_{L^2}.
\end{align*}
For the second summand, we use
\begin{align*}
\bigl\| \jb{x}^{-9} \tfrac{\partial^2}{4\kappa^2-\partial^2} q' \bigr\|_{H^{-1}} \lesssim
		\bigl\| \tfrac{\partial^2}{4\kappa^2-\partial^2}  \jb{x}^{-1}  q \bigr\|_{L^2} \lesssim \kappa^{-1/4} \| \jb{x}^{-1} q \|_{H^{1/4}}.
\end{align*}
For the third summand we use \eqref{2130} to obtain
\begin{align*}
\bigl\| \jb{x}^{-9} \tfrac{4\kappa^2}{4\kappa^2-\partial^2} [\gamma(\kappa)q]' \bigr\|_{H^{-1}}
	&\lesssim \kappa^{-1} \bigl( 1 + \|q\|_{L^2_1}^2 \bigr)^2 \bigl\| \jb{x}^{-1} q \bigr\|_{H^{1/4}}.
\end{align*}
In this way, we see that the contribution of the first pair of terms is bounded by
\begin{align*}
\kappa^{-1/4} \bigl( 1 + \|q\|_{L^2_1}^2 \bigr)^2 \bigl\| \jb{x}^{-1} q \bigr\|_{H^{1/4}} \| \jb{x}^{-3} g_+(\vk) \|_{H^1},
\end{align*}
which \eqref{pbd'} shows is acceptable.

The second pair of terms leads us to consider 
\begin{align*}
	  \tfrac{4\kappa^4(\kappa^2+\vk^2) }{(\kappa^2-\vk^2)^{2}} \gamma(\kappa) + 2q^2 
	  	= \tfrac{12\vk^2\kappa^4 - 4\kappa^2\vk^4}{(\kappa^2-\vk^2)^{2}}  \gamma(\kappa) + \bigl[4\kappa^2 \gamma(\kappa) + 2q^2 \bigr] ,
\end{align*}
which we estimate thus:
\begin{align*}
	  \bigl\| \jb{x}^{-8} \bigl[ \tfrac{4\kappa^4(\kappa^2+\vk^2) }{(\kappa^2-\vk^2)^{2}} \gamma(\kappa) + 2q^2 \bigr] \bigr\|_{H^{-1}}
	  	\lesssim \| \jb{x}^{-8}  \gamma(\kappa)\|_{L^2} + \bigl\| \jb{x}^{-8}  [ 2\kappa^2 \gamma(\kappa) + q^2 ] \bigr\|_{L^2} .
\end{align*}
Combining this with \eqref{gbd'} and
\begin{align*}
	  \| \jb{x}^{-4} [ 1 +  \gamma(\kappa)] \|_{L^\infty} \lesssim 1 + \bigl\| \jb{x}^{-4} \gamma(\kappa) \bigr\|_{H^1},
\end{align*}
we deduce that this pair makes an acceptable contribution.

Regarding the third pair of terms, we first use \eqref{prformula} to write
\begin{align}\label{2237}
q  - \tfrac{\kappa^5}{(\kappa^2-\vk^2)^{2}}  g_-(\kappa) = - \tfrac{\partial^2}{4\kappa^2-\partial^2} q
	- \tfrac{4\kappa^2}{4\kappa^2-\partial^2} [\gamma(\kappa)q] - \tfrac{\kappa^4-(\kappa^2-\vk^2)^{2}}{(\kappa^2-\vk^2)^{2}}  \kappa g_-(\kappa)
\end{align}
from which we deduce via \eqref{gbd'} and \eqref{pbd'} that
\begin{align*}
\bigl\| \jb{x}^{-8} \cdot \text{LHS\eqref{2237}}  \bigr\|_{H^{-1}}
	&\lesssim \kappa^{-1} \| \jb{x}^{-1} q  \|_{L^2} + \bigl\| \jb{x}^{-8} \gamma(\kappa)q \bigr\|_{L^1}
		+ \bigl\| \jb{x}^{-8} g_-(\kappa) \bigr\|_{L^2} \\
	&\lesssim \kappa^{-1} \bigl( 1 + \|q\|_{L^2_1}^2 \bigr)^2 \bigl\| \jb{x}^{-1} q \bigr\|_{L^2}   .
\end{align*}
Estimating the remaining factor $\jb{x}^{-3} g_-(\vk)$ using \eqref{pbd'} yields a satisfactory bound.

Combining the final terms in \eqref{2141} and \eqref{2140} yields
\begin{align*}
\Bigl[4\kappa^2 - \tfrac{4\kappa^4(\kappa^2+\vk^2) }{(\kappa^2-\vk^2)^{2}} + 12\vk^2\Bigr] \gamma(\vk)
	=\tfrac{12\vk^6 - 20 \kappa^2 \vk^4}{(\kappa^2-\vk^2)^{2}}\gamma(\vk) ,
\end{align*}
which \eqref{gbd'} shows is acceptable.
\end{proof}

\begin{proof}[Proof of Theorem~\ref{T:1936}]
Let us adopt the abbreviation $q_\kappa(t):=\Phi_\kappa(t,q_\infty^0)$.

Corollary~\ref{C:1951} shows that $q_\kappa(t)$ almost surely converges in $C_t([-T,T];\Wspb)$ for any $T>0$ and any one triple \eqref{spb}.  Thus, such convergence holds for all $T\in 2^\N$ and all rational triples and thence (by imbedding properties of these spaces) for all triples.

This convergence together with Theorem~\ref{T:1601} and Lemma~\ref{L:aegp} show that the limiting family of transformations $\Phi(t,q)$ forms a one-parameter group of transformations preserving Gibbs measure.  In particular, $q(t):=\Phi(t,q_\infty^0)$ is Gibbs distributed at every time.

Combining the convergence shown earlier, \eqref{1776}, and Fatou's lemma proves~\eqref{1910}.

Next we show that $q(t)$ is almost surely a distributional solution.  Using \eqref{prformula}, we may rewrite the $H_\kappa$ evolution equation as
\begin{align*}
 \tfrac{d}{dt} q_\kappa &= \bigl\{ 4\kappa^2 q_\kappa - 4\kappa^3 g_-(\kappa;q_\kappa) \bigr\}'
 	= \bigl\{  - \tfrac{4\kappa^2}{4\kappa^2-\partial^2} q_\kappa''
 	- \tfrac{4\kappa^2}{4\kappa^2-\partial^2} \bigl[ 4\kappa^2 \gamma(\kappa;q_\kappa) \, q_\kappa \bigr] \bigr\}' .
\end{align*}
On the other hand, $q_\kappa(t)\to q(t)$ in $C_t^{ }L^3_1$ and so
\begin{align*}
\bigl\{ - \tfrac{4\kappa^2}{4\kappa^2-\partial^2} q_\kappa''  + \tfrac{4\kappa^2}{4\kappa^2-\partial^2} \bigl[ 2 q_\kappa^3 \bigr]\bigr\}'
	\to \bigl\{ -q''+ 2q^3  \bigr\}'
 \end{align*}
almost surely as spacetime distributions.  To finish, it suffices to show that
\begin{align*}
\lim_{\kappa\to\infty} \int_{-T}^T \|\jb{x}^{-9} \bigl[ 4\kappa^2 \gamma(\kappa;q_\kappa) \, q_\kappa + 2 q_\kappa^3 \bigr] \|_{L^1} \,dt = 0	
\end{align*}
almost surely.   Recalling that $q_\kappa(t)$ is Gibbs distributed at every time, we may deduce this from \eqref{2015}; indeed, together with \eqref{log13}, this shows
\begin{align*}
\E \Bigl\{ \sum_{\kappa\in 2^\N} &\int_{-T}^T \bigl\|\jb{x}^{-9} \bigl[ 4\kappa^2 \gamma(\kappa;q_\kappa) \, q_\kappa + 2 q_\kappa^3 \bigr]
	\bigr\|_{L^1} \,dt \Bigr\}  \\
&\lesssim  T \sum_{\kappa\in 2^\N} \E \Bigl\{ \bigl\|\jb{x}^{-1} q_\kappa \bigr\|_{L^\infty} \bigl\|\jb{x}^{-8} \bigl[ \kappa^2 \gamma(\kappa;q_\kappa) + \tfrac12 q_\kappa^2 \bigr] \bigr\|_{L^1} \Bigr\}< \infty 	.
\end{align*}

It remains so verify that $q(t)$ is almost surely a green solution.  From Theorem~\ref{T:1601}(b), we know that
\begin{gather}\label{2293}
g_-(\vk;q_\kappa(t_2)) - g_-(\vk;q_\kappa(t_1)) = \int_{t_1}^{t_2} X_\kappa(\vk,q_\kappa(t)) \,dt, \\
	\label{2294}
\gamma(\vk;q_\kappa(t_2)) - \gamma(\vk;q_\kappa(t_1)) = \int_{t_1}^{t_2} Y_\kappa(\vk,q_\kappa(t)) \,dt .
\end{gather}
Here and below we employ the notations of Lemma~\ref{L:2094}.  Our approach is to deduce that $g_-(\vk;q(t))$ satisfies \eqref{Green-} and that $\gamma(\vk;q(t))$ satisfies \eqref{Greeng}, almost surely, by sending $\kappa\to\infty$ in \eqref{2293} and \eqref{2294}, respectively.

As $q_\kappa(t) \to q(t)$, Proposition~\ref{P:G Lip} guarantees convergence of the left-hand sides.  It also implies that
$$
X(\vk,q_\kappa(t)) \to X(\vk,q (t)) \qtq{and} Y(\vk,q_\kappa(t)) \to Y(\vk,q(t))
$$
as spacetime distributions, almost surely.  This leaves us to show that as $\kappa\to\infty$,
\begin{align}\label{2305}
X(\vk,q_\kappa(t)) - X_\kappa(\vk,q_\kappa(t)) \to 0 \qtq{and} Y_\kappa (\vk,q_\kappa(t)) - Y(\vk,q_\kappa(t)) \to 0
\end{align}
in the same sense.  Noting that $q_\kappa(t)$ is Gibbs distributed at all times and recalling \eqref{Wsp moments} and \eqref{2015}, Lemma~\ref{L:2094} implies that
\begin{align*}
\sum_\kappa \ \E \Bigl\{\ \int_{t_1}^{t_2} \bigl\| \jb{x}^{-12} \bigl[ X(\vk,q_\kappa(t)) - X_\kappa(\vk,q_\kappa(t)) \bigr]\bigr\|_{H^{-1}}\,dt \Bigr\} < \infty
\end{align*}
and analogously for $Y_\kappa (\vk,q_\kappa(t)) - Y(\vk,q_\kappa(t))$.  This certainly implies \eqref{2305}.
\end{proof}

\section{New invariant measures for KdV in infinite volume}\label{S:7}

\newcommand{\Hs}{\mathsf{H}}
\newcommand{\Rs}{\mathsf{R}}
\newcommand{\cj}{\overline}
\newcommand{\g}{\gamma}
\newcommand{\al}{\alpha}

The goal of this section is to show that applying the Miura map \eqref{Miura} to the Gibbs-distributed solutions to \eqref{mkdv} described in Theorem~\ref{T:1936}, we may construct new invariant measures for \eqref{kdv}.  We begin our discussion with some of the relevant operator theory.

The Lax operators pertinent to the \eqref{mkdv} equation take the form of a one-dimensional Dirac operator.  Dirac famously arrived at his model by factoring the three-dimensional Schr\"odinger equation in a manner inspired by the energy-momentum relation of Einstein.  After a minor change of basis, we will likewise see that the square of $\Lax$ yields a pair of one-dimensional Schr\"odinger operators; see \eqref{D2}.  The idea of factoring Sturm--Liouville operators as a product of first-order differential operators is considerably older, beginning in the late nineteenth century; see \cite{MR10757} for historical references.

The minor change of basis alluded to above is based on the orthogonal matrix 
\begin{align}\label{mcO}
\mathcal{O} := \frac{1}{\sqrt{2}} 
\begin{bmatrix}
1 & 1\\
1  & -1
\end{bmatrix} .
\end{align} 
Evidently, $\mathcal{O}^{-1}=\mathcal O^T = \mathcal{O}$. Moreover, 
\begin{align*}
 i \mathcal{O} \Lax \mathcal{O}= 
i \begin{bmatrix}
 0 & -\partial - q \\
-\partial + q & 0
\end{bmatrix}
\end{align*}
and consequently,
\begin{align}\label{D2}
-\mathcal{O} \Lax^{2}\mathcal{O}= 
\begin{bmatrix}
-\partial^{2}+q'+q^2 & 0\\
0  & -\partial^2-q'+q^2
\end{bmatrix}.
\end{align}

Recall that $\Lax$ was defined as an anti-selfadjoint operator in Proposition~\ref{P:Lax Op} under the mild condition \eqref{det L hyp}.  This gives meaning to \eqref{D2} as a self-adjoint operator.  We shall develop the theory of this operator only to the extent that we need and do so in a manner dictated by the means we have at our disposal.  For alternative, more thorough discussions, see \cite{MR2189502} and the references therein.

\begin{proposition}
For any $q$ satisfying \eqref{det L hyp}, the functionals
\begin{align}\label{Schrod forms}
\mathsf q_\pm(\psi) := \int |\psi' \mp q\psi|^2\,dx = \int |\psi'|^2 + [q^2 \pm q'] |\psi|^2 \,dx,
\end{align}
with domains $\mathsf D_\pm:=\{ \psi\in L^2(\R) : \psi\in H^1_\loc(\R) \text{ and } \psi' \mp q\psi\in L^2\}$, are the quadratic forms of positive semidefinite selfadjoint operators $\mathsf H_\pm$.  Moreover,
\begin{align}\label{D2'}
-\mathcal{O} \Lax^{2}\mathcal{O} = \mathsf H_+ \oplus \mathsf H_-
\end{align}
and the resolvents $\mathsf R_\pm(\kappa):=(\mathsf H_\pm + \kappa^2)^{-1}$ of\/ $\mathsf H_\pm$ satisfy
\begin{align}\label{Reskdvtomkdv}
\mathsf R_\pm(\kappa)=\tfrac{1}{2\kappa}\bigl[ \LaxR_{11}(\kappa)+\LaxR_{22}(\kappa) \pm \LaxR_{12}(\kappa) \pm \LaxR_{21}(\kappa)\bigr],
\end{align}
where $\LaxR_{jk}$ are the component operators of\/ $\LaxR(\kappa)=(\Lax+\kappa)^{-1}$.
\end{proposition}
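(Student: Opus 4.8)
The plan is to establish the three claims in turn, working from the anti-selfadjointness of $\Lax$ already obtained in Proposition~\ref{P:Lax Op} and the resolvent bounds of Section~\ref{S:3}. First I would verify that $\mathsf q_\pm$ are closed, nonnegative, densely defined quadratic forms. Nonnegativity is immediate from the first expression $\int|\psi'\mp q\psi|^2\,dx$; the equality of the two expressions for $\mathsf q_\pm(\psi)$ is a routine integration by parts, legitimate because the $H^1_\loc$ hypothesis together with $\psi,\psi'\mp q\psi\in L^2$ forces the boundary terms $|\psi|^2\big|_a^b$ to vanish along a sequence $a\to-\infty$, $b\to+\infty$ (the same mechanism used in the proof of Proposition~\ref{P:Lax Op} to show $\mathfrak l^*$ is anti-symmetric). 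Closedness of the form with domain $\mathsf D_\pm$ is best obtained by identifying $\mathsf q_\pm$ as (essentially) the form of a component of $-\Lax^2$: since $i\Lax$ is selfadjoint, $-\Lax^2=(i\Lax)^2$ is selfadjoint and nonnegative, hence it is the operator associated to the closed form $\psi\mapsto\|i\Lax\psi\|^2$ on the form domain of $i\Lax$; conjugating by the unitary $\mathcal O$ and using the block-diagonal structure \eqref{D2} then exhibits $\mathsf H_\pm$ as the two diagonal blocks, proving both \eqref{D2'} and that $\mathsf q_\pm$ with domain $\mathsf D_\pm$ is exactly the form of $\mathsf H_\pm$. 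I would spell out that the form domain of $i\Lax$, transported through $\mathcal O$, is precisely $\mathsf D_+\oplus\mathsf D_-$: applying $\mathcal O$ to the domain description \eqref{D(L)} turns the pair of conditions $q\psi-\phi',\ \psi'-q\phi\in L^2$ into the decoupled conditions defining $\mathsf D_+$ and $\mathsf D_-$ for the two new components $\tfrac1{\sqrt2}(\phi+\psi)$ and $\tfrac1{\sqrt2}(\phi-\psi)$.

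Once \eqref{D2'} is in hand, the resolvent identity \eqref{Reskdvtomkdv} is purely algebraic. The key point is that for $\kappa\geq 1$ (so that $\kappa\in\rho(-i\Lax)$, indeed $-\kappa^2\in\rho(\Lax^2)$ for all real $\kappa\ne 0$ since $\Lax^2\leq 0$), we have the operator identity
\begin{align}\label{resalg}
(\kappa^2-\Lax^2)^{-1} = (\kappa-\Lax)^{-1}(\kappa+\Lax)^{-1} = \tfrac{1}{2\kappa}\bigl[(\Lax+\kappa)^{-1}+(-\Lax+\kappa)^{-1}\bigr],
\end{align}
where the second equality is the partial-fraction decomposition $\tfrac1{(\kappa-z)(\kappa+z)}=\tfrac1{2\kappa}\bigl(\tfrac1{\kappa+z}+\tfrac1{\kappa-z}\bigr)$ applied through the functional calculus, together with $(-\Lax+\kappa)^{-1}=-\LaxR(-\kappa)$ from \eqref{k symmetry}. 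Conjugating \eqref{resalg} by $\mathcal O$ and reading off the upper-left block: $\mathcal O(\kappa^2-\Lax^2)^{-1}\mathcal O=\mathsf R_+(\kappa)\oplus\mathsf R_-(\kappa)$ by \eqref{D2'}, while the right-hand side becomes $\tfrac1{2\kappa}\mathcal O\bigl[\LaxR(\kappa)+\sigma_1\LaxR(\kappa)\sigma_1\bigr]\mathcal O$ using $-\LaxR(-\kappa)=\sigma_1\LaxR(\kappa)\sigma_1$ from \eqref{k symmetry}. A direct computation of the $2\times2$ conjugation $\mathcal O(\,\cdot\,)\mathcal O$ of a generic matrix operator $\bigl[\begin{smallmatrix}\LaxR_{11}&\LaxR_{12}\\ \LaxR_{21}&\LaxR_{22}\end{smallmatrix}\bigr]$, symmetrized by the $\sigma_1$ conjugation (which swaps the $11\leftrightarrow22$ and $12\leftrightarrow21$ entries), produces exactly the combination $\tfrac1{2\kappa}\bigl[\LaxR_{11}+\LaxR_{22}\pm(\LaxR_{12}+\LaxR_{21})\bigr]$ in the two diagonal blocks, which is \eqref{Reskdvtomkdv}.

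The main obstacle, and the part deserving the most care, is the precise identification of the form domains — i.e., checking that the operator obtained from the closed form $(\mathsf q_\pm,\mathsf D_\pm)$ genuinely coincides with the diagonal block of $-\mathcal O\Lax^2\mathcal O$ rather than merely being a restriction or extension of it. This is where essential selfadjointness of $i\Lax$ on $C^\infty_c$, already granted by Proposition~\ref{P:Lax Op}, does the real work: it guarantees there is no ambiguity in the selfadjoint realization of $\Lax^2$, so the form $\psi\mapsto\|i\Lax\psi\|_{L^2}^2$ on $\mathrm{Dom}(i\Lax)$ is the unique closed form attached to it, and the block decomposition is then forced. I would also need the elementary remark that $\mathsf R_\pm(\kappa)$ is well-defined as a bounded operator for every $\kappa\geq 1$ precisely because $\mathsf H_\pm\geq 0$; this is consistent with, and can alternatively be read off from, the pointwise Green's function bounds \eqref{GinLinfty} and the Combes--Thomas estimate \eqref{CT est}, which in particular show the right-hand side of \eqref{Reskdvtomkdv} is a bona fide bounded operator with an integral kernel inheriting exponential off-diagonal decay. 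The remaining assertions — that $\mathsf H_\pm$ is positive semidefinite (immediate from $\mathsf q_\pm\geq 0$) and self-adjoint (from the first representation theorem for closed nonnegative forms) — then follow with no further effort.
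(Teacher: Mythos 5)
Your proposal is correct and takes essentially the same route as the paper's (very terse) proof: define $\mathsf H_\pm$ via the block decomposition of $(i\mathcal O\Lax\mathcal O)^2$, identify $\mathcal O\,\mathrm{Dom}(\Lax)=\mathsf D_+\oplus\mathsf D_-$ directly from \eqref{D(L)}, and derive \eqref{Reskdvtomkdv} from the functional-calculus identity $(\kappa^2-\Lax^2)^{-1}=\tfrac1{2\kappa}\bigl[(\kappa+\Lax)^{-1}+(\kappa-\Lax)^{-1}\bigr]$ combined with the symmetries \eqref{k symmetry}. The only cosmetic slip is the phrase ``form domain of $i\Lax$'' where you mean its operator domain (which is the form domain of $(i\Lax)^2$); your subsequent usage is correct, and essential selfadjointness on $C^\infty_c$ is not actually needed for the form identification---selfadjointness of $i\Lax$ on \eqref{D(L)} already suffices.
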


\begin{proof}
Proposition~\ref{P:Lax Op} shows that the domain of the self-adjoint operator $i \mathcal{O} \Lax \mathcal{O}$ is precisely $\mathsf D_+ \oplus \mathsf D_-$.  It is then an elementary matter to define selfadjoint operators $\mathsf H_\pm$ through \eqref{D2'} and verify that \eqref{Schrod forms} are the associated quadratic forms; see \cite{RS1} for further background.

From the functional calculus of the selfadjoint operator $ i \mathcal{O} \Lax \mathcal{O}$, we have
\begin{align*}
\begin{bmatrix}
\Rs_{+}(\kappa) & 0\\
0  & \Rs_{-}(\kappa)
\end{bmatrix} =\mathcal{O}(\kappa^2-\Lax^{2})^{-1}\mathcal{O}
	= \tfrac{1}{2\kappa}\mathcal{O}\Big[ (\kappa+\Lax)^{-1} + (\kappa-\Lax)^{-1} \Big]\mathcal{O} .
\end{align*}
The claim \eqref{Reskdvtomkdv} follows from this and the symmetries \eqref{k symmetry}.
\end{proof}

In Proposition~\ref{P:Lax Op}, we also described the Green's functions associated to the Lax operator $\Lax$. Recall that those Green's functions were not continuous; they contain jumps at the diagonal of the same magnitude as the free Green's function \eqref{G0}.  These jumps cancel out in the combinations \eqref{Reskdvtomkdv}:

\begin{corollary}\label{C:2417}
For any $q$ satisfying \eqref{det L hyp} with $b>0$, the Schr\"odinger operators $\mathsf H_\pm$ admit Green's functions $\mathsf G(x,y;\kappa,q)$ that are continuous in $(x,y)$, real-analytic in $\kappa>0$, and satisfy 
\begin{align}\label{2470}
| \mathsf G_\pm(y_1,y_2;\kappa) | \lesssim \langle y_1 \rangle^b \langle y_2 \rangle^b \bigl[ 1 + \kappa^{-1} \| \langle x\rangle^{-b} q \|_{L^2}^2  \bigr] .
\end{align}
Their restrictions $\mathsf g_\pm(x;\kappa):=\mathsf G_\pm(x,x;\kappa)$ to the diagonal satisfy
\begin{align}\label{gestimate}
\bigl\| \langle x\rangle^{-4b} \bigl[ \mathsf g_\pm(x;\kappa) - \tfrac1{2\kappa} \bigr] \bigr\|_{H^{1}_\kappa}
	\lesssim_b \kappa^{-1} \|\langle x\rangle^{-b}q\|_{L^2} \Bigl[ 1+\kappa^{-1}\|\langle x\rangle^{-b}q\|_{L^2}^{2} \Bigr]^{3/2} 
\end{align}
and may also be written as
\begin{align}\label{gtogp0}
\mathsf g_\pm(x;\kappa) = \tfrac{1}{2\kappa} \big[ 1+ \gamma(x;\kappa,q) \pm g_+(x;\kappa,q)\big], 
\end{align}
using the diagonal Green's functions associated to $\Lax$.
\end{corollary}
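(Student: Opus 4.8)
The plan is to deduce everything from the resolvent identity \eqref{Reskdvtomkdv} together with the properties of the matrix Green's function $G(x,y;\kappa,q)$ of $\Lax$ established in Proposition~\ref{P:Lax Op}. Taking integral kernels in \eqref{Reskdvtomkdv} --- using that $\LaxR(\kappa)$ has kernel $G$ --- the resolvent $\mathsf R_\pm(\kappa)$ has integral kernel
\begin{align*}
\mathsf G_\pm(x,y;\kappa,q) = \tfrac{1}{2\kappa}\bigl[ G_{11}+G_{22} \pm G_{12} \pm G_{21}\bigr](x,y;\kappa),
\end{align*}
where $G_{jk}$ are the entries of $G$. The first point to check is that the diagonal jump of $G$ cancels in this combination: from \eqref{G0}, $[G_0]_{12}=[G_0]_{21}\equiv0$, and for $\kappa>0$ the two indicators in $[G_0]_{11}$ and $[G_0]_{22}$ are $\ind_{(0,\infty)}(y-x)$ and $\ind_{[0,\infty)}(x-y)$, which partition $\R^2$, so that $[G_0]_{11}+[G_0]_{22}=e^{-\kappa|x-y|}$. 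Hence
\begin{align*}
\mathsf G_\pm(x,y;\kappa,q) = \tfrac{1}{2\kappa}e^{-\kappa|x-y|} + \tfrac{1}{2\kappa}\bigl[ (G-G_0)_{11}+(G-G_0)_{22} \pm (G-G_0)_{12} \pm (G-G_0)_{21}\bigr](x,y;\kappa).
\end{align*}
The first summand is the Green's function of the free operator $-\partial^2+\kappa^2$; it is continuous in $(x,y)$ and real-analytic in $\kappa>0$. The second summand is continuous in $(x,y)$ and real-analytic in $\kappa$ by Proposition~\ref{P:Lax Op}; that proposition is phrased for $|\kappa|\geq1$, but its proof --- a Neumann series from \eqref{basicRI} --- goes through at every fixed $\kappa>0$ with only the uniformity of the bounds degrading (alternatively, positivity of $\mathsf H_\pm$ already makes $\kappa\mapsto\mathsf R_\pm(\kappa)$ analytic on $(0,\infty)$). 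This gives the asserted continuity and analyticity. For the bound \eqref{2470} (for $\kappa\geq1$), each entry of $G$ obeys \eqref{GinLinfty}, whence $|\mathsf G_\pm|\leq\tfrac{2}{\kappa}|G|\lesssim\jb{y_1}^b\jb{y_2}^b[1+\kappa^{-1}\|\jb{x}^{-b}q\|_{L^2}^2]$.

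Next I would obtain the diagonal formula \eqref{gtogp0} simply by setting $x=y$ in the last display. The free summand contributes $\tfrac{1}{2\kappa}$. For the remaining summand, recall that $\gamma$, $g_+$, $g_-$ were defined in \eqref{gamma}, \eqref{r}, \eqref{p} as the continuous diagonal limits of the entries of $G-G_0$; in particular $(G-G_0)_{11}(x,x)+(G-G_0)_{22}(x,x)=\gamma(x;\kappa)$, and, since $[G_0]_{12}=[G_0]_{21}=0$, $(G-G_0)_{12}(x,x)+(G-G_0)_{21}(x,x)=G_{12}(x,x)+G_{21}(x,x)=g_+(x;\kappa)$. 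Therefore $\mathsf g_\pm(x;\kappa)=\tfrac{1}{2\kappa}\bigl[1+\gamma(x;\kappa,q)\pm g_+(x;\kappa,q)\bigr]$, which is \eqref{gtogp0}.

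Finally, for \eqref{gestimate} I would combine \eqref{gtogp0} with the diagonal bounds of Proposition~\ref{P:det diag}. From \eqref{gtogp0}, $\jb{x}^{-4b}[\mathsf g_\pm-\tfrac{1}{2\kappa}]=\tfrac{1}{2\kappa}\jb{x}^{-4b}[\gamma\pm g_+]$, so it suffices to control $\|\jb{x}^{-4b}\gamma\|_{H^1_\kappa}$ by \eqref{gbd'} and $\|\jb{x}^{-4b}g_+\|_{H^1_\kappa}$ by \eqref{pbd'} --- for the latter, noting first that multiplication by the smooth bounded symbol $\jb{x}^{-b}$ is bounded on $H^1_\kappa$ uniformly for $\kappa\geq1$, so $\|\jb{x}^{-4b}g_+\|_{H^1_\kappa}\lesssim\|\jb{x}^{-3b}g_+\|_{H^1_\kappa}$. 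Writing $A:=\|\jb{x}^{-b}q\|_{L^2}$, these inputs give a bound $\lesssim_b \kappa^{-1}A(1+\kappa^{-1}A^2)(1+\kappa^{-1/2}A)$, and \eqref{gestimate} follows from the elementary inequality $1+\kappa^{-1/2}A\lesssim(1+\kappa^{-1}A^2)^{1/2}$. Since this is a corollary, I do not expect a genuine obstacle; the one point requiring any care is the cancellation of the $G_0$ jump discontinuities noted at the outset --- which is precisely what makes $\mathsf G_\pm$, and hence $\mathsf g_\pm$, continuous --- together with the bookkeeping that identifies the diagonal values of the $(G-G_0)$ entries with $\gamma$ and $g_+$.
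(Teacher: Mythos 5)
Your proposal is correct and follows exactly the paper's route: the continuity/analyticity and the bound \eqref{2470} come from the kernel form of \eqref{Reskdvtomkdv} together with \eqref{G0} and Proposition~\ref{P:Lax Op}, the identity \eqref{gtogp0} from \eqref{Reskdvtomkdv} with the definitions \eqref{gamma} and \eqref{p}, and the estimate \eqref{gestimate} from \eqref{gtogp0} combined with \eqref{gbd'} and \eqref{pbd'}. You have simply written out the cancellation of the diagonal jump and the arithmetic with $(1+\kappa^{-1}A^2)^{3/2}$ that the paper leaves implicit, and both check out.
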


\begin{proof} The claims regarding $\mathsf G_\pm$ follow from \eqref{Reskdvtomkdv}, \eqref{G0},  and Proposition~\ref{P:Lax Op}.  To obtain \eqref{gtogp0}, we simply combine \eqref{Reskdvtomkdv} with \eqref{gamma} and \eqref{p}.  Lastly, \eqref{gestimate} follows from \eqref{gtogp0}, \eqref{gbd'}, and \eqref{pbd'}.
\end{proof}

The Miura transformations $w_\pm(x;q) := q(x)^2 \pm q'(x)$  were introduced in \cite{MR0252825} and marked the beginning of the discovery of the intimate connection between \eqref{kdv} and Sturm--Liouville problems.  Miura's observation was that if $q$ is a smooth solution to \eqref{mkdv}, then both $w_\pm$ are smooth solutions of \eqref{kdv}.

As the transformation $q\mapsto -q$ preserves the Gibbs law, there is no advantage to considering both Miura maps.  We will confine our attention to just
\begin{align}\label{Miura}
w(x;q) := q'(x) + q(x)^2 .
\end{align}

For very general functions $q$, we may interpret the derivative in \eqref{Miura} distributionally.  In subsection~\ref{SS:7.1}, we will examine the resulting family of random distributions.  In particular, we will give meaning to our claim that samples `have the regularity of white noise', while also affirming that the image measures are mutually singular to white noise and to each other (as $\mu$ varies).  

In order to view $w$ as a solution to \eqref{kdv}, even distributionally, we must understand the \emph{square} of $w$.   As observed in subsection~\ref{SS:diffusion}, samples $q$ from the Gibbs state are not absolutely continuous on finite intervals or indeed, even of bounded variation.  In this way, $q'$ may not be interpreted as a function and so cannot be squared.  (The other terms in $w^2$ do make sense as distributions.)    It was precisely to give an intrinsic notion of what it means for such irregular objects to be a solution of \eqref{kdv} that the notion of green solutions was first introduced in \cite{KMV} building on the use of diagonal Green's functions in \cite{KV}.  In subsection~\ref{SS:7.2}, we will demonstrate that the solutions of \eqref{mkdv} described in Theorem~\ref{T:1936} are indeed transformed by \eqref{Miura} into green solutions of \eqref{kdv}; see \eqref{GreensolnKdV}.  In choosing \eqref{GreensolnKdV} as the defining notion of green solution for \eqref{kdv}, we have deviated slightly from \cite{KMV}.  In that paper, the authors selected (3.6) from \cite{KV} as the basis for defining green solutions.  Here we have adopted (3.10) from \cite{KV}  instead.

In order to interpret these solutions to \eqref{kdv} as a measure-preserving system, it is crucial that each initial data leads to a single solution.  We will achieve this goal by showing that the Miura map is almost surely injective, building on the analysis of \cite{MR2189502}.

\subsection{The Miura mapping of Gibbs states}~\label{SS:7.1}
First we wish to understand the smoothness properties of the distributions $w=q'+q^2$ by connecting them to the regularity of Brownian motion.  
For our purposes, the most convenient description of the law of white noise is as the derivative of Brownian motion.  More precisely, given a realization $B(x)$ of Brownian motion indexed by $x\in\R$, the random tempered distribution $\mathring w(x)$ defined by
\begin{align}\label{2505}
\langle \phi, \mathring w\rangle := -\int \phi'(x) B(x)\,dx \qtq{for all} \phi \in \mathcal S(\R), 
\end{align}
is said to be white noise distributed.  Observe that
\begin{align}\label{2506}
\E\bigl\{ \exp( i \langle \phi, \mathring w\rangle )\bigr\} = \exp\bigl\{ - \tfrac12 \|\phi\|^2_{L^2} \bigr\} ,
\end{align}
which, by the Minlos Theorem, may also serve as a definition of white noise.

\begin{proposition}\label{P:7.3}
Fix $\mu\in\R$ and suppose that the process $q$, defined on some probability space, follows the infinite-volume Gibbs law with chemical potential $\mu$.  The random distribution $w=q'+q^2$ has the same regularity as white noise in the following sense:  One may define a realization $\mathring w$ of white noise over the same probability space so that the random distribution
$$
\phi \mapsto \bigl\langle\phi, w - \mathring w \bigr\rangle
$$
may be represented, almost surely, by a continuous function.  Nevertheless, the law of $w$ is not Gaussian and is mutually singular to that of white noise.
\end{proposition}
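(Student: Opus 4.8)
The plan is to establish three separate assertions: (i) that $w-\mathring w$ can be realized as a continuous function on a common probability space, (ii) that the law of $w$ is not Gaussian, and (iii) that it is mutually singular to white noise. For (i), I would exploit the diffusion structure of the Gibbs process established in Corollary~\ref{C:BM}: recall $q(x) = q(0) + B(x) + \int_0^x \tfrac{\psi_0'}{\psi_0}(q(x'))\,dx'$, where $B$ is an honest Brownian motion defined over the same probability space. Take $\mathring w$ to be the white noise associated to this $B$ via \eqref{2505}. Then, distributionally, $q' = \mathring w + \tfrac{\psi_0'}{\psi_0}(q)$, so that
\[
w = q' + q^2 = \mathring w + \tfrac{\psi_0'}{\psi_0}\bigl(q(x)\bigr) + q(x)^2 .
\]
The point is that $\tfrac{\psi_0'}{\psi_0}(q(x)) + q(x)^2$ is, almost surely, a continuous function of $x$: indeed $q$ is continuous (it is $C(\R)$-valued) and $\psi_0'/\psi_0$ is smooth, so the composition is continuous, and $q^2$ is continuous. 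This is exactly the representation claimed; pairing against $\phi\in\mathcal S(\R)$ gives $\langle\phi,w-\mathring w\rangle = \int\phi(x)[\tfrac{\psi_0'}{\psi_0}(q(x))+q(x)^2]\,dx$, an honest integral against a continuous function.

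For (ii), non-Gaussianity, the cleanest route is to test the characteristic functional against a fixed $\phi\in C^\infty_c(\R)$ and show it does not have the Gaussian form $\exp(-\tfrac12\sigma^2)$ for any variance. Using the representation above and integrating by parts on the $\mathring w$ part (or directly on the $q'$ part of $w$), one has $\langle\phi,w\rangle = -\int\phi' q\,dx + \int\phi q^2\,dx$. The first term $-\int\phi' q\,dx$ is a centered Gaussian only in the limiting heuristic sense; in fact since $q$ is \emph{not} a Gaussian process (its one-point marginal is $\psi_0(y)^2\,dy$, not Gaussian — see Corollary~\ref{C:diff} and the diffusion description), the linear functional $\int\phi' q\,dx$ already fails to be Gaussian, and the additional quadratic term $\int\phi q^2\,dx$ only compounds this. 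To make this rigorous without circularity I would compute a low moment: for instance show that the third cumulant of $\langle\phi,w\rangle$ (equivalently, of $-\int\phi' q + \int\phi q^2$) is nonzero for a suitable $\phi$, using the explicit finite-marginal formula \eqref{FKmuL2} to evaluate the relevant mixed moments $\E[q(x_1)q(x_2)q(x_3)]$ and $\E[q(x_1)q(x_2)^2]$ in terms of $\langle\psi_0, f_1 e^{-\Delta x\LL}f_2\cdots\psi_0\rangle$. A Gaussian has vanishing third cumulant, so a single $\phi$ witnessing a nonzero value finishes (ii). (One should check the third cumulant of the purely linear part $-\int\phi'q$ alone is generically nonzero, which follows because $e^{-t\LL}$ does not commute with multiplication by $y$ unless the potential is quadratic, i.e.\ unless $V$ is harmonic — which it is not.)

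For (iii), mutual singularity with white noise, I would push the image of the mixing/ergodicity results through the Miura map. The Gibbs process $q$ is stationary and strong mixing under translations (Corollary~\ref{C:mixing}), hence ergodic; the map $q\mapsto w = q'+q^2$ is translation-equivariant and measurable from $C(\R)$ to $\mathcal S'(\R)$, so the pushforward law $\nu$ of $w$ is translation-invariant and ergodic. White noise $\mathring\nu$ is likewise translation-invariant and ergodic. Two ergodic measures for the same transformation group are either equal or mutually singular, so it suffices to exhibit \emph{one} translation-invariant bounded measurable functional that has different expectations under $\nu$ and under $\mathring\nu$ — or more robustly, a bounded functional $F$ for which the spatial average $\tfrac1L\int_0^L F(\tau_x w)\,dx$ converges a.s.\ to different constants. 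A convenient choice uses the continuous-function part isolated in step (i): for a fixed bump $\phi$, the smeared square $\langle\phi,w\rangle^2$ has expectation under $\nu$ equal to $\E[(-\int\phi'q+\int\phi q^2)^2]$, which involves $\E[q^2]$, $\E[q^3]$, $\E[q^4]$ computed from \eqref{FKmuL2}; under white noise the analogous quantity is $\|\phi\|_{L^2}^2$ plus a mean-square term, and these differ for generic $\mu$ and generic $\phi$. Alternatively, and perhaps more transparently, one invokes the spectral-theoretic obstruction flagged in the introduction: Schr\"odinger operators with potentials in the range of the Miura map are positive semidefinite ($\mathsf H_\pm\ge 0$ by \eqref{Schrod forms}), a property that holds $\nu$-a.s.\ but fails $\mathring\nu$-a.s.\ since the white-noise Schr\"odinger operator has spectrum all of $\R$. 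This event is translation-invariant and measurable (approximable by the continuous functionals $\langle\phi,\mathsf R_\pm(\kappa)\psi\rangle$, which by Corollary~\ref{C:2417} and \eqref{Reskdvtomkdv} are measurable functions of $w$), so it separates the two laws and yields mutual singularity.

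I expect step (ii), the rigorous verification of non-Gaussianity, to be the main technical nuisance: one must actually extract a clean computation from the finite-marginal formula \eqref{FKmuL2} showing a nonvanishing higher cumulant, and verify that the quadratic correction $\int\phi q^2$ does not conspire to cancel the non-Gaussianity of the linear part for every test function. The safest execution is to choose $\phi$ supported on a tiny interval and expand $e^{-t\LL}\approx 1 - t\LL$ for small $t$, reducing the cumulant computation to the statement that $[\,y,[\,y,\LL\,]\,]$ (equivalently $V''$) is not constant — which is manifestly true for the quartic $V$. Steps (i) and (iii) are comparatively routine given Corollary~\ref{C:BM}, Corollary~\ref{C:mixing}, and the ergodic dichotomy.
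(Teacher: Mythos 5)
Your step (i) is exactly the paper's argument: take $\mathring w$ to be the derivative of the Brownian motion produced by Corollary~\ref{C:BM}, so that $w-\mathring w = \tfrac{\psi_0'}{\psi_0}(q)+q^2$ is a continuous function. Your step (iii) is a legitimate alternative to the paper's route: the pushforward of an ergodic translation-invariant law under the equivariant map $q\mapsto q'+q^2$ is again translation-invariant and ergodic, two distinct ergodic laws for the same group action are mutually singular, and the laws are distinct because already the means differ ($\E\langle\phi,w\rangle = \E[q(0)^2]\int\phi \neq 0$ while white noise is centered). The paper instead exhibits explicit disjoint full-measure events by comparing the Birkhoff averages of $\exp\{-\lambda\langle\phi^2,w(\cdot+y)\rangle\}$ under the two laws; both arguments rest on ergodicity, and yours is, if anything, slightly more economical. (Your second suggestion for (iii), via the spectrum of the white-noise Schr\"odinger operator, would need an external reference, since that fact is asserted but not proved in this paper.)

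The genuine gap is in step (ii). Your parenthetical claim that the third cumulant of the purely linear part $-\int\phi' q\,dx$ is generically nonzero is false: the potential $V$ in \eqref{HGibbs} is even, so the Gibbs law is invariant under $q\mapsto -q$ (a symmetry the paper itself exploits in the proof of Proposition~\ref{P:inject}), and therefore \emph{every} odd moment of every linear functional of $q$ vanishes identically. Writing $A=-\int\phi'q$ and $B=\int\phi q^2$, the same symmetry kills $\E[A^3]$ and $\E[A(B-\E B)^2]$, so the third cumulant of $\langle\phi,w\rangle$ reduces to $3\E[A^2(B-\E B)]+\kappa_3(B)$; showing this is nonzero for some $\phi$ requires computing genuine fourth- and sixth-order marginals from \eqref{FKmuL2}, which you flag as the ``main technical nuisance'' but do not carry out, and your proposed reduction to $V''$ being nonconstant does not survive the symmetry observation. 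The paper avoids all of this with a one-line argument: $\langle\phi^2,w\rangle = \langle\phi,\mathsf H_+\phi\rangle - \|\phi'\|_{L^2}^2 \geq -\|\phi'\|_{L^2}^2$ almost surely, so this random variable is bounded below and non-constant, hence cannot be Gaussian; the same pointwise lower bound then drives the mutual-singularity argument. I recommend replacing your cumulant computation with this positivity observation, which you already have available in the form of \eqref{Schrod forms}.
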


\begin{proof}
Recall that we can construct a Brownian motion on our underlying probability space via \eqref{aBM}.  We then use this to construct our realization $\mathring w$ of white noise via \eqref{2505}.  By construction,
$$
\bigl\langle\phi, w - \mathring w \bigr\rangle
	= \int \phi(x) \bigl[ \tfrac{\psi_0'}{\psi_0}\bigl(q(x)\bigr) + q(x)^2\bigr]\,dx.
$$
Thus we see that this distribution may be represented, almost surely, by a continuous function.

For any $\phi\in C^\infty_c(\R)$, we have   
\begin{align}
\langle \phi^2, w \rangle =  \langle \phi, \mathsf H_+ \phi \rangle  - \langle \phi', \phi' \rangle \geq - \|\phi'\|_{L^2}^2 .
\end{align}
This already shows that $w$ is not Gaussian.

The same inequality holds for the translated function $x\mapsto \phi(x-y)$.  It follows that for any $\lambda>0$, the event
\begin{align}\label{wevent}
\limsup_{L\to\infty} \frac{1}{L} \int_0^L \exp\bigl\{ - \lambda \langle \phi^2, w(\cdot + y) \rangle\bigr\} \,dy
	\leq \exp\bigl\{ \lambda \|\phi'\|_{L^2}^2 \bigr\}
\end{align}
has full probability.  For white noise, however, ergodicity and \eqref{2506} imply
\begin{align}\label{w0event}
\lim_{L\to\infty} \frac{1}{L} \int_0^L \exp\bigl\{ - \lambda \langle \phi^2, \mathring w(\cdot + y) \rangle\bigr\} \,dy
	=  \exp\bigl\{ \tfrac12 \lambda^2 \|\phi\|_{L^4}^4 \bigr\}
\end{align}
almost surely.  Provided $\phi\not\equiv 0$, we may choose $\lambda$ large enough so that \eqref{wevent} and \eqref{w0event} are mutually exclusive conditions on a distribution.  It follows that the laws of $w$ and of white noise are mutually singular.
\end{proof}

Next we prove that the Miura map is almost surely injective by combining the deterministic analysis of \cite{MR2189502} with probabilistic tools.

\begin{proposition}\label{P:inject}
The Miura map is almost surely injective: The set
\begin{align}\label{not inj}
 \bigl\{ q\in C(\R) : \exists \tilde q\in C(\R) \text{ with } q\neq\tilde q \text{ but }
 	q'+q^2 = \tilde q' + q^2 \text{ as distributions}\bigr\}
\end{align}
is a null set for the infinite-volume Gibbs law, irrespective of $\mu\in\R$. 
\end{proposition}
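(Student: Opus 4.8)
The plan is to exploit the structure theory of the Miura map developed in \cite{MR2189502}, which characterizes the fibres of $q\mapsto q'+q^2$ in terms of the associated Schr\"odinger operator $\mathsf H_+$. The key deterministic fact is this: two continuous potentials $q\neq\tilde q$ have $q'+q^2=\tilde q'+\tilde q^2$ as distributions if and only if $w:=q'+q^2$ is the Miura image of two distinct members of a fibre, and this happens precisely when the operator $\mathsf H_+$ associated to $q$ is \emph{not} critical (in the sense of Murata--Pinchover theory): it admits a positive solution $u$ of $\mathsf H_+ u=0$ that is \emph{not} the unique (up to scalar) such solution, equivalently, when $\mathsf H_+$ has a ground state energy $0$ but is subcritical, so that $1/u$ is not integrable at one end only. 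In other words, non-injectivity of the Miura map at $q$ is equivalent to subcriticality of $\mathsf H_+(q)$ at zero energy. Thus the event \eqref{not inj} is contained in the event that $\mathsf H_+(q)$ is subcritical at zero.

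So the first step is to recall from \cite{MR2189502} the precise dichotomy and identify non-injectivity with subcriticality; this requires checking that Gibbs samples $q$ fall within the class of potentials treated there, which follows from the regularity established in Proposition~\ref{P:7.3} (namely $w=q'+q^2$ has the regularity of white noise, hence lies in $H^{-1}_{\loc}$, and $\mathsf H_+$ is a well-defined positive semidefinite operator by the Proposition preceding Corollary~\ref{C:2417}). The second step is to show that, almost surely under the Gibbs law, $\mathsf H_+(q)$ is critical at zero energy. The natural witness is the function $\psi_0(q(x))$: since $\mathsf H_+$ is built from $\Lax$ via \eqref{D2'}, and $\Lax$ annihilates the vector $\bigl[\begin{smallmatrix}\psi_0(q)\\ \psi_0(q)\end{smallmatrix}\bigr]$ formally (as $\psi_0$ is the zero-energy ground state), one expects $u(x):=\psi_0(q(x))$ (or a closely related expression built from the diagonal Green's functions $\mathsf g_+$ of Corollary~\ref{C:2417}) to be a positive zero-energy solution; criticality then amounts to showing $\int 1/u^2$ diverges at \emph{both} $\pm\infty$. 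Because $q(x)$ is stationary and ergodic (Corollary~\ref{C:mixing}) and $\psi_0(y)\lesssim e^{-\eps|y|^3}$ decays rapidly (Proposition~\ref{P:heat}, \eqref{Carmona}), while $q(x)$ has only the rare large excursions quantified by \eqref{max on I}, the Birkhoff ergodic theorem should give that $\frac1L\int_0^L \psi_0(q(x))^{-2}\,dx$ (and the analogous integral toward $-\infty$) converges almost surely to $\E[\psi_0(q(0))^{-2}]$, which is $+\infty$ precisely because $\psi_0^{-2}$ is not integrable against the stationary law $\psi_0(y)^2\,dy$ (the integrand is $1$). Divergence at both ends is exactly the criticality condition, so $\mathsf H_+(q)$ is almost surely critical, and hence the Miura map is almost surely injective.

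The main obstacle I expect is the second step: rigorously identifying the correct positive zero-energy solution $u$ of $\mathsf H_+(q)u=0$ for the rough Gibbs-distributed $q$ and then controlling its growth/decay well enough to run the ergodic argument. Two technical points need care. First, one must verify that $u$ is genuinely a zero-energy solution in the appropriate (weak/distributional) sense given that $q$ is only continuous and not differentiable — here the cleaner route is probably to work with the Green's function $\mathsf G_+(x,y;\kappa)$ from Corollary~\ref{C:2417}, extract a positive generalized eigenfunction at $\kappa\to 0^+$ via a Harnack/compactness argument, and use the diagonal bound \eqref{2470} together with \eqref{gtogp0} to relate its local behaviour to $\gamma$ and $g_+$. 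Second, the ergodic theorem applies to the stationary process $x\mapsto u(x)^{-2}$, but one must confirm that $u(x)^{-2}$ is itself a measurable stationary functional of $q$ (which follows from the translation covariance \eqref{G translate} of the Green's function together with the uniqueness of the minimal positive solution in the critical case, making $u$ canonical up to a multiplicative constant that cancels in $u(x)^{-2}/\!\int$ ratios — or one simply normalizes $u(0)=1$). Modulo these points — which are precisely where \cite{MR2189502} and the probabilistic estimates of Section~\ref{S:Gibbs} are brought together — the argument closes: \eqref{not inj} is contained in the null event that $\mathsf H_+(q)$ is subcritical at zero, completing the proof.
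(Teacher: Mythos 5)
Your first step is essentially the paper's: both arguments rest on the fibre structure of the Miura map from \cite{MR2189502}, namely that any second positive distributional solution of $y''=(q'+q^2)y$ must take the d'Alembert--Lagrange form $\tilde y = y\,[1+c\int_0^x y^{-2}]$, so injectivity reduces to showing that $\int y^{-2}$ diverges at \emph{both} ends (your ``criticality'' condition). The gap is in your second step, and it is twofold. First, your proposed zero-energy solution $u(x)=\psi_0(q(x))$ is wrong: $\psi_0$ is the ground state of the anharmonic oscillator $\LL$ acting in the \emph{field-value} variable $y$, and composing it with $q(x)$ does not produce a solution of $\mathsf H_+u=0$; nor does $\Lax$ annihilate $\bigl[\begin{smallmatrix}\psi_0(q)\\ \psi_0(q)\end{smallmatrix}\bigr]$ --- the kernel condition $-\phi'+q\psi=0=\psi'-q\phi$ with $\phi=\psi$ forces $\phi'=q\phi$, i.e.\ $u(x)=\exp\bigl[\int_0^x q\bigr]$. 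Consequently your appealing computation $\E[\psi_0(q(0))^{-2}]=\int \psi_0^{-2}\,\psi_0^2\,dy=\infty$ establishes divergence of the wrong integral.

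Second, even with the correct $u$, the Birkhoff argument does not run: $u(x)^{-2}=e^{-2\int_0^x q}$ is not a stationary observable of the process (translating $q$ rescales $u$ by a random constant and, more to the point, the value at $x$ depends on the entire path on $[0,x]$), so $\tfrac1L\int_0^L u^{-2}$ is not an ergodic average, and the a.s.\ divergence of $\int_0^\infty e^{\pm 2\int_0^x q}\,dx$ is genuinely nontrivial (for comparison, such exponential functionals of a drifted Brownian motion can be a.s.\ finite). The paper circumvents this by noting that the \emph{divergence events} themselves are translation-invariant, hence of probability $0$ or $1$ by ergodicity; that the two ends have equal probability by the $x\mapsto-x$ symmetry; and that Cauchy--Schwarz, $x^2\le\int_0^x e^{-2\int_0^s q}\,\int_0^x e^{2\int_0^s q}$, combined with the $q\mapsto -q$ symmetry of the Gibbs law forces that probability to be at least $\tfrac12$, hence $1$. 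You would need to replace your ergodic-average step with an argument of this kind (or an equally robust substitute) for the proof to close.
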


\begin{proof}
Suppose $q$ belongs to the set \eqref{not inj}, as witnessed by $\tilde q\in C(\R)$.  Then both
\begin{align}\label{2498}
 y(x) = \exp\bigl[ \textstyle{\int_0^x} q(s)\,ds \bigr]
 	\qtq{and} \tilde y(x) = \exp\bigl[ \textstyle{\int_0^x} \tilde q(s)\,ds \bigr]
\end{align}
are positive distributional solutions to $y''=[q'+q^2]y$.  Moreover, $y(0)=\tilde y(0) =1$.  Our goal is to show that $y= \tilde y$, almost surely, and consequently, $q= \tilde q$.

The classical reduction of order method of d'Alembert--Lagrange provides the general solution of a Sturm--Liouville problem once one particular solution is known.  The paper \cite{MR2189502} realized this idea for potentials that are merely $H^{-1}_\loc(\R)$ and from their analysis, we conclude that 
\begin{align}\label{2499}
 \tilde y(x) = y(x) \biggl[  1 + c  \int_0^x \frac{ds}{y(s)^2} \biggr]
 	\qtq{for some} c\in \R.
\end{align}

In what follows, we will show that
\begin{align}\label{2500}
\limsup_{x\to\infty} \int_0^x \frac{ds}{y(s)^2}  =\infty \qtq{and}
\limsup_{x\to-\infty} \int_x^0 \frac{ds}{y(s)^2}  =\infty
\end{align}
almost surely.  Recalling that $y(x)$ and $\tilde y(x)$ are both positive, the former shows that $c\geq 0$ and the latter that $c\leq 0$.  Thus, together they show that $c=0$ and so the injectivity claim will be verified.

By virtue of its continuity, $q(x)$  is integrable on every finite interval. Combining this with the definition \eqref{2498} of $y(x)$, we see that the two events described in \eqref{2500} are invariant under translations $q(x)\mapsto q(x+h)$.  As our process is ergodic under such translations (see \S\ref{SS:mix}), it follows that the probability of each of these events is either zero or one.

As the transformation $q(x)\mapsto q(-x)$ preserves Gibbs measure, the two events in \eqref{2500} have the same probability.  We will now use a similar symmetry to show that the probability of the former is at least $\frac12$ and so must be one.

By the Cauchy--Schwarz inequality,
\begin{align*}
x^2 \leq \int_0^x e^{- 2\int_0^s q(s')\,ds'}\,ds \int_0^x e^{2\int_0^s q(s')\,ds'}\,ds .
\end{align*}
It follows that if one of these integrals does not diverge as $x\to\infty$, then the other must diverge.  The first of these integrals is an exact recapitulation of the first integral in \eqref{2500}; the second corresponds to case where the Gibbs sample $q(x)$ is replaced by $-q(x)$.  However, the mapping $q\mapsto -q$ preserves the Gibbs measure and so the first event in \eqref{2500} must have a probability of $\frac12$ or greater.
\end{proof}

\begin{corollary}\label{C:mutsing}
Distinct values of the chemical potential lead to mutually singular laws for $w=q'+q^2$.
\end{corollary}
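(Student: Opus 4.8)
The plan is to deduce Corollary~\ref{C:mutsing} from two facts already in hand --- the mutual singularity of the Gibbs laws themselves (Corollary~\ref{C:diff}) and the almost sure injectivity of the Miura map (Proposition~\ref{P:inject}) --- by pushing mutual singularity forward through the Miura map. To set the stage: for every $q\in C(\R)$ one has $q^{2}\in C(\R)\subset L^{2}_{\loc}(\R)$ and $q'\in H^{-1}_{\loc}(\R)$, so the Miura map $\Phi: q\mapsto q'+q^{2}$ of~\eqref{Miura} takes values in the Polish space $H^{-1}_{\loc}(\R)$, and it is continuous there: uniform-on-compacts convergence of $q$ forces $L^{2}_{\loc}$-convergence of $q^{2}$ and $H^{-1}_{\loc}$-convergence of $q'$. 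Writing $\PP_{\mu}$ for the infinite-volume Gibbs law of chemical potential $\mu$ from Proposition~\ref{PROP:Gibbs}, the law of $w=q'+q^{2}$ under this state is by definition the Borel probability measure $\Phi_{*}\PP_{\mu}$ on $H^{-1}_{\loc}(\R)$.

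The substantive step is the transfer of singularity across $\Phi_{*}$. Fix $\mu_{1}\neq\mu_{2}$. By Proposition~\ref{P:inject}, the exceptional set $N\subset C(\R)$ of~\eqref{not inj} is null for both $\PP_{\mu_{1}}$ and $\PP_{\mu_{2}}$; since $N$ is the projection to the first coordinate of the Borel set $\{(q,\tilde q)\in C(\R)^{2}:q\neq\tilde q,\ \Phi(q)=\Phi(\tilde q)\}$, it is analytic, hence universally measurable, so we may choose Borel sets $N_{i}\supseteq N$ with $\PP_{\mu_{i}}(N_{i})=0$ and put $B:=C(\R)\setminus(N_{1}\cup N_{2})$, a Borel set with $\PP_{\mu_{1}}(B)=\PP_{\mu_{2}}(B)=1$. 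The map $\Phi$ is injective on $B$: distinct $q_{1},q_{2}\in B$ with $\Phi(q_{1})=\Phi(q_{2})$ would place $q_{2}$ in $N$, contradicting $B\cap N=\emptyset$. By the Lusin--Souslin theorem, $\Phi(B)$ is a Borel subset of $H^{-1}_{\loc}(\R)$ and $\Phi|_{B}$ is a Borel isomorphism of $B$ onto $\Phi(B)$; let $\Psi$ be its inverse. Since $\PP_{\mu_{i}}(B)=1$, the measure $\Phi_{*}\PP_{\mu_{i}}$ is concentrated on $\Phi(B)$ and $\Psi_{*}\Phi_{*}\PP_{\mu_{i}}=\PP_{\mu_{i}}$. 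Because Borel isomorphisms preserve mutual singularity, $\Phi_{*}\PP_{\mu_{1}}\perp\Phi_{*}\PP_{\mu_{2}}$ if and only if $\PP_{\mu_{1}}\perp\PP_{\mu_{2}}$, and the latter is exactly Corollary~\ref{C:diff} (with the temperature held fixed, or normalized to $\beta=1$ as in Section~\ref{S:Gibbs}). This proves the corollary.

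I expect the only genuine obstacle to be this transfer step: mutual singularity is not preserved by arbitrary Borel pushforwards, so the almost sure injectivity of $\Phi$ together with the Lusin--Souslin theorem really is needed, while everything else is bookkeeping. If one prefers to avoid descriptive set theory, there is a hands-on alternative in the spirit of Corollary~\ref{C:diff} and Proposition~\ref{P:7.3}: for a fixed nonzero $\phi\in C^{\infty}_{c}(\R)$, the $w$-measurable quantity $\langle\phi(\cdot-y)^{2},w\rangle=\int|\phi'(x-y)-\phi(x-y)q(x)|^{2}\,dx-\|\phi'\|_{L^{2}}^{2}$ (compare~\eqref{Schrod forms}) has, by ergodicity under translations (see \S\ref{SS:mix}), spatial averages over $y\in[0,L]$ that converge $\PP_{\mu}$-almost surely, as $L\to\infty$, to $m_{2}(\mu)\|\phi\|_{L^{2}}^{2}$, where $m_{2}(\mu):=\int y^{2}\psi_{0}(y)^{2}\,dy$ is the second moment of the one-point marginal $\psi_{0}(y)^{2}\,dy$ identified in \S\ref{SS:diffusion} (the cross term drops because $\psi_{0}$ is even); since $\mu\mapsto m_{2}(\mu)$ is strictly increasing --- the ground-state energy of $-\tfrac12\partial_y^{2}+\tfrac12 y^{4}-\tfrac\mu2 y^{2}$ is concave in $\mu$ with $\mu$-derivative $-\tfrac12 m_{2}(\mu)$ by Feynman--Hellmann, and strictness follows from simplicity of the ground state --- distinct chemical potentials force mutually exclusive almost sure behaviour of this functional of $w$, whence the laws of $w$ are mutually singular.
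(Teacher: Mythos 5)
Your primary argument is the paper's own proof: combine the mutual singularity of the Gibbs laws (Corollary~\ref{C:diff}) with the almost sure injectivity of the Miura map (Proposition~\ref{P:inject}) and push the witnessing sets forward. You simply make explicit the measure-theoretic bookkeeping that the paper leaves implicit (continuity of $q\mapsto q'+q^2$ into $H^{-1}_{\loc}$, universal measurability of the exceptional set, and Lusin--Souslin to get a Borel isomorphism on a common full-measure set), all of which is correct.

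Your second, ``hands-on'' argument is genuinely different and worth noting: it detects the chemical potential directly from $w$ via the translation-ergodic average of $\langle\phi(\cdot-y)^2,w\rangle$, whose almost sure limit $m_2(\mu)\|\phi\|_{L^2}^2$ separates distinct $\mu$ because $\mu\mapsto m_2(\mu)$ is strictly increasing (Feynman--Hellmann plus strict concavity of the ground-state energy; for the strictness one should note that $y^2\psi_0$ is not proportional to $\psi_0$, so the second-order perturbation sum is strictly negative). This route bypasses Proposition~\ref{P:inject} entirely and is in the same spirit as the paper's proofs of Corollary~\ref{C:diff} and Proposition~\ref{P:7.3}; what it buys is independence from the injectivity of the Miura map, at the cost of a small spectral-theoretic computation.
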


\begin{proof}
Given distinct chemical potentials, $\mu_1<\mu_2$,  Corollary~\ref{C:diff} shows that there are disjoint measurable subsets $A_1$ and $A_2$ of $C(\R)$ that have full Gibbs measure.   Proposition~\ref{P:inject} shows that although the images of these sets under the Miura map might intersect, any such intersection is a null set for the pushforward of both Gibbs measures.  
\end{proof}

\subsection{Invariant measures for KdV}~\label{SS:7.2}
By virtue of the almost-sure injectivity of the Miura map demonstrated in Proposition~\ref{P:inject}, the measure-preserving group of transformations $\Phi$ described in Theorem~\ref{T:1936} pushes forward to a measure-preserving group of transformations on the space of tempered distributions.  Our final theorem encapsulates the strong senses in which these orbits solve \eqref{kdv}:

\begin{theorem}\label{T:7.6}
Let $q(t)$ denote the statistical ensemble of solutions to \eqref{mkdv} constructed in Theorem~\ref{T:1936} and consider the process $w(t):= q'(t)+q(t)^2$.\\
\slp{a} Given $s>\frac12$ there exists $\alpha>0$ so that
\begin{align}\label{1610''}
 \E\Bigl\{ \| \langle x\rangle^{-2} w(t,x) \|_{C_{t}^{\alpha}([-T,T];H^{-s})}^{r} \Bigr\}<\infty
\end{align}
for every $T>0$ and every $1\leq r <\infty$.\\
\slp{b} $w(t)$ follows the same law for each $t\in \R$.\\
\slp{c} Almost surely, $w(t)$ is a green solution to \eqref{kdv} in the sense that
\begin{align}\label{GreensolnKdV}
\mathsf g(t_2;\kappa) = \mathsf g(t_1;\kappa) + \int_{t_1}^{t_2} \big\{ 2\mathsf g(t;\kappa)'' -6w(t)\mathsf g(t;\kappa)-12\kappa^2 \mathsf g(t;\kappa)\big\}'dt, 
\end{align}
as distributions.  Here $\kappa>0$ and $\mathsf g(t;\kappa)$ denotes the diagonal Green's function associated to the potential $w(t)$ by Corollary~\ref{C:2417}.
\end{theorem}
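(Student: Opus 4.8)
The plan is to dispatch parts~(b) and~(a) quickly and then concentrate the effort on part~(c).

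Part~(b) is essentially immediate: the Miura map $q\mapsto q'+q^{2}$ is a Borel map from $C(\R)$ into the space of tempered distributions, and Theorem~\ref{T:1936} tells us that $q(t)$ is Gibbs distributed for \emph{every} $t\in\R$; hence $w(t)$ has, for every $t$, the law obtained by pushing the infinite-volume Gibbs measure forward under $q\mapsto q'+q^{2}$.

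For part~(a) I would split $\langle x\rangle^{-2}w=\langle x\rangle^{-2}q'+\langle x\rangle^{-2}q^{2}$. Commuting the weight through the derivative (exactly as in the proof of Lemma~\ref{L:bRb}) and using that, by Theorem~\ref{T:1936}(c), $q(t)$ lies in $W^{\sigma,p}_{b}$ for every admissible triple~\eqref{spb} with good $C^{\alpha}_{t}$-in-time bounds and $L^{r}(d\PP)$ moments, one sees that $\langle x\rangle^{-2}q'(t)\in H^{\sigma-1}\hookrightarrow H^{-s}$ for the given $s>\tfrac12$ once $\sigma<\tfrac12$ is chosen with $1-\sigma<s$, while $\langle x\rangle^{-2}q(t)^{2}=(\langle x\rangle^{-1}q(t))^{2}$ already lies in $L^{2}(\R)\hookrightarrow H^{-s}$ by the slow-growth bound~\eqref{log13} (and has all moments by~\eqref{Wsp moments} with $p=4$). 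The H\"older-in-time statement then follows from
\begin{align*}
w(t)-w(s)=(q(t)-q(s))'+\bigl(q(t)+q(s)\bigr)\bigl(q(t)-q(s)\bigr),
\end{align*}
applying Cauchy--Schwarz in $d\PP$ to the quadratic term and invoking the H\"older bounds and the stationarity of $q(\cdot)$ supplied by Theorem~\ref{T:1936} and Proposition~\ref{PROP:Gibbs}.

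For part~(c), the key is Corollary~\ref{C:2417}: the diagonal Green's function of the Schr\"odinger operator with potential $w(t)=q(t)'+q(t)^{2}$ is $\mathsf g(t;\kappa)=\tfrac1{2\kappa}\bigl[1+\gamma(x;\kappa,q(t))+g_{+}(x;\kappa,q(t))\bigr]$, where $\gamma$ and $g_{+}$ are the diagonal Green's functions of the \eqref{mkdv} Lax operator. By Theorem~\ref{T:1936}(b), $q(t)$ is a green solution of \eqref{mkdv}, so $\gamma(\kappa;q(t))$ satisfies~\eqref{Greeng} and $g_{-}(\kappa;q(t))$ satisfies~\eqref{Green-}; differentiating~\eqref{Green-} in $x$ and using $g_{-}'=-2\kappa g_{+}$ from~\eqref{rderiv} produces the companion identity $g_{+}(t_{2})-g_{+}(t_{1})=\int_{t_{1}}^{t_{2}}\bigl\{-g_{+}''+6q^{2}g_{+}\bigr\}'\,dt$. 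Adding the $\gamma$- and $g_{+}$-identities, dividing by $2\kappa$, and then systematically eliminating $\gamma$ and $g_{\pm}$ in favour of $\mathsf g$ and $w$ by means of the algebraic relations~\eqref{Gderiv},~\eqref{pderiv},~\eqref{rderiv} and~\eqref{gtogp0}, one checks that the integrand collapses to precisely $\bigl\{2\mathsf g''-6w\mathsf g-12\kappa^{2}\mathsf g\bigr\}'$ (a spurious additive constant $12\kappa^{2}$ that surfaces along the way is annihilated by the outermost $\partial_{x}$); this is~\eqref{GreensolnKdV} for $\kappa\geq1$, and the extension to all $\kappa>0$ follows by analytic continuation, since for a fixed $\phi\in C^{\infty}_{c}(\R)$ both sides of~\eqref{GreensolnKdV}, paired against $\phi$, are real-analytic in $\kappa>0$ by Corollary~\ref{C:2417} and Proposition~\ref{P:Lax Op}. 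Throughout, every product occurring in the computation is a legitimate distribution: $g_{\pm}$, $\gamma$ and $\mathsf g-\tfrac1{2\kappa}$ lie in weighted $H^{1}_{\kappa}$ (and $g_{-}$ in weighted $H^{2}_{\kappa}$) by~\eqref{pbd'},~\eqref{gbd'},~\eqref{rbd'},~\eqref{gestimate}, while $q(t)$ is continuous with at most logarithmic growth by~\eqref{log13}, so that $q^{2}g_{+}$ and $qg_{+}'$ are genuine functions and $q'\mathsf g=(q\mathsf g)'-q\mathsf g'$, $w\mathsf g$ genuine distributions.

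The main obstacle I anticipate is precisely the bookkeeping of part~(c): the algebraic identity making the $\gamma$- and $g_{+}$-evolutions collapse into~\eqref{GreensolnKdV} is elementary but lengthy, and one must simultaneously verify that each intermediate expression is a well-defined distribution for the Brownian-regularity field $q(t)$, so that every invocation of~\eqref{Gderiv}--\eqref{qquad} is justified at that level of regularity. No estimate beyond those already established in the preceding sections is needed; it is solely a matter of organizing the computation carefully.
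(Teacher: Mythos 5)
Your proposal is correct and follows essentially the same route as the paper: part (b) by pushforward of the invariant Gibbs law, part (a) by splitting $w=q'+q^2$ and using the weighted $W^{s,p}_b$ bounds of Theorem~\ref{T:1936}(c), and part (c) by differentiating \eqref{Green-}, combining with \eqref{Greeng} via the identities \eqref{Gderiv}--\eqref{rderiv}, and summing through \eqref{gtogp0} so that the integrand collapses to $\{2\mathsf g''-6w\mathsf g-12\kappa^2\mathsf g\}'$. Your explicit analytic-continuation remark for extending from $\kappa\geq1$ to all $\kappa>0$ is a small point the paper leaves implicit.
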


\begin{proof}
It suffices to consider $\tfrac12<s\leq 1$.  Choosing $b=1$ in \eqref{1910}, we see that $\langle x\rangle^{-1} q(t,x)$ is H\"older continuous as an $H^{1-s}(\R)$-valued function with all moments finite.  The claim \eqref{1610''} then follows easily.  To treat $q^2$, for example, one may just use the embedding $L^1(\R)\hookrightarrow H^{-s}(\R)$, which is dual to $H^s(\R)\hookrightarrow L^\infty(\R)$.

By Theorem~\ref{T:1936}, the process $q(t,x)$ follows the same Gibbs distribution for all $t\in\R$.  It follows that the law of $w(t)$ is likewise independent of $t\in\R$.

Recall Theorem~\ref{T:1936} also showed that $q(t)$ is almost surely a green solution to \eqref{mkdv}, which is to say, we showed that the identities \eqref{Green-} and \eqref{Greeng} both hold.  Taking a spatial derivative of the former and applying \eqref{Gderiv}, \eqref{pderiv}, and  \eqref{rderiv}, we deduce that
\begin{align}\label{Greener+}
g_+(t_2) &= g_+(t_1) + \int_{t_1}^{t_2} \bigl\{ 2g''_+ - 6q' [1+\gamma] - 6q^2 g_+ - 12\kappa^2 g_+ \bigr\}' \,dt \\
\label{Greenerg}
 \gamma(t_2) &= \gamma(t_1) + \int_{t_1}^{t_2} \bigl\{ 2\gamma'' - 6q' g_+ - 6q^2[1+\gamma] - 12\kappa^2 \gamma \bigr\}' \,dt
\end{align}
as distributions, for every choice of $t_1<t_2$.  Summing these identities and using \eqref{gtogp0}, we deduce \eqref{GreensolnKdV}.
\end{proof}

\end{document}